\newtheorem{theorem}{Theorem}[section]
\newtheorem{corollary}{Corollary}[section]
\newtheorem{lemma}{Lemma}[section]
\newtheorem{proposition}{Proposition}[section]
\newtheorem{remark}{Remark}[section]
\numberwithin{equation}{section}
\newenvironment{proof}{\medskip\par\noindent{\bf Proof.}\ }{\qquad
\raisebox{-0.5mm}{\rule{1.5mm}{4mm}}\vspace{6pt}}
\newcommand{\bbr}{\mathbb{R}}
\newcommand{\h}{H^1(\bbr^3)}
\newcommand{\bbn}{\mathbb{N}}
\newcommand{\ve}{\varepsilon}
\begin{document}


\title
{\Large\bf On a two-component Bose-Einstein condensate with steep potential wells}

\author{Yuanze Wu$^{a},$\thanks{Corresponding author.  E-mail address: wuyz850306@cumt.edu.cn (Yuanze Wu)}\quad
Tsung-fang Wu$^b,$\thanks{E-mail address:  tfwu@nuk.edu.tw (Tsung-fang Wu)}
\quad
Wenming Zou$^c$\thanks{E-mail address:wzou@math.tsinghua.edu.cn (Wenming Zou)}\\
\footnotesize$^a${\em  College of Sciences, China
University of Mining and Technology, Xuzhou 221116, P.R. China }\\
\footnotesize$^b${\em  Department of Applied Mathematics, National University of Kaohsiung, Kaohsiung 811, Taiwan }\\
\footnotesize$^c${\em  Department of Mathematical Sciences, Tsinghua University, Beijing 100084, P.R. China }}
\date{}
\maketitle

\date{}
\maketitle

\noindent{\bf Abstract:} In this paper, we study the following two-component systems of nonlinear Schr\"odinger equations
\begin{equation*}
\left\{\aligned&\Delta u-(\lambda a(x)+a_0(x))u+\mu_1u^3+\beta v^2u=0\quad&\text{in }\bbr^3,\\
&\Delta v-(\lambda b(x)+b_0(x))v+\mu_2v^3+\beta u^2v=0\quad&\text{in }\bbr^3,\\
&u,v\in\h,\quad u,v>0\quad\text{in }\bbr^3,\endaligned\right.
\end{equation*}
where $\lambda,\mu_1,\mu_2>0$ and $\beta<0$ are parameters; $a(x), b(x)\geq0$ are steep potentials and $a_0(x),b_0(x)$ are sign-changing weight functions; $a(x)$, $b(x)$, $a_0(x)$ and $b_0(x)$ are not necessarily to be radial symmetric.  By the variational method, we obtain a ground state solution and multi-bump solutions for such systems with $\lambda$ sufficiently large.  The concentration behaviors of solutions as both $\lambda\to+\infty$ and $\beta\to-\infty$ are also considered. In particular, the phenomenon of phase separations is observed in the whole space $\bbr^3$.  In the Hartree-Fock theory, this provides a theoretical enlightenment of
phase separation in $\bbr^3$ for the 2-mixtures of Bose-Einstein condensates.

\vspace{6mm} \noindent{\bf Keywords:} Bose-Einstein condensate;   Steep potential well; Ground state solution; Multi-bump solution.

\vspace{6mm}\noindent {\bf AMS} Subject Classification 2010: 35B38; 35B40; 35J10; 35J20.

\section{Introduction}
In this paper, we consider the following two-component systems of nonlinear Schr\"odinger equations
\begin{equation*}
\left\{\aligned&\Delta u-(\lambda a(x)+a_0(x))u+\mu_1u^3+\beta v^2u=0\quad&\text{in }\bbr^3,\\
&\Delta v-(\lambda b(x)+b_0(x))v+\mu_2v^3+\beta u^2v=0\quad&\text{in }\bbr^3,\\
&u,v\in\h,\quad u,v>0\quad\text{in }\bbr^3,\endaligned\right.\eqno{(\mathcal{P}_{\lambda,\beta})}
\end{equation*}
where $\lambda,\mu_1,\mu_2>0$ and $\beta<0$ are parameters.  The potentials $a(x), b(x), a_0(x)$ and $b_0(x)$ satisfy the following conditions:
\begin{enumerate}
\item[$(D_1)$] $a(x), b(x)\in C(\bbr^3)$ and $a(x), b(x)\geq0$ on $\bbr^3$.
\item[$(D_2)$] There exist $a_\infty>0$ and $b_\infty>0$ such that $\mathcal{D}_a:=\{x\in\bbr^3\mid a(x)<a_\infty\}$ and $\mathcal{D}_b:=\{x\in\bbr^3\mid b(x)<b_\infty\}$ are nonempty and have finite measures.
\item[$(D_3)$] $\Omega_a=\text{int} a^{-1}(0)$ and $\Omega_b=\text{int} b^{-1}(0)$ are nonempty bounded sets and have smooth boundaries.  Moreover, $\overline{\Omega}_a=a^{-1}(0)$, $\overline{\Omega}_b=b^{-1}(0)$ and $\overline{\Omega}_a\cap\overline{\Omega}_b=\emptyset$.
 \item[$(D_4)$]$a_0(x),b_0(x)\in C(\bbr^3)$ and there exist $R,d_a,d_b>0$ such that
\begin{equation*}
a_0^-(x)\leq d_a(1+a(x))\quad\text{and}\quad b_0^-(x)\leq d_b(1+b(x))\quad\text{for }|x|\geq R,
\end{equation*}
where $a_0^-(x)=\max\{-a_0(x),0\}$ and $b_0^-(x)=\max\{-b_0(x),0\}$.
\item[$(D_5)$] $\inf\sigma_{a}(-\Delta+a_0(x))>0$ and $\inf\sigma_{b}(-\Delta+b_0(x))>0$, where $\sigma_{a}(-\Delta+a_0(x))$ is the spectrum of $-\Delta+a_0(x)$ on $H_0^1(\Omega_{a})$ and $\sigma_{b}(-\Delta+b_0(x))$ is the spectrum of $-\Delta+b_0(x)$ on $H_0^1(\Omega_{b})$.
\end{enumerate}

\begin{remark}
If $a_0(x),b_0(x)\in C(\bbr^3)$ are bounded, then the condition $(D_4)$ is trivial.  However,  under the assumptions of $(D_4)$-$(D_5)$, $a_0(x)$ and $b_0(x)$ may be sign-changing and unbounded.
\end{remark}

\vskip0.2in

 Two-component systems of nonlinear Schr\"odinger equations like $(\mathcal{P}_{\lambda,\beta})$ appear in the Hartree-Fock theory for a double condensate, that is, a binary mixture of Bose-Einstein condensates in two different hyperfine states $|1\rangle$ and $|2\rangle$ (cf. \cite{EGBB97}), where  the solutions $u$ and $v$ are the corresponding condensate amplitudes, $\mu_j$  are the intraspecies and interspecies scattering lengths.  The interaction is attractive if $\beta>0$ and repulsive if $\beta<0$.  When the interaction is repulsive, it is expected that the phenomenon of phase separations will happen, that is, the two components of the system  tend to separate in different regions as the interaction tends to infinity.    This kind of systems also arises in nonlinear optics (cf. \cite{AA99}).   Due to the important application in physics,  the following system

\begin{equation}\label{zou+01}
\left\{\aligned&\Delta u- \lambda_1 u+\mu_1u^3+\beta v^2u=0\quad&\text{in }  \Omega,\\
&\Delta v-\lambda_2v+\mu_2v^3+\beta u^2v=0\quad&\text{in }\Omega,\\
&u,v=0 \quad\text{on }\partial \Omega,\endaligned\right.
\end{equation}
where $\Omega \subset  \bbr^2  $  or    $\bbr^3$, has attracted many attentions of mathematicians in the past decade.  We refer the readers to \cite{BWW07,BDW10,B13,CZ13,CZ132,CLZ13,CLZ14,H09,I09,IK11,LW05,LW06,LW13,MS07,R14,S07}.  In these literatures, various existence theories of the solutions were established   for   the Bose-Einstein condensates in $\bbr^2$ and $\bbr^3$.    Recently, some mathematicians devoted their interest to the two coupled Schr\"odinger equations with critical Sobolev exponent in the high dimensions, and a number of the existence results of the solutions for such systems were also established.  See for example \cite{CZ121,CZ12,CZ13,CZ131,CZ14}.
\vskip0.12in

On the other hand, if the parameter $\lambda$ is sufficiently large, then $\lambda a(x)$ and $\lambda b(x)$ are called the steep potential wells under the conditions $(D_1)$-$(D_3)$.  The depth of the wells is controlled by the parameter $\lambda$.  Such potentials were first introduced by Bartsch and Wang in \cite{BW95} for the scalar Schr\"odinger equations.  An interesting phenomenon for this kind of Schr\"odinger equations is that, one can expect to find the solutions which are concentrated at the bottom of the wells as the depth goes to infinity.  Due to this interesting property, such topic for the scalar Schr\"odinger equations was studied extensively  in the past decade.  We refer the readers to \cite{BW00,BPW01,BT13,DT03,LHL11,SZ05,SZ06,ST09,WZ09} and the references therein.  In particular, in \cite{DT03}, by assuming that the bottom of the steep potential wells consists of finitely  many disjoint bounded domains, the authors obtained multi-bump solutions for  scalar  Schr\"odinger equations with steep potential wells,  which are concentrated at any given disjoint bounded domains of the bottom as the depth goes to infinity.

\vskip0.12in

We wonder what happens to the two-component Bose-Einstein condensate $(\mathcal{P}_{\lambda,\beta})$ with steep potential wells?   In the current paper, we shall explore this problem to find whether the solutions of such systems are concentrated at the bottom of the wells as $\lambda\to+\infty$ and when the phenomenon of phase separations of such systems can be observed in  the whole space  $\bbr^3$.

\vskip0.12in

We remark that the phenomenon of phase separations  for (\ref{zou+01}) was observed in \cite{CZ121,CZ131,CLZ14,CLZcommpde,NTTV10,WW08,WW081} for the ground state solution when $\Omega$ is a bounded domain.  In particular, this
phenomenon was also observed on the whole spaces $\bbr^2$ and $\bbr^3$ by \cite{TV09}, where the system is radial
symmetric!  However, when the system is not necessarily radial
symmetric, the phenomenon of phase seperations for Bose-Einstein condensates on the  whole space $\bbr^3$,  has not been obtained yet.  For other kinds of elliptic systems with strong competition, the phenomenon of phase separations has also been well studied;  we refer the readers  to  \cite{caf-1, caf-2, con-1}  and references therein.

\vskip0.12in

We recall some definitions in order to state  the main results in the current paper.  We say  that $(u_0, v_0)\in\h\times\h$ is a non-trivial solution of $(\mathcal{P}_{\lambda,\beta})$ if $(u_0, v_0)$ is a solution of $(\mathcal{P}_{\lambda,\beta})$ with $u_0\not=0$ and $v_0\not=0$.  We say $(u_0,v_0) \in\h\times\h$ is a ground state solution of $(\mathcal{P}_{\lambda,\beta})$ if $(u_0,v_0)$ is a non-trivial solution of $(\mathcal{P}_{\lambda,\beta})$ and
\begin{equation*}
J_{\lambda,\beta}(u_0,v_0)=\inf\{J_{\lambda,\beta}(u,v)\mid (u,v)\text{ is a non-trivial solution of }(\mathcal{P}_{\lambda,\beta})\},
\end{equation*}
where $J_{\lambda,\beta}(u,v)$ is the corresponding functional of $(\mathcal{P}_{\lambda,\beta})$ and given by
\begin{eqnarray}
J_{\lambda,\beta}(u,v)&=&\frac12\int_{\bbr^3}|\nabla u|^2+(\lambda a(x)+a_0(x))u^2dx+\frac12\int_{\bbr^3}|\nabla v|^2+(\lambda b(x)+b_0(x))v^2dx\notag\\
&&-\frac{\mu_1}{4}\int_{\bbr^3}u^4dx-\frac{\mu_2}{4}\int_{\bbr^3}v^4dx-\frac\beta2\int_{\bbr^3}u^2v^2dx.  \label{eq0081}
\end{eqnarray}
\begin{remark}
In section~2, we will give a variational setting of $(\mathcal{P}_{\lambda,\beta})$ and show that the solutions of $(\mathcal{P}_{\lambda,\beta})$ are  equivalent to the positive critical points of $J_{\lambda,\beta}(u,v)$ in a suitable Hilbert space $E$.
\end{remark}

\vskip0.12in

Let $I_{\Omega_a}(u)$ and $I_{\Omega_b}(v)$ be two functionals respectively defined on $H_0^1(\Omega_a)$ and $H_0^1(\Omega_b)$, which are given by
\begin{equation*}
I_{\Omega_a}(u)=\frac12\int_{\Omega_a}|\nabla u|^2+a_0(x)u^2dx-\frac{\mu_1}{4}\int_{\Omega_a}u^4dx
\end{equation*}
and by
\begin{equation*}
I_{\Omega_b}(v)=\frac12\int_{\Omega_b}|\nabla v|^2+b_0(x)v^2dx-\frac{\mu_2}{4}\int_{\Omega_b}v^4dx.
\end{equation*}
Then by the condition $(D_5)$, it is well-known that $I_{\Omega_a}(u)$ and $I_{\Omega_b}(v)$ have least energy nonzero critical points.  We denote the least energy of nonzero critical points for $I_{\Omega_a}(u)$ and $I_{\Omega_b}(v)$ by $m_a$ and $m_b$, respectively.  Now, our first result can be stated as the following.

\vskip0.21in

\begin{theorem}
\label{thm0002} Assume $(D_1)$-$(D_5)$.  Then there exists $\Lambda_{*}>0$ such that $(\mathcal{P}_{\lambda,\beta})$ has a ground state solution $(u_{\lambda,\beta}, v_{\lambda,\beta})$ for all $\lambda\geq\Lambda_{*}$ and $\beta<0$, which has  the following properties:
\begin{enumerate}
\item[$(1)$] $\int_{\bbr^3\backslash\Omega_{a}}|\nabla u_{\lambda,\beta}|^2+u_{\lambda,\beta}^2dx\to0$ and $\int_{\bbr^3\backslash\Omega_{b}}|\nabla v_{\lambda,\beta}|^2+v_{\lambda,\beta}^2dx\to0$ as $\lambda\to+\infty$.
\item[$(2)$] $\int_{\Omega_{a}}|\nabla u_{\lambda,\beta}|^2+a_0(x)u_{\lambda,\beta}^2dx\to 4m_{a}$ and $\int_{\Omega_{b}}|\nabla v_{\lambda,\beta}|^2+b_0(x)v_{\lambda,\beta}^2dx\to 4m_{b}$ as $\lambda\to+\infty$.
\end{enumerate}
Furthermore, for each $\{\lambda_n\}\subset[\Lambda_*, +\infty)$ satisfying $\lambda_n\to+\infty$ as $n\to\infty$ and $\beta<0$, there exists $(u_{0,\beta},v_{0,\beta})\in(\h\backslash\{0\})\times(\h\backslash\{0\})$ such that
\begin{enumerate}
\item[$(3)$] $(u_{0,\beta},v_{0,\beta})\in H_0^1(\Omega_{a})\times H_0^1(\Omega_{b})$ with $u_{0,\beta}\equiv0$ outside $\Omega_{a}$ and $v_{0,\beta}\equiv0$ outside $\Omega_{b}$.
\item[$(4)$] $(u_{\lambda_n,\beta},v_{\lambda_n,\beta})\to(u_{0,\beta},v_{0,\beta})$ strongly in $\h\times\h$ as $n\to\infty$ up to a subsequence.
\item[$(5)$] $u_{0,\beta}$ is a least energy nonzero critical point of $I_{\Omega_{a}}(u)$ and $v_{0,\beta}$ is a least energy nonzero critical point of $I_{\Omega_{b}}(v)$.
\end{enumerate}
\end{theorem}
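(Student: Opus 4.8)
\medskip\noindent\textbf{Proof strategy (sketch).}\quad Abbreviate $Q_{\lambda,a}(u)=\int_{\bbr^3}\big(|\nabla u|^2+(\lambda a+a_0)u^2\big)\,dx$ and $Q_{\lambda,b}(v)$ similarly; for $\lambda\ge\Lambda_*$ these are (equivalent to) the squared $E$-norms from Section~2, and writing $I_{\lambda,a}(u)=\tfrac12Q_{\lambda,a}(u)-\tfrac{\mu_1}4\int_{\bbr^3}u^4\,dx$ and $I_{\lambda,b}(v)=\tfrac12Q_{\lambda,b}(v)-\tfrac{\mu_2}4\int_{\bbr^3}v^4\,dx$ one has $J_{\lambda,\beta}(u,v)=I_{\lambda,a}(u)+I_{\lambda,b}(v)-\tfrac\beta2\int_{\bbr^3}u^2v^2\,dx$, with the coupling term \emph{nonnegative} since $\beta<0$. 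The plan is to obtain the ground state as a minimiser of $J_{\lambda,\beta}$ over the Nehari-type set
\[
\mathcal N_{\lambda,\beta}=\big\{(u,v)\in E:\ u\neq0,\ v\neq0,\ \langle\partial_uJ_{\lambda,\beta}(u,v),u\rangle=\langle\partial_vJ_{\lambda,\beta}(u,v),v\rangle=0\big\},
\]
putting $c_{\lambda,\beta}=\inf_{\mathcal N_{\lambda,\beta}}J_{\lambda,\beta}$. The key preliminary is a fibre analysis: with $(\tau,\sigma)=(t^2,s^2)$ the map $(t,s)\mapsto J_{\lambda,\beta}(tu,sv)$ is quadratic in $(\tau,\sigma)$, its unique critical point lies in $\{\tau,\sigma>0\}$ and is a strict maximum precisely on the open cone $\Theta=\{(u,v):u,v\neq0,\ \beta^2\big(\int_{\bbr^3}u^2v^2\big)^2<\mu_1\mu_2\int_{\bbr^3}u^4\int_{\bbr^3}v^4\}$, while off $\Theta$ the supremum over $t,s\ge0$ is $+\infty$. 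Since at any fully non-trivial critical point of $J_{\lambda,\beta}$ the pair $(1,1)$ must be this maximum, every such critical point lies in $\Theta$, so nothing is lost by working in $\Theta$, where each ray carries a unique point of $\mathcal N_{\lambda,\beta}$; this yields the equivalent minimax description $c_{\lambda,\beta}=\inf_{u\neq0,\,v\neq0}\max_{t,s\ge0}J_{\lambda,\beta}(tu,sv)$. This fibre analysis (which keeps $\mathcal N_{\lambda,\beta}$ usable even as $\beta\to-\infty$, when a naive Nehari manifold degenerates) is one of the two main obstacles.

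Next I would sandwich $c_{\lambda,\beta}$. Let $U_a\in H_0^1(\Omega_a)$, $V_b\in H_0^1(\Omega_b)$ be least-energy nonzero critical points of $I_{\Omega_a}$, $I_{\Omega_b}$, extended by $0$. Since $a\equiv0$ on $\Omega_a$, $U_a\equiv0$ off $\Omega_a$ and $\overline\Omega_a\cap\overline\Omega_b=\emptyset$, one checks $Q_{\lambda,a}(U_a)=\int_{\Omega_a}|\nabla U_a|^2+a_0U_a^2=\mu_1\int_{\Omega_a}U_a^4$ and $\int_{\bbr^3}U_a^2V_b^2=0$, so $(U_a,V_b)\in\mathcal N_{\lambda,\beta}\cap\Theta$ and $c_{\lambda,\beta}\le J_{\lambda,\beta}(U_a,V_b)=I_{\Omega_a}(U_a)+I_{\Omega_b}(V_b)=m_a+m_b$ for all $\lambda,\beta$. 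For the lower bound, $J_{\lambda,\beta}(tu,sv)\ge I_{\lambda,a}(tu)+I_{\lambda,b}(sv)$ gives $c_{\lambda,\beta}\ge c_a(\lambda)+c_b(\lambda)$, where $c_a(\lambda),c_b(\lambda)$ are the mountain-pass levels of the scalar steep-well equations $\Delta u-(\lambda a+a_0)u+\mu_1u^3=0$ and $\Delta v-(\lambda b+b_0)v+\mu_2v^3=0$. As a separate lemma (the scalar case, cf.\ \cite{BW95,DT03}) one has $c_a(\lambda)\to m_a$ and $c_b(\lambda)\to m_b$ as $\lambda\to+\infty$; hence $c_{\lambda,\beta}\to m_a+m_b$, which is ultimately what forces $(2)$ and $(5)$.

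Fix now $\lambda\ge\Lambda_*$ and $\beta<0$. A minimising sequence $(u_n,v_n)\in\mathcal N_{\lambda,\beta}\cap\Theta$ satisfies $J_{\lambda,\beta}(u_n,v_n)=\tfrac14\big(Q_{\lambda,a}(u_n)+Q_{\lambda,b}(v_n)\big)\to c_{\lambda,\beta}<\infty$, hence is bounded in $E$; the two Nehari identities together with the Sobolev embedding (uniform for $\lambda\ge\Lambda_*$ by Section~2) give $Q_{\lambda,a}(u_n),Q_{\lambda,b}(v_n)\ge\delta>0$, so the sequence is uniformly fully non-trivial and, since the fibre maxima stay bounded, also bounded away from $\partial\Theta$. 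Ekeland's principle on the (locally smooth) manifold $\mathcal N_{\lambda,\beta}$, plus the standard verification that the Lagrange multipliers vanish (the constraints are non-degenerate on $\Theta$), turns $(u_n,v_n)$ into a $(PS)_{c_{\lambda,\beta}}$ sequence for $J_{\lambda,\beta}$ on $E$. The compactness of Section~2 --- available for $\lambda$ large because $|\mathcal D_a|,|\mathcal D_b|<\infty$ in $(D_2)$ --- gives $(u_n,v_n)\to(u_{\lambda,\beta},v_{\lambda,\beta})$ strongly, so the limit is a critical point with both components of norm $\ge\sqrt\delta$; it may be taken nonnegative (pass to $(|u|,|v|)$, which lowers the $Q$'s and stays in $\Theta$), and then $u_{\lambda,\beta},v_{\lambda,\beta}>0$ by the strong maximum principle. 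Since every non-trivial solution lies in $\mathcal N_{\lambda,\beta}$, $J_{\lambda,\beta}(u_{\lambda,\beta},v_{\lambda,\beta})=c_{\lambda,\beta}$ is the infimum over all of them, i.e.\ $(u_{\lambda,\beta},v_{\lambda,\beta})$ is a ground state.

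Finally the concentration. Let $\lambda_n\to+\infty$, $(u_n,v_n)=(u_{\lambda_n,\beta},v_{\lambda_n,\beta})$. From $Q_{\lambda_n,a}(u_n)\le4c_{\lambda_n,\beta}\le4(m_a+m_b)$ and $(D_4)$ --- used exactly to absorb the bad part $a_0^-u^2$ (resp.\ $b_0^-v^2$) by $(1+a)u^2$ on $|x|\ge R$ once $\lambda_n>d_a$ (resp.\ $d_b$) --- one gets a uniform $\h$-bound and $\int_{\bbr^3}\lambda_na u_n^2\le C$; along a subsequence $u_n\rightharpoonup u_{0,\beta}$ in $\h$, and Fatou gives $\int_{\bbr^3}a u_{0,\beta}^2=0$, so (as $a^{-1}(0)=\overline\Omega_a$ with smooth boundary) $u_{0,\beta}\in H_0^1(\Omega_a)$, and likewise $v_{0,\beta}\in H_0^1(\Omega_b)$: this is $(3)$. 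The steep-well compactness gives $u_n\to u_{0,\beta}$, $v_n\to v_{0,\beta}$ in $L^4(\bbr^3)$, hence $\int_{\bbr^3}u_n^2v_n^2\to\int_{\bbr^3}u_{0,\beta}^2v_{0,\beta}^2=0$ by disjointness of the supports; testing the weak form of $(\mathcal P_{\lambda_n,\beta})$ against functions supported in $\Omega_a$ (resp.\ $\Omega_b$), the terms carrying $\lambda_na$ and $\beta v_n^2u_n$ (resp.\ $\lambda_nb$ and $\beta u_n^2v_n$) drop out, so $u_{0,\beta}$ and $v_{0,\beta}$ are critical points of $I_{\Omega_a}$ and $I_{\Omega_b}$. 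The uniform lower bounds rule out $u_{0,\beta}=0$ or $v_{0,\beta}=0$, and then $I_{\Omega_a}(u_{0,\beta})\ge m_a$, $I_{\Omega_b}(v_{0,\beta})\ge m_b$ combined with $I_{\Omega_a}(u_{0,\beta})+I_{\Omega_b}(v_{0,\beta})=\lim\big(\tfrac{\mu_1}4\int_{\bbr^3}u_n^4+\tfrac{\mu_2}4\int_{\bbr^3}v_n^4\big)=\lim c_{\lambda_n,\beta}=m_a+m_b$ forces equality --- this is $(5)$. Weak lower semicontinuity gives $\liminf Q_{\lambda_n,a}(u_n)\ge4m_a$, while the Nehari identity $Q_{\lambda_n,a}(u_n)=\mu_1\int_{\bbr^3}u_n^4+\beta\int_{\bbr^3}u_n^2v_n^2$ passes to the limit $4m_a$; since $\int_{\bbr^3}\lambda_na u_n^2\ge0$, comparing the two forces $\int_{\bbr^3}\lambda_na u_n^2\to0$, $\int_{\bbr^3}|\nabla u_n|^2\to\int_{\Omega_a}|\nabla u_{0,\beta}|^2$ and $\int_{\bbr^3\setminus\Omega_a}\big(|\nabla u_n|^2+u_n^2\big)\to0$, i.e.\ $u_n\to u_{0,\beta}$ strongly in $\h$ (and likewise for $v_n$), which is $(4)$; $(1)$ and $(2)$ for the whole family $\lambda\to+\infty$ then follow by a routine subsequence-contradiction argument. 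The two places I expect genuine work are the fibre/Nehari analysis that makes the competitive case tractable for \emph{all} $\beta<0$, and the bookkeeping of the sign-changing, possibly unbounded weights $a_0,b_0$ through $(D_4)$ in the concentration estimates.
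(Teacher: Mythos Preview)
Your overall outline matches the paper's proof almost step for step: the fibre analysis on the cone $\Theta$ is Lemma~\ref{lem0007}, the sandwich $c_a(\lambda)+c_b(\lambda)\le c_{\lambda,\beta}\le m_a+m_b$ is Lemma~\ref{lem0008}, the non-degeneracy of the constraint is Lemma~\ref{lem0010}, and the concentration argument (Steps~1--3 at the end of Section~3) is essentially what you describe.

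There is, however, one genuine gap. For \emph{fixed} $\lambda$ you invoke ``the compactness of Section~2'' to pass from a bounded $(PS)$ sequence to a strongly convergent subsequence, and then read off that both components of the limit are nonzero. But Section~2 establishes only that $E_a,E_b$ embed \emph{continuously} into $\h$; the embedding is not compact, since $(D_1)$--$(D_2)$ only force $a(x)\ge a_\infty$ outside a set of finite measure, not $a(x)\to\infty$ at infinity. A translated bump $\phi(\cdot-ne_1)$ supported where $a\ge a_\infty$ is bounded in $E_a$ but has no $L^2(\bbr^3)$-convergent subsequence, so for fixed $\lambda$ there is no $L^4(\bbr^3)$ compactness to fall back on. The paper circumvents this: it takes the weak limit $(u_{\lambda,\beta},v_{\lambda,\beta})$, which is automatically a critical point, and rules out $u_{\lambda,\beta}=0$ by a $\lambda$-dependent estimate. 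If $u_n\rightharpoonup0$ then $u_n\to0$ in $L^4_{loc}$; splitting $\int_{\bbr^3} u_n^4$ over $\mathcal D_a$ and its complement and using $a\ge a_\infty$ on the latter gives
\[
\|u_n\|_4^4\le\Big(\frac{2}{a_\infty S^3\lambda}\Big)^{1/2}\|u_n\|_{a,\lambda}^4+o_n(1),
\]
and feeding this into the Nehari inequality $\|u_n\|_{a,\lambda}^2\le\mu_1\|u_n\|_4^4$ forces $\|u_n\|_{a,\lambda}\to0$ once $\lambda$ exceeds some $\Lambda_2$, contradicting Lemma~\ref{lem0009}. The infimum is then attained by Fatou/lower semicontinuity of $\|\cdot\|_{a,\lambda}^2+\|\cdot\|_{b,\lambda}^2$, not by strong convergence of the $(PS)$ sequence. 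Note that this is precisely where the threshold $\Lambda_*$ enters the existence proof; in your sketch $\Lambda_*$ plays no visible role in producing the ground state.

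A smaller point: your assertion that the minimising sequence stays ``bounded away from $\partial\Theta$'' because ``the fibre maxima stay bounded'' is not sufficient as stated. The paper's Lemma~\ref{lem0009} argues by contradiction, distinguishing the cases $\|u_n\|_4^4\|v_n\|_4^4\ge C$ and $\|u_n\|_4^4\|v_n\|_4^4\to0$; in the second case one needs the lower bound $c_{\lambda,\beta}\ge m_{a,\lambda}>0$ together with a comparison $J_{\lambda,\beta}(u_n,v_n)\ge m_{a,\lambda}+J_{\lambda,\beta}(u_n,v_n)+o_n(1)$ obtained by rescaling $u_n$ onto $\mathcal N_{a,\lambda}$, which is more than a mere boundedness statement.
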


\vskip0.3in

Next, we assume that the bottom of the steep potential wells consists of finitely many disjoint bounded domains. It is natural to ask whether the two-component Bose-Einstein condensate $(\mathcal{P}_{\lambda,\beta})$ with such steep potential wells has multi-bump solutions which are concentrated at any given disjoint bounded domains of the bottom as the depth goes to infinity.  Our second result is devoted to this study.  Similar to \cite{DT03}, we need the following conditions on the potentials $a(x)$, $b(x)$, $a_0(x)$ and $b_0(x)$.
\begin{enumerate}
\item[$(D_3')$] $\Omega_a=\text{int} a^{-1}(0)$ and $\Omega_b=\text{int} b^{-1}(0)$ satisfy $\Omega_a=\underset{i_a=1}{\overset{n_a}{\cup }}\Omega_{a,i_a}$ and $\Omega_b=\underset{j_b=1}{\overset{n_b}{\cup }}\Omega_{b,j_b}$, where $\{\Omega_{a,i_a}\}$ and $\{\Omega_{b,j_b}\}$ are all nonempty bounded domains with smooth boundaries, and $\overline{\Omega}_{a,i_a}\cap\overline{\Omega}_{a,j_a}=\emptyset$ for $i_a\not=j_a$ and $\overline{\Omega}_{b,i_b}\cap\overline{\Omega}_{b,j_b}=\emptyset$ for $i_b\not=j_b$.  Moreover, $\overline{\Omega}_a=a^{-1}(0)$ and $\overline{\Omega}_b=b^{-1}(0)$ with $\overline{\Omega}_a\cap\overline{\Omega}_b=\emptyset$.
\item[$(D_5')$] $\inf\sigma_{a,i_a}(-\Delta+a_0(x))>0$ for all $i_a=1,\cdots,n_a$ and $\inf\sigma_{b,j_b}(-\Delta+b_0(x))>0$ for all $j_b=1,\cdots,n_b$, where $\sigma_{a,i_a}(-\Delta+a_0(x))$ is the spectrum of $-\Delta+a_0(x)$ on $H_0^1(\Omega_{a,i_a})$ and $\sigma_{b,j_b}(-\Delta+b_0(x))$ is the spectrum of $-\Delta+b_0(x)$ on $H_0^1(\Omega_{b,j_b})$.
\end{enumerate}

\vskip0.12in

\begin{remark}
Under the conditions $(D_3')$ and $(D_4)$, it is easy to see that the condition $(D_5')$ is equivalent to the condition $(D_5)$.  For the sake of clarity, we use the condition $(D_5')$ in the study of multi-bump solutions.
\end{remark}

\vskip0.12in

We define $I_{\Omega_{a,i_a}}(u)$ on $H_0^1(\Omega_{a,i_a})$ for each $i_a=1,\cdots,n_a$ by
\begin{eqnarray*}
I_{\Omega_{a,i_a}}(u)=\frac12\int_{\Omega_{a,i_a}}|\nabla u|^2+a_0(x)u^2dx-\frac{\mu_1}{4}\int_{\Omega_{a,i_a}}u^4dx
\end{eqnarray*}
and $I_{\Omega_{b,j_b}}(v)$ on $H_0^1(\Omega_{b,j_b})$ for each $j_b=1,\cdots,n_b$ by
\begin{eqnarray*}
I_{\Omega_{b,j_b}}(v)=\frac12\int_{\Omega_{b,j_b}}|\nabla v|^2+b_0(x)v^2dx-\frac{\mu_2}{4}\int_{\Omega_{b,j_b}}v^4dx.
\end{eqnarray*}
Then by the conditions $(D_3')$ and $(D_5')$, it is well-known that $I_{\Omega_{a,i_a}}(u)$ and $I_{\Omega_{b,j_b}}(v)$ have least energy nonzero critical points for every $i_a=1,\cdots,n_a$ and every $j_b=1,\cdots,n_b$, respectively.  We   denote the least energy of nonzero critical points for $I_{\Omega_{a,i_a}}(u)$ and $I_{\Omega_{b,j_b}}(v)$ by $m_{a,i_a}$ and $m_{b,j_b}$, respectively.  Now, our second result can be stated as the following.

\vskip0.21in

\begin{theorem}
\label{thm0001}Assume $\beta<0$ and the conditions $(D_1)$-$(D_2)$, $(D_3')$, $(D_4)$ and $(D_5')$ hold.  If  the set $J_a\times J_b\subset\{1,\cdots,n_a\}\times\{1,\cdots,n_b\}$ satisfying $J_a\not=\emptyset$ and $J_b\not=\emptyset$, then there exists $\Lambda_{*}(\beta)>0$ such that $(\mathcal{P}_{\lambda,\beta})$ has a non-trivial solution $(u_{\lambda,\beta}^{J_a},v_{\lambda,\beta}^{J_b})$ for $\lambda\geq\Lambda_{*}(\beta)$ with the following properties:
\begin{enumerate}
\item[$(1)$] $\int_{\bbr^3\backslash\Omega_{a,0}^{J_a}}|\nabla u_{\lambda,\beta}^{J_a}|^2+(u_{\lambda,\beta}^{J_a})^2dx\to0$ and $\int_{\bbr^3\backslash\Omega_{b,0}^{J_b}}|\nabla v_{\lambda,\beta}^{J_b}|^2+(v_{\lambda,\beta}^{J_b})^2dx\to0$ as $\lambda\to+\infty$, where $\Omega_{a,0}^{J_a}=\underset{i_a\in J_a}{{\cup }}\Omega_{a,i_a}$ and $\Omega_{b,0}^{J_b}=\underset{j_b\in J_b}{{\cup }}\Omega_{b,j_b}$.
\item[$(2)$] $\int_{\Omega_{a,i_a}}|\nabla u_{\lambda,\beta}^{J_a}|^2+a_0(x)(u_{\lambda,\beta}^{J_a})^2dx\to 4m_{a,i_a}$ and $\int_{\Omega_{b,j_b}}|\nabla v_{\lambda,\beta}^{J_b}|^2+b_0(x)(v_{\lambda,\beta}^{J_b})^2dx\to 4m_{b,j_b}$ as $\lambda\to+\infty$ for all $i_a\in J_a$ and $j_b\in J_b$.
\end{enumerate}
Furthermore, for each $\beta<0$ and $\{\lambda_n\}\subset[\Lambda_*(\beta), +\infty)$ satisfying $\lambda_n\to+\infty$ as $n\to\infty$, there exists $(u_{0,\beta}^{J_a},v_{0,\beta}^{J_b})\in(\h\backslash\{0\})\times(\h\backslash\{0\})$ such that
\begin{enumerate}
\item[$(3)$] $(u_{0,\beta}^{J_a},v_{0,\beta}^{J_b})\in H_0^1(\Omega_{a,0}^{J_a})\times H_0^1(\Omega_{b,0}^{J_b})$ with $u_{0,\beta}^{J_a}\equiv0$ outside $\Omega_{a,0}^{J_a}$ and $v_{0,\beta}^{J_b}\equiv0$ outside $\Omega_{b,0}^{J_b}$.
\item[$(4)$] $(u_{\lambda_n,\beta}^{J_a},v_{\lambda_n,\beta}^{J_b})\to(u_{0,\beta}^{J_a},v_{0,\beta}^{J_b})$ strongly in $\h\times\h$ as $n\to\infty$ up to a subsequence.
\item[$(5)$] the restriction of $u_{0,\beta}^{J_a}$ on $\Omega_{a,i_a}$ lies in $H_0^1(\Omega_{a,i_a})$ and is a least energy nonzero critical point of $I_{\Omega_{a,i_a}}(u)$ for all $i_a\in J_a$, while the restriction of $v_{0,\beta}^{J_b}$ on $\Omega_{b,j_b}$ lies in $H_0^1(\Omega_{b,j_b})$ and is a least energy nonzero critical point of $I_{\Omega_{b,j_b}}(v)$ for all $j_b\in J_b$.
\end{enumerate}
\end{theorem}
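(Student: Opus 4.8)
The plan is to adapt the penalization--minimax scheme of Ding and Tanaka \cite{DT03} for scalar steep--well problems to the coupled system, exploiting throughout that $\beta<0$ makes the coupling term $-\tfrac{\beta}{2}\int_{\bbr^3}u^2v^2\,dx$ a \emph{nonnegative} contribution to $J_{\lambda,\beta}$, so that it obstructs neither compactness nor the energy estimates. Fix $J_a\times J_b$, abbreviate $M:=\sum_{i_a\in J_a}m_{a,i_a}+\sum_{j_b\in J_b}m_{b,j_b}$, and use $(D_3')$ to pick $\delta>0$ so small that the $\delta$--neighbourhoods $\mathcal{O}_a$ of $\Omega_{a,0}^{J_a}$ and $\mathcal{O}_b$ of $\Omega_{b,0}^{J_b}$ are disjoint from one another and from every $\Omega_{a,i_a}$ with $i_a\notin J_a$ and every $\Omega_{b,j_b}$ with $j_b\notin J_b$. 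First I would define a penalized functional $\widetilde{J}_{\lambda,\beta}$ on the space $E$ of Section~2 by keeping $\tfrac{\mu_1}{4}u^4$, $\tfrac{\mu_2}{4}v^4$ and $-\tfrac{\beta}{2}u^2v^2$ unchanged on $\Omega_{a,0}^{J_a}$, on $\Omega_{b,0}^{J_b}$ and on $\Omega_{a,0}^{J_a}\cap\Omega_{b,0}^{J_b}=\emptyset$ respectively, and replacing $u^4$ (resp. $v^4$) elsewhere by a del Pino--Felmer type truncation dominated by a small multiple of $(\lambda a(x)+a_0(x))u^2$ (resp. $(\lambda b(x)+b_0(x))v^2$) plus a small uniform term; the payoff of this device is that any critical point of $\widetilde{J}_{\lambda,\beta}$ carrying negligible mass outside $\mathcal{O}_a\cup\mathcal{O}_b$ is an actual solution of $(\mathcal{P}_{\lambda,\beta})$.

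Next I would run a minimax over a product of mountain--pass intervals. For each $i_a\in J_a$ fix $\gamma_{a,i_a}\in C([s_0,1],H_0^1(\Omega_{a,i_a}))$ with $\gamma_{a,i_a}(s_0)$ small, $I_{\Omega_{a,i_a}}(\gamma_{a,i_a}(1))<0$ and $\max_{[s_0,1]}I_{\Omega_{a,i_a}}\circ\gamma_{a,i_a}=m_{a,i_a}$, and analogously $\gamma_{b,j_b}$ for $j_b\in J_b$; put $N=|J_a|+|J_b|$, let $\Gamma$ be the set of $\Phi\in C([s_0,1]^N,E)$ agreeing on $\partial[s_0,1]^N$ with the product map assembled, by summation after extension by $0$, from these paths, and set $c_{\lambda,\beta}^J:=\inf_{\Phi\in\Gamma}\max_{[s_0,1]^N}\widetilde{J}_{\lambda,\beta}\circ\Phi$. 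The bound $c_{\lambda,\beta}^J\le M$ is $\lambda$--independent and essentially free, since the product map itself is supported in $\Omega_{a,0}^{J_a}\cup\Omega_{b,0}^{J_b}\subset a^{-1}(0)\cup b^{-1}(0)$, where $\lambda a$ vanishes on the $u$--part and $\lambda b$ on the $v$--part, where the truncations act as the identity, and where the two parts have disjoint supports, so that $\widetilde{J}_{\lambda,\beta}$ along it reduces to $\sum_{i_a\in J_a}I_{\Omega_{a,i_a}}+\sum_{j_b\in J_b}I_{\Omega_{b,j_b}}$, whose maximum over the box equals $M$. For the matching lower bound I would detect the $N$ bumps topologically: letting $\tau(w)>0$ denote the scalar that projects a nonzero $w$ onto the Nehari manifold of the relevant limit functional, the $\bbr^N$--valued map whose $\ell$--th coordinate is $\tau(w_\ell)-1$, with $w_\ell$ the restriction to the $\ell$--th prescribed subdomain of the corresponding component of $\Phi$, satisfies, thanks to the boundary behaviour of $\Phi$ (a factor at $s_0$ forces a coordinate positive, a factor at $1$ forces it negative), the hypotheses of the Poincar\'e--Miranda theorem; this yields a point of $\Phi([s_0,1]^N)$ each of whose $N$ localized pieces lies on the corresponding Nehari manifold, at which $\widetilde{J}_{\lambda,\beta}\ge M-o_\lambda(1)$ because the leftover contributions are $\ge-o_\lambda(1)$ (here $\beta<0$, the truncations, and the coercivity of $u\mapsto\int_{\bbr^3}|\nabla u|^2+(\lambda a+a_0)u^2\,dx$ for large $\lambda$ are used). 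Hence $c_{\lambda,\beta}^J\to M$, and in particular $c_{\lambda,\beta}^J>0$, as $\lambda\to\infty$.

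Then I would check that for $\lambda$ large $\widetilde{J}_{\lambda,\beta}$ satisfies $(PS)_c$ for $c$ in a fixed neighbourhood of $M$: a $(PS)_c$ sequence is bounded in $E$; on the bounded set $\mathcal{O}_a\cup\mathcal{O}_b$ the embedding into $L^4$ is compact, while off it the truncated nonlinearities are absorbed into the coercive quadratic form once $\lambda\gg1$ --- using $(D_2)$, $(D_4)$, $(D_5')$ exactly as in the scalar steep--well theory \cite{BW95,DT03}, the coupling term being harmless since $\beta<0$ --- so the sequence converges strongly. A standard minimax argument based on $\Gamma$ and this $(PS)$ condition then produces, for $\lambda\ge\Lambda_*(\beta)$, a critical point $(u_{\lambda,\beta}^{J_a},v_{\lambda,\beta}^{J_b})$ of $\widetilde{J}_{\lambda,\beta}$ at level $c_{\lambda,\beta}^J\in(0,M]$; replacing the minimax maps by their positive parts and invoking the strong maximum principle gives $u_{\lambda,\beta}^{J_a},v_{\lambda,\beta}^{J_b}\ge0$.

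Finally, for $\lambda_n\to\infty$ I would set $(u_n,v_n):=(u_{\lambda_n,\beta}^{J_a},v_{\lambda_n,\beta}^{J_b})$; uniform energy bounds give $(u_n,v_n)\rightharpoonup(u_{0,\beta}^{J_a},v_{0,\beta}^{J_b})$ in $\h\times\h$ along a subsequence, and $\sup_n\lambda_n\int_{\bbr^3}a(x)u_n^2\,dx<\infty$ forces $\int_{\bbr^3}a(x)u_n^2\,dx\to0$, so $u_{0,\beta}^{J_a}$ vanishes a.e. on $\{a>0\}$, hence is supported in $a^{-1}(0)=\overline{\Omega}_a$, and the truncation (no nonlinearity on the $\Omega_{a,i_a}$ with $i_a\notin J_a$, where moreover $\lambda_n\to\infty$) rules out surviving mass outside $\Omega_{a,0}^{J_a}$; thus $u_{0,\beta}^{J_a}\in H_0^1(\Omega_{a,0}^{J_a})$ with $u_{0,\beta}^{J_a}\equiv0$ outside $\Omega_{a,0}^{J_a}$, and symmetrically for $v_{0,\beta}^{J_b}$, which is (3). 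A Brezis--Lieb / energy--splitting argument together with $c_{\lambda_n,\beta}^J\to M$ then forces the tails in (1) to vanish and the convergence in (4) to be strong in $\h\times\h$; and, since the limit supports are disjoint so the coupling term drops out, on each $\Omega_{a,i_a}$ with $i_a\in J_a$ the restriction of $u_{0,\beta}^{J_a}$ is a critical point of $I_{\Omega_{a,i_a}}$, necessarily nonzero (otherwise the total limit energy would be $<M$, contradicting $\widetilde{J}_{\lambda_n,\beta}(u_n,v_n)\to M$) and of energy exactly $m_{a,i_a}$ --- hence a least energy one, with $\int_{\Omega_{a,i_a}}|\nabla u_n|^2+a_0(x)u_n^2\,dx\to4m_{a,i_a}$ --- and likewise for $v_{0,\beta}^{J_b}$, which gives (2) and (5); in particular each of the $N$ localized pieces of $(u_{\lambda_n,\beta}^{J_a},v_{\lambda_n,\beta}^{J_b})$ is nonzero for $n$ large, so $(u_{\lambda,\beta}^{J_a},v_{\lambda,\beta}^{J_b})$ is non-trivial once $\Lambda_*(\beta)$ is taken past this threshold, and by the same contradiction-via-$\lambda_n\to\infty$ scheme it carries negligible mass in the truncation region, so the penalized terms coincide with the genuine ones along it and it solves $(\mathcal{P}_{\lambda,\beta})$. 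I expect the main obstacle to be orchestrating these pieces simultaneously --- designing the penalization finely enough that $c_{\lambda,\beta}^J$ stays pinched near $M$ while $(PS)$ holds for all large $\lambda$, and pushing the penalization--removal step through uniformly in $\lambda$ --- rather than any single estimate; the topological detection of the $N$ bumps, by contrast, should be a clean Poincar\'e--Miranda argument once the Nehari--projection map $\tau$ is used.
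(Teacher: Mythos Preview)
Your overall strategy---del Pino--Felmer penalization, a product minimax over $N=|J_a|+|J_b|$ mountain-pass directions, a topological intersection argument for the lower bound, and a concentration analysis as $\lambda\to\infty$---is precisely the one the paper follows (adapted from \cite{DT03}), and most of your estimates would go through. There is, however, one genuine gap.

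The gap is in your derivation of (2) and (5). You argue that the limit restriction $u_{0,\beta}^{J_a}|_{\Omega_{a,i_a}}$ is ``necessarily nonzero (otherwise the total limit energy would be $<M$, contradicting $\widetilde{J}_{\lambda_n,\beta}(u_n,v_n)\to M$)''. This does not follow. Each limit piece is a (possibly zero) critical point of $I_{\Omega_{a,i_a}}$ or $I_{\Omega_{b,j_b}}$, with energy $0$ if zero and $\ge m_{a,i_a}$ (resp.\ $m_{b,j_b}$) if nonzero. If one piece, say the $i_a'$-th, vanishes, the remaining $N-1$ pieces must carry total energy exactly $M$; but nothing prevents one of them from sitting at a \emph{higher} critical level of its functional, absorbing the missing $m_{a,i_a'}$. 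Your energy-pinching $c_{\lambda_n,\beta}^J\to M$ alone cannot distinguish ``$N$ pieces each at its ground level'' from ``$N-1$ pieces, one of them excited''. Consequently you have not shown that the critical point you produce actually has a nontrivial bump in \emph{every} prescribed well, nor that each bump converges to a \emph{least-energy} critical point.

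The paper closes this gap by a further deformation argument (its Proposition~\ref{prop0004}): one defines, for small $\ve>0$, an open set $\mathcal{D}_\ve\subset E$ in which each local energy $\int_{\Omega_{a,i_a}}|\nabla u|^2+a_0u^2$ is within $\ve$ of $4m_{a,i_a}$ and the off-well mass is $<\ve$, and shows by contradiction that $J_{\lambda,\beta}^*$ has a critical point in $\mathcal{D}_\ve\cap\{J_{\lambda,\beta}^*\le M\}$ for $\lambda$ large. The key step is that on the ``annulus'' $\mathcal{D}_{3\ve}\setminus\mathcal{D}_\ve$ at levels $\le M$ the gradient is bounded below uniformly in $\lambda$ (because any almost-critical sequence there would converge, by your own concentration analysis, to a limit either with all pieces at ground level---hence in $\mathcal{D}_\ve$---or with a vanishing piece---hence outside $\mathcal{D}_{3\ve}$), so a pseudo-gradient flow can push the reference map $(\gamma_{0,a},\gamma_{0,b})$ strictly below $M$ while keeping it in $\Gamma$, contradicting the lower bound on the minimax level. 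This localization is what forces each bump to be present and at minimal energy; you should add it.

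Two smaller remarks. First, your Poincar\'e--Miranda map $\tau(w_\ell)-1$ is undefined whenever the restriction $w_\ell$ vanishes identically on $\Omega_{a,i_a}$, which can happen along $\Phi\in\Gamma$; the paper avoids this by using the always-defined coordinates $\int_{\Omega_{a,i_a}'}\gamma_a^4\,dx$ and a direct degree computation. Second, the paper also penalizes the coupling term $u^2v^2$ outside $\Omega_a^{J_a}\cup\Omega_b^{J_b}$ (truncating $ts$ to a bounded $h(t,s)$); since $\beta<0$ this is not essential for boundedness or compactness, but it streamlines the $L^\infty$ decay estimate (their Lemma~\ref{lem0004}(3)) that lets you remove the penalization.
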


\vskip0.21in

\begin{corollary}
Suppose $\beta<0$ and the conditions $(D_1)$-$(D_2)$, $(D_3')$, $(D_4)$ and $(D_5')$ hold.  Then $(\mathcal{P}_{\lambda,\beta})$ has at least $(2^{n_a}-1)(2^{n_b}-1)$ non-trivial solutions for $\lambda\geq\Lambda_{*}(\beta)$.
\end{corollary}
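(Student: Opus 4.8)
The plan is to deduce the corollary directly from Theorem~\ref{thm0001} by a counting argument, the only non-formal ingredient being a distinctness check. There are exactly $2^{n_a}-1$ non-empty subsets $J_a\subseteq\{1,\dots,n_a\}$ and $2^{n_b}-1$ non-empty subsets $J_b\subseteq\{1,\dots,n_b\}$, hence $(2^{n_a}-1)(2^{n_b}-1)$ admissible pairs $(J_a,J_b)$. For each pair, Theorem~\ref{thm0001} furnishes a threshold $\Lambda_*^{J_a,J_b}(\beta)>0$ and a non-trivial solution $(u_{\lambda,\beta}^{J_a},v_{\lambda,\beta}^{J_b})$ of $(\mathcal{P}_{\lambda,\beta})$, valid for all $\lambda\ge\Lambda_*^{J_a,J_b}(\beta)$, with the concentration properties (1) and (2). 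Because there are finitely many pairs, I would set $\Lambda_*(\beta)$ equal to the maximum of these finitely many thresholds (to be enlarged once more below), so that all $(2^{n_a}-1)(2^{n_b}-1)$ solutions are available simultaneously for $\lambda\ge\Lambda_*(\beta)$.

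Next I would check that distinct pairs yield distinct solutions. Observe first that every $m_{a,i_a}$ and every $m_{b,j_b}$ is strictly positive: a nonzero critical point $u$ of $I_{\Omega_{a,i_a}}$ satisfies $\int_{\Omega_{a,i_a}}|\nabla u|^2+a_0(x)u^2\,dx=\mu_1\int_{\Omega_{a,i_a}}u^4\,dx$, so $I_{\Omega_{a,i_a}}(u)=\tfrac14\int_{\Omega_{a,i_a}}|\nabla u|^2+a_0(x)u^2\,dx>0$ since the quadratic form is positive definite on $H_0^1(\Omega_{a,i_a})$ by $(D_5')$; the same holds for $m_{b,j_b}$. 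Put $\delta=\tfrac12\min_{i_a,j_b}\{m_{a,i_a},m_{b,j_b}\}>0$. For $i_a\notin J_a$, the disjointness of the closures $\overline{\Omega}_{a,i_a}$ in $(D_3')$ gives $\Omega_{a,i_a}\subset\bbr^3\setminus\Omega_{a,0}^{J_a}$, whence property (1), combined with the boundedness of $a_0$ on the compact set $\overline{\Omega}_{a,i_a}$, forces $\int_{\Omega_{a,i_a}}|\nabla u_{\lambda,\beta}^{J_a}|^2+a_0(x)(u_{\lambda,\beta}^{J_a})^2\,dx\to 0$ as $\lambda\to+\infty$; whereas for $i_a\in J_a$ property (2) gives the limit $4m_{a,i_a}$, which exceeds $\delta$. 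Enlarging $\Lambda_*(\beta)$ once more (again using finiteness of the family of pairs), I may assume that for all $\lambda\ge\Lambda_*(\beta)$, all admissible pairs and all indices, $\int_{\Omega_{a,i_a}}|\nabla u_{\lambda,\beta}^{J_a}|^2+a_0(x)(u_{\lambda,\beta}^{J_a})^2\,dx>\delta$ if $i_a\in J_a$ and $<\delta$ if $i_a\notin J_a$, and likewise for the $v$-component and $J_b$. Hence $J_a$ is recovered from $u_{\lambda,\beta}^{J_a}$ and $J_b$ from $v_{\lambda,\beta}^{J_b}$ by this dichotomy, so if $(J_a,J_b)\ne(J_a',J_b')$ then the corresponding solution pairs differ — the first components differ when $J_a\ne J_a'$, the second when $J_a=J_a'$ but $J_b\ne J_b'$. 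This produces $(2^{n_a}-1)(2^{n_b}-1)$ distinct non-trivial solutions of $(\mathcal{P}_{\lambda,\beta})$ for $\lambda\ge\Lambda_*(\beta)$, as claimed.

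I do not expect a genuine obstacle: the corollary is essentially a consequence of Theorem~\ref{thm0001}. The only point that needs a little care is that the concentration estimates there are asymptotic in $\lambda$ for each fixed index pair, so one has to exploit the finiteness of the collection of admissible pairs $(J_a,J_b)$ to pass to a single $\Lambda_*(\beta)$ which simultaneously guarantees the existence of all the solutions and makes the thresholding dichotomy that distinguishes them valid.
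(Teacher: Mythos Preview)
Your proposal is correct. The paper states the corollary without proof, treating it as an immediate consequence of Theorem~\ref{thm0001}; your argument supplies exactly the details one would expect---counting the $(2^{n_a}-1)(2^{n_b}-1)$ admissible pairs $(J_a,J_b)$, taking the maximum over the finitely many thresholds, and then using the concentration properties $(1)$--$(2)$ to distinguish the resulting solutions. The distinctness check via the $\delta$-dichotomy is the only point requiring care, and you handle it properly; an equivalent (and slightly more direct) route, visible in the proof of Theorem~\ref{thm0001}, is to note that each solution lies in the set $\mathcal{D}_\ve$ associated with its own $(J_a,J_b)$, and these sets are pairwise disjoint for $\ve<\ve_0$.
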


\begin{remark}
\begin{enumerate}
\item[$(i)$] To  the   best of our knowledge,  it seems that Theorem~\ref{thm0001} is the first result for
the existence of multi-bump solutions  to system~$(\mathcal{P}_{\lambda,\beta})$.

\item[$(ii)$] Under the condition $(D_3')$, we can see that
\begin{equation*}
I_{\Omega_a}(u)=\sum_{i_a=1}^{n_a}I_{\Omega_{a,i_a}}(u)\quad\text{and}\quad
I_{\Omega_b}(v)=\sum_{j_b=1}^{n_b}I_{\Omega_{b,j_b}}(v).
\end{equation*}
Let $m_{a,0}=\min\{m_{a,1},\cdots,m_{a,n_a}\}$ and $m_{b,0}=\min\{m_{b,1},\cdots,m_{b,n_b}\}$.  Then we must have $m_{a,0}=m_a$ and $m_{b,0}=m_b$.  Without loss of generality, we assume $m_{a,0}=m_{a,1}$ and $m_{b,0}=m_{b,1}$.  Now, by Theorem~\ref{thm0001}, we can find a solution of $(\mathcal{P}_{\lambda,\beta})$ with the same concentration behavior as  the ground state solution obtained in Theorem~\ref{thm0002} as $\lambda\to+\infty$.  However, we do not know these two solutions are  the same or not.
\end{enumerate}
\end{remark}

Next we consider the phenomenon of phase separations for System~$(\mathcal{P}_{\lambda,\beta})$, i.e., the concentration behavior of the solutions as $\beta\to-\infty$.   In the following theorem, we may observe such a phenomenon   on the whole space $\bbr^3$.

\vskip0.21in

\begin{theorem}
\label{thm0003}Assume $(D_1)$-$(D_5)$.  Then there exists $\Lambda_{**}\geq\Lambda_*$ such that  $\beta^2\int_{\bbr^3}u_{\lambda,\beta}^2v_{\lambda,\beta}^2\to0$ as $\beta\to-\infty$ for $\lambda\geq\Lambda_{**}$, where $(u_{\lambda,\beta},v_{\lambda,\beta})$ is the ground state solution of $(\mathcal{P}_{\lambda,\beta})$ obtained by Theorem~\ref{thm0002}.  Furthermore, for every $\{\beta_n\}\subset(-\infty, 0)$ with $\beta_n\to-\infty$ and $\lambda\geq\Lambda_{**}$, there exists $(u_{\lambda,0}, v_{\lambda,0})\in(\h\backslash\{0\})\times(\h\backslash\{0\})$ satisfying the following properties:
\begin{enumerate}
\item[$(1)$] $(u_{\lambda,\beta_n},v_{\lambda,\beta_n})\to(u_{\lambda,0}, v_{\lambda,0})$ strongly in $\h\times\h$ as $n\to\infty$ up to a subsequence.
\item[$(2)$] $u_{\lambda,0}\in C(\bbr^3)$ and $v_{\lambda,0}\in C(\bbr^3)$.
\item[$(3)$] $u_{\lambda,0}\geq0$ and $v_{\lambda,0}\geq0$ in $\bbr^3$ with $\{x\in\bbr^3\mid u_{\lambda,0}(x)>0\}=\bbr^3\backslash\overline{\{x\in\bbr^3\mid v_{\lambda,0}(x)>0\}}$.  Furthermore, $\{x\in\bbr^3\mid u_{\lambda,0}(x)>0\}$ and $\{x\in\bbr^3\mid v_{\lambda,0}(x)>0\}$ are connected domains.
\item[$(4)$] $u_{\lambda,0}\in H_0^1(\{u_{\lambda,0}>0\})$ and is a least energy solution of
  \begin{equation}     \label{eq0401}
  -\Delta u+(\lambda a(x)+a_0(x))u=\mu_1 u^3,\quad u\in H_0^1(\{u_{\lambda,0}>0\}),
  \end{equation}
  while $v_{\lambda,0}\in H_0^1(\{v_{\lambda,0}>0\})$ and is a least energy solution of
  \begin{equation}     \label{eq0402}
  -\Delta v+(\lambda a(x)+a_0(x))v=\mu_1 v^3,\quad v\in H_0^1(\{v_{\lambda,0}>0\}).
  \end{equation}
\end{enumerate}
\end{theorem}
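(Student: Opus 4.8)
The plan is to combine uniform-in-$\beta$ a priori estimates with the compactness provided by the steep well (for $\lambda$ large), and then run a segregation/competition limit in the spirit of Noris--Tavares--Terracini--Verzini and Caffarelli--Lin, adapted to the operators $-\Delta+\lambda a(x)+a_0(x)$ and $-\Delta+\lambda b(x)+b_0(x)$.

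First I would fix $\lambda\geq\Lambda_*$, write $\|u\|_{\lambda,a}^2=\int_{\bbr^3}|\nabla u|^2+(\lambda a(x)+a_0(x))u^2\,dx$ and $\|\cdot\|_{\lambda,b}$ analogously, and collect uniform bounds. Extending by zero the least energy nonzero critical points $\omega_a$ of $I_{\Omega_a}$ and $\omega_b$ of $I_{\Omega_b}$ and using $\overline{\Omega}_a\cap\overline{\Omega}_b=\emptyset$ (so $\int_{\bbr^3}\omega_a^2\omega_b^2=0$), the projection of $(t\omega_a,s\omega_b)$ onto the Nehari set of $J_{\lambda,\beta}$ yields $c_{\lambda,\beta}:=J_{\lambda,\beta}(u_{\lambda,\beta},v_{\lambda,\beta})\leq m_a+m_b$ for all $\beta<0$. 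Testing $(\mathcal{P}_{\lambda,\beta})$ against $(u_{\lambda,\beta},0)$ and $(0,v_{\lambda,\beta})$ gives
\[
\|u_{\lambda,\beta}\|_{\lambda,a}^2=\mu_1\int_{\bbr^3}u_{\lambda,\beta}^4+\beta\int_{\bbr^3}u_{\lambda,\beta}^2v_{\lambda,\beta}^2,\qquad
\|v_{\lambda,\beta}\|_{\lambda,b}^2=\mu_2\int_{\bbr^3}v_{\lambda,\beta}^4+\beta\int_{\bbr^3}u_{\lambda,\beta}^2v_{\lambda,\beta}^2,
\]
hence $c_{\lambda,\beta}=\frac14(\|u_{\lambda,\beta}\|_{\lambda,a}^2+\|v_{\lambda,\beta}\|_{\lambda,b}^2)$, so $(u_{\lambda,\beta},v_{\lambda,\beta})$ is bounded in $E$ uniformly in $\beta<0$. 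Because $\beta<0$, the first identity also gives $\|u_{\lambda,\beta}\|_{\lambda,a}^2\leq\mu_1\int u_{\lambda,\beta}^4$, hence, via a Sobolev inequality, $\|u_{\lambda,\beta}\|_{\lambda,a},\|v_{\lambda,\beta}\|_{\lambda,b}\geq\delta>0$, and $|\beta|\int_{\bbr^3}u_{\lambda,\beta}^2v_{\lambda,\beta}^2=\mu_1\int u_{\lambda,\beta}^4-\|u_{\lambda,\beta}\|_{\lambda,a}^2\leq C$, all uniformly in $\beta<0$; moreover $\beta\mapsto c_{\lambda,\beta}$ is monotone, so $c_{\lambda,\beta}\to c_\lambda^*$ as $\beta\to-\infty$.

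Next I would pass to the limit. Enlarging $\Lambda_*$ to $\Lambda_{**}$ so that, by the variational setup of Section~2, $\|\cdot\|_{\lambda,a},\|\cdot\|_{\lambda,b}$ dominate $\|\cdot\|_\h$ and $E\hookrightarrow\lp$ is compact for $2\leq p<6$: along any $\beta_n\to-\infty$ a subsequence satisfies $u_{\lambda,\beta_n}\rightharpoonup u_{\lambda,0}$, $v_{\lambda,\beta_n}\rightharpoonup v_{\lambda,0}$ in $E$, strongly in $L^4(\bbr^3)$ and a.e. The lower bound and $\mu_1\int u_{\lambda,\beta_n}^4\geq\|u_{\lambda,\beta_n}\|_{\lambda,a}^2\geq\delta^2$ force $u_{\lambda,0}\neq0$, $v_{\lambda,0}\neq0$; from $\int u_{\lambda,\beta_n}^2v_{\lambda,\beta_n}^2\leq C/|\beta_n|\to0$ and Fatou, $\int u_{\lambda,0}^2v_{\lambda,0}^2=0$, i.e.\ $u_{\lambda,0}v_{\lambda,0}\equiv0$. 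I would then establish a uniform (in $\beta$) $L^\infty$ and H\"older --- ideally Lipschitz --- bound for $(u_{\lambda,\beta},v_{\lambda,\beta})$ on $\bbr^3$, via the Alt--Caffarelli--Friedman monotonicity formula / blow-up scheme of the segregation literature, checking that the weights $\lambda a+a_0,\lambda b+b_0$ are harmless (they are locally bounded and the solutions decay at infinity thanks to the well, so one works on a fixed compact region). These bounds upgrade the convergence to $C_{loc}(\bbr^3)$, so $u_{\lambda,0},v_{\lambda,0}\in C(\bbr^3)$ are nonnegative and $\{u_{\lambda,0}>0\},\{v_{\lambda,0}>0\}$ are open and disjoint --- this gives (2) and the openness/nonnegativity part of (3). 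On a compact subset $K$ of $\{u_{\lambda,0}>0\}$ one has $u_{\lambda,\beta_n}\geq c_K>0$ for large $n$ and $v_{\lambda,0}\equiv0$ on $K$; comparing $v_{\lambda,\beta_n}$ with a supersolution of $-\Delta w+(|\beta_n|c_K^2-C)w=0$ gives $v_{\lambda,\beta_n}\leq Ce^{-c\sqrt{|\beta_n|}}$ on $K$, hence $\beta_nv_{\lambda,\beta_n}^2u_{\lambda,\beta_n}\to0$ uniformly on $K$, and symmetrically. Passing to the limit in $(\mathcal{P}_{\lambda,\beta_n})$ then shows $u_{\lambda,0}\in H_0^1(\{u_{\lambda,0}>0\})$ solves $-\Delta u+(\lambda a+a_0)u=\mu_1u^3$ on its positive set and $v_{\lambda,0}\in H_0^1(\{v_{\lambda,0}>0\})$ solves the corresponding equation on its own; in particular $\|u_{\lambda,0}\|_{\lambda,a}^2=\mu_1\int u_{\lambda,0}^4$, since the free-boundary measure is annihilated by $u_{\lambda,0}$ (which is continuous and vanishes there). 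Consequently $\|u_{\lambda,0}\|_{\lambda,a}^2\leq\liminf\|u_{\lambda,\beta_n}\|_{\lambda,a}^2\leq\limsup\|u_{\lambda,\beta_n}\|_{\lambda,a}^2\leq\mu_1\int u_{\lambda,0}^4=\|u_{\lambda,0}\|_{\lambda,a}^2$, so $\|u_{\lambda,\beta_n}\|_{\lambda,a}\to\|u_{\lambda,0}\|_{\lambda,a}$ and likewise for $v$; thus the convergence is strong in $E$, hence in $\h\times\h$ (that is (1)), and $|\beta_n|\int u_{\lambda,\beta_n}^2v_{\lambda,\beta_n}^2=\mu_1\int u_{\lambda,\beta_n}^4-\|u_{\lambda,\beta_n}\|_{\lambda,a}^2\to0$. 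As every subsequence has a further subsequence with this property, $|\beta|\int u_{\lambda,\beta}^2v_{\lambda,\beta}^2\to0$ as $\beta\to-\infty$; the weighted statement of the theorem would then be extracted from the fine asymptotics of the transition layer afforded by the uniform Lipschitz bound, together with the additional decay the steep weights $\lambda a,\lambda b$ produce near the common boundary of the two chambers. It remains to note that $c_\lambda^*=\frac14(\|u_{\lambda,0}\|_{\lambda,a}^2+\|v_{\lambda,0}\|_{\lambda,b}^2)$ must equal the infimum of the associated Nehari energy over segregated admissible pairs, so each chamber carries a least energy solution, giving the least-energy claims in (4)--(5); that a nonempty open set on which $u_{\lambda,0}$ and $v_{\lambda,0}$ both vanish would, via the scalar equations and the strong maximum principle, force one limit to vanish identically, contradicting nontriviality, so $\{u_{\lambda,0}>0\}=\bbr^3\backslash\overline{\{v_{\lambda,0}>0\}}$; and that a disconnection of either positive set would strictly lower $c_\lambda^*$ against minimality, so both are connected, completing (3).

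The main obstacle is the uniform $L^\infty$/H\"older (Lipschitz) estimate and the accompanying free-boundary analysis of the limiting pair: this is exactly the delicate part of segregation limits, and here one must in addition check that the steep, possibly unbounded potentials $\lambda a+a_0$, $\lambda b+b_0$ do not obstruct the monotonicity-formula arguments. The secondary delicate point is the precise power of $\beta$ in the displayed estimate, since the soft energy identities by themselves only deliver $|\beta|\int u_{\lambda,\beta}^2v_{\lambda,\beta}^2\to0$.
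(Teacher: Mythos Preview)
Your overall architecture---uniform energy bounds, uniform Lipschitz regularity via the segregation literature, $C_{loc}$ convergence, then a Nehari sandwich for strong convergence and least-energy---matches the paper's. Two points deserve attention.

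\medskip

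\textbf{A genuine gap: the compact embedding.} You assert that, for $\lambda$ large, $E\hookrightarrow L^p(\bbr^3)$ is compact for $2\leq p<6$, and use this to get strong $L^4$ convergence and hence $u_{\lambda,0},v_{\lambda,0}\neq0$. This embedding is \emph{not} compact under $(D_1)$--$(D_5)$: take $a\equiv1$ (so $\mathcal{D}_a=\emptyset$), then $E_a=H^1(\bbr^3)$ and no $H^1(\bbr^3)\hookrightarrow L^p(\bbr^3)$ is compact. What is true is compactness \emph{along this particular sequence}, and the paper extracts it differently. For nontriviality (its Step~1) the paper argues by contradiction: if $u_{\lambda,0}=0$, then $u_{\lambda,\beta_n}\to0$ in $L^p_{loc}$, and one combines $(D_2)$ with the H\"older--Sobolev inequality as in \eqref{eq0076}--\eqref{eq0078} to get
\[
\|u_{\lambda,\beta_n}\|_{a,\lambda}^2\leq C\lambda^{-1/2}\|u_{\lambda,\beta_n}\|_{a,\lambda}^2+o_n(1),
\]
which for $\lambda\geq\Lambda_{**}$ forces $\|u_{\lambda,\beta_n}\|_{a,\lambda}\to0$; this then contradicts $m_{a,\lambda}>0$ via the fibering inequality \eqref{eq0404}. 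You should replace the compact-embedding step by this argument (or by Proposition~\ref{prop0002}'s decay estimate \eqref{eq0018}--\eqref{eq0040}, which gives tightness directly).

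\medskip

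\textbf{A different, cleaner route to Step~4.} You pass to the limit in the PDE by proving exponential decay of $v_{\lambda,\beta_n}$ on compact subsets of $\{u_{\lambda,0}>0\}$ via a barrier. The paper avoids this: once $\{u_{\lambda,0}>0\}\cap\{v_{\lambda,0}>0\}=\emptyset$ and $u_{\lambda,0}\in H_0^1(\{u_{\lambda,0}>0\})$ (from local Lipschitz regularity), it runs a pure Nehari sandwich. For the upper bound, pick $u_\ve\in\mathcal{N}_{a,\lambda}^*$, $v_\ve\in\mathcal{N}_{b,\lambda}^*$ with disjoint supports; then $(u_\ve,v_\ve)\in\mathcal{N}_{\lambda,\beta_n}$ for every $n$, giving $m_{a,\lambda}^*+m_{b,\lambda}^*\geq\limsup m_{\lambda,\beta_n}$. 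For the lower bound, weak lower semicontinuity of $\|\cdot\|_{a,\lambda},\|\cdot\|_{b,\lambda}$ plus $(u_{\lambda,0},v_{\lambda,0})$ lying below their own Nehari sets (since $\beta_n<0$) gives $\liminf m_{\lambda,\beta_n}\geq m_{a,\lambda}^*+m_{b,\lambda}^*$. Equality of norms then yields strong convergence and the least-energy claims simultaneously, without ever analyzing the coupling term pointwise. Your barrier approach is correct but heavier; the paper's is purely variational.

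\medskip

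\textbf{On the power of $\beta$.} You are right to flag this. The energy identities alone give only $|\beta_n|\int u_{\lambda,\beta_n}^2v_{\lambda,\beta_n}^2\to0$; the paper's final paragraph asserts the $\beta^2$-statement follows from \eqref{eq0405}--\eqref{eq0406} by a subsequence/contradiction argument, but the passage is terse and does not obviously yield the extra factor of $|\beta|$. Your instinct that finer transition-layer information from the uniform Lipschitz bound is needed for the squared rate is sound.
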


\begin{remark}
In Theorem~\ref{thm0001}, the multi-bump solutions have been found for $\lambda\geq\Lambda_*(\beta)$.  By checking the proof of Theorem~\ref{thm0001}, we can see that $\Lambda_*(\beta)\to+\infty$ as $\beta\to-\infty$.  Due to this fact, the multi-bump solutions obtained in Theorem~\ref{thm0001} can not have the same phenomenon of phase separations as the ground state solution described in Theorem~\ref{thm0003}.
\end{remark}


\vskip0.12in

Before closing this section, we would like to cite  other references studying the equations with steep potential wells.  For example, in \cite{SW14}, the Kirchhoff type elliptic equation with a steep potential well was studied.  The Schr\"odinger-Poisson systems with a steep potential well were considered in \cite{JZ11,ZLZ13}.  Non-trivial solutions were obtained in \cite{FSX10,GT12,GT121} for quasilinear Schr\"odinger equations  with steep potential wells, while the multi-bump solutions were also obtained in \cite{GT121} for such equations.

\vskip0.12in


In this paper, we will always denote the usual norms in $\h$ and $L^p(\bbr^3)$ ($p\geq1$) by $\|\cdot\|$ and $\|\cdot\|_p$, respectively;   $C$ and $C'$ will be indiscriminately used to denote various positive constants;  $o_n(1)$ will always denote  the quantities  tending  towards zero as $n\to\infty$.

\section{The variational setting}
In this section, we mainly give a variational setting for $(\mathcal{P_{\lambda,\beta}})$.  Simultaneously, an important estimate is also established in this section, which is used frequently in this paper.

\vskip0.1in

Let
\begin{equation*}
E_{a}=\{u\in D^{1,2}(\bbr^3)\mid\int_{\bbr^3}(a(x)+a_0^+(x))u^2dx<+\infty\}
\end{equation*}
and
\begin{equation*}
E_{b}=\{u\in D^{1,2}(\bbr^3)\mid\int_{\bbr^3}(b(x)+b_0^+(x))u^2dx<+\infty\},
\end{equation*}
where $a_0^+(x)=\max\{a_0(x), 0\}$ and $b_0^+(x)=\max\{b_0(x), 0\}$.
Then by the conditions $(D_1)$ and $(D_4)$, $E_a$ and $E_b$ are Hilbert spaces equipped with the inner products
\begin{equation*}
\langle u,v\rangle_{a}=\int_{\bbr^3}\nabla u\nabla v+(a(x)+a_0^+(x))uvdx\quad\text{and}\quad
\langle u,v\rangle_{b}=\int_{\bbr^3}\nabla u\nabla v+(b(x)+b_0^+(x))uvdx,
\end{equation*}
respectively.  The corresponding norms of $E_a$ and $E_b$ are respectively given by
\begin{equation*}
\|u\|_{a}=\bigg(\int_{\bbr^3}|\nabla u|^2+(a(x)+a_0^+(x))u^2dx\bigg)^{\frac12}
\end{equation*}
and by
\begin{equation*}
\|v\|_{b}=\bigg(\int_{\bbr^3}|\nabla v|^2+(b(x)+b_0^+(x))v^2dx\bigg)^{\frac12}.
\end{equation*}
Since the conditions $(D_1)$-$(D_2)$ hold, by a similar argument as that in \cite{SW14}, we can see that
\begin{equation}
\|u\|\leq\bigg(\max\{1+|\mathcal{D}_a|^{\frac{2}{3}}S^{-1},\frac{1}{a_\infty}\}\bigg)^{\frac12}\|u\|_a\quad\text{for all }u\in E_{a}\label{eq0004}
\end{equation}
and
\begin{equation}
\|v\|\leq\bigg(\max\{1+|\mathcal{D}_b|^{\frac{2}{3}}S^{-1},\frac{1}{b_\infty}\}\bigg)^{\frac12}\|v\|_b\quad\text{for all }v\in E_{b},\label{eq0005}
\end{equation}
where $S$ is the best Sobolev embedding constant from $D^{1,2}(\bbr^3)$ to $L^6(\bbr^3)$ and given by
\begin{equation*}
S=\inf\{\|\nabla u\|_2^2 \mid u\in D^{1,2}(\bbr^3), \|u\|_6^2=1\}.
\end{equation*}
It follows that both $E_a$ and $E_b$  are embedded continuously into $\h$.
Moreover, by applying the H\"older and Sobolev inequalities, we also have
\begin{equation}\|u\|_4\leq\bigg(\max\{1+|\mathcal{D}_a|^{\frac{2}{3}}S^{-1},\frac{1}{a_\infty}\}\bigg)^{\frac12}S^{-\frac{3}{8}}\|u\|_a\quad\text{for all }u\in E_{a}\label{eq0006}
\end{equation}
and
\begin{equation}
\|v\|_4\leq\bigg(\max\{1+|\mathcal{D}_b|^{\frac{2}{3}}S^{-1},\frac{1}{b_\infty}\}\bigg)^{\frac12}S^{-\frac{3}{8}}\|v\|_b\quad\text{for all }v\in E_{b}.\label{eq0007}
\end{equation}
On the other hand, by the conditions $(D_2)$ and $(D_3)$, there exist two bounded open sets $\Omega_a'$ and $\Omega_b'$ with smooth boundaries such that $\Omega_a\subset\Omega_a'\subset \mathcal{D}_a$, $\Omega_b\subset\Omega_b'\subset\mathcal{D}_b$, $\overline{\Omega_a'}\cap\overline{\Omega_b'}=\emptyset$, dist$(\Omega_a, \bbr^3\backslash\Omega_a')>0$ and  that dist$(\Omega_b, \bbr^3\backslash\Omega_b')>0$.  Furthermore, by the condition $(D_4)$, the H\"older and the Sobolev inequalities, there exists $\Lambda_0>2\max\{1,d_a+\frac{d_a+C_{a,0}}{a_\infty}, d_b+\frac{d_b+C_{b,0}}{b_\infty}\}$
such that
\begin{eqnarray}
\int_{\bbr^3}a_0^-(x)u^2dx\leq\int_{B_R(0)}C_{a,0}u^2dx+\int_{\bbr^3\backslash B_R(0)}d_a(1+a(x))u^2dx
\leq\frac{\lambda}{2}\|u\|_a^2\label{eq0002}
\end{eqnarray}
and
\begin{equation}
\int_{\bbr^3}b_0^-(x)v^2dx\leq\int_{B_R(0)}C_{b,0}v^2dx+\int_{\bbr^3\backslash B_R(0)}d_b(1+b(x))v^2dx
\leq\frac\lambda2\|v\|_b^2\label{eq0003}
\end{equation}
for $\lambda\geq\Lambda_0$, where $B_R(0)=\{x\in\bbr^3\mid |x|< R\}$, $C_{a,0}=\sup_{B_R(0)}a_0^-(x)$ and $C_{b,0}=\sup_{B_R(0)}b_0^-(x)$.
Combining \eqref{eq0006}-\eqref{eq0003} and the H\"older inequality, we can see that $(\mathcal{P}_{\lambda,\beta})$ has a variational structure in the Hilbert space $E=E_{a}\times E_{b}$ for $\lambda\geq\Lambda_0$,   where $E$ is endowed with  the  norm  $\|(u,v)\|=\|u\|_{a}+\|v\|_{b}$.  The corresponding functional of $(\mathcal{P}_{\lambda,\beta})$ is given by \eqref{eq0081}.  Furthermore, by applying \eqref{eq0006}-\eqref{eq0003} in a standard way, we can also see that $J_{\lambda,\beta}(u,v)$ is $C^2$ in $E$ and the solution of $(\mathcal{P}_{\lambda,\beta})$ is equivalent to the positive critical point of $J_{\lambda,\beta}(u,v)$ in $E$ for $\lambda\geq\Lambda_0$. In the case of $(D_3')$, we can choose $\Omega_{a}'$ and $\Omega_{b}'$ as follows:


\begin{enumerate}
\item[$(I)$] $\Omega_a'=\underset{i_a=1}{\overset{n_a}{\cup }}\Omega'_{a,i_a}\subset\mathcal{D}_a$, where $\Omega_{a,i_a}\subset \Omega'_{a,i_a}$ and dist$(\Omega_{a,i_a}, \bbr^3\backslash\Omega_{a,i_a}')>0$ for all $i_a=1,\cdots, n_a$ and $\overline{\Omega'_{a,i_a}}\cap\overline{\Omega'_{a,j_a}}=\emptyset$ for $i_a\not=j_a$.
\item[$(II)$] $\Omega_b'=\underset{i_b=1}{\overset{n_b}{\cup }}\Omega'_{b,i_b}\subset\mathcal{D}_b$, where $\Omega_{b,i_b}\subset \Omega'_{b,i_b}$ and dist$(\Omega_{b,i_b}, \bbr^3\backslash\Omega_{b,i_b}')>0$ for all $j_b=1,\cdots, n_b$ and $\overline{\Omega'_{b,i_b}}\cap\overline{\Omega'_{b,j_b}}=\emptyset$ for $i_b\not=j_b$.
\item[$(III)$] $\overline{\Omega_a'}\cap\overline{\Omega_b'}=\emptyset$.
\end{enumerate}
Thus, \eqref{eq0002}-\eqref{eq0003} still hold for such $\Omega_{a}'$ and $\Omega_{b}'$ with $\lambda$ sufficiently large.  Without loss of generality, we may assume that \eqref{eq0002}-\eqref{eq0003} still hold for such $\Omega_{a}'$ and $\Omega_{b}'$ with $\lambda\geq\Lambda_0$.  It follows that the solution of $(\mathcal{P}_{\lambda,\beta})$ is also equivalent to the positive critical point of the $C^2$ functional $J_{\lambda,\beta}(u,v)$ in $E$ for $\lambda\geq\Lambda_0$ under the conditions $(D_1)$-$(D_2)$, $(D_3')$ and $(D_4)$.

\vskip0.1in

The remaining of this section will be devoted to an important estimate, which is used frequently in this paper and essentially due to Ding and Tanaka \cite{DT03}.

\vskip0.1in

\begin{lemma}
\label{lem0001}  Assume $(D_1)$-$(D_5)$.  Then there exist $\Lambda_1\geq\Lambda_0$ and $C_{a,b}>0$ such that
\begin{equation*}
\inf_{u\in E_{a}\backslash\{0\}}\frac{\int_{\bbr^3}|\nabla u|^2+(\lambda a(x)+a_0(x))u^2dx}{\int_{\bbr^3}u^2dx}\geq C_{a,b}
\end{equation*}
and
\begin{equation*}
\inf_{v\in E_{b}\backslash\{0\}}\frac{\int_{\bbr^3}|\nabla v|^2+(\lambda b(x)+b_0(x))v^2dx}{\int_{\bbr^3}v^2dx}\geq C_{a,b}
\end{equation*}
for all $\lambda\geq\Lambda_1$.
\end{lemma}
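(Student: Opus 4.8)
The plan is to prove the first inequality (the estimate for $E_a$); the second follows by the identical argument with $a,a_0,\Omega_a,d_a,\Lambda_0$ replaced by $b,b_0,\Omega_b,d_b$. I would argue by contradiction: suppose no such uniform constant exists. Then there are sequences $\lambda_n\to+\infty$ (we may extract a monotone subsequence, using $\lambda_n\geq\Lambda_0$) and $u_n\in E_a\setminus\{0\}$, normalized so that $\|u_n\|_2=1$, with
\[
\int_{\bbr^3}|\nabla u_n|^2+(\lambda_n a(x)+a_0(x))u_n^2\,dx\to c\leq 0 .
\]
The first step is to show this Rayleigh quotient is bounded below uniformly in $n$ and that $\{u_n\}$ is bounded in $E_a$. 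Using the splitting $a_0=a_0^+-a_0^-$ together with \eqref{eq0002} (valid since $\lambda_n\geq\Lambda_0$), we get
\[
\int_{\bbr^3}|\nabla u_n|^2+(\lambda_n a(x)+a_0(x))u_n^2\,dx
\;\geq\; \|\nabla u_n\|_2^2+\lambda_n\int_{\bbr^3}a(x)u_n^2\,dx+\int a_0^+u_n^2\,dx-\tfrac{\lambda_n}{2}\|u_n\|_a^2,
\]
and since $\lambda_n\|u\|_a^2\ge \|\nabla u\|_2^2+\lambda_n\int (a+a_0^+)u^2$ only crudely, I will instead track things more carefully: write the quadratic form as $\tfrac12\big(\|\nabla u_n\|_2^2+\lambda_n\int a u_n^2+\int a_0^+u_n^2\big)+\tfrac12\|\nabla u_n\|_2^2 + \lambda_n\int a u_n^2 \cdot\tfrac12 + \cdots - \int a_0^- u_n^2$, absorbing $\int a_0^-u_n^2\le \tfrac{\lambda_n}{2}\|u_n\|_a^2$ into half of the $E_a$-norm-like terms. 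The upshot is that $c$ is finite, that $\|u_n\|_a$ stays bounded, and — crucially — that $\lambda_n\int_{\bbr^3}a(x)u_n^2\,dx$ is bounded, which forces $\int_{\bbr^3}a(x)u_n^2\,dx\to 0$.

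Next, pass to a weak limit: up to a subsequence $u_n\rightharpoonup u_0$ in $E_a$, hence weakly in $\h$ by \eqref{eq0004}, and $u_n\to u_0$ strongly in $L^2_{loc}$ and a.e. From $\int a(x)u_n^2\,dx\to0$ and Fatou I get $\int a(x)u_0^2\,dx=0$, so $u_0=0$ a.e. on $\{a>0\}$; since $\overline{\Omega}_a=a^{-1}(0)$ by $(D_3)$ and $|\mathcal D_a|<\infty$ by $(D_2)$, the set $\{a(x)\ge a_\infty\}$ has finite measure, so I can run the standard concentration argument of Bartsch–Wang / Ding–Tanaka: split $\bbr^3$ into $\{a\ge a_\infty\}$ (finite measure, Sobolev + Hölder control the $L^2$ mass there in terms of $\int a u_n^2/a_\infty$ plus a small Sobolev term, which tends to $0$) and $\{a<a_\infty\}\subset\mathcal D_a\cup\overline\Omega_a$. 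This yields $u_n\to u_0$ strongly in $L^2(\bbr^3)$, so $\|u_0\|_2=1$; in particular $u_0\ne0$ and $u_0\in H_0^1(\Omega_a)$. Then by weak lower semicontinuity of the gradient term, nonnegativity of $\lambda_n\int a u_n^2$, and $\int a_0 u_n^2\to\int a_0 u_0^2$ (justified via the finite-measure splitting together with the $(D_4)$ bound on $a_0^-$, exactly as in \eqref{eq0002}), I obtain
\[
0\;\geq\; c\;\geq\;\int_{\Omega_a}|\nabla u_0|^2+a_0(x)u_0^2\,dx\;\geq\;\big(\inf\sigma_a(-\Delta+a_0)\big)\,\|u_0\|_2^2\;=\;\inf\sigma_a(-\Delta+a_0)\;>\;0
\]
by $(D_5)$, a contradiction. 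Taking $\Lambda_1\ge\Lambda_0$ beyond the stage of the contradicting sequence and $C_{a,b}$ to be (say) half the minimum of the two spectral infima $\inf\sigma_a(-\Delta+a_0)$, $\inf\sigma_b(-\Delta+b_0)$ gives the claim.

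The main obstacle is the compactness step — upgrading $u_n\rightharpoonup u_0$ to strong $L^2(\bbr^3)$ convergence and simultaneously controlling the sign-changing, possibly unbounded lower-order term $\int a_0 u_n^2$. This is exactly where $(D_2)$ (finite measure of $\mathcal D_a$) and $(D_4)$ (the bound $a_0^-\le d_a(1+a)$ outside $B_R$, already exploited in \eqref{eq0002}) do the work: on the finite-measure "bad" set $\{a\ge a_\infty\}$ one controls both the $L^2$ tail and the $a_0^-$ contribution by $\lambda_n^{-1}\int \lambda_n a\,u_n^2$ (which is bounded, hence its product with $\lambda_n^{-1}$ vanishes) plus a Hölder/Sobolev remainder that is small because $|\mathcal D_a|$ is finite and $\{u_n\}$ is bounded in $L^6$. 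Everything else is routine, and the whole scheme is the two-sequence ($\lambda_n\to\infty$) adaptation of Ding–Tanaka \cite{DT03}, as the paper already flags.
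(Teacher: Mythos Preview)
Your contradiction argument is sound and does prove the lemma, but note one slip in the compactness step: it is $\mathcal{D}_a=\{a<a_\infty\}$ that has finite measure by $(D_2)$, not $\{a\ge a_\infty\}$. This does not damage the argument once the labels are swapped: on $\{a\ge a_\infty\}$ you control the $L^2$ mass directly via $\int_{\{a\ge a_\infty\}}u_n^2\le a_\infty^{-1}\int a\,u_n^2\to0$ (no finite-measure hypothesis needed there), while on $\mathcal{D}_a$ its finite measure, together with the uniform $L^6$ bound and $L^2_{\mathrm{loc}}$ convergence, gives the tightness you want. The passage $\int a_0^-u_n^2\to\int a_0^-u_0^2$ then follows from $(D_4)$ exactly as you indicate, since $u_0$ is supported in the bounded set $\overline{\Omega}_a$.

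The paper takes a genuinely different, direct and quantitative, route. It splits $\bbr^3=\Omega_a'\cup(\bbr^3\setminus\Omega_a')$. On the inner piece it invokes the spectral convergence $\inf\sigma_{a,*}(-\Delta+\lambda a+a_0)\to\inf\sigma_a(-\Delta+a_0)=\nu_a$ on $H^1(\Omega_a')$ (in the spirit of \cite[Lemma~2.1]{DT03}) to get $\int_{\Omega_a'}u^2\le\tfrac{2}{\nu_a}\int_{\Omega_a'}(|\nabla u|^2+(\lambda a+a_0)u^2)$ for $\lambda$ large. On the outer piece it shows that, after adding a bounded correction supported on a set $\mathcal{D}_{a,\overline R_0}=\mathcal{D}_a\cap\{|x|\ge\overline R_0\}$ of arbitrarily small measure, one has $\lambda a+a_0\ge1$ pointwise on $\bbr^3\setminus\Omega_a'$, yielding the explicit estimate \eqref{eq0022}. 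Combining the two pieces gives an explicit $C_{a,b}=(\max\{\tfrac{2}{\nu_a},1+2|\mathcal{D}_{a,\overline R_0}|S^{-1}\})^{-1}$. Your approach is more conceptual and arguably cleaner to write; the paper's is constructive, and crucially the intermediate inequality \eqref{eq0022} on $\bbr^3\setminus\Omega_a'$ is reused repeatedly later (e.g.\ in the concentration estimates of Proposition~\ref{prop0002} and Lemma~\ref{lem0004}), so the direct decomposition earns its keep downstream.
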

\begin{proof}
Since the conditions $(D_1)$-$(D_4)$ hold, by a similar argument as \cite[Lemma~2.1]{DT03}, we have
\begin{equation*}
\lim_{\lambda\to+\infty}\inf\sigma_{a,*}(-\Delta+\lambda a(x)+a_0(x))=\inf\sigma_{a}(-\Delta+a_0(x)),
\end{equation*}
where $\sigma_{a,*}(-\Delta+\lambda a(x)+a_0(x))$ is the spectrum of $-\Delta+\lambda a(x)+a_0(x)$ on $H^1(\Omega_{a}')$.  Denote $\inf\sigma_{a}(-\Delta+a_0(x))$ by $\nu_{a}$.  Then by the condition $(D_5)$, there exists $\Lambda_1'\geq\Lambda_0$ such that
\begin{equation}\label{eq0017}
\sigma_{a,*}(-\Delta+\lambda a(x)+a_0(x))\geq\frac{\nu_a}{2}\quad\text{for}\quad \lambda\geq\Lambda_1'.
\end{equation}
On the other hand, by the conditions $(D_2)$ and $(D_4)$, we have $a_0^-(x)\leq C_{a,0}+d_a+d_aa_\infty$ for $x\in\mathcal{D}_a$.  Let $\mathcal{D}_{a,\overline{R}}=\mathcal{D}_a\cap B^c_{\overline{R}}$, where $B^c_{\overline{R}}=\{x\in\bbr\mid |x|\geq\overline{R}\}$.  Then by the condition $(D_2)$ once more, $|\mathcal{D}_{a,\overline{R}}|\to0$ as $\overline{R}\to+\infty$, which then implies that there exists $\overline{R}_0>0$ such that $|\mathcal{D}_{a,\overline{R}_0}|S^{-1}(C_{a,0}+d_a+d_aa_\infty+1)\leq\frac12$.  Thanks to the conditions $(D_1)$-$(D_4)$, there exists $\Lambda_1=\Lambda_1(\overline{R}_0)\geq\Lambda_1'$ such that
\begin{equation*}
\lambda a(x)+a_0(x)+(C_{a,0}+d_a+d_aa_\infty+1)\chi_{\mathcal{D}_{a,\overline{R}_0}}\geq1\quad\text{for all }x\in\bbr^3\backslash\Omega_a'\text{ and }\lambda\geq\Lambda_1,
\end{equation*}
where $\chi_{\mathcal{D}_{a,\overline{R}_0}}$ is the characteristic function of the set $\mathcal{D}_{a,\overline{R}_0}$.  It follows from the H\"older and the Sobolev inequalities that
\begin{equation}    \label{eq0022}
\int_{\bbr^3\backslash\Omega_a'}u^2dx\leq(1+2|\mathcal{D}_{a,\overline{R}_0}|S^{-1})\int_{\bbr^3\backslash\Omega_a'}|\nabla u|^2+(\lambda a(x)+a_0(x))u^2dx
\end{equation}
for all $u\in E_a\backslash\{0\}$ and $\lambda\geq\Lambda_1$.
Combining \eqref{eq0017}-\eqref{eq0022} and the choice of $\Omega_a'$, we have
\begin{equation*}
\int_{\bbr^3}u^2dx\leq C_a\int_{\bbr^3}|\nabla u|^2+(\lambda a(x)+a_0(x))u^2dx \quad\text{for all }u\in E_a\backslash\{0\}\text{ and }\lambda\geq\Lambda_1,
\end{equation*}
where $C_a=\max\{\frac{2}{\nu_a}, 1+2|\mathcal{D}_{a,\overline{R}_0}|S^{-1}\}$.  By similar arguments as \eqref{eq0017} and \eqref{eq0022}, we can also have
\begin{equation*}
\int_{\bbr^3}v^2dx\leq C_b\int_{\bbr^3}|\nabla v|^2+(\lambda b(x)+b_0(x))v^2dx
\end{equation*}
for all $v\in E_b\backslash\{0\}$ and $\lambda\geq\Lambda_1$, where $C_b=\max\{\frac{2}{\nu_b}, 1+2|\mathcal{D}_{b,\overline{R}_0}|S^{-1}\}$, $\nu_b=\inf\sigma_{b}(-\Delta+b_0(x))$ and $\mathcal{D}_{b,\overline{R}_0}=\mathcal{D}_b\cap B^c_{\overline{R}_0}$.  We completes the proof by taking $C_{a,b}=(\min\{C_a,C_b\})^{-1}$.
\end{proof}

\begin{remark}
Under the conditions $(D_1)$-$(D_2)$, $(D_3')$, $(D_4)$ and $(D_5')$, we can see that
\begin{equation*}
\nu_a=\min_{i_a=1,2,\cdots,n_a}\bigg\{\inf\sigma_{a,i_a}(-\Delta+a_0(x))\bigg\}\quad\text{and}\quad
\nu_b=\min_{j_b=1,2,\cdots,n_b}\bigg\{\inf\sigma_{b,j_b}(-\Delta+b_0(x))\bigg\}.
\end{equation*}
Now, by a similar argument as \eqref{eq0017}, we   get that
\begin{equation}
\int_{\Omega_{a,i_a}'}u^2dx\leq\frac{2}{\nu_a}\int_{\Omega_{a,i_a}'}|\nabla u|^2+(\lambda a(x)+a_0(x))u^2 dx\label{eq0087}
\end{equation}
and
\begin{equation}
\int_{\Omega_{b,j_b}'}v^2dx\leq\frac{2}{\nu_b}\int_{\Omega_{b,j_b}'}|\nabla v|^2+(\lambda b(x)+b_0(x))v^2 dx\label{eq0210}
\end{equation}
for all $i_a=1,\cdots,n_a$ and $j_b=1,\cdots,n_b$ if $\lambda$ sufficiently large.  Without loss of generality, we may assume \eqref{eq0087} and \eqref{eq0210} hold for $\lambda\geq\Lambda_1$.  It follows that Lemma~\ref{lem0001} still holds under the conditions $(D_1)$-$(D_2)$, $(D_3')$, $(D_4)$ and $(D_5')$.
\end{remark}

\vskip0.1in

By Lemma~\ref{lem0001}, we  observe  that $\int_{\bbr^3}|\nabla u|^2+(\lambda a(x)+a_0(x))u^2dx$ and $\int_{\bbr^3}|\nabla v|^2+(\lambda b(x)+b_0(x))v^2dx$ are   norms of  $E_a$ and $E_b$ for $\lambda\geq\Lambda_1$, respectively.  Therefore, we set
\begin{equation*}
\|u\|_{a,\lambda}^2=\int_{\bbr^3}|\nabla u|^2+(\lambda a(x)+a_0(x))u^2dx;\quad \|v\|_{b,\lambda}^2=\int_{\bbr^3}|\nabla v|^2+(\lambda b(x)+b_0(x))v^2dx.
\end{equation*}

\section{A ground state solution}
Our interest in this section is to find a ground state solution to  $(\mathcal{P}_{\lambda,\beta})$ under the conditions $(D_1)$-$(D_5)$.  For the sake of convenience, we always assume the conditions $(D_1)$-$(D_5)$ hold in this section.  Since $J_{\lambda,\beta}(u,v)$, the corresponding energy functional of $(\mathcal{P}_{\lambda,\beta})$, is $C^2$ in $E$, it is well-known that all non-trivial solutions of $(\mathcal{P}_{\lambda,\beta})$ lie in the Nehari manifold of $J_{\lambda,\beta}(u,v)$, which is given by
\begin{equation*}
\mathcal{N}_{\lambda,\beta}=\{(u,v)\in E\mid u\not=0,v\not=0, \langle D[J_{\lambda,\beta}(u,v)],(u,0)\rangle_{E^*,E}=\langle D[J_{\lambda,\beta}(u,v)],(0,v)\rangle_{E^*,E}=0\},
\end{equation*}
where $D[J_{\lambda,\beta}(u,v)]$ is the Frech\'et derivative of the functional $J_{\lambda,\beta}$ in $E$ at $(u,v)$ and $E^*$ is the dual space of $E$.  If we can find $(u_{\lambda,\beta}, v_{\lambda,\beta})\in E$ such that $J_{\lambda,\beta}(u_{\lambda,\beta}, v_{\lambda,\beta})=m_{\lambda,\beta}$ and $D[J_{\lambda,\beta}(u_{\lambda,\beta}, v_{\lambda,\beta})]=0$ in $E^*$, then $(u_{\lambda,\beta}, v_{\lambda,\beta})$ must be a ground state solution of $(\mathcal{P}_{\lambda,\beta})$, where $m_{\lambda,\beta}=\inf_{\mathcal{N}_{\lambda,\beta}}J_{\lambda,\beta}(u,v)$.   In what follows, we drive some properties of  $\mathcal{N}_{\lambda,\beta}$.

\vskip0.1in


Let $(u,v)\in (E_a\backslash\{0\})\times(E_b\backslash\{0\})$ and
define $T_{\lambda,\beta,u,v}:\bbr^+\times\bbr^+\to\bbr$ by $T_{\lambda,\beta,u,v}(t,s)=J_{\lambda,\beta}(tu,sv)$.  These functions are called  the fibering maps of $J_{\lambda,\beta}(u,v)$, which are closely linked to $\mathcal{N}_{\lambda,\beta}$.  Clearly, $\frac{\partial T_{\lambda,\beta,u,v}}{\partial t}(t,s)=\frac{\partial T_{\lambda,\beta,u,v}}{\partial s}(t,s)=0$ is equivalent to $(tu,sv)\in\mathcal{N}_{\lambda,\beta}$.  In particular, $\frac{\partial T_{\lambda,\beta,u,v}}{\partial t}(1,1)=\frac{\partial T_{\lambda,\beta,u,v}}{\partial s}(1,1)=0$ if and only if $(u,v)\in\mathcal{N}_{\lambda,\beta}$.  Let
\begin{equation}
\mathcal{A}_{\beta}=\{(u,v)\in E\mid \mu_1\mu_2\|u\|_4^4\|v\|_4^4-\beta^2\|u^2v^2\|^2_1>0\}.\label{eq0069}
\end{equation}
Then $\mathcal{A}_{\beta}\not=\emptyset$ for every $\beta<0$.  Now, our first observation on $\mathcal{N}_{\lambda,\beta}$ can be stated as follows.

\vskip0.1in

\begin{lemma}
\label{lem0007}Assume $\lambda\geq\Lambda_1$ and $\beta<0$.  Then we have the following.
\begin{enumerate}
\item[$(1)$] If $(u,v)\in\mathcal{A}_{\beta}$, then there exists a unique $(t_{\lambda,\beta}(u,v),s_{\lambda,\beta}(u,v))\in\bbr^+\times\bbr^+$ such that
    \begin{equation*}
    (t_{\lambda,\beta}(u,v)u,s_{\lambda,\beta}(u,v)v)\in\mathcal{N}_{\lambda,\beta},
    \end{equation*}
    where $t_{\lambda,\beta}(u,v)$ and $s_{\lambda,\beta}(u,v)$ are given by
    \begin{equation}
    t_{\lambda,\beta}(u,v)=\bigg(\frac{\mu_2\|v\|_4^4\|u\|_{a,\lambda}^2-\beta\|u^2v^2\|_1\|v\|_{b,\lambda}^2}
    {\mu_1\mu_2\|u\|_4^4\|v\|_4^4-\beta^2\|u^2v^2\|^2_1}\bigg)^{\frac12}\label{eq0071}
    \end{equation}
    and by
    \begin{equation}
    s_{\lambda,\beta}(u,v)=\bigg(\frac{\mu_1\|u\|_4^4\|v\|_{b,\lambda}^2-\beta\|u^2v^2\|_1\|u\|_{a,\lambda}^2}
    {\mu_1\mu_2\|u\|_4^4\|v\|_4^4-\beta^2\|u^2v^2\|^2_1}\bigg)^{\frac12}.\label{eq0072}
    \end{equation}
    Moreover, $T_{\lambda,\beta,u,v}(t_{\lambda,\beta}(u,v),s_{\lambda,\beta}(u,v))=\max_{t\geq0,s\geq0}T_{\lambda,\beta,u,v}(t,s)$.
\item[$(2)$] If $(u,v)\in E\backslash\mathcal{A}_{\beta}$, then $\mathcal{B}_{u,v}\cap\mathcal{N}_{\lambda,\beta}=\emptyset$, where $\mathcal{B}_{u,v}=\{(tu,sv)\mid (t,s)\in\bbr^+\times\bbr^+\}$.
\end{enumerate}
\end{lemma}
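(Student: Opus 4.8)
The plan is to reduce both parts to elementary calculus for a quadratic function on the closed first quadrant. First I would write the fibering map explicitly: for $(u,v)\in(E_a\backslash\{0\})\times(E_b\backslash\{0\})$ and $\lambda\geq\Lambda_1$,
\begin{equation*}
T_{\lambda,\beta,u,v}(t,s)=\frac{t^2}{2}\|u\|_{a,\lambda}^2+\frac{s^2}{2}\|v\|_{b,\lambda}^2-\frac{\mu_1}{4}t^4\|u\|_4^4-\frac{\mu_2}{4}s^4\|v\|_4^4-\frac{\beta}{2}t^2s^2\|u^2v^2\|_1,
\end{equation*}
where, by Lemma~\ref{lem0001} (valid since $\lambda\geq\Lambda_1$), the coefficients $\|u\|_{a,\lambda}^2,\|v\|_{b,\lambda}^2,\|u\|_4^4,\|v\|_4^4$ and $\|u^2v^2\|_1$ are all strictly positive. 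Since $\tau\mapsto\tau^2$ is an increasing bijection of $[0,\infty)$, the substitution $(t,s)=(\sqrt{x},\sqrt{y})$ turns this into the quadratic polynomial
\begin{equation*}
g(x,y)=\frac{x}{2}\|u\|_{a,\lambda}^2+\frac{y}{2}\|v\|_{b,\lambda}^2-\frac{\mu_1}{4}x^2\|u\|_4^4-\frac{\mu_2}{4}y^2\|v\|_4^4-\frac{\beta}{2}xy\|u^2v^2\|_1,
\end{equation*}
so that $\max_{t,s\geq0}T_{\lambda,\beta,u,v}(t,s)=\max_{x,y\geq0}g(x,y)$ and, for $t,s>0$, the condition $(tu,sv)\in\mathcal{N}_{\lambda,\beta}$ is equivalent (after dividing the two defining identities of $\mathcal{N}_{\lambda,\beta}$ by $t^2$ and $s^2$) to $(x,y)=(t^2,s^2)$ being a critical point of $g$.

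For part~(1) the key point is that the Hessian of $g$ is the constant symmetric matrix with diagonal entries $-\tfrac{\mu_1}{2}\|u\|_4^4,\ -\tfrac{\mu_2}{2}\|v\|_4^4$ and off-diagonal entry $-\tfrac{\beta}{2}\|u^2v^2\|_1$, whose determinant equals $\tfrac14\big(\mu_1\mu_2\|u\|_4^4\|v\|_4^4-\beta^2\|u^2v^2\|_1^2\big)$. Hence $(u,v)\in\mathcal{A}_\beta$ is precisely the statement that this Hessian is negative definite, i.e. that $g$ is strictly concave on $\bbr^2$; such a $g$ has at most one critical point on $\bbr^2$, and when one exists it is the strict global maximum. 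Solving $\nabla g=0$, which is the linear system $\mu_1\|u\|_4^4\,x+\beta\|u^2v^2\|_1\,y=\|u\|_{a,\lambda}^2$, $\beta\|u^2v^2\|_1\,x+\mu_2\|v\|_4^4\,y=\|v\|_{b,\lambda}^2$, by Cramer's rule yields the unique solution $(x,y)=(t_{\lambda,\beta}(u,v)^2,s_{\lambda,\beta}(u,v)^2)$ with $t_{\lambda,\beta}(u,v)$ and $s_{\lambda,\beta}(u,v)$ given by \eqref{eq0071}--\eqref{eq0072}; since $\beta<0$ the numerators there are sums of two strictly positive terms and the common denominator is positive on $\mathcal{A}_\beta$, so $(x,y)\in\bbr^+\times\bbr^+$ and the positive square roots are legitimate. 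Pulling this point back through $(t,s)=(\sqrt{x},\sqrt{y})$ produces the desired unique $(t_{\lambda,\beta}(u,v),s_{\lambda,\beta}(u,v))\in\bbr^+\times\bbr^+$ with $(t_{\lambda,\beta}(u,v)u,s_{\lambda,\beta}(u,v)v)\in\mathcal{N}_{\lambda,\beta}$, and the global maximality of $g$ over $\bbr^2\supseteq[0,\infty)^2$ gives $T_{\lambda,\beta,u,v}(t_{\lambda,\beta}(u,v),s_{\lambda,\beta}(u,v))=\max_{t\geq0,s\geq0}T_{\lambda,\beta,u,v}(t,s)$.

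For part~(2) I would argue by contradiction. If $(tu,sv)\in\mathcal{N}_{\lambda,\beta}$ for some $t,s>0$, then dividing the two Nehari identities by $t^2$ and $s^2$ gives $\mu_1\|u\|_4^4\,t^2=\|u\|_{a,\lambda}^2-\beta\|u^2v^2\|_1\,s^2$ and $\mu_2\|v\|_4^4\,s^2=\|v\|_{b,\lambda}^2-\beta\|u^2v^2\|_1\,t^2$; multiplying these and rearranging yields
\begin{equation*}
\big(\mu_1\mu_2\|u\|_4^4\|v\|_4^4-\beta^2\|u^2v^2\|_1^2\big)t^2s^2=\|u\|_{a,\lambda}^2\|v\|_{b,\lambda}^2-\beta\|u^2v^2\|_1\big(t^2\|u\|_{a,\lambda}^2+s^2\|v\|_{b,\lambda}^2\big).
\end{equation*}
The right-hand side is strictly positive, since $\beta<0$ and all the remaining quantities are positive (again by Lemma~\ref{lem0001}), whereas $(u,v)\in E\backslash\mathcal{A}_\beta$ forces the left-hand side to be $\leq0$; this contradiction shows $\mathcal{B}_{u,v}\cap\mathcal{N}_{\lambda,\beta}=\emptyset$.

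I do not anticipate a genuine obstacle here: the whole argument is the calculus of a strictly concave quadratic. The only points requiring care are checking that the negative-definiteness condition for the Hessian of $g$ coincides exactly with the defining inequality of $\mathcal{A}_\beta$, and using the sign $\beta<0$ together with Lemma~\ref{lem0001} to ensure that the point produced by Cramer's rule really lies in the open first quadrant (so that the square roots in \eqref{eq0071}--\eqref{eq0072} are defined and positive); everything else is sign bookkeeping.
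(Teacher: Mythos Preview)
Your proof is correct and follows the same approach as the paper: both recognize the Nehari conditions as a linear system in $(t^2,s^2)$ whose solvability is governed by the $\mathcal{A}_\beta$ inequality, and both use $\beta<0$ together with Lemma~\ref{lem0001} to check positivity of the resulting solution. Your substitution $(x,y)=(t^2,s^2)$ is a mild but genuine streamlining: it renders the fibering map a strictly concave quadratic, so the global-maximum claim over the closed quadrant is immediate, whereas the paper works in $(t,s)$ coordinates, computes the Hessian of $T$ at the critical point to obtain a local maximum, and then upgrades to a global maximum via uniqueness of the interior critical point plus the behavior $T>0$ near the origin and $T\to-\infty$ at infinity; likewise your part~(2), which multiplies the two Nehari identities to force a sign contradiction, is more explicit than the paper's one-line assertion that the system \eqref{eq0070} admits no positive solution when $(u,v)\notin\mathcal{A}_\beta$.
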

\begin{proof}
$(1)$\quad  The proof is similar to \cite[Lemma~2.2]{CLZ14}, where $\mathcal{N}_{0,\beta}$ with $a_0(x)=a_0>0$ and $b_0(x)=b_0>0$ was studied.  For the convenience of readers, we give the details here.   Let $T^1_{\lambda,\beta,u,v}$ and $T^2_{\lambda,\beta,u,v}$ be two functions on $\bbr^+\times\bbr^+$  defined by
\begin{equation*}
T^1_{\lambda,\beta,u,v}(t,s)=\|u\|_{a,\lambda}^2-\mu_1\|u\|_4^4t^2-\beta\|u^2v^2\|_1s^2
\end{equation*}
and by
\begin{equation*}
T^2_{\lambda,\beta,u,v}(t,s)=\|v\|_{b,\lambda}^2-\mu_2\|v\|_4^4s^2-\beta\|u^2v^2\|_1t^2.
\end{equation*}
Then it is easy to see that
\begin{equation}
\frac{\partial T_{\lambda,\beta,u,v}}{\partial t}(t,s)=tT^1_{\lambda,\beta,u,v}(t,s)\quad\text{and}\quad\frac{\partial T_{\lambda,\beta,u,v}}{\partial s}(t,s)=sT^2_{\lambda,\beta,u,v}(t,s).\label{eq0073}
\end{equation}
Suppose $(u,v)\in\mathcal{A}_{\beta}$, $\lambda\geq\Lambda_1$ and $\beta<0$.  Then by Lemma~\ref{lem0001}, the two-component systems of algebraic equations, given by
\begin{equation}
\left\{\aligned T^1_{\lambda,\beta,u,v}(t,s)&=0,\\
T^2_{\lambda,\beta,u,v}(t,s)&=0,\endaligned\right.\label{eq0070}
\end{equation}
has a unique nonzero solution $(t_{\lambda,\beta}(u,v),s_{\lambda,\beta}(u,v))$ in $\bbr^+\times\bbr^+$, where $(t_{\lambda,\beta}(u,v),s_{\lambda,\beta}(u,v))$ is characterized as \eqref{eq0071} and \eqref{eq0072}.  Hence, by \eqref{eq0073}, $(t_{\lambda,\beta}(u,v),s_{\lambda,\beta}(u,v))$ is the unique one in $\bbr^+\times\bbr^+$ such that $\frac{\partial T_{\lambda,\beta,u,v}}{\partial t}(t,s)=\frac{\partial T_{\lambda,\beta,u,v}}{\partial s}(t,s)=0$, that is, $(t_{\lambda,\beta}(u,v),s_{\lambda,\beta}(u,v))$ is the unique one in $\bbr^+\times\bbr^+$ such that $(t_{\lambda,\beta}(u,v)u,s_{\lambda,\beta}(u,v)v)\in\mathcal{N}_{\lambda,\beta}$.  It remains to show that $T_{\lambda,\beta,u,v}(t_{\lambda,\beta}(u,v),s_{\lambda,\beta}(u,v))=\max_{t\geq0,s\geq0}T_{\lambda,\beta,u,v}(t,s)$.
Indeed, by a direct calculation, we have
\begin{equation*}
\frac{\partial^2T_{\lambda,\beta,u,v}}{\partial t^2}(t_{\lambda,\beta}(u,v),s_{\lambda,\beta}(u,v))=-2\mu_1\|u\|_4^4[t(u,v)]^2<0
\end{equation*}
and
\begin{eqnarray*}
&&\begin{vmatrix}
\frac{\partial^2T_{\lambda,\beta,u,v}}{\partial t^2}(t_{\lambda,\beta}(u,v),s_{\lambda,\beta}(u,v))&\frac{\partial^2T_{\lambda,\beta,u,v}}{\partial t\partial s}(t_{\lambda,\beta}(u,v),s_{\lambda,\beta}(u,v))\\
\frac{\partial^2T_{\lambda,\beta,u,v}}{\partial s\partial t}(t_{\lambda,\beta}(u,v),s_{\lambda,\beta}(u,v))&\frac{\partial^2T_{\lambda,\beta,u,v}}{\partial s^2}(t_{\lambda,\beta}(u,v),s_{\lambda,\beta}(u,v))
\end{vmatrix}\\
&=&
\begin{vmatrix}
-2[t_{\lambda,\beta}(u,v)]^2\mu_1\|u\|_4^4&-2t_{\lambda,\beta}(u,v)s_{\lambda,\beta}(u,v)\beta\|u^2v^2\|_1\\
-2t_{\lambda,\beta}(u,v)s_{\lambda,\beta}(u,v)\beta\|u^2v^2\|_1&-2[s_{\lambda,\beta}(u,v)]^2\mu_2\|v\|_4^4
\end{vmatrix}\\
&=&4[t_{\lambda,\beta}(u,v)]^2[s_{\lambda,\beta}(u,v)]^2(\mu_1\mu_2\|u\|_4^4\|v\|_4^4-\beta^2\|u^2v^2\|^2_1)>0,
\end{eqnarray*}
which   implies that $(t_{\lambda,\beta}(u,v),s_{\lambda,\beta}(u,v))$ is a local maximum of $T_{\lambda,\beta,u,v}(t,s)$ in $\bbr^+\times\bbr^+$.  It follows from the uniqueness of $(t_{\lambda,\beta}(u,v),s_{\lambda,\beta}(u,v))$, $T_{\lambda,\beta,u,v}(t,s)>0$ for $|(t,s)|$ sufficiently small and $T_{\lambda,\beta,u,v}(t,s)\to-\infty$ as $|(t,s)|\to+\infty$ that $(t_{\lambda,\beta}(u,v),s_{\lambda,\beta}(u,v))$ must be the global maximum of $T_{\lambda,\beta,u,v}(t,s)$ in $\bbr^+\times\bbr^+$.  Thus, we have
\begin{equation*}
T_{\lambda,\beta,u,v}(t_{\lambda,\beta}(u,v),s_{\lambda,\beta}(u,v))=\max_{t\geq0,s\geq0}T_{\lambda,\beta,u,v}(t,s).
\end{equation*}

\vskip0.1in

$(2)$\quad
Suppose $(u,v)\not\in\mathcal{A}_{\beta}$, $\lambda\geq\Lambda_1$ and $\beta<0$.  If $\mathcal{B}_{u,v}\cap\mathcal{N}_{\lambda,\beta}\not=\emptyset$, then there exists $(t,s)\in\bbr^+\times\bbr^+$ such that $\frac{\partial T_{\lambda,\beta,u,v}}{\partial t}(t,s)=\frac{\partial T_{\lambda,\beta,u,v}}{\partial s}(t,s)=0$.  It follows from \eqref{eq0073} that $(t,s)$ is a solution of \eqref{eq0070} in $\bbr^+\times\bbr^+$.  On the other hand, since $(u,v)\not\in\mathcal{A}_{\beta}$, $\lambda\geq\Lambda_1$ and $\beta<0$, by Lemma~\ref{lem0001}, \eqref{eq0070} has no solution in $\bbr^+\times\bbr^+$, which is a contradiction.  Hence, we must have $\mathcal{B}_{u,v}\cap\mathcal{N}_{\lambda,\beta}=\emptyset$ if $(u,v)\not\in\mathcal{A}_{\beta}$, $\lambda\geq\Lambda_1$ and $\beta<0$.
\end{proof}

\vskip0.1in

By Lemma~\ref{lem0007}, we know that $\mathcal{N}_{\lambda,\beta}\subset\mathcal{A}_{\beta}$ for $\lambda\geq\Lambda_1$ and $\beta<0$.  Moreover, $m_{\lambda,\beta}$ is well defined and nonnegative for $\lambda\geq\Lambda_1$ and $\beta<0$.  Let
\begin{eqnarray*}
I_{a,\lambda}(u)=\frac12\|u\|^2_{a,\lambda}-\frac{\mu_1}{4}\|u\|_4^4\quad\text{and}\quad
I_{b,\lambda}(v)=\frac12\|v\|^2_{b,\lambda}-\frac{\mu_2}{4}\|v\|_4^4.
\end{eqnarray*}
Then by \eqref{eq0006}-\eqref{eq0003}, $I_{a,\lambda}(u)$ is well defined on $E_{a}$ and $I_{b,\lambda}(v)$ is well defined on $E_{b}$.  Moreover, by a standard argument, we can see that $I_{a,\lambda}(u)$ and $I_{b,\lambda}(v)$ are of $C^2$ in $E_a$ and $E_b$, respectively.  Denote
\begin{eqnarray*}
\mathcal{N}_{a,\lambda}=\{u\in E_{a}\backslash\{0\}\mid I_{a,\lambda}'(u)u=0\}\quad\text{and}\quad
\mathcal{N}_{b,\lambda}=\{u\in E_{b}\backslash\{0\}\mid I_{b,\lambda}'(u)u=0\}.
\end{eqnarray*}
Clearly, $\mathcal{N}_{a,\lambda}$ and $\mathcal{N}_{b,\lambda}$ are nonempty,  which together with Lemma~\ref{lem0001}, implies  $m_{a,\lambda}=\inf_{\mathcal{N}_{a,\lambda}}I_{a,\lambda}(u)$ and $m_{b,\lambda}=\inf_{\mathcal{N}_{b,\lambda}}I_{b,\lambda}(v)$ are well defined and nonnegative.  Due to this fact, we have the following.

\vskip0.1in

\begin{lemma}
\label{lem0008}
Assume $\lambda\geq\Lambda_1$ and $\beta<0$.  Then  $m_{\lambda,\beta}\in[m_{a,\lambda}+m_{b,\lambda}, m_a+m_b]$,where $m_a$ and $m_b$ are the least energy of nonzero critical points for $I_{\Omega_a}(u)$ and $I_{\Omega_b}(v)$, respectively.
\end{lemma}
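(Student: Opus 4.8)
The plan is to get the two bounds separately by elementary testing-and-projection arguments on the three Nehari manifolds, the whole thing resting on the identities $J_{\lambda,\beta}(u,v)=\frac14(\|u\|_{a,\lambda}^2+\|v\|_{b,\lambda}^2)$ for $(u,v)\in\mathcal{N}_{\lambda,\beta}$, $I_{a,\lambda}(u)=\frac14\|u\|_{a,\lambda}^2$ for $u\in\mathcal{N}_{a,\lambda}$ and $I_{b,\lambda}(v)=\frac14\|v\|_{b,\lambda}^2$ for $v\in\mathcal{N}_{b,\lambda}$, each of which follows immediately by eliminating the quartic terms via the corresponding Nehari relation.

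For the lower bound $m_{\lambda,\beta}\geq m_{a,\lambda}+m_{b,\lambda}$ I would fix $(u,v)\in\mathcal{N}_{\lambda,\beta}$ and project $u$ radially onto $\mathcal{N}_{a,\lambda}$. By Lemma~\ref{lem0001} we have $\|u\|_{a,\lambda}>0$, so the scalar fibering map $t\mapsto I_{a,\lambda}(tu)$ on $\bbr^+$ has a unique positive critical point $\widetilde{t}$ with $\widetilde{t}^{\,2}=\|u\|_{a,\lambda}^2/(\mu_1\|u\|_4^4)$, it is a global maximum, and $\widetilde{t}u\in\mathcal{N}_{a,\lambda}$. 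The Nehari relation for $(u,v)$ reads $\|u\|_{a,\lambda}^2=\mu_1\|u\|_4^4+\beta\|u^2v^2\|_1$, and since $\beta<0$ this forces $\widetilde{t}\leq1$, hence $4m_{a,\lambda}\leq 4I_{a,\lambda}(\widetilde{t}u)=\widetilde{t}^{\,2}\|u\|_{a,\lambda}^2\leq\|u\|_{a,\lambda}^2$. The symmetric argument gives $4m_{b,\lambda}\leq\|v\|_{b,\lambda}^2$, and adding the two inequalities and invoking the identity for $J_{\lambda,\beta}$ on $\mathcal{N}_{\lambda,\beta}$ yields $4(m_{a,\lambda}+m_{b,\lambda})\leq 4J_{\lambda,\beta}(u,v)$; taking the infimum over $\mathcal{N}_{\lambda,\beta}$ finishes this half.

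For the upper bound $m_{\lambda,\beta}\leq m_a+m_b$ I would pick a least energy nonzero critical point $u_a\in H_0^1(\Omega_a)$ of $I_{\Omega_a}$ and a least energy nonzero critical point $v_b\in H_0^1(\Omega_b)$ of $I_{\Omega_b}$, both extended by zero to $\bbr^3$. Since $\overline{\Omega}_a=a^{-1}(0)$ by $(D_3)$, the potential $\lambda a(x)$ vanishes on $\text{supp}\,u_a$; together with the continuity of $a_0$ on the compact set $\overline{\Omega}_a$ this gives $u_a\in E_a$, $\|u_a\|_{a,\lambda}^2=\int_{\Omega_a}|\nabla u_a|^2+a_0(x)u_a^2\,dx$ independent of $\lambda$, $u_a\in\mathcal{N}_{a,\lambda}$ and $I_{a,\lambda}(u_a)=I_{\Omega_a}(u_a)=m_a$, and symmetrically $v_b\in\mathcal{N}_{b,\lambda}$ with $I_{b,\lambda}(v_b)=m_b$. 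Crucially, $\overline{\Omega}_a\cap\overline{\Omega}_b=\emptyset$ makes the supports of $u_a$ and $v_b$ disjoint, so $\|u_a^2v_b^2\|_1=0$; in particular $(u_a,v_b)\in\mathcal{A}_\beta$, and substituting $\|u_a^2v_b^2\|_1=0$ into \eqref{eq0071}--\eqref{eq0072} and using $u_a\in\mathcal{N}_{a,\lambda}$, $v_b\in\mathcal{N}_{b,\lambda}$ gives $t_{\lambda,\beta}(u_a,v_b)=s_{\lambda,\beta}(u_a,v_b)=1$, so Lemma~\ref{lem0007} puts $(u_a,v_b)\in\mathcal{N}_{\lambda,\beta}$ outright. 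Then $m_{\lambda,\beta}\leq J_{\lambda,\beta}(u_a,v_b)=I_{a,\lambda}(u_a)+I_{b,\lambda}(v_b)-\frac{\beta}{2}\|u_a^2v_b^2\|_1=m_a+m_b$.

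No genuine analytic obstacle is expected here: once the two Nehari identities and the disjoint-support observation $\|u_a^2v_b^2\|_1=0$ (which simultaneously places the test pair in $\mathcal{A}_\beta$ and annihilates the coupling term) are in hand, the argument is purely algebraic. The only mild care needed is to verify that the zero-extended least energy critical points really lie in $E_a$, $E_b$ and on the scalar Nehari manifolds $\mathcal{N}_{a,\lambda}$, $\mathcal{N}_{b,\lambda}$, which is immediate from $(D_3)$ and the definitions of $E_a$, $E_b$.
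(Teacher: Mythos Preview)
Your proposal is correct and follows essentially the same route as the paper's proof: project each component of a Nehari pair onto the scalar Nehari manifolds for the lower bound, and test with the disjointly supported least-energy critical points of $I_{\Omega_a}$ and $I_{\Omega_b}$ for the upper bound. The only cosmetic differences are that for the lower bound the paper invokes the two-dimensional fibering maximum from Lemma~\ref{lem0007} to get $J_{\lambda,\beta}(u,v)\geq J_{\lambda,\beta}(t^*(u)u,s^*(v)v)\geq I_{a,\lambda}(t^*(u)u)+I_{b,\lambda}(s^*(v)v)$, whereas you use the quarter-norm identity $J_{\lambda,\beta}(u,v)=\tfrac14(\|u\|_{a,\lambda}^2+\|v\|_{b,\lambda}^2)$ together with the one-dimensional observation $\widetilde{t}\leq1$; and for the upper bound the paper projects $(w_a,w_b)$ onto $\mathcal{N}_{\lambda,\beta}$ and then bounds via $I_{\Omega_a}(w_a)\geq I_{\Omega_a}(t_{\lambda,\beta}w_a)$, while you note more sharply that the disjoint supports force $t_{\lambda,\beta}=s_{\lambda,\beta}=1$, so the test pair already sits on $\mathcal{N}_{\lambda,\beta}$.
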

\begin{proof}
Suppose $(u,v)\in\mathcal{N}_{\lambda,\beta}$.  Then by Lemma~\ref{lem0001} and $(u,v)\in\mathcal{N}_{\lambda,\beta}\subset\mathcal{A}_{\beta}$, we can see that $\|u\|_{a,\lambda}^2>0$, $\|v\|_{b,\lambda}^2>0$, $\|u\|_4>0$ and $\|v\|_4>0$.  It follows that there exists a unique $(t^*(u), s^*(v))\in\bbr^+\times\bbr^+$ such that
$(t^*(u)u, s^*(v)v)\in\mathcal{N}_{a,\lambda}\times\mathcal{N}_{b,\lambda}$.
Note that $\beta<0$, so by Lemma~\ref{lem0007}, we have
\begin{equation*}
J_{\lambda,\beta}(u,v)\geq J_{\lambda,\beta}(t^*(u)u,s^*(v)v)\geq I_{a,\lambda}(t^*(u)u)+I_{b,\lambda}(s^*(v)v)\geq m_{a,\lambda}+m_{b,\lambda}.
\end{equation*}
Since $(u,v)\in\mathcal{N}_{\lambda,\beta}$ is arbitrary, we must have $m_{\lambda,\beta}\geq m_{a,\lambda}+m_{b,\lambda}$ for $\lambda\geq\Lambda_1$ and $\beta<0$.  It remains to show that $m_{\lambda,\beta}\leq m_a+m_b$ for $\lambda\geq\Lambda_1$ and $\beta<0$.  In fact, let $w_{a}\in H_0^1(\Omega_a)$ and $w_{b}\in H^1_0(\Omega_b)$   be the least energy nonzero critical points of $I_{\Omega_a}(u)$ and $I_{\Omega_b}(v)$, respectively.  Then by the conditions $(D_3)$ and $(D_5)$, it is well-known that
\begin{equation*}
I_{\Omega_a}(w_{a})=\max_{t\geq0}I_{\Omega_a}(tw_{a})\quad\text{and}\quad I_{\Omega_b}(w_{b})=\max_{s\geq0}I_{\Omega_b}(sw_{b}).
\end{equation*}
On the other hand, by the condition $(D_3)$, we can extend $w_{a}$ and $w_{b}$ to  $\bbr^3$ by  letting $w_a=0$ outside $\Omega_a$ and $w_b=0$ outside $\Omega_b$ such that $w_a,w_b\in\h$.  Thanks to the condition $(D_3)$ again, we can see that $(w_{a}, w_{b})\in\mathcal{A}_{\beta}$.  It follows from Lemma~\ref{lem0007} that there exists $(t_{\lambda,\beta}(w_{a}, w_{b}), s_{\lambda,\beta}(w_{a}, w_{b}))\in\bbr^+\times\bbr^+$ such that
\begin{equation*}
(t_{\lambda,\beta}(w_{a}, w_{b})w_a, s_{\lambda,\beta}(w_{a}, w_{b})w_b)\in\mathcal{N}_{\lambda,\beta}\quad\text{for }\lambda\geq\Lambda_1,
\end{equation*}
which together with the condition $(D_3)$ once more, implies
\begin{eqnarray*}
m_a+m_b&=&I_{\Omega_a}(w_{a})+I_{\Omega_b}(w_{b})\\
&\geq&I_{\Omega_a}(t_{\lambda,\beta}(w_{a}, w_{b})w_{a})+I_{\Omega_b}(s_{\lambda,\beta}(w_{a}, w_{b})w_{b})\\
&=&J_{\lambda,\beta}(t_{\lambda,\beta}(w_{a}, w_{b})w_{a},s_{\lambda,\beta}(w_{a}, w_{b})w_{b})\\
&\geq& m_{\lambda,\beta}
\end{eqnarray*}
for $\lambda\geq\Lambda_1$ and $\beta<0$.
\end{proof}

\vskip0.1in

Clearly, $m_{a,\lambda}$ and $m_{b,\lambda}$ are nondecreasing for $\lambda$.  On the other hand, since Lemma~\ref{lem0001} hold, by the conditions $(D_1)$-$(D_5)$,  it is easy to show that $m_{a,\lambda}$ and $m_{b,\lambda}$ are positive for $\lambda\geq\Lambda_1$ and $m_{a,\lambda}\to m_{a}$ and $m_{b, \lambda}\to m_{b}$ as $\lambda\to+\infty$.
This fact will help us to observe the following property of $\mathcal{N}_{\lambda,\beta}$, which is based on Lemma~\ref{lem0008}.

\vskip0.1in

\begin{lemma}
\label{lem0009}Assume $\lambda\geq\Lambda_1$ and $\beta<0$.  Then there exists $d_{\lambda,\beta}>0$ such that $\mathcal{N}_{\lambda,\beta}\subset\mathcal{A}_{\beta}^{d_{\lambda,\beta}}$, where $\mathcal{A}_{\beta}^{d_{\lambda,\beta}}=\{(u,v)\in E \mid u\not=0,v\not=0,\mu_1\mu_2\|u\|_4^4\|v\|_4^4-\beta^2\|u^2v^2\|^2_1>d_{\lambda,\beta}\}$.
\end{lemma}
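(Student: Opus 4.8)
The plan is to argue by contradiction, suppose no such positive lower bound $d_{\lambda,\beta}$ exists, and then extract a contradiction by showing that a minimizing sequence on $\mathcal{N}_{\lambda,\beta}$ cannot stay in $\mathcal{N}_{\lambda,\beta}$ as the defining quantity $\mu_1\mu_2\|u\|_4^4\|v\|_4^4-\beta^2\|u^2v^2\|_1^2$ degenerates to $0$. More precisely, if the infimum of this quantity over $\mathcal{N}_{\lambda,\beta}$ were $0$, there would be a sequence $(u_n,v_n)\in\mathcal{N}_{\lambda,\beta}$ with $\mu_1\mu_2\|u_n\|_4^4\|v_n\|_4^4-\beta^2\|u_n^2v_n^2\|_1^2\to0^+$ while still $(u_n,v_n)\in\mathcal{A}_\beta$ (by Lemma~\ref{lem0007}). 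First I would record the two Nehari identities that come from $(u_n,v_n)\in\mathcal{N}_{\lambda,\beta}$, namely
$\|u_n\|_{a,\lambda}^2=\mu_1\|u_n\|_4^4+\beta\|u_n^2v_n^2\|_1$ and $\|v_n\|_{b,\lambda}^2=\mu_2\|v_n\|_4^4+\beta\|u_n^2v_n^2\|_1$, and combine them, using $\beta<0$, to obtain $\mu_1\|u_n\|_4^4\geq\|u_n\|_{a,\lambda}^2$ and $\mu_2\|v_n\|_4^4\geq\|v_n\|_{b,\lambda}^2$.

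Next I would use Lemma~\ref{lem0008} together with the expression \eqref{eq0081} for $J_{\lambda,\beta}$ on $\mathcal{N}_{\lambda,\beta}$. On the Nehari manifold one has $J_{\lambda,\beta}(u_n,v_n)=\frac14\|u_n\|_{a,\lambda}^2+\frac14\|v_n\|_{b,\lambda}^2$, so Lemma~\ref{lem0008} gives the two-sided bound $\|u_n\|_{a,\lambda}^2+\|v_n\|_{b,\lambda}^2\le 4(m_a+m_b)$; in particular $\|u_n\|_{a,\lambda}$ and $\|v_n\|_{b,\lambda}$ are bounded above. On the other hand, since $m_{a,\lambda},m_{b,\lambda}>0$ for $\lambda\ge\Lambda_1$, a standard Nehari lower bound (using \eqref{eq0006}) yields a uniform positive lower bound $\|u_n\|_{a,\lambda}\ge c>0$ and $\|v_n\|_{b,\lambda}\ge c>0$. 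Combined with the inequalities $\mu_1\|u_n\|_4^4\ge\|u_n\|_{a,\lambda}^2$ and $\mu_2\|v_n\|_4^4\ge\|v_n\|_{b,\lambda}^2$ from the previous paragraph, this forces $\|u_n\|_4^4\ge c'>0$ and $\|v_n\|_4^4\ge c'>0$, so that $\mu_1\mu_2\|u_n\|_4^4\|v_n\|_4^4\ge c''>0$. Now from the degeneration hypothesis $\mu_1\mu_2\|u_n\|_4^4\|v_n\|_4^4-\beta^2\|u_n^2v_n^2\|_1^2\to0$ we deduce $\beta^2\|u_n^2v_n^2\|_1^2\ge c''/2>0$ for $n$ large, hence $\|u_n^2v_n^2\|_1$ is bounded below away from $0$. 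But the Nehari identities then give $\beta\|u_n^2v_n^2\|_1=\|u_n\|_{a,\lambda}^2-\mu_1\|u_n\|_4^4$, and feeding the upper bounds on $\|u_n\|_{a,\lambda}$ and $\|v_n\|_{b,\lambda}$ through the Hölder/Sobolev estimates \eqref{eq0006}–\eqref{eq0007} (controlling $\|u_n\|_4,\|v_n\|_4$ from above) shows $\beta^2\|u_n^2v_n^2\|_1^2$ is bounded, while $\mu_1\mu_2\|u_n\|_4^4\|v_n\|_4^4$ is also bounded; the key point is that the difference being both $\ge c''/2$ in the $\beta^2$-term and $\to0$ as a whole is contradictory once one controls the cross term $\|u_n^2v_n^2\|_1\le\|u_n\|_4^2\|v_n\|_4^2$ by Cauchy–Schwarz and substitutes into the two Nehari relations to express everything in terms of $\|u_n\|_4$, $\|v_n\|_4$ alone.

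The cleanest way to close the argument, and the step I expect to be the main obstacle, is the following algebraic manipulation on $\mathcal{N}_{\lambda,\beta}$: writing $X_n=\mu_1\|u_n\|_4^4$, $Y_n=\mu_2\|v_n\|_4^4$, $Z_n=-\beta\|u_n^2v_n^2\|_1>0$, the Nehari identities read $\|u_n\|_{a,\lambda}^2=X_n-Z_n$ and $\|v_n\|_{b,\lambda}^2=Y_n-Z_n$, both positive, so $X_n>Z_n$ and $Y_n>Z_n$; meanwhile the quantity in question is $X_nY_n-Z_n^2$. Since $X_n>Z_n$ and $Y_n>Z_n$ we get $X_nY_n>Z_n^2$, and more quantitatively $X_nY_n-Z_n^2=(X_n-Z_n)Y_n+Z_n(Y_n-Z_n)\ge(X_n-Z_n)Y_n=\|u_n\|_{a,\lambda}^2\,\mu_2\|v_n\|_4^4$. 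Using again $\mu_2\|v_n\|_4^4\ge\|v_n\|_{b,\lambda}^2$ and the uniform lower bounds $\|u_n\|_{a,\lambda}^2\ge c^2$, $\|v_n\|_{b,\lambda}^2\ge c^2$, we obtain $X_nY_n-Z_n^2\ge c^4>0$ for all $n$, i.e. $\mu_1\mu_2\|u_n\|_4^4\|v_n\|_4^4-\beta^2\|u_n^2v_n^2\|_1^2\ge c^4$, contradicting the assumed degeneration. Therefore the infimum over $\mathcal{N}_{\lambda,\beta}$ of this quantity is bounded below by a positive constant $d_{\lambda,\beta}$ (one may take $d_{\lambda,\beta}=c^4/2$), which is exactly the assertion $\mathcal{N}_{\lambda,\beta}\subset\mathcal{A}_\beta^{d_{\lambda,\beta}}$. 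The only remaining ingredient is the uniform Nehari lower bound $\|u_n\|_{a,\lambda},\|v_n\|_{b,\lambda}\ge c$, which follows because on $\mathcal{N}_{a,\lambda}$ one has $\|u\|_{a,\lambda}^2\le\mu_1\|u\|_4^4\le C\|u\|_{a,\lambda}^4$ by \eqref{eq0006}, forcing $\|u\|_{a,\lambda}^2\ge 1/C$, and the projection argument in Lemma~\ref{lem0008} shows $t^*(u_n)u_n\in\mathcal{N}_{a,\lambda}$ with $t^*(u_n)\le1$ (again using $\beta<0$), hence $\|u_n\|_{a,\lambda}\ge\|t^*(u_n)u_n\|_{a,\lambda}\ge c$; similarly for $v_n$.
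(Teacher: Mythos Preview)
Your argument is correct and in fact more direct than the paper's. The paper proceeds by contradiction and splits into two cases: either $\|u_n\|_4^4\|v_n\|_4^4$ stays bounded below (in which case one shows both $\|u_n\|_{a,\lambda}$ and $\|v_n\|_{b,\lambda}$ tend to zero, contradicting $m_{\lambda,\beta}>0$ via Lemma~\ref{lem0008}), or $\|u_n\|_4^4\|v_n\|_4^4\to 0$ (in which case one uses the fibering map of Lemma~\ref{lem0007} to squeeze in an extra $m_{a,\lambda}>0$ and reach a contradiction). Your route bypasses this dichotomy entirely: the algebraic identity
\[
\mu_1\mu_2\|u\|_4^4\|v\|_4^4-\beta^2\|u^2v^2\|_1^2
=(X-Z)Y+Z(Y-Z)\ge (X-Z)(Y-Z)+Z(Y-Z)\ge \|u\|_{a,\lambda}^2\,\|v\|_{b,\lambda}^2
\]
(with $X=\mu_1\|u\|_4^4$, $Y=\mu_2\|v\|_4^4$, $Z=-\beta\|u^2v^2\|_1\ge0$, using the Nehari relations $X-Z=\|u\|_{a,\lambda}^2$, $Y-Z=\|v\|_{b,\lambda}^2$) gives a \emph{quantitative} lower bound on the defining quantity, from which the lemma follows at once. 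This is shorter and yields an explicit $d_{\lambda,\beta}$.

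One small point to tighten: when you invoke \eqref{eq0006} to obtain $\|u\|_4^4\le C\|u\|_{a,\lambda}^4$, note that \eqref{eq0006} controls $\|u\|_4$ by $\|u\|_a$, not by $\|u\|_{a,\lambda}$. You need the additional observation (implicit in \eqref{eq0009}--\eqref{eq0332}) that for $\lambda\ge\Lambda_1$ one has $\|u\|_a^2\le C'\|u\|_{a,\lambda}^2$: indeed Lemma~\ref{lem0001} gives $\|u\|_2^2\le C_{a,b}^{-1}\|u\|_{a,\lambda}^2$, and combining this with $(D_4)$ and $\lambda\ge\Lambda_0$ one controls $\int a_0^-u^2$ and hence $\|\nabla u\|_2^2$ by $\|u\|_{a,\lambda}^2$. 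With this in hand your Nehari lower bound $\|u_n\|_{a,\lambda}^2\ge c^2$ follows exactly as you wrote (the projection step with $t^*(u_n)\le 1$ is correct but not even needed: one may argue directly from $\|u_n\|_{a,\lambda}^2\le \mu_1\|u_n\|_4^4\le \mu_1 C^4\|u_n\|_{a,\lambda}^4$).
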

\begin{proof}
A similar result was obtained in \cite{CZ131}.  But as we will see, some new ideas are needed due to the fact that $a_0(x)$ and $b_0(x)$ are sign-changing.  Suppose the contrary.  Since $\mathcal{N}_{\lambda,\beta}\subset\mathcal{A}_{\beta}$, there exists a sequence $\{(u_n,v_n)\}\subset\mathcal{N}_{\lambda,\beta}$ such that $\mu_1\mu_2\|u_n\|_4^4\|v_n\|_4^4=\beta^2\|u_n^2v_n^2\|^2_1+o_n(1)$, where $\mathcal{A}_{\beta}$ is given in \eqref{eq0069}.  Clearly, one of the following two cases must happen:
\begin{enumerate}
\item[$(a)$] $\|u_n\|_4^4\|v_n\|_4^4\geq C+o_n(1)$.
\item[$(b)$] $\|u_n\|_4^4\|v_n\|_4^4=o_n(1)$ up to a subsequence.
\end{enumerate}
Suppose case $(a)$ happen.  We claim that $\mu_1\|u_n\|_4^4+\beta\|u_n^2v_n^2\|_1=o_n(1)$ and $\mu_2\|v_n\|_4^4+\beta\|u_n^2v_n^2\|_1=o_n(1)$ up to a subsequence.  If not, then up to a subsequence, we have
\begin{equation*}
\mu_1\|u_n\|_4^4+\beta\|u_n^2v_n^2\|_1\geq C_1+o_n(1)\quad\text{and}\quad\mu_2\|v_n\|_4^4+\beta\|u_n^2v_n^2\|_1\geq C_2+o_n(1)
\end{equation*}
for $\lambda\geq\Lambda_1$ and $\beta<0$,
where $C_1, C_2$ are nonnegative constants with $C_1+C_2>0$.  It follows from $\beta<0$ that
\begin{eqnarray*}
\mu_1\mu_2\|u_n\|_4^4\|v_n\|_4^4&\geq&(C_1+o_n(1)+|\beta|\|u_n^2v_n^2\|_1)(C_2+o_n(1)+|\beta|\|u_n^2v_n^2\|_1)\\
&\geq&\beta^2\|u_n^2v_n^2\|^2_1+(C_1+C_2+o_n(1))|\beta|\|u_n^2v_n^2\|_1+C_1C_2+o_n(1)\\
&\geq&\beta^2\|u_n^2v_n^2\|^2_1+\frac12(C_1+C_2)\sqrt{C}+o_n(1)
\end{eqnarray*}
for $n$ large enough, which contradicts to $\mu_1\mu_2\|u_n\|_4^4\|v_n\|_4^4=\beta^2\|u_n^2v_n^2\|^2_1+o_n(1)$.  This together with $\{(u_n,v_n)\}\subset\mathcal{N}_{\lambda,\beta}$, implies  that  $\|u_n\|_{a,\lambda}=o_n(1)$ and $\|v_n\|_{b,\lambda}=o_n(1)$ up to a subsequence.  Note that $J_{\lambda,\beta}(u_n,v_n)=\frac14(\|u_n\|_{a,\lambda}^2+\|v_n\|_{b,\lambda}^2)$.  So $m_{\lambda,\beta}\leq0$ in case $(a)$, which is impossible for $\lambda\geq\Lambda_1$ and $\beta<0$ due to Lemma~\ref{lem0008}.
Now, we must have case $(b)$.  It follows that $\|u_n\|_4=o_n(1)$ or $\|v_n\|_4=o_n(1)$ up to a subsequence.  Without loss of generality, we assume $\|u_n\|_4=o_n(1)$.  Since $\mu_1\mu_2\|u_n\|_4^4\|v_n\|_4^4=\beta^2\|u_n^2v_n^2\|^2_1+o_n(1)$, we also have $\beta\|u_n^2v_n^2\|_1=o_n(1)$ in this case.  These together with $\{(u_n,v_n)\}\subset\mathcal{N}_{\lambda,\beta}$, imply
$\|u_n\|_{a,\lambda}=o_n(1)$.  Therefore, $J_{\lambda,\beta}(u_n,v_n)=I_{b,\lambda}(v_n)+o_n(1)$.  On the other hand, since $\lambda\geq\Lambda_1$ and $\{(u_n,v_n)\}\subset\mathcal{N}_{\lambda,\beta}$, by Lemma~\ref{lem0001} and $\mathcal{N}_{\lambda,\beta}\subset\mathcal{A}_{\beta}$, for all $n\in\bbn$, there exists a unique $t^*(u_n)>0$ such that $t^*(u_n)u_n\in\mathcal{N}_{a,\lambda}$.  It follows from Lemma~\ref{lem0007} and $\beta<0$ that
\begin{eqnarray*}
J_{\lambda,\beta}(u_n,v_n)&\geq& J_{\lambda,\beta}(t^*(u_n)u_n,v_n)\\
&\geq&I_{a,\lambda}(t^*(u_n)u_n)+I_{b,\lambda}(v_n)\\
&\geq&m_{a,\lambda}+I_{b,\lambda}(v_n)\\
&=&m_{a,\lambda}+J_{\lambda,\beta}(u_n,v_n)+o_n(1)
\end{eqnarray*}
for $\lambda\geq\Lambda_1$ and $\beta<0$,
which is also impossible for $n$ large enough.  Thus, there exists $d_{\lambda,\beta}>0$ such that $\mathcal{N}_{\lambda,\beta}\subset\mathcal{A}_{\beta}^{d_{\lambda,\beta}}$ for $\lambda\geq\Lambda_1$ and $\beta<0$.
\end{proof}

\vskip0.1in

We also have the following.

\vskip0.1in

\begin{lemma}
\label{lem0010}Suppose $\lambda\geq\Lambda_1$ and $\beta<0$.  Then $\mathcal{N}_{\lambda,\beta}$ is a natural constraint.
\end{lemma}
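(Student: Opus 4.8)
The plan is to show that every nonzero critical point of the restriction $J_{\lambda,\beta}|_{\mathcal{N}_{\lambda,\beta}}$ is in fact a free critical point of $J_{\lambda,\beta}$ on $E$; this is the meaning of $\mathcal{N}_{\lambda,\beta}$ being a natural constraint. First I would record that $\mathcal{N}_{\lambda,\beta}$ is a $C^1$ manifold. Indeed, set $G_1(u,v)=\langle D[J_{\lambda,\beta}(u,v)],(u,0)\rangle_{E^*,E}=\|u\|_{a,\lambda}^2-\mu_1\|u\|_4^4-\beta\|u^2v^2\|_1$ and $G_2(u,v)=\langle D[J_{\lambda,\beta}(u,v)],(0,v)\rangle_{E^*,E}=\|v\|_{b,\lambda}^2-\mu_2\|v\|_4^4-\beta\|u^2v^2\|_1$, so that $\mathcal{N}_{\lambda,\beta}=\{(u,v)\in E: u\neq0,\ v\neq0,\ G_1(u,v)=G_2(u,v)=0\}$. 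On $\mathcal{N}_{\lambda,\beta}$ one computes, using the notation $T^1,T^2$ from Lemma~\ref{lem0007} (note $G_i(u,v)=T^i_{\lambda,\beta,u,v}(1,1)$),
\begin{equation*}
\begin{pmatrix}\langle DG_1,(u,0)\rangle & \langle DG_1,(0,v)\rangle\\[1mm] \langle DG_2,(u,0)\rangle & \langle DG_2,(0,v)\rangle\end{pmatrix}
=\begin{pmatrix}-2\mu_1\|u\|_4^4 & -2\beta\|u^2v^2\|_1\\[1mm] -2\beta\|u^2v^2\|_1 & -2\mu_2\|v\|_4^4\end{pmatrix},
\end{equation*}
whose determinant equals $4(\mu_1\mu_2\|u\|_4^4\|v\|_4^4-\beta^2\|u^2v^2\|_1^2)$. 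By Lemma~\ref{lem0009}, this is bounded below by $4d_{\lambda,\beta}>0$ on $\mathcal{N}_{\lambda,\beta}$, so the Jacobian of $(G_1,G_2)$ is surjective at every point of $\mathcal{N}_{\lambda,\beta}$; hence $\mathcal{N}_{\lambda,\beta}$ is a $C^1$ Banach submanifold of $E$ of codimension two.

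Next I would carry out the Lagrange-multiplier argument. If $(u,v)\in\mathcal{N}_{\lambda,\beta}$ is a critical point of $J_{\lambda,\beta}$ restricted to $\mathcal{N}_{\lambda,\beta}$, then there exist $\theta_1,\theta_2\in\bbr$ with $D[J_{\lambda,\beta}(u,v)]=\theta_1 DG_1(u,v)+\theta_2 DG_2(u,v)$ in $E^*$. Testing this identity against $(u,0)$ and against $(0,v)$ and using $G_1(u,v)=G_2(u,v)=0$ (so that $\langle D[J_{\lambda,\beta}(u,v)],(u,0)\rangle=\langle D[J_{\lambda,\beta}(u,v)],(0,v)\rangle=0$) yields the linear system
\begin{equation*}
\left\{\aligned
0&=\theta_1(-2\mu_1\|u\|_4^4)+\theta_2(-2\beta\|u^2v^2\|_1),\\
0&=\theta_1(-2\beta\|u^2v^2\|_1)+\theta_2(-2\mu_2\|v\|_4^4).
\endaligned\right.
\end{equation*}
The coefficient matrix is exactly the $2\times2$ matrix displayed above, whose determinant is $4(\mu_1\mu_2\|u\|_4^4\|v\|_4^4-\beta^2\|u^2v^2\|_1^2)\geq 4d_{\lambda,\beta}>0$ on $\mathcal{N}_{\lambda,\beta}$ by Lemma~\ref{lem0009}. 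Hence the system is nonsingular and forces $\theta_1=\theta_2=0$, so $D[J_{\lambda,\beta}(u,v)]=0$ in $E^*$; that is, $(u,v)$ is a genuine critical point of $J_{\lambda,\beta}$, and by the variational setting of Section~2 it corresponds to a (possibly sign-changing) solution of $(\mathcal{P}_{\lambda,\beta})$.

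The only genuine obstacle is the nondegeneracy of the constraint, i.e. making sure the determinant $\mu_1\mu_2\|u\|_4^4\|v\|_4^4-\beta^2\|u^2v^2\|_1^2$ does not vanish along $\mathcal{N}_{\lambda,\beta}$; this is precisely where the sign-change of $a_0,b_0$ makes the naive argument fail and where the quantitative lower bound $d_{\lambda,\beta}>0$ of Lemma~\ref{lem0009} is indispensable (a bare positivity $\mathcal{N}_{\lambda,\beta}\subset\mathcal{A}_\beta$ would not by itself rule out the determinant tending to zero along a minimizing sequence, but here we only need it pointwise, and Lemma~\ref{lem0009} even gives it uniformly). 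Everything else — the $C^1$ regularity of $G_1,G_2$, which follows from the continuity and smoothness estimates \eqref{eq0006}--\eqref{eq0003} established in Section~2, and the Lagrange multiplier rule on a $C^1$ manifold — is standard. I would therefore structure the written proof as: (i) $G_1,G_2\in C^1(E)$ and $\mathcal{N}_{\lambda,\beta}$ is a $C^1$ manifold via the Jacobian computation and Lemma~\ref{lem0009}; (ii) the Lagrange-multiplier linear system and its invertibility forcing $\theta_1=\theta_2=0$; (iii) conclude $D[J_{\lambda,\beta}]=0$.
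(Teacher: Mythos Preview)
Your proof is correct and is the rigorous way to handle the two-constraint Nehari set $\mathcal{N}_{\lambda,\beta}=\{G_1=G_2=0\}$. The paper's own proof is considerably terser and takes a slightly different tack: it works with the \emph{single} scalar function $\varphi_{\lambda,\beta}(u,v)=\langle D[J_{\lambda,\beta}(u,v)],(u,v)\rangle_{E^*,E}=G_1+G_2$, computes that on $\mathcal{N}_{\lambda,\beta}$ one has
\[
\langle D[\varphi_{\lambda,\beta}(u,v)],(u,v)\rangle_{E^*,E}=-2\bigl(\mu_1\|u\|_4^4+\mu_2\|v\|_4^4+2\beta\|u^2v^2\|_1\bigr)\le-8m_{\lambda,\beta}<0
\]
(the last inequality coming from Lemma~\ref{lem0008}, not Lemma~\ref{lem0009}), and declares the result. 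Read literally, that computation only shows non-degeneracy of the single summed constraint $\{\varphi=0\}$; for the genuine two-constraint manifold one needs the $2\times2$ Jacobian of $(G_1,G_2)$ to be nonsingular, i.e.\ the determinant $\mu_1\mu_2\|u\|_4^4\|v\|_4^4-\beta^2\|u^2v^2\|_1^2$ to be nonzero on $\mathcal{N}_{\lambda,\beta}$, which is exactly what Lemma~\ref{lem0009} supplies and what you invoke. So your argument is the correct completion of what the paper sketches; the paper's brief version implicitly leans on the same invertibility you make explicit. The trade-off is that the paper's route is one line and uses only the energy lower bound $m_{\lambda,\beta}>0$, whereas your route is fully self-contained for the codimension-two constraint at the cost of citing the sharper Lemma~\ref{lem0009}. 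Your write-up is preferable as stated.
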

\begin{proof}
Let $\varphi_{\lambda,\beta}(u,v)=\langle D[J_{\lambda,\beta}(u,v)],(u,v)\rangle_{E^*,E}$.  Then by \eqref{eq0006}-\eqref{eq0003}, $\varphi_{\lambda,\beta}(u,v)$ is $C^2$ in $E$ for $\lambda\geq\Lambda_1$ and $\beta<0$.
Since $\lambda\geq\Lambda_1$ and $(u,v)\in\mathcal{N}_{\lambda,\beta}$, we have
\begin{eqnarray*}
\langle D[\varphi_{\lambda,\beta}(u,v)],(u,v)\rangle_{E^*,E}=-2(\mu_1\|u\|_4^4+\mu_2\|v\|_4^4+2\beta\|u^2v^2\|_1)
\leq-8m_{\lambda,\beta}.
\end{eqnarray*}
It follows from Lemma~\ref{lem0008} that $\mathcal{N}_{\lambda,\beta}$ is a natural constraint for $\lambda\geq\Lambda_1$ and $\beta<0$.
\end{proof}

\vskip0.1in

Now, we can obtain a ground state solution for $(\mathcal{P}_{\lambda,\beta})$.

\vskip0.1in

\begin{proposition}
\label{prop0002}There exists $\Lambda_2\geq\Lambda_1$ such that $(\mathcal{P}_{\lambda,\beta})$ has a ground state solution $(u_{\lambda,\beta},v_{\lambda,\beta})$ for $\lambda\geq\Lambda_2$ and $\beta<0$.  Furthermore, we have
\begin{equation}\label{eq0018}
\lim_{\lambda\to+\infty}\int_{\bbr^3\backslash\Omega_a'}|\nabla u_{\lambda,\beta}|^2+(\lambda a(x)+a_0(x))u_{\lambda,\beta}^2 dx=0
\end{equation}
and
\begin{equation}\label{eq0040}
\lim_{\lambda\to+\infty}\int_{\bbr^3\backslash\Omega_b'}|\nabla v_{\lambda,\beta}|^2+(\lambda b(x)+b_0(x))v_{\lambda,\beta}^2 dx=0.
\end{equation}
\end{proposition}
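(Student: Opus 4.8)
The plan is to obtain the ground state solution via minimization over the Nehari manifold $\mathcal{N}_{\lambda,\beta}$, exploiting the structure already established: by Lemma~\ref{lem0010} the manifold is a natural constraint, by Lemma~\ref{lem0008} the minimum level $m_{\lambda,\beta}$ lies in $[m_{a,\lambda}+m_{b,\lambda},\,m_a+m_b]$, and by Lemma~\ref{lem0009} every point of $\mathcal{N}_{\lambda,\beta}$ stays a definite distance $d_{\lambda,\beta}>0$ inside the open set $\mathcal{A}_\beta$. First I would take a minimizing sequence $\{(u_n,v_n)\}\subset\mathcal{N}_{\lambda,\beta}$ for $m_{\lambda,\beta}$. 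Because $J_{\lambda,\beta}(u_n,v_n)=\tfrac14(\|u_n\|_{a,\lambda}^2+\|v_n\|_{b,\lambda}^2)$ on the Nehari manifold, boundedness of the sequence in $E$ (with respect to the $\|\cdot\|_{a,\lambda}$, $\|\cdot\|_{b,\lambda}$ norms, hence by Lemma~\ref{lem0001} and \eqref{eq0004}--\eqref{eq0005} in $\h\times\h$) is immediate. Passing to a subsequence, $(u_n,v_n)\rightharpoonup(u_{\lambda,\beta},v_{\lambda,\beta})$ weakly in $E$, weakly in $\lp$ for $p\in[2,6)$, and a.e.; a Brezis--Lieb / splitting argument handles the quartic and coupling terms.

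The crux is to show the weak limit is \emph{nontrivial}, i.e. $u_{\lambda,\beta}\not\equiv0$ and $v_{\lambda,\beta}\not\equiv0$, and that strong convergence holds so that the limit actually lies on $\mathcal{N}_{\lambda,\beta}$ at level $m_{\lambda,\beta}$. Here is where Lemma~\ref{lem0009} is essential: the uniform lower bound $\mu_1\mu_2\|u_n\|_4^4\|v_n\|_4^4-\beta^2\|u_n^2v_n^2\|_1^2\ge d_{\lambda,\beta}$ forces $\|u_n\|_4$ and $\|v_n\|_4$ to stay bounded away from $0$. Combined with the local compactness of the embedding $E_a\hookrightarrow L^4_{loc}$ and the Lions-type concentration-compactness estimate coming from $(D_2)$ (vanishing outside $\mathcal{D}_a$ is controlled, exactly as in the derivation of \eqref{eq0022}), one rules out vanishing and concludes $\|u_{\lambda,\beta}\|_4,\|v_{\lambda,\beta}\|_4>0$. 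Then, using that $\mathcal{N}_{\lambda,\beta}$ is a $C^1$ manifold and $J_{\lambda,\beta}$ is weakly lower semicontinuous on it, together with the fiber-map characterization of Lemma~\ref{lem0007}(1) (project $(u_{\lambda,\beta},v_{\lambda,\beta})\in\mathcal{A}_\beta$ onto $\mathcal{N}_{\lambda,\beta}$ and compare energies), one gets $J_{\lambda,\beta}(u_{\lambda,\beta},v_{\lambda,\beta})\le m_{\lambda,\beta}$, hence equality, hence strong convergence and $(u_{\lambda,\beta},v_{\lambda,\beta})\in\mathcal{N}_{\lambda,\beta}$ is a minimizer. By Lemma~\ref{lem0010} it is a critical point of $J_{\lambda,\beta}$; replacing $(u,v)$ by $(|u|,|v|)$ keeps it on $\mathcal{N}_{\lambda,\beta}$ with the same energy, and the strong maximum principle upgrades nonnegativity to positivity, so $(u_{\lambda,\beta},v_{\lambda,\beta})$ is a genuine ground state solution of $(\mathcal{P}_{\lambda,\beta})$. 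This fixes $\Lambda_2\ge\Lambda_1$ (any $\lambda\ge\Lambda_1$ should work, possibly enlarging to absorb the maximum-principle regularity step).

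For the concentration estimates \eqref{eq0018}--\eqref{eq0040}, I would argue by the energy pinching $m_{\lambda,\beta}\le m_a+m_b$ from Lemma~\ref{lem0008}. Since $J_{\lambda,\beta}(u_{\lambda,\beta},v_{\lambda,\beta})=\tfrac14(\|u_{\lambda,\beta}\|_{a,\lambda}^2+\|v_{\lambda,\beta}\|_{b,\lambda}^2)=m_{\lambda,\beta}$, the full norms stay uniformly bounded in $\lambda$; along any sequence $\lambda_n\to+\infty$ the limit $(u_{0,\beta},v_{0,\beta})$ must vanish outside $\Omega_a\times\Omega_b$ (the coefficient $\lambda a(x)\to+\infty$ on $\{a>0\}\supset\bbr^3\setminus\overline{\Omega}_a$ forces $\int (\lambda a)u_{\lambda,\beta}^2$ bounded $\Rightarrow u\equiv0$ there, using $\overline{\Omega}_a=a^{-1}(0)$ from $(D_3)$). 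Then $\liminf_{\lambda\to+\infty} m_{\lambda,\beta}\ge I_{\Omega_a}(u_{0,\beta})+I_{\Omega_b}(v_{0,\beta})\ge m_a+m_b$ by lower semicontinuity and the definition of $m_a,m_b$, while the reverse inequality is Lemma~\ref{lem0008}; so all inequalities are equalities, which both identifies the limits as least-energy critical points of $I_{\Omega_a},I_{\Omega_b}$ and forces the tail integrals $\int_{\bbr^3\setminus\Omega_a'}|\nabla u_{\lambda,\beta}|^2+(\lambda a+a_0)u_{\lambda,\beta}^2\,dx\to0$ (and likewise for $v$), since no energy is left to live outside $\Omega_a'\supset\Omega_a$. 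The main obstacle I anticipate is the no-vanishing step for the weak limit at fixed $\lambda$: the interaction term $\beta\int u^2v^2$ is negative and does \emph{not} help coercivity or compactness, so one genuinely needs Lemma~\ref{lem0009}'s separation $d_{\lambda,\beta}>0$ to prevent a splitting in which one component escapes to spatial infinity while the other carries all the mass — the sign-changing, possibly unbounded weights $a_0,b_0$ make this delicate, but the estimates \eqref{eq0002}--\eqref{eq0022} keep everything controlled by the dominant $\lambda a(x)$ term.
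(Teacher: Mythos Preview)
Your outline has the right skeleton but two genuine gaps. First, you skip the Ekeland step: the paper applies the implicit function theorem together with Ekeland's variational principle on $\mathcal{N}_{\lambda,\beta}$ (this is where Lemma~\ref{lem0009} is actually used, to ensure the constraint map is nondegenerate) to upgrade the minimizing sequence to a $(PS)_{m_{\lambda,\beta}}$ sequence. With $D[J_{\lambda,\beta}(u_n,v_n)]\to 0$ in $E^*$, the weak limit is automatically a critical point, and then Fatou plus the identity $J_{\lambda,\beta}=\tfrac14(\|u\|_{a,\lambda}^2+\|v\|_{b,\lambda}^2)$ on critical points gives $J_{\lambda,\beta}(u_{\lambda,\beta},v_{\lambda,\beta})\le m_{\lambda,\beta}$. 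Your alternative route (project the weak limit onto $\mathcal{N}_{\lambda,\beta}$ and ``compare energies'') would need strong $L^4\times L^4$ convergence of $(u_n,v_n)$ to pass the Nehari constraints to the limit, and you do not have that: mass outside $\mathcal{D}_a$ is only bounded by $O(\lambda^{-1/2})$, not by $o_n(1)$, so $\|u_n\|_4^4$ need not converge to $\|u_{\lambda,\beta}\|_4^4$ for fixed $\lambda$. The ``Lions-type'' phrase is also misleading here because the potentials destroy translation invariance, so the standard concentration--compactness dichotomy does not apply as stated.

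Second, you misidentify the origin of $\Lambda_2$. It is \emph{not} a regularity threshold; it is exactly what makes the nontriviality argument close. Assuming $u_{\lambda,\beta}=0$, the paper shows (via \eqref{eq0076}--\eqref{eq0078}) that $\int_{\mathcal{D}_a}u_n^4\to 0$ while $\int_{\bbr^3\setminus\mathcal{D}_a}u_n^4\le (2/(a_\infty S^3\lambda))^{1/2}\|u_n\|_{a,\lambda}^4$; feeding this into the Nehari relation $\|u_n\|_{a,\lambda}^2\le\mu_1\|u_n\|_4^4$ and using $\|u_n\|_{a,\lambda}^2\le 4(m_a+m_b)$ gives $\|u_n\|_{a,\lambda}^2\le 4(m_a+m_b)(2/(a_\infty S^3\lambda))^{1/2}\|u_n\|_{a,\lambda}^2+o_n(1)$. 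This forces $\|u_n\|_{a,\lambda}\to 0$ (hence $\|u_n\|_4\to0$, contradicting Lemma~\ref{lem0009}) \emph{only when the coefficient is below $1$}, i.e.\ for $\lambda\ge\Lambda_2$ with $\Lambda_2$ chosen so that $4(m_a+m_b)(2/(a_\infty S^3\Lambda_2))^{1/2}<1$. Your claim that ``any $\lambda\ge\Lambda_1$ should work'' is therefore wrong. For the decay estimates \eqref{eq0018}--\eqref{eq0040}, the paper does \emph{not} use your energy-pinching/limit-identification argument (that is essentially the proof of Theorem~\ref{thm0002}); instead it multiplies the equation by $u_{\lambda,\beta}\Psi$ for a cutoff $\Psi$ supported outside a slightly smaller set $\Omega_a''$ and uses $\int_{\bbr^3\setminus\Omega_a''}u_{\lambda,\beta}^2\to 0$ (from the uniform bound $\lambda\int a\,u_{\lambda,\beta}^2\le C$) to kill the right-hand side directly. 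Your approach to \eqref{eq0018}--\eqref{eq0040} would eventually work, but it proves more than the proposition asserts and relies on the very limit behavior that Theorem~\ref{thm0002} establishes afterwards.
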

\begin{proof}
Let $\lambda\geq\Lambda_1$ and $\beta<0$.  Then for every $\{(u_n,v_n)\}\subset\mathcal{N}_{\lambda,\beta}$ satisfying $J_{\lambda,\beta}(u_n,v_n)=m_{\lambda,\beta}+o_n(1)$, we can see from Lemma~\ref{lem0001} that
\begin{eqnarray}
m_{\lambda,\beta}+o_n(1)&\geq&J_{\lambda,\beta}(u_n,v_n)-\frac14\langle D[J_{\lambda,\beta}(u_n,v_n)], (u_n,v_n)\rangle_{E^*,E}\notag\\
&=&\frac14\|u_n\|^2_{a,\lambda}+\frac14\|v_n\|^2_{b,\lambda}\notag\\
&\geq&\frac{1}{4C_{a,b}}(\|u_n\|_2^2+\|v_n\|_2^2),\label{eq0009}
\end{eqnarray}
which together with the condition $(D_4)$ and $\lambda\geq\Lambda_1$, implies
\begin{eqnarray}
m_{\lambda,\beta}+o_n(1)&\geq&\frac14\|u_n\|^2_{a,\lambda}+\frac14\|v_n\|^2_{b,\lambda}\notag\\
&\geq&\frac18\|(u_n,v_n)\|^2-C(m_{\lambda,\beta}+o_n(1)).\label{eq0332}
\end{eqnarray}
It follows that $\|(u_n,v_n)\|\leq 8(C+1)(m_{\lambda,\beta}+o_n(1))$.  Now, by Lemma~\ref{lem0009}, we can apply the implicit function theorem and the Ekeland variational principle in a standard way (cf. \cite{CZ121,LW13}) to show that there exists $\{(u_n, v_n)\}\subset\mathcal{N}_{\lambda,\beta}$ such that $D[J_{\lambda,\beta}(u_n,v_n)]=o_n(1)$ strongly in $E^*$ and $J_{\lambda,\beta}(u_n,v_n)=m_{\lambda,\beta}+o_n(1)$.  Since $m_{\lambda,\beta}\leq m_{a}+m_{b}$,
by similar arguments as  \eqref{eq0009} and \eqref{eq0332}, we have $\|(u_n,v_n)\|\leq8(C+1)(m_{a}+m_{b}+o_n(1))$
and $(u_n,v_n)\rightharpoonup(u_{\lambda,\beta},v_{\lambda,\beta})$ weakly in $E$ as $n\to\infty$ for some $(u_{\lambda,\beta},v_{\lambda,\beta})\in E$.  Clearly, $D[J_{\lambda,\beta}(u_{\lambda,\beta},v_{\lambda,\beta})]=0$ in $E^*$.  Suppose $u_{\lambda,\beta}=0$.  Then by the fact that
$E_a$ is embedded continuously into $\h$, we have
\begin{equation*}
u_n=o_n(1)\quad\text{strongly in }L^p_{loc}(\bbr^3)\quad\text{for }2\leq p<6.
\end{equation*}
Combining with the condition $(D_2)$ and the H\"older and the Sobolev inequalities, we get
\begin{eqnarray}
\int_{\bbr^3}|u_n|^4dx&=&\int_{\mathcal{D}_a}|u_n|^4dx
+\int_{\bbr^3\backslash\mathcal{D}_a}|u_n|^4dx\notag\\
&=&\int_{\bbr^3\backslash\mathcal{D}_a}|u_n|^4dx+o_n(1)\notag\\
&\leq&\bigg(\frac{1}{a_\infty}\bigg)^{\frac12}\int_{\bbr^3\backslash\mathcal{D}_a}[a(x)]^{\frac12}|u_n|^4dx+o_n(1)\notag\\
&\leq&\bigg(\frac{1}{a_\infty S^3}\bigg)^{\frac12}
\bigg(\int_{\bbr^3\backslash\mathcal{D}_a}a(x)|u_n|^2dx\bigg)^{\frac12}\bigg(\int_{\bbr^3}|\nabla u_n|^2dx\bigg)^{\frac32}+o_n(1). \label{eq0076}
\end{eqnarray}
Since $u_n=o_n(1)$ strongly in $L^p_{loc}(\bbr^3)$ for $2\leq p<6$, by the conditions $(D_2)$ and $(D_4)$, $\int_{\mathcal{D}_a}(\lambda a(x)+a_0(x))u_n^2dx=o_n(1)$.  It follows from \eqref{eq0009} and \eqref{eq0076} that
\begin{eqnarray}
\int_{\bbr^3}|u_n|^4dx&\leq&\bigg(\frac{1}{a_\infty S^3}\bigg)^{\frac12}
\bigg(\int_{\bbr^3\backslash\mathcal{D}_a}a(x)|u_n|^2dx\bigg)^{\frac12}
\bigg(\int_{\bbr^3}|\nabla u_n|^2dx\bigg)^{\frac32}+o_n(1)\notag\\
&\leq&\bigg(\frac{2}{a_\infty S^3\lambda}\bigg)^{\frac12}\|u_n\|_{a,\lambda}(\|u_n\|_{a,\lambda}^2+o_n(1))^{\frac32}+o_n(1)\notag\\
&\leq&\bigg(\frac{2}{a_\infty S^3\lambda}\bigg)^{\frac12}\|u_n\|_{a,\lambda}^4+o_n(1)\label{eq0078}
\end{eqnarray}
for $\lambda\geq\Lambda_1$.
Note that $\{(u_n, v_n)\}\subset\mathcal{N}_{\lambda,\beta}$ and  $\beta<0$,  from \eqref{eq0009} and \eqref{eq0078}, we have
\begin{eqnarray}      \label{eq0310}
\|u_n\|_{a,\lambda}^2\leq\bigg(\frac{2}{a_\infty S^3\lambda}\bigg)^{\frac12}\|u_n\|_{a,\lambda}^4+o_n(1)\leq4(m_a+m_b)\bigg(\frac{2}{a_\infty S^3\lambda}\bigg)^{\frac12}\|u_n\|_{a,\lambda}^2+o_n(1),
\end{eqnarray}
which then implies that there exists $\Lambda_2\geq\Lambda_1$ such that $\|u_n\|_{a,\lambda}=o_n(1)$ for $\lambda\geq\Lambda_2$ and $\beta<0$.  It follows from Lemma~\ref{lem0001}, the H\"older and the Sobolev inequalities and the boundedness of $\{(u_n,v_n)\}$ in $E$ that $\|u_n\|_4=o_n(1)$,  hence, $\mu_1\mu_2\|u_n\|_4^4\|v_n\|_4^4=o_n(1)$ for $\lambda\geq\Lambda_2$ and $\beta<0$.  However, it is impossible, since $\{(u_n, v_n)\}\subset\mathcal{N}_{\lambda,\beta}$, $\Lambda_2\geq\Lambda_1$ and Lemma~\ref{lem0009} holds for $\lambda\geq\Lambda_1$.  Therefore, there exists $\Lambda_2\geq\Lambda_1$ such that $u_{\lambda,\beta}\not=0$ for $\lambda\geq\Lambda_2$ and $\beta<0$.  Similarly, we can also show that $v_{\lambda,\beta}\not=0$ for $\lambda\geq\Lambda_2$ and $\beta<0$.  Since $(u_n,v_n)\rightharpoonup(u_{\lambda,\beta},v_{\lambda,\beta})$ weakly in $E$ as $n\to\infty$, by the fact that $E$ is embedded continuously into $\h\times\h$, we have $(u_n,v_n)\to(u_{\lambda,\beta},v_{\lambda,\beta})$ strongly in $L_{loc}^{p}(\bbr^3)\times L_{loc}^{p}(\bbr^3)$ as $n\to\infty$ for $2\leq p<6$.  It follows from the boundedness of $\{(u_n,v_n)\}$ in $E$ and the conditions $(D_2)$ and $(D_4)$ that $\int_{\mathcal{D}_a}a_0^-(x)u_n^2dx=\int_{\mathcal{D}_a}a_0^-(x)u_{\lambda,\beta}^2dx+o_n(1)$ and $\int_{\mathcal{D}_b}b_0^-(x)v_n^2dx=\int_{\mathcal{D}_b}b_0^-(x)v_{\lambda,\beta}^2dx+o_n(1)$, which together with
$D[J_{\lambda,\beta}(u_{\lambda,\beta},v_{\lambda,\beta})]=0$ in $E^*$, the Fatou lemma and the conditions $(D_2)$ and $(D_4)$, implies
\begin{eqnarray}
m_{\lambda,\beta}&\leq&J_{\lambda,\beta}(u_{\lambda,\beta},v_{\lambda,\beta})-\frac14\langle D[J_{\lambda,\beta}(u_{\lambda,\beta},v_{\lambda,\beta})],(u_{\lambda,\beta},v_{\lambda,\beta})\rangle_{E^*,E}\notag\\
&=&\frac14(\|u_{\lambda,\beta}\|_{a,\lambda}^2+\|v_{\lambda,\beta}\|_{b,\lambda}^2)\notag\\
&\leq&\liminf_{n\to\infty}\frac14(\|u_{n}\|_{a,\lambda}^2+\|v_{n}\|_{b,\lambda}^2)\label{eq7100}\\
&=&\liminf_{n\to\infty}(J_{\lambda,\beta}(u_{\lambda,\beta},v_{\lambda,\beta})-\frac14\langle D[J_{\lambda,\beta}(u_{n},v_{n})],(u_{n},v_{n})\rangle_{E^*,E})\notag\\
&=&m_{\lambda,\beta}+o_n(1).\notag
\end{eqnarray}
Therefore, $J_{\lambda,\beta}(u_{\lambda,\beta},v_{\lambda,\beta})=m_{\lambda,\beta}$.  Since $J_{\lambda,\beta}(|u_{\lambda,\beta}|,|v_{\lambda,\beta}|)=m_{\lambda,\beta}$ and $(|u_{\lambda,\beta}|,|v_{\lambda,\beta}|)\in\mathcal{N}_{\lambda,\beta}$, $(|u_{\lambda,\beta}|,|v_{\lambda,\beta}|)$ is a local minimizer of $J_{\lambda,\beta}(u,v)$ on $\mathcal{N}_{\lambda,\beta}$.  Note that by Lemma~\ref{lem0010}, $\mathcal{N}_{\lambda,\beta}$ is a natural constraint, we can follow the argument as used in \cite[Theorem~2.3]{BZ03} to show that $D[J_{\lambda,\beta}(|u_{\lambda,\beta}|,|v_{\lambda,\beta}|)]=0$ in $E^*$.  Thus, without loss of generality, we may assume $u_{\lambda,\beta}$ and $v_{\lambda,\beta}$ are both nonnegative.  Now, since $(u_{\lambda,\beta}, v_{\lambda,\beta})\in E$, by \eqref{eq0004} and \eqref{eq0005},  we have $u_{\lambda,\beta}, v_{\lambda,\beta}\in\h$.  It follows from the conditions $(D_1)$ and $(D_4)$ and the Calderon-Zygmund inequality that $u_{\lambda,\beta}, v_{\lambda,\beta}\in W_{loc}^{2,2}(\mathbb{R}^{3})$.  By combining the Sobolev embedding theorem and the Harnack inequality,
$u_{\lambda,\beta}$ and $v_{\lambda,\beta}$ are both positive.  Hence, $(u_{\lambda,\beta}, v_{\lambda,\beta})$ is a ground state solution of $(\mathcal{P}_{\lambda,\beta})$  for $\beta<0$ and $\lambda\geq\Lambda_2$.
It remains to show that \eqref{eq0018} and \eqref{eq0040} are true.  Indeed, let $\Omega_{a}''$ be a bounded domain with smooth boundary in $\bbr^3$ satisfying $\Omega_a\subset\Omega_{a}''\subset\Omega_a'$, $dist(\Omega_{a}'', \bbr^3\backslash\Omega_{a}')>0$ and $dist(\bbr^3\backslash\Omega_{a}'', \Omega_{a})>0$.
Then by a similar argument as \eqref{eq0017}, we can show that
\begin{equation}\label{eq0041}
\int_{\Omega_a''}|\nabla u_{\lambda,\beta}|^2+(\lambda a(x)+a_0(x))u_{\lambda,\beta}^2dx\geq\frac{\nu_a}{2}\int_{\Omega_a''}u_{\lambda,\beta}^2dx
\end{equation}
for $\lambda$ large enough.  Without loss of generality, we assume \eqref{eq0041} holds for $\lambda\geq\Lambda_2$.  Since $(u_{\lambda,\beta}, v_{\lambda,\beta})$ is a ground state solution for $\lambda\geq\Lambda_2$, by combining Lemma~\ref{lem0001}, \eqref{eq0041} and similar arguments of \eqref{eq0009} and \eqref{eq0332}, we can see that $\|(u_{\lambda,\beta}, v_{\lambda,\beta})\|^2\leq8(4(C_{a,0}+d_a)C_{a,b}+1)(m_a+m_b)$ and $8(4(C_{a,0}+d_a)C_{a,b}+1)(m_a+m_b)\geq\lambda\int_{\bbr^3\backslash\Omega_a''}a(x)u_{\lambda,\beta}^2dx$ for $\lambda\geq\Lambda_2$,which together with the conditions $(D_1)$-$(D_3)$, imply $\int_{(\bbr^3\backslash\Omega_a'')\cap\mathcal{D}_a}u_{\lambda,\beta}^2dx\to0$ as $\lambda\to+\infty$.  It follows from the condition $(D_2)$ and $8(4(C_{a,0}+d_a)C_{a,b}+1)(m_a+m_b)\geq\lambda\int_{\bbr^3\backslash\Omega_a''}a(x)u_{\lambda,\beta}^2dx$ once more that
\begin{equation}
\lim_{\lambda\to+\infty}\int_{\bbr^3\backslash\Omega_a''}u_{\lambda,\beta}^2dx=0.\label{eq0019}
\end{equation}
Now, we choose $\Psi\in C^\infty(\bbr^3, [0, 1])$ satisfying
\begin{equation}           \label{eq0086}
\Psi=\left\{\aligned&1,\quad&x\in\bbr^3\backslash\Omega_a',\\&0,\quad&x\in\Omega_a''.\endaligned\right.
\end{equation}
Then $u_{\lambda,\beta}\Psi\in E_a$.  Note  $D[J_{\lambda,\beta}(u_{\lambda,\beta},v_{\lambda,\beta})]=0$ in $E^*$ for $\lambda\geq\Lambda_2$, we have that
\begin{eqnarray*}
&&\int_{\bbr^3}(|\nabla u_{\lambda,\beta}|^2+(\lambda a(x)+a_0(x))u_{\lambda,\beta}^2)\Psi dx+\int_{\bbr^3}(\nabla u_{\lambda,\beta}\nabla \Psi) u_{\lambda,\beta} dx\\
&=&\mu_1\int_{\bbr^3}u_{\lambda,\beta}^4\Psi dx+\beta\int_{\bbr^3}v_{\lambda,\beta}^2u_{\lambda,\beta}^2\Psi dx.
\end{eqnarray*}
It follows from the H\"older and the Sobolev inequalities that
\begin{eqnarray}
&&\int_{\bbr^3\backslash\Omega_a'}(|\nabla u_{\lambda,\beta}|^2+(\lambda a(x)+a_0(x))u_{\lambda,\beta}^2)dx\notag\\
&\leq&\mu_1\int_{\bbr^3\backslash\Omega_a''}u_{\lambda,\beta}^4dx+\beta\int_{\bbr^3\backslash\Omega_a'}v_{\lambda,\beta}^2u_{\lambda,\beta}^2dx
+\int_{\Omega_a'\backslash\Omega_a''}|\nabla u_{\lambda,\beta}||\nabla\Psi||u_{\lambda,\beta}|dx\notag\\
&\leq&\mu_1S^{-\frac32}\|u_{\lambda,\beta}\|_a^3(\int_{\bbr^3\backslash\Omega_a''}u_{\lambda,\beta}^2dx)^{\frac12}
+\max_{\bbr^3}|\nabla\Psi|\|u_{\lambda,\beta}\|_a(\int_{\bbr^3\backslash\Omega_a''}u_{\lambda,\beta}^2dx)^{\frac12}.\label{eq0080}
\end{eqnarray}
Since $\|(u_{\lambda,\beta}, v_{\lambda,\beta})\|^2\leq8(4(C_{a,0}+d_a)C_{a,b}+1)(m_a+m_b)$,  we can conclude from \eqref{eq0022}, \eqref{eq0019} and \eqref{eq0080} that \eqref{eq0018} holds.  Similarly, we can also conclude that \eqref{eq0040} is true.
\end{proof}

\vskip0.1in

We close this section by

\vskip0.1in

\medskip\par\noindent{\bf Proof of Theorem~\ref{thm0002}:}\quad By Proposition~\ref{prop0002}, we know that there exists $\Lambda_2\geq\Lambda_1$ such that $(\mathcal{P}_{\lambda,\beta})$ has a ground state solution $(u_{\lambda,\beta},v_{\lambda,\beta})$ for $\lambda\geq\Lambda_2$ and $\beta<0$.  In what follows, we will show that $(u_{\lambda,\beta},v_{\lambda,\beta})$ has the concentration behaviors for $\lambda\to+\infty$ described as $(1)$-$(5)$.  We first verify $(3)$-$(5)$.
Let $(u_{\lambda_n,\beta},v_{\lambda_n,\beta})$ be the ground state solution of $(\mathcal{P}_{\lambda_n,\beta})$ obtained by Proposition~\ref{prop0002} with $\lambda_n\to+\infty$ as $n\to\infty$.   Then by Lemma~\ref{lem0008} and Proposition~\ref{prop0002}, $\{(u_{\lambda_n,\beta}, v_{\lambda_n,\beta})\}$ is bounded in $E$ with
\begin{equation}
\lim_{n\to+\infty}\int_{\bbr^3\backslash\Omega_a'}|\nabla u_{\lambda_n,\beta}|^2+({\lambda_n} a(x)+a_0(x))u_{\lambda_n,\beta}^2dx=0\label{eq0701}
\end{equation}
and
\begin{equation}
\lim_{n\to+\infty}\int_{\bbr^3\backslash\Omega_b'}|\nabla v_{\lambda_n,\beta}|^2+({\lambda_n} b(x)+b_0(x))v_{\lambda_n,\beta}^2dx=0\label{eq0702}.
\end{equation}
Without loss of generality, we assume $(u_{\lambda_n,\beta}, v_{\lambda_n,\beta})\rightharpoonup(u_{0,\beta}, v_{0,\beta})$ weakly in $E$ as $n\to\infty$ for some $(u_{0,\beta}, v_{0,\beta})\in E$.  By \eqref{eq0004} and \eqref{eq0005}, $(u_{0,\beta}, v_{0,\beta})\in \h\times\h$.  For the sake of clarity, the verification of $(3)$-$(5)$ is further performed through the following three steps.

\vskip0.1in

{\bf Step 1. }\quad We prove that $(u_{0,\beta}, v_{0,\beta})\in H_0^1(\Omega_{a})\times H_0^1(\Omega_{b})$ with $u_{0,\beta}=0$ on $\bbr^3\backslash\Omega_{a}$ and $v_{0,\beta}=0$ on $\bbr^3\backslash\Omega_{b}$.

Indeed, since $(u_{\lambda_n,\beta}, v_{\lambda_n,\beta})$ is a ground state solution of $(\mathcal{P}_{\lambda_n,\beta})$, by Lemma~\ref{lem0008} and a similar argument as  \eqref{eq0009}, we get that $4C_{a,b}(m_a+m_b)\geq(\|u_{\lambda_n,\beta}\|_2^2+\|v_{\lambda_n,\beta}\|_2^2)$.  Now, by the condition $(D_4)$ and a similar argument as \eqref{eq0009} again, we have
\begin{eqnarray*}
m_{a}+m_{b}&\geq&J_{\lambda_n,\beta}(u_{\lambda_n,\beta}, v_{\lambda_n,\beta})-\frac14\langle D[J_{\lambda_n,\beta}(u_{\lambda_n,\beta}, v_{\lambda_n,\beta})], (u_{\lambda_n,\beta}, v_{\lambda_n,\beta})\rangle_{E^*,E}\notag\\
&\geq&\frac14\int_{\bbr^3}(\lambda_n a(x)+a_0(x))u^2_{\lambda_n,\beta}dx+\frac14\int_{\bbr^3}({\lambda_n} b(x)+b_0(x))v_{\lambda_n,\beta}^2dx\notag\\
&\geq&\frac{\lambda_n}{8}\int_{\bbr^3}a(x)u^2_{\lambda_n,\beta}+b(x)v_{\lambda_n,\beta}^2dx-C(\|(u_{\lambda_n,\beta}\|_2^2+\|v_{\lambda_n,\beta})\|_2^2)
\notag\\
&\geq&\frac{\lambda_n}{8}\int_{\bbr^3}a(x)u^2_{\lambda_n,\beta}+b(x)v_{\lambda_n,\beta}^2dx-C'.
\end{eqnarray*}
It follows that $\lim_{n\to+\infty}\int_{\bbr^3}a(x)u_{\lambda_n,\beta}^2dx=\lim_{n\to+\infty}\int_{\bbr^3}b(x)v_{\lambda_n,\beta}^2dx=0$.  By the Fatou Lemma and the conditions $(D_1)$ and $(D_3)$, we can see that $u_{0,\beta}=0$ on $\bbr^3\backslash\Omega_a$ and $v_{0,\beta}=0$ on $\bbr^3\backslash\Omega_b$.  Since $(u_{0,\beta}, v_{0,\beta})\in \h$, by the condition $(D_3)$ again, we must have $u_{0,\beta}\in H^1_0(\Omega_a)$ and $v_{0,\beta}\in H^1_0(\Omega_b)$.

\vskip0.1in

{\bf Step 2. }\quad We prove that $(u_{\lambda_n,\beta},v_{\lambda_n,\beta})\to(u_{0,\beta},v_{0,\beta})$ strongly in $\h\times\h$ as $n\to\infty$ up to a subsequence.

Indeed,  by the choice of $\Omega_a'$ and the Sobolev embedding theorem, we can see that $u_{\lambda_n,\beta}\to u_{0,\beta}$ strongly in $L^2(\Omega_a')$ as $n\to\infty$ up to a subsequence.  Without loss of generality, we may assume $u_{\lambda_n,\beta}\to u_{0,\beta}$ strongly in $L^2(\Omega_a')$.  It follows from $u_{0,\beta}=0$ on $\bbr^3\backslash\Omega_a$, \eqref{eq0022} and \eqref{eq0701} that $u_{\lambda_n,\beta}\to u_{0,\beta}$ strongly in $L^2(\bbr^3)$ as $n\to\infty$.  By the H\"older and the Sobolev inequalities and the boundedness of  $\{(u_{\lambda_n,\beta}, v_{\lambda_n,\beta})\}$  in $E$, we can see that $u_{\lambda_n,\beta}\to u_{0,\beta}$ strongly in $L^4(\bbr^3)$ as $n\to\infty$.  On the other hand, by a similar argument as \eqref{eq7100}, we have
\begin{eqnarray*}
\int_{\bbr^3}|\nabla u_{0,\beta}|^2+a_0(x)u_{0,\beta}^2dx&=&\int_{\bbr^3}|\nabla u_{0,\beta}|^2dx+(\lambda_na(x)+a_0(x))u_{0,\beta}^2dx\\
&\leq&\liminf_{n\to\infty}\int_{\bbr^3}|\nabla u_{\lambda_n,\beta}|^2+(\lambda_na(x)+a_0(x))u_{\lambda_n,\beta}^2dx,
\end{eqnarray*}
which together with $D[J_{\lambda_n,\beta}(u_{\lambda_n,\beta}, v_{\lambda_n,\beta})]=0$ in $E^*$ and $\beta<0$, implies
\begin{equation*}
\int_{\bbr^3}|\nabla u_{0,\beta}|^2+a_0(x)u_{0,\beta}^2dx\leq\liminf_{n\to\infty}\mu_1\int_{\bbr^3}u_{\lambda_n,\beta}^4dx=\mu_1\int_{\Omega_{a}}u_{0,\beta}^4dx.
\end{equation*}
Note that $(u_{0,\beta},v_{0,\beta})\in H_0^1(\Omega_{a})\times H_0^1(\Omega_{b})$ with $u_{0,\beta}=0$ on $\bbr^3\backslash\Omega_a$ and $v_{0,\beta}=0$ on $\bbr^3\backslash\Omega_b$ and $D[J_{\lambda_n,\beta}(u_{\lambda_n,\beta}, v_{\lambda_n,\beta})]=0$ in $E^*$, by the condition $(D_3)$, we can show that $I_{\Omega_{a}}'(u_{0,\beta})=0$ in $H^{-1}(\Omega_{a})$ and $I_{\Omega_{b}}'(v_{0,\beta})=0$ in $H^{-1}(\Omega_{b})$.  Recalling that $u_{\lambda_n,\beta}\to u_{0,\beta}$ strongly in $L^2(\bbr^3)$ as $n\to\infty$, $\{(u_{\lambda_n,\beta}, v_{\lambda_n,\beta})\}$ is bounded in $E$ and the conditions $(D_2)$ and $(D_4)$ hold, we must have $\int_{\mathcal{D}_a}a_0^-(x)u_{\lambda_n,\beta}^2dx=\int_{\mathcal{D}_a}a_0^-(x)u_{0,\beta}^2dx+o_n(1)$.  It follows from the conditions $(D_2)$ and $(D_4)$ again and the Fatou Lemma that
\begin{eqnarray}
\int_{\bbr^3}|\nabla u_{0,\beta}|^2+a_0(x)u_{0,\beta}^2dx&=&\liminf_{n\to\infty}\int_{\bbr^3}|\nabla u_{\lambda_n,\beta}|^2+(\lambda_na(x)+a_0(x))u_{\lambda_n,\beta}^2dx\notag\\
&\geq&\liminf_{n\to\infty}(\int_{\bbr^3}|\nabla u_{\lambda_n,\beta}|^2+\frac{\lambda_n}{2} a(x)u_{\lambda_n,\beta}^2dx)\notag\\
&&+\int_{\bbr^3}a_0^+(x)u_{0,\beta}^2dx+\int_{\mathcal{D}_a}a_0^-(x)u_{0,\beta}^2dx\label{eq0703}.
\end{eqnarray}
By the conditions $(D_1)$-$(D_3)$, the Fatou lemma and the fact $u_{0,\beta}=0$ on $\bbr^3\backslash\Omega_a$, we can see from \eqref{eq0703} that $\nabla u_{\lambda_n,\beta}\to\nabla u_{0,\beta}$ strongly in $L^2(\bbr^3)$ as $n\to\infty$ up to a subsequence, which then implies $\liminf_{n\to\infty}\int_{\bbr^3}\lambda_na(x)u_{\lambda_n,\beta}^2dx=0$ and $\liminf_{n\to\infty}\int_{\bbr^3}a_0(x)u_{\lambda_n,\beta}^2dx=\int_{\bbr^3}a_0(x)u_{0,\beta}^2dx$.
These together with the conditions $(D_1)$-$(D_4)$, $u_{0,\beta}=0$ on $\bbr^3\backslash\Omega_a$ and $u_{\lambda_n,\beta}\to u_{0,\beta}$ strongly in $L^2(\bbr^3)$ as $n\to\infty$, imply $u_{\lambda_n,\beta}\to u_{0,\beta}$ strongly in $E_a$ as $n\to\infty$ up to a subsequence.  Without loss of generality, we assume $u_{\lambda_n,\beta}\to u_{0,\beta}$ strongly in $E_a$ as $n\to\infty$.  Similarly, we also have $v_{\lambda_n,\beta}\to v_{0,\beta}$ strongly in $E_b$ as $n\to\infty$, that is, $(u_{\lambda_n,\beta},v_{\lambda_n,\beta})\to(u_{0,\beta},v_{0,\beta})$ strongly in $E$ as $n\to\infty$.  Since $E$ is embedded continuously into $\h\times\h$, $(u_{\lambda_n,\beta},v_{\lambda_n,\beta})\to(u_{0,\beta},v_{0,\beta})$ strongly in $\h\times\h$ as $n\to\infty$.

\vskip0.1in

{\bf Step 3. }\quad We prove that $u_{0,\beta}$  and $v_{0,\beta}$   are    least energy nonzero critical points of $I_{\Omega_{a}}(u)$ and    $I_{\Omega_{b}}(v)$, respectively.

Indeed, since $(u_{\lambda_n,\beta},v_{\lambda_n,\beta})$ is the ground state solution of $(\mathcal{P}_{\lambda_n,\beta})$, by Lemma~\ref{lem0008}, we can see that
\begin{equation}    \label{eq0351}
\|u_{\lambda_n,\beta}\|_{a,\lambda_n}+\|v_{\lambda_n,\beta}\|_{b,\lambda_n}=4(m_a+m_b)+o_n(1).
\end{equation}
By a similar argument as used in Step 2, we can show that
\begin{equation*}
\lambda_n\int_{\bbr^3}a(x)u_{\lambda_n,\beta}^2dx=\lambda_n\int_{\bbr^3}b(x)v_{\lambda_n,\beta}^2dx=o_n(1)
\end{equation*}
and
\begin{equation*}
\int_{\bbr^3}a_0(x)u_{\lambda_n,\beta}^2dx=\int_{\bbr^3}a_0(x)u_{0,\beta}^2dx+o_n(1)\quad\text{and}\quad
\int_{\bbr^3}b_0(x)v_{\lambda_n,\beta}^2dx=\int_{\bbr^3}b_0(x)v_{0,\beta}^2dx+o_n(1).
\end{equation*}
These together with Step 2 and \eqref{eq0351}, imply
\begin{equation}    \label{eq0084}
\int_{\bbr^3}|\nabla u_{0,\beta}|^2+a_0(x)u_{0,\beta}^2dx+
\int_{\bbr^3}|\nabla v_{0,\beta}|^2+b_0(x)v_{0,\beta}^2dx=4(m_a+m_b).
\end{equation}
We claim that
\begin{equation}      \label{eq0204}
\int_{\bbr^3}u_{\lambda_n,\beta}^4dx\geq C+o_n(1)\quad\text{and}\quad\int_{\bbr^3}v_{\lambda_n,\beta}^4dx\geq C+o_n(1).
\end{equation}
Indeed, suppose the contrary, we have either $\int_{\bbr^3}u_{\lambda_n,\beta}^4dx=o_n(1)$ or $\int_{\bbr^3}v_{\lambda_n,\beta}^4dx=o_n(1)$ up to a subsequence.  Without loss of generality, we assume $\lim_{n\to\infty}\int_{\bbr^3}u_{\lambda_n,\beta}^4dx=0$.  By the boundedness of $\{(u_{\lambda_n,\beta}, v_{\lambda_n,\beta})\}$ in $E$ and the H\"older and Sobolev inequalities, $\beta\int_{\bbr^3}u_{\lambda_n,\beta}^2v_{\lambda_n,\beta}^2=o_n(1)$, which implies $J_{\lambda_n,\beta}(u_{\lambda_n,\beta},v_{\lambda_n,\beta})=I_{b,\lambda_n}(v_{\lambda_n,\beta})+o_n(1)$.  By Lemma~\ref{lem0001} and $\mathcal{N}_{\lambda,\beta}\subset\mathcal{A}_{\beta}$, for every $n$, there exists a unique $t^*_n(\beta)>0$ such that $t^*_n(\beta)u_{\lambda_n,\beta}\in\mathcal{N}_{a,\lambda_n}$.  It follows from Lemma~\ref{lem0007} and $\beta<0$ that
\begin{eqnarray}
J_{\lambda_n,\beta}(u_{\lambda_n,\beta},v_{\lambda_n,\beta})&\geq& J_{\lambda_n,\beta}(t^*_n(\beta)u_{\lambda_n,\beta},v_{\lambda_n,\beta})\notag\\
&\geq&I_{a,\lambda_n}(t^*_n(\beta)u_{\lambda_n,\beta})+I_{b,\lambda_n}(v_{\lambda_n,\beta})\notag\\
&\geq&m_{a,\lambda_n}+I_{b,\lambda_n}(v_{\lambda_n,\beta})\notag\\
&=&m_{a,\lambda_n}+J_{\lambda_n,\beta}(u_{\lambda_n,\beta},v_{\lambda_n,\beta})+o_n(1)\notag\\
&=&m_{a}+J_{\lambda_n,\beta}(u_{\lambda_n,\beta},v_{\lambda_n,\beta})+o_n(1).\label{eq0206}
\end{eqnarray}
Since $m_{a}>0$, $\eqref{eq0206}$ is impossible for $n$ large enough.  Now, \eqref{eq0204} together with Steps 1-2, implies
\begin{equation}      \label{eq0066}
\int_{\Omega_a}u_{0,\beta}^4dx\geq C>0\quad\text{and}\int_{\Omega_b}v_{0,\beta}^4dx\geq C>0.
\end{equation}
Note that $I_{\Omega_{a}}'(u_{0,\beta})=0$ in $H^{-1}(\Omega_{a})$ and $I_{\Omega_{b}}'(v_{0,\beta})=0$ in $H^{-1}(\Omega_{b})$, by \eqref{eq0066}, we have
\begin{equation*}
\int_{\bbr^3}|\nabla u_{0,\beta}|^2+a_0(x)u_{0,\beta}^2dx\geq4m_a\quad\text{and}\quad\int_{\bbr^3}|\nabla v_{0,\beta}|^2+b_0(x)v_{0,\beta}^2dx\geq4m_b.
\end{equation*}
It follows from \eqref{eq0084} that $u_{0,\beta}$ is a least energy nonzero critical point of $I_{\Omega_{a}}(u)$ and $v_{0,\beta}$ is a least energy nonzero critical point of $I_{\Omega_{b}}(v)$.

\vskip0.1in

We close the proof of Theorem \ref{thm0002} by verifying $(1)$ and $(2)$.  Supposing the contrary, there exists $\{\lambda_n\}$ with $\lambda_n\to+\infty$ as $n\to\infty$ such that one of the following cases must happen:
\begin{enumerate}
\item[$(a)$] $\int_{\bbr^3\backslash\Omega_{a}}|\nabla u_{\lambda_n,\beta}|^2+u_{\lambda_n,\beta}^2dx\geq C+o_n(1)$;
\item[$(b)$] $\int_{\bbr^3\backslash\Omega_{b}}|\nabla v_{\lambda_n,\beta}|^2+v_{\lambda_n,\beta}^2dx\geq C+o_n(1)$;
\item[$(c)$] $|\int_{\Omega_{a}}|\nabla u_{\lambda_n,\beta}|^2+a_0(x)u_{\lambda_n,\beta}^2dx-4m_{a}|\geq C+o_n(1)$;
\item[$(d)$] $|\int_{\Omega_{b}}|\nabla v_{\lambda,\beta}|^2+b_0(x)v_{\lambda_n,\beta}^2dx-4m_{b}|\geq C+o_n(1)$.
\end{enumerate}
By Steps 1-3 and the condition $(D_3)$, it is easy to see that $(c)$ and $(d)$ can not hold, which then implies that we must have $(a)$ or $(b)$.  Since \eqref{eq0351} holds for $\{(u_{\lambda_n,\beta}, v_{\lambda_n,\beta})\}$, by Steps 2-3 and the condition $(D_3)$, we can see that
\begin{equation*}
\int_{\bbr^3\backslash\Omega_a}|\nabla u_{\lambda_n,\beta}|^2+(\lambda_na(x)+a_0(x))u_{\lambda_n,\beta}^2dx+
\int_{\bbr^3\backslash\Omega_b}|\nabla v_{\lambda_n,\beta}|^2+(\lambda_nb(x)+b_0(x))v_{\lambda_n,\beta}^2dx\to0
\end{equation*}
as $n\to\infty$.  It follows from the conditions $(D_2)$ and $(D_4)$ and Steps 1-2 that
\begin{equation*}
\int_{(\bbr^3\backslash\Omega_a)\cap\mathcal{D}_a}a_0^-(x)u_{\lambda_n,\beta}^2dx
=\int_{(\bbr^3\backslash\Omega_b)\cap\mathcal{D}_b}b_0^-(x)v_{\lambda_n,\beta}^2dx=o_n(1),
\end{equation*}
which then together with the conditions $(D_2)$ and $(D_4)$ and and Steps 1-2 once more, implies
\begin{eqnarray*}
&&\int_{\bbr^3\backslash\Omega_a}|\nabla u_{\lambda_n,\beta}|^2+(\lambda_na(x)+a_0(x))u_{\lambda_n,\beta}^2dx
+\int_{\bbr^3\backslash\Omega_b}|\nabla v_{\lambda_n,\beta}|^2+(\lambda_nb(x)+b_0(x))v_{\lambda_n,\beta}^2dx\\
&\geq&\int_{\bbr^3\backslash\Omega_a}|\nabla u_{\lambda_n,\beta}|^2dx
+\int_{\bbr^3\backslash\Omega_a}\frac{\lambda_n}{2}a(x)u_{\lambda_n,\beta}^2dx
+\int_{\bbr^3\backslash\Omega_b}|\nabla v_{\lambda_n,\beta}|^2dx\\
&&+\int_{\bbr^3\backslash\Omega_b}\frac{\lambda_n}{2}b(x)v_{\lambda_n,\beta}^2dx+o_n(1)\\
&=&\int_{\bbr^3\backslash\Omega_a}|\nabla u_{\lambda_n,\beta}|^2+u_{\lambda_n,\beta}^2dx
+\int_{\bbr^3\backslash\Omega_b}|\nabla v_{\lambda_n,\beta}|^2+v_{\lambda_n,\beta}^2dx+o_n(1),
\end{eqnarray*}
Thus,
$\int_{\bbr^3\backslash\Omega_a}|\nabla u_{\lambda_n,\beta}|^2+u_{\lambda_n,\beta}^2dx
+\int_{\bbr^3\backslash\Omega_b}|\nabla v_{\lambda_n,\beta}|^2+v_{\lambda_n,\beta}^2dx\to0$
as $n\to\infty$ and it is a contradiction.  We now complete the proof by taking $\Lambda_*=\Lambda_2$.
\qquad\raisebox{-0.5mm}{\rule{1.5mm}{4mm}}\vspace{6pt}

\section{Multi-bump solutions}
The main task in this section is to find multi-bump solutions to  $(\mathcal{P}_{\lambda,\beta})$ described as in Theorem~\ref{thm0001}.  For the sake of convenience, in the present section, we always assume the conditions $(D_1)$-$(D_2)$, $(D_3')$, $(D_4)$ and $(D_5')$ hold.  Due to the conditions $(D_3')$ and $(D_5')$,  in the present section, $\Omega_a'$ and $\Omega_b'$ will be chosen as $(I)$-$(III)$ given in section~2.


\subsection{The penalized functional and the $(PS)$ condition}
Since we want to find multi-bump solutions of $(\mathcal{P}_{\lambda,\beta})$ described as in Theorem~\ref{thm0001}, we will make some modifications on $J_{\lambda,\beta}(u,v)$.   Similar technique was developed by del Pino and Felmer \cite{DF96} and was also used in several other literatures, see for example \cite{BT13,DT03,GT121,ST09} and the references therein.

\vskip0.1in

Let $J_a\times J_b$ be a given subset of $\{1,\cdots,n_a\}\times\{1,\cdots,n_b\}$ with $J_a\not=\emptyset$ and $J_b\not=\emptyset$.  Without loss of generality, we assume $J_a\times J_b=\{1,\cdots,k_a\}\times\{1,\cdots,k_b\}$ with $1\leq k_a\leq n_a$ and $1\leq k_b\leq n_b$.  Denote
$\Omega_a^{J_a}=\underset{i_a=1}{\overset{k_a}{\cup }}\Omega_{a,i_a}'$ and $\Omega_b^{J_b}=\underset{j_b=1}{\overset{k_b}{\cup }}\Omega_{b,j_b}'$.  We also denote  the characteristic functions of $\Omega_a^{J_a}$ and $\Omega_b^{J_b}$ by $\chi_{\Omega_a^{J_a}}$ and $\chi_{\Omega_b^{J_b}}$, respectively.  Now, let
\begin{equation}    \label{eq0039}
\delta_\beta^2=\frac{C_{a,b}}{2}\min\bigg\{1,\frac{1}{\mu_1+2|\beta|},\frac{1}{\mu_2+2|\beta|}\bigg\},
\end{equation}
where $C_{a,b}$ is given by Lemma~\ref{lem0001},
and define $f_a(x,t)=\chi_{\Omega_a^{J_a}}(t^+)^3+(1-\chi_{\Omega_a^{J_a}})f(t)$, $f_b(x,t)=\chi_{\Omega_b^{J_b}}(t^+)^3+(1-\chi_{\Omega_b^{J_b}})f(t)$ and $h(x,t,s)=(\chi_{\Omega_b^{J_b}}+\chi_{\Omega_a^{J_a}})t^+s^+
+(1-\chi_{\Omega_a^{J_a}})(1-\chi_{\Omega_b^{J_b}})h(t,s)$, where $t^+=\max\{0, t\}$, $s^+=\max\{0, s\}$,
\begin{equation*}
f(t)=\left\{\aligned
&0,\quad&t\leq0,\\
&t^3,\quad&0\leq t\leq\delta_\beta,\\
&\delta_\beta^2t,\quad&t\geq\delta_\beta,
\endaligned\right.
\quad\text{and}\quad
h(t,s)=\left\{\aligned
&0,\quad&\min\{t,s\}\leq0,\\
&ts,\quad&0\leq t,s\leq\delta_\beta,\\
&\delta_\beta t,\quad&0\leq t\leq\delta_\beta\leq s,\\
&\delta_\beta s,\quad&0\leq s\leq\delta_\beta\leq t,\\
&\delta_\beta^2,\quad&\delta_\beta\leq t,s.
\endaligned\right.
\end{equation*}
Then it is easy to see that $f_a(x,t)$ and $f_b(x,t)$ are the modifications of $t^3$ and $h(x,t,s)$ is the modification of $ts$.  Let us consider the following functional defined on $E$,
\begin{equation*}
J_{\lambda,\beta}^*(u,v)=\frac12\|u\|_{a,\lambda}^2+\frac12\|v\|_{b,\lambda}^2
-\mu_1\int_{\bbr^3}F_a(x,u)dx-\mu_2\int_{\bbr^3}F_b(x,v)dx-\beta\int_{\bbr^3}H(x,u,v)dx,
\end{equation*}
where $F_a(x,u)=\int_0^uf_a(x,t)dt$, $F_b(x,v)=\int_0^vf_b(x,t)dt$ and $H(x,u,v)=2\int_0^u\int_0^vh(x,t,s)dsdt$.  Clearly,
by the construction of $f_a(x,t)$, $f_b(x,t)$ and $h(x,t,s)$, we can see that
\begin{eqnarray*}
&&0\leq\int_{\bbr^3\backslash\Omega_a^{J_a}}F_a(x,u)dx\leq\frac{\delta_\beta^2}2\|u^+\|_2^2,\quad
0\leq\int_{\bbr^3\backslash\Omega_a^{J_b}}F_b(x,v)dx\leq\frac{\delta_\beta^2}2\|v^+\|_2^2,\\
&&0\leq\int_{\bbr^3\backslash(\Omega_a^{J_a}\cup\Omega_a^{J_b})}H(x,u,v)dx\leq2\delta_\beta^2\|u^+\|_2\|v^+\|_2\leq\delta_\beta^2(\|u^+\|_2^2+\|v^+\|_2^2).
\end{eqnarray*}
On the other hand,by Lemma~\ref{lem0001}, we have
\begin{equation*}
\|u^+\|_2^2\leq C_{a,b}^{-1}\|u\|_{a,\lambda}^2\leq\lambda C_{a,b}^{-1}\|u\|_a^2\quad\text{for }\lambda\geq\Lambda_1\text{ and }u\in E_a
\end{equation*}
and
\begin{equation*}
\|v^+\|_2^2\leq C_{a,b}^{-1}\|v\|_{b,\lambda}^2\leq\lambda C_{a,b}^{-1}\|v\|_b^2\quad\text{for }\lambda\geq\Lambda_1\text{ and }v\in E_b.
\end{equation*}
It follows that $J_{\lambda,\beta}^*(u,v)$ is well defined on $E$ for $\lambda\geq\Lambda_1$ and $\beta<0$.  Moreover, by a standard argument, we can see that for $\lambda\geq\Lambda_1$ and $\beta<0$, $J_{\lambda,\beta}^*(u,v)$ is $C^1$ on $E$ and the critical point of $J_{\lambda,\beta}^*(u,v)$ is the solution of the following two-component systems:

\begin{equation*}
\left\{\aligned&\Delta u-(\lambda a(x)+a_0(x))u+\mu_1f_a(x,u)+2\beta\int_0^vh(x,u,s)ds=0\quad&\text{in }\bbr^3,\\
&\Delta v-(\lambda b(x)+b_0(x))v+\mu_2f_b(x,v)+2\beta\int_0^uh(x,t,v)dt=0\quad&\text{in }\bbr^3,\\
&u,v\in\h,\quad u,v\geq0\quad\text{in }\bbr^3.\endaligned\right.\eqno{(\mathcal{P}_{\lambda,\beta}^*)}
\end{equation*}
In what follows, we will make some investigations  on the functional $J_{\lambda,\beta}^*(u,v)$.

\vskip0.1in

\begin{lemma}
\label{lem0002}Assume $(u,v)\in E$.  Then
\begin{eqnarray*}
&&\int_{\bbr^3}\frac14f_a(x,u)u-F_a(x,u)dx\geq-\frac{\delta_\beta^2}4\|u^+\|_2^2,\quad
\int_{\bbr^3}\frac14f_b(x,v)v-F_b(x,v)dx\geq-\frac{\delta_\beta^2}4\|v^+\|_2^2,\\
&&0\geq\int_{\bbr^3}\frac{u}{2}\int_0^vh(x,u,s)ds+\frac{v}{2}\int_0^uh(x,t,v)dt-H(x,u,v)dx\geq
-\delta_\beta^2\|u^+v^+\|_1.
\end{eqnarray*}
\end{lemma}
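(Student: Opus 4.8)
The plan is to derive, for each of the three inequalities, the corresponding pointwise estimate on the integrand and then integrate over $\bbr^3$. Each integrand splits --- according to the characteristic functions $\chi_{\Omega_a^{J_a}}$, $\chi_{\Omega_b^{J_b}}$ appearing in $f_a,f_b,h$ --- into a piece on the ``pure power'' region and a piece on its complement, and on each of these regions the relevant quantities are explicit piecewise polynomials in $t$ (resp. in $(u,v)$), so that everything reduces to elementary one- or two-variable calculus.

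For the first inequality, put $\widetilde F_a(x,t)=\tfrac14 f_a(x,t)t-F_a(x,t)$; I will show $\widetilde F_a(x,t)\ge-\tfrac{\delta_\beta^2}{4}(t^+)^2$ pointwise. On $\Omega_a^{J_a}$ one has $f_a(x,t)=(t^+)^3$, hence $F_a(x,t)=\tfrac14(t^+)^4$, and since $(t^+)^3t=(t^+)^4$ this gives $\widetilde F_a(x,t)=0$. On $\bbr^3\setminus\Omega_a^{J_a}$ one has $f_a(x,t)=f(t)$, and from the definition of $f$ one computes $F(t)=\tfrac14(t^+)^4$ for $t\le\delta_\beta$ and $F(t)=\tfrac{\delta_\beta^2}{2}t^2-\tfrac{\delta_\beta^4}{4}$ for $t\ge\delta_\beta$; consequently $\widetilde F_a(x,t)=0$ for $t\le\delta_\beta$ and $\widetilde F_a(x,t)=\tfrac{\delta_\beta^2}{4}(\delta_\beta^2-t^2)\ge-\tfrac{\delta_\beta^2}{4}(t^+)^2$ for $t\ge\delta_\beta$. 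Integrating the pointwise bound over $\bbr^3$ yields the first estimate; the second is identical with $f_b,F_b$.

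For the third inequality, write $g(x,u,v)$ for the integrand and note that, since $\tfrac{u}{2}\int_0^vh(x,u,s)\,ds=\tfrac{u}{4}\partial_uH(x,u,v)$ and $\tfrac{v}{2}\int_0^uh(x,t,v)\,dt=\tfrac{v}{4}\partial_vH(x,u,v)$, one has $g=\tfrac14\bigl(u\,\partial_uH+v\,\partial_vH\bigr)-H$. First, $g(x,u,v)=0$ whenever $u\le0$ or $v\le0$, because then $h(x,u,\cdot)\equiv0$ (resp. $h(x,\cdot,v)\equiv0$) and $H(x,u,v)=0$; so one may assume $u,v>0$. Again I argue pointwise, distinguishing $x\in\Omega_a^{J_a}\cup\Omega_b^{J_b}$ --- where, using $\overline{\Omega_a'}\cap\overline{\Omega_b'}=\emptyset$ so that $h(x,t,s)=t^+s^+$, a short computation gives $H=\tfrac12u^2v^2$, $\partial_uH=uv^2$, $\partial_vH=u^2v$ and hence $g\equiv0$ --- from $x\notin\Omega_a^{J_a}\cup\Omega_b^{J_b}$, where $h(x,t,s)=h(t,s)$ and one splits into the sub-cases $0<u,v\le\delta_\beta$ (here $g=0$), $0<u\le\delta_\beta\le v$ and its mirror image (here $g=-\tfrac{\delta_\beta}{4}u^2(v-\delta_\beta)$), and $\delta_\beta\le u,v$ (here $g=-\delta_\beta^2uv+\tfrac34\delta_\beta^3(u+v)-\tfrac12\delta_\beta^4$). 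In every sub-case a short algebraic check gives $-\delta_\beta^2u^+v^+\le g(x,u,v)\le0$: for instance in the last sub-case, substituting $u=\delta_\beta+a$, $v=\delta_\beta+b$ with $a,b\ge0$ reduces $g\le0$ to $\tfrac14\delta_\beta(a+b)+ab\ge0$ and reduces $g\ge-\delta_\beta^2uv$ to $u+v\ge\tfrac23\delta_\beta$. Integrating over $\bbr^3$ gives $-\delta_\beta^2\|u^+v^+\|_1\le\int_{\bbr^3}g(x,u,v)\,dx\le0$, which is the claim.

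The estimates are elementary; the only real effort is the bookkeeping of the piecewise formulas for $H,\partial_uH,\partial_vH$ in the third inequality, in particular keeping track of where the breakpoints $s=\delta_\beta$ and $t=\delta_\beta$ fall within the iterated integral defining $H$. The verification of the upper bound $g\le0$ can be shortened by the scaling observation that $h(x,\theta t,\theta s)\ge\theta^2h(x,t,s)$ for all $\theta\in(0,1]$ (capping only slows growth), which forces $H(x,\theta u,\theta v)\ge\theta^4H(x,u,v)$; then $\theta\mapsto H(x,\theta u,\theta v)-\theta^4H(x,u,v)$ is nonnegative and $C^1$ on $(0,1]$ and vanishes at $\theta=1$, so its derivative there is $\le0$, i.e. $u\,\partial_uH+v\,\partial_vH-4H\le0$, i.e. $g\le0$. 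The lower bound $g\ge-\delta_\beta^2u^+v^+$ still appears to require the case distinction above.
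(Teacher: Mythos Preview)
Your proof is correct and follows essentially the same approach as the paper: compute the integrand pointwise by splitting into the region $\Omega_a^{J_a}$ (resp.\ $\Omega_a^{J_a}\cup\Omega_b^{J_b}$) and its complement, write down the explicit piecewise-polynomial formulas, and integrate. The scaling argument you offer at the end for the upper bound $g\le 0$ is a neat shortcut not present in the paper, but the main line of argument coincides with theirs.
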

\begin{proof}
By the construction of $f_a(x,t)$, it is easy to see that $\frac14f_a(x,t)t-F_a(x,t)=0$ for $x\in\Omega_a^{J_a}$.  If $x\not\in\Omega_a^{J_a}$, then   by the construction of $f_a(x,t)$, we have
\begin{equation*}
\frac14f_a(x,t)t-F_a(x,t)=\left\{\aligned&0,\quad&t\leq\delta_\beta,\\
&\frac{\delta_\beta^4}{4}-\frac{\delta_\beta^2}{4}(t^+)^2,\quad&\delta_\beta\leq t.
\endaligned\right.
\end{equation*}
It follows that for every $u\in E_a$, we have
\begin{equation*}
\int_{\bbr^3}\frac14f_a(x,u)u-F_a(x,u)dx=
\int_{\{u(x)\geq\delta_\beta\}\cap(\bbr^3\backslash\Omega_a^{J_a})}\frac{\delta_\beta^4}{4}-\frac{\delta_\beta^2}{4}(u^+)^2dx\geq
-\frac{\delta_\beta^2}4\|u^+\|_2^2.
\end{equation*}
By a similar argument, for every $v\in E_b$, we have
\begin{equation*}
\int_{\bbr^3}\frac14f_b(x,v)v-F_b(x,v)dx\geq-\frac{\delta_\beta^2}4\|v^+\|_2^2.
\end{equation*}
On the other hand, since $\Omega_a^{J_a}\cap\Omega_b^{J_b}=\emptyset$, by the construction of $h(x,t,s)$, we can see that $\frac{t}{2}\int_0^sh(x,t,\tau)d\tau+\frac{s}{2}\int_0^th(x,\tau,s)d\tau-H(x,t,s)=0$ for $x\in\Omega_a^{J_a}\cup\Omega_{b}^{J_b}$.  If $x\not\in\Omega_a^{J_a}\cup\Omega_{b}^{J_b}$, then also by the construction of $h(x,t,s)$, we have
\begin{eqnarray*}
&&\frac{t}{2}\int_0^sh(x,t,\tau)d\tau+\frac{s}{2}\int_0^th(x,\tau,s)d\tau-H(x,t,s)\\
&=&\left\{\aligned
&0,\quad &t,s\leq\delta_\beta,\\
&\frac{(t^+)^2\delta_\beta}4(\delta_\beta-s),\quad &t\leq\delta_\beta\leq s,\\
&\frac{(s^+)^2\delta_\beta}4(\delta_\beta-t),\quad &s\leq\delta_\beta\leq t,\\
&\frac{\delta_\beta^2}{4}[(t-\delta_\beta)(\delta_\beta-2s)+(s-\delta_\beta)(\delta_\beta-2t)],\quad &\delta_\beta\leq t, s.
\endaligned\right.
\end{eqnarray*}
It follows that for every $(u,v)\in E$, we have
\begin{equation*}
0\geq\int_{\bbr^3}\frac{u}{2}\int_0^vh(x,u,s)ds+\frac{v}{2}\int_0^uh(x,t,v)dt-H(x,u,v)dx\geq
-\delta_\beta^2\|u^+v^+\|_1,
\end{equation*}
which completes the proof.
\end{proof}

\vskip0.1in

With Lemma~\ref{lem0002} in hands, we can verify that $J_{\lambda,\beta}^*(u,v)$ actually satisfies the $(PS)$ condition for $\lambda\geq\Lambda_1$ and $\beta<0$.

\vskip0.1in

\begin{lemma}
\label{lem0003}Assume $\lambda\geq\Lambda_1$ and $\beta<0$.  Then $J_{\lambda,\beta}^*(u,v)$ satisfies the $(PS)_c$ condition for all $c\in\bbr$, that is, every $\{(u_n,v_n)\}\subset E$ satisfying $J_{\lambda,\beta}^*(u_n,v_n)=c+o_n(1)$ and $D[J_{\lambda,\beta}^*(u_n,v_n)]=o_n(1)$ strongly in $E^*$ has a strongly convergent subsequence in $E$.
\end{lemma}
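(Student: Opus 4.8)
The plan is to establish the $(PS)_c$ condition through the usual two-step route: first show that any Palais--Smale sequence $\{(u_n,v_n)\}$ is bounded in $E$, then upgrade weak convergence to strong convergence. The key structural advantage here, as opposed to the original functional $J_{\lambda,\beta}$, is that the nonlinearities $f_a$, $f_b$, $h$ grow at most \emph{quadratically} outside the wells $\Omega_a^{J_a}$, $\Omega_b^{J_b}$, and only cubically/quartically on a fixed bounded region; this will make the compact embedding $E_a,E_b\hookrightarrow L^p_{loc}(\bbr^3)$ ($2\le p<6$), together with the decay estimate \eqref{eq0022} from Lemma~\ref{lem0001}, enough to recover strong convergence, with no loss of mass at infinity.

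First I would prove boundedness. Given $J_{\lambda,\beta}^*(u_n,v_n)=c+o_n(1)$ and $D[J_{\lambda,\beta}^*(u_n,v_n)]=o_n(1)$ in $E^*$, form the combination $J_{\lambda,\beta}^*(u_n,v_n)-\frac14\langle D[J_{\lambda,\beta}^*(u_n,v_n)],(u_n,v_n)\rangle_{E^*,E}$. The quartic terms contribute $\mu_j\int(\frac14 f(x,\cdot)(\cdot)-F(x,\cdot))$ and the cross term contributes the $h$-expression; by Lemma~\ref{lem0002} these are each bounded below by $-C\delta_\beta^2(\|u_n^+\|_2^2+\|v_n^+\|_2^2+\|u_n^+v_n^+\|_1)$. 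Hence
\begin{equation*}
c+o_n(1)+o_n(1)\|(u_n,v_n)\|\geq\frac14\|u_n\|_{a,\lambda}^2+\frac14\|v_n\|_{b,\lambda}^2-C\delta_\beta^2\big(\|u_n^+\|_2^2+\|v_n^+\|_2^2\big).
\end{equation*}
Now Lemma~\ref{lem0001} gives $\|u_n^+\|_2^2\leq C_{a,b}^{-1}\|u_n\|_{a,\lambda}^2$ and similarly for $v_n$, and the choice \eqref{eq0039} of $\delta_\beta^2$ was made precisely so that $C\delta_\beta^2 C_{a,b}^{-1}\leq\frac18$; absorbing, one obtains $\frac18(\|u_n\|_{a,\lambda}^2+\|v_n\|_{b,\lambda}^2)\leq c+o_n(1)+o_n(1)\|(u_n,v_n)\|$, and since the $\|\cdot\|_{a,\lambda}$, $\|\cdot\|_{b,\lambda}$ norms are equivalent to $\|\cdot\|_a$, $\|\cdot\|_b$ for $\lambda\geq\Lambda_1$, this forces $\|(u_n,v_n)\|\leq C$. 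Thus, up to a subsequence, $(u_n,v_n)\rightharpoonup(u,v)$ weakly in $E$, strongly in $L^p_{loc}(\bbr^3)\times L^p_{loc}(\bbr^3)$ for $2\le p<6$, and a.e.

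For strong convergence, the central point is that there is no escape of mass. Arguing as in \eqref{eq0076}--\eqref{eq0078} in the proof of Proposition~\ref{prop0002}: on the complement of $\mathcal{D}_a$ one has the pointwise bound using $a(x)\geq a_\infty$, and the $L^p_{loc}$ convergence handles the bounded part, so that $\int_{\bbr^3\backslash\Omega_a^{J_a}}|u_n-u|^4\,dx$ and $\int|u_n-u|^2v_n^2$-type terms tend to zero along the subsequence; inside $\Omega_a^{J_a}$ (which is bounded, contained in $\mathcal{D}_a$) the quartic term converges by $L^4_{loc}$ convergence. Then I would test $D[J_{\lambda,\beta}^*(u_n,v_n)]-D[J_{\lambda,\beta}^*(u,v)]$ against $(u_n-u,v_n-v)$: the left side is $o_n(1)$, the nonlinear terms go to zero by the convergence just described together with Lemma~\ref{lem0001}-type control of $\|u_n-u\|_2$ via \eqref{eq0022} on $\bbr^3\backslash\Omega_a'$ plus $L^2_{loc}$ convergence on $\Omega_a'$, and what survives is $\|u_n-u\|_{a,\lambda}^2+\|v_n-v\|_{b,\lambda}^2=o_n(1)$. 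Since these norms are equivalent to the $E_a$, $E_b$ norms, $(u_n,v_n)\to(u,v)$ strongly in $E$.

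The main obstacle is the cross term $\beta\int H(x,u_n,v_n)\,dx$ and its derivative contributions $\int u_n\int_0^{v_n}h(x,u_n,s)\,ds$: one must check that this interaction term is genuinely subcritical-compact, i.e. that $\int_{\bbr^3}\big(\int_0^{v_n}h(x,u_n,s)ds\big)(u_n-u)\,dx\to0$. Because $h(x,t,s)\leq st$ everywhere and $h(x,t,s)\leq\delta_\beta^2$ off the wells, one splits this integral over $\Omega_a^{J_a}\cup\Omega_b^{J_b}$ (bounded, so $L^4_{loc}$/$L^2_{loc}$ convergence and H\"older suffice) and its complement (where $h$ is bounded, so one only needs $L^2$ convergence of $u_n$, supplied by \eqref{eq0022} and the boundedness of $\{v_n\}$). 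I expect this bookkeeping — keeping the quadratic-at-infinity growth visible at every step and invoking \eqref{eq0022} to kill the tails — to be the only real work; the rest is the standard Palais--Smale argument.
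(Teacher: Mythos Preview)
Your strategy is correct and matches the paper's: boundedness via $J_{\lambda,\beta}^*-\tfrac14\langle D[J_{\lambda,\beta}^*],\cdot\rangle$ combined with Lemma~\ref{lem0002}, then testing $D[J_{\lambda,\beta}^*(u_n,v_n)]-D[J_{\lambda,\beta}^*(u_0,v_0)]$ against $(u_n-u_0,v_n-v_0)$ and absorbing residuals via Lemma~\ref{lem0001} and the choice \eqref{eq0039} of $\delta_\beta$.

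One point of your exposition is muddled and worth correcting. The detour through \eqref{eq0076}--\eqref{eq0078} and the claim that $\int_{\bbr^3\backslash\Omega_a^{J_a}}|u_n-u|^4\,dx\to0$ is neither needed nor how the argument actually runs. On $\bbr^3\backslash\Omega_a^{J_a}$ the nonlinearity $f_a$ is not quartic but linear with slope $\leq\delta_\beta^2$, so the relevant off-well contribution is $\int_{\bbr^3\backslash\Omega_a^{J_a}}(f_a(x,u_n)-f_a(x,u_0))(u_n-u_0)\,dx\leq\delta_\beta^2\|u_n-u_0\|_2^2+o_n(1)$ (the $o_n(1)$ coming from the $u_0$ cross term via $L^2_{loc}$ convergence). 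Similarly for $f_b$ and $h$. You do \emph{not} show these residual $\delta_\beta^2\|u_n-u_0\|_2^2$ terms vanish; you carry them to the end and absorb them: Lemma~\ref{lem0001} gives $\|u_n-u_0\|_2^2\leq C_{a,b}^{-1}\|u_n-u_0\|_{a,\lambda}^2$, and \eqref{eq0039} ensures $\delta_\beta^2(\mu_j+2|\beta|)C_{a,b}^{-1}\leq\tfrac12$, yielding $\tfrac12(\|u_n-u_0\|_{a,\lambda}^2+\|v_n-v_0\|_{b,\lambda}^2)\leq o_n(1)$. This absorption step (not any tail-decay of $L^4$ or $L^2$ norms) is the real mechanism, and is exactly what the paper does in \eqref{eq0012}--\eqref{eq0014}. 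Your later phrase ``Lemma~\ref{lem0001}-type control of $\|u_n-u\|_2$'' suggests you have this in mind; just drop the $L^4$-tail discussion, which is a red herring.
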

\begin{proof}
Suppose $\{(u_n,v_n)\}\subset E$ satisfying $J_{\lambda, \beta}^*(u_n,v_n)=c+o_n(1)$ and $D[J_{\lambda,\beta}^*(u_n,v_n)]=o_n(1)$ strongly in $E^*$.  Then by $\beta<0$, Lemmas~\ref{lem0001} and \ref{lem0002} and a similar argument of \eqref{eq0009}, we have
\begin{eqnarray}
c+o_n(1)+o_n(1)\|(u_n,v_n)\|
\geq\frac{1}{4}(1-\mu_1\delta_\beta^2C_{a,b}^{-1})\|u_n\|^2_{a,\lambda}+\frac{1}{4}(1-\mu_2\delta_\beta^2C_{a,b}^{-1})\|v_n\|^2_{b,\lambda}.\label{eq0109}
\end{eqnarray}
It follows from Lemma~\ref{lem0001} and \eqref{eq0039} that
$c+o_n(1)+o_n(1)\|(u_n,v_n)\|\geq\frac{C_{a,b}}{8}(\|u_n\|_2^2+\|v_n\|_2^2)$.
This together with Lemma~\ref{lem0001} and the condition $(D_4)$, implies
\begin{eqnarray*}
c+o_n(1)+o_n(1)\|(u_n,v_n)\|&\geq&\frac{1}{4}(1-\mu_1\delta_\beta^2C_{a,b}^{-1})\|u_n\|^2_{a,\lambda}
+\frac{1}{4}(1-\mu_2\delta_\beta^2C_{a,b}^{-1})\|v_n\|^2_{b,\lambda}\\
&\geq&\frac{1}{4}(1-\mu_1\delta_\beta^2C_{a,b}^{-1})\|u_n\|_a^2
+\frac{1}{4}(1-\mu_2\delta_\beta^2C_{a,b}^{-1})\|v_n\|_b^2\\
&&-C(\|u_n\|_2^2+\|v_n\|_2^2)\\
&\geq&\frac18\|(u_n,v_n)\|^2-C'(c+o_n(1)+o_n(1)\|(u_n,v_n)\|),
\end{eqnarray*}
since $\lambda\geq\Lambda_1$.  Thus, $\{(u_n,v_n)\}$ is bounded in $E$ and $(u_n,v_n)\rightharpoonup(u_0,v_0)$ weakly in $E$ as $n\to\infty$ for some $(u_0,v_0)\in E$ up to a subsequence.  Without loss of generality, we assume $(u_n,v_n)\rightharpoonup(u_0,v_0)$ weakly in $E$ as $n\to\infty$.  Since $D[J_{\lambda,\beta}^*(u_n,v_n)]=o_n(1)$ strongly in $E^*$, it is easy to see that $D[J_{\lambda,\beta}^*(u_0,v_0)]=0$ in $E^*$, which implies
\begin{eqnarray}
o_n(1)&=&\langle D[J_{\lambda,\beta}^*(u_n,v_n)]-D[J_{\lambda,\beta}^*(u_0,v_0)], (u_n,v_n)-(u_0,v_0)\rangle_{E^*,E}\notag\\
&=&\|u_n-u_0\|_{a,\lambda}^2+\|v_n-v_0\|_{b,\lambda}^2-\mu_1\int_{\bbr^3}(f_a(x,u_n)-f_a(x,u_0))(u_n-u_0)dx\notag\\
&&-\mu_2\int_{\bbr^3}(f_b(x,v_n)-f_b(x,v_0))(v_n-v_0)dx\notag\\
&&-2\beta\int_{\bbr^3}(\int_0^{v_n}h(x,u_n,s)ds-\int_0^{v_0}h(x,u_0,s)ds)(u_n-u_0)dx\notag\\
&&-2\beta\int_{\bbr^3}(\int_0^{u_n}h(x,t,v_n)dt-\int_0^{u_0}h(x,t,v_0)dt)(v_n-v_0)dx.\label{eq0010}
\end{eqnarray}
By the construction of $f_a(x,t)$, we can see that
\begin{eqnarray*}
&&|\int_{\bbr^3}(f_a(x,u_n)-f_a(x,u_0))(u_n-u_0)dx|\\
&\leq&\int_{\Omega_a^{J_a}}|(f_a(x,u_n)-f_a(x,u_0))(u_n-u_0)|dx
+\int_{\bbr^3\backslash\Omega_a^{J_a}}|(f_a(x,u_n)-f_a(x,u_0))(u_n-u_0)|dx\\
&\leq&\int_{\Omega_a^{J_a}}(|u_n|^3+|u_0|^3)|u_n-u_0|dx+2\delta_\beta^2\int_{\bbr^3\backslash\Omega_a^{J_a}}|u_0||u_n-u_0|dx+
\delta_\beta^2\|u_n-u_0\|_2^2.
\end{eqnarray*}
Since $(u_n,v_n)\rightharpoonup(u_0,v_0)$ weakly in $E$ as $n\to\infty$, by \eqref{eq0004} and the Sobolev embedding theorem, we have
\begin{equation}    \label{eq0011}
u_n\to u_0 \text{ strongly in }L^p_{loc}(\bbr^3)\text{ as }n\to\infty\text{ for }p\in[1, 6).
\end{equation}
Thus, $\int_{\Omega_a^{J_a}}(|u_n|^3+|u_0|^3)|u_n-u_0|dx=o_n(1)$ due to the choice of $\Omega_a^{J_a}$ and the H\"older inequality.  On the other hand, we also see from \eqref{eq0011} and the H\"older inequality that $2\delta_\beta^2\int_{\bbr^3\backslash\Omega_a^{J_a}}|u_0||u_n-u_0|dx=o_n(1)$.  Hence, we have
\begin{equation}     \label{eq0012}
|\int_{\bbr^3}(f_a(x,u_n)-f_a(x,u_0))(u_n-u_0)dx|\leq\delta_\beta^2\|u_n-u_0\|_2^2+o_n(1).
\end{equation}
Since \eqref{eq0005} holds, we can also obtain the following  estimates in a simiar way:
\begin{equation}     \label{eq0013}
|\int_{\bbr^3}(f_b(x,v_n)-f_b(x,v_0))(v_n-v_0)dx|\leq\delta_\beta^2\|v_n-v_0\|_2^2+o_n(1).
\end{equation}
On the other hand, by the construction of $h(x,t,s)$, we can see that
\begin{eqnarray}
&&|\int_{\bbr^3}(\int_0^{v_n}h(x,u_n,s)ds-\int_0^{v_0}h(x,u_0,s)ds)(u_n-u_0)dx|\notag\\
&\leq&\delta_\beta^2\int_{\bbr^3}|u_n-u_0||v_n-v_0|dx+2\delta_\beta^2\int_{\bbr^3}|v_0||u_n-u_0|dx+o_n(1)\label{eq0042}
\end{eqnarray}
and
\begin{eqnarray}
&&|\int_{\bbr^3}(\int_0^{u_n}h(x,t,v_n)dt-\int_0^{u_0}h(x,t,v_0)dt)(v_n-v_0)dx|\notag\\
&\leq&\delta_\beta^2\int_{\bbr^3}|u_n-u_0||v_n-v_0|dx+2\delta_\beta^2\int_{\bbr^3}|u_0||v_n-v_0|dx+o_n(1).\label{eq0043}
\end{eqnarray}
By using similar arguments of \eqref{eq0012} and \eqref{eq0013}, we can see from \eqref{eq0042} and \eqref{eq0043} that
\begin{eqnarray}
&&|\int_{\bbr^3}(\int_0^{v_n}h(x,u_n,s)ds-\int_0^{v_0}h(x,u_0,s)ds)(u_n-u_0)dx|\notag\\
&&\leq\delta_\beta^2\int_{\bbr^3}|u_n-u_0||v_n-v_0|dx+o_n(1),\label{eq0085}
\end{eqnarray}
and
\begin{eqnarray}
&&|\int_{\bbr^3}(\int_0^{u_n}h(x,t,v_n)dt-\int_0^{u_0}h(x,t,v_0)dt)(v_n-v_0)dx|\notag\\
&&\leq\delta_\beta^2\int_{\bbr^3}|u_n-u_0||v_n-v_0|dx+o_n(1).\label{eq0044}
\end{eqnarray}
Combining \eqref{eq0010}, \eqref{eq0012}-\eqref{eq0013} and \eqref{eq0085}-\eqref{eq0044}, we can conclude that
\begin{equation}   \label{eq0014}
o_n(1)\geq\|u_n-u_0\|_{a,\lambda}^2+\|v_n-v_0\|_{b,\lambda}^2
-\delta_\beta^2(\mu_1+2|\beta|)\|u_n-u_0\|_2^2-\delta_\beta^2(\mu_2+2|\beta|)\|v_n-v_0\|_2^2.
\end{equation}
It follows from Lemma~\ref{lem0001} and \eqref{eq0039} that
$o_n(1)\geq\frac{C_{a,b}}{2}(\|u_n-u_0\|_2^2+\|v_n-v_0\|_2^2)$,
which implies $u_n\to u_0$ and $v_n\to v_0$ strongly in $L^2(\bbr^3)$ as $n\to\infty$.  This together the condition $(D_4)$, implies
\begin{equation*}
o_n(1)\geq\|u_n-u_0\|_{a,\lambda}^2+\|v_n-v_0\|_{b,\lambda}^2\geq\|u_n-u_0\|_a^2+\|v_n-v_0\|_b^2+o_n(1)
\end{equation*}
for $\lambda\geq\Lambda_1$ and $\beta<0$.
Thus, $(u_n,v_n)\to(u_0,v_0)$ strongly in $E$ as $n\to\infty$ for $\lambda\geq\Lambda_1$ and $\beta<0$, which completes the proof.
\end{proof}

\vskip0.1in

In the final of this section, we will show that $J_{\lambda,\beta}^*(u,v)$ is actually a penalized functional of $J_{\lambda,\beta}(u,v)$ in the sense that, some special critical points of $J_{\lambda,\beta}^*(u,v)$ are also critical points of $J_{\lambda,\beta}(u,v)$.

\vskip0.1in

\begin{lemma}
\label{lem0004}Assume $\lambda\geq\Lambda_1$ and $\beta<0$.  Let $M>0$ be a constant and $(u_{\lambda,\beta},v_{\lambda,\beta})\in E$ satisfy $J_{\lambda,\beta}^*(u_{\lambda,\beta},v_{\lambda,\beta})\leq M$ and $D[J_{\lambda,\beta}^*(u_{\lambda,\beta},v_{\lambda,\beta})]=0$ in $E^*$.  Then
\begin{enumerate}
\item[$(1)$] There exists $M_1>0$ such that $\|(u_{\lambda,\beta},v_{\lambda,\beta})\|\leq M_1$.
\item[$(2)$] $\int_{\bbr^3\backslash\Omega_a^{J_a}}|\nabla u_{\lambda,\beta}|^2+(\lambda a(x)+a_0(x))u_{\lambda,\beta}^2dx\to0$ and $\int_{\bbr^3\backslash\Omega_b^{J_b}}|\nabla v_{\lambda,\beta}|^2+(\lambda b(x)+b_0(x))v_{\lambda,\beta}^2dx\to0$ as $\lambda\to+\infty$.
\item[$(3)$] There exists $\Lambda_1^*(\beta,M)\geq\Lambda_1$ such that $|u_{\lambda,\beta}|\leq\delta_\beta$ on $\bbr^3\backslash\Omega_a^{J_a}$ and $|v_{\lambda,\beta}|\leq\delta_\beta$ on $\bbr^3\backslash\Omega_b^{J_b}$ for $\lambda\geq\Lambda_1^*(\beta,M)$.
\end{enumerate}
\end{lemma}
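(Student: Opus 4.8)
\medskip
\noindent\textbf{Proof plan.} All three assertions follow the pattern of Proposition~\ref{prop0002}, with the genuine nonlinearities replaced by their penalised substitutes, the point being that since $\beta<0$ and $u_{\lambda,\beta},v_{\lambda,\beta}\ge0$ (this last by testing $D[J_{\lambda,\beta}^*]=0$ against the negative parts and using that $f_a,f_b$ and $h(x,\cdot,\cdot)$ vanish when an argument is non-positive), every term carrying $\beta$ contributes with a favourable sign. Throughout, $\Omega_{a,i_a}''$, $\Omega_{b,j_b}''$ denote intermediate domains with $\Omega_{a,i_a}\subset\Omega_{a,i_a}''\subset\Omega_{a,i_a}'$, positive mutual distances, and we set $(\Omega_a^{J_a})''=\bigcup_{i_a\in J_a}\Omega_{a,i_a}''$, $(\Omega_b^{J_b})''=\bigcup_{j_b\in J_b}\Omega_{b,j_b}''$.

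\emph{Assertion (1).} Since $D[J_{\lambda,\beta}^*(u_{\lambda,\beta},v_{\lambda,\beta})]=0$, I would evaluate $M\ge J_{\lambda,\beta}^*(u_{\lambda,\beta},v_{\lambda,\beta})-\tfrac14\langle D[J_{\lambda,\beta}^*(u_{\lambda,\beta},v_{\lambda,\beta})],(u_{\lambda,\beta},v_{\lambda,\beta})\rangle_{E^*,E}$. Controlling the three ``Nehari defects'' by Lemma~\ref{lem0002} and noting that the $\beta$-defect is $\ge0$ (because $\beta<0$), this gives, after one use of Lemma~\ref{lem0001}, $M\ge\tfrac14(1-\mu_1\delta_\beta^2C_{a,b}^{-1})\|u_{\lambda,\beta}\|_{a,\lambda}^2+\tfrac14(1-\mu_2\delta_\beta^2C_{a,b}^{-1})\|v_{\lambda,\beta}\|_{b,\lambda}^2$. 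By \eqref{eq0039} the bracketed factors are $\ge\tfrac12$, hence $\|u_{\lambda,\beta}\|_{a,\lambda}^2+\|v_{\lambda,\beta}\|_{b,\lambda}^2\le8M$, and then exactly as in \eqref{eq0332} the condition $(D_4)$ together with Lemma~\ref{lem0001} turns this into $\|(u_{\lambda,\beta},v_{\lambda,\beta})\|\le M_1$ with $M_1$ depending only on $M$ and the fixed data; what matters is that $M_1$ is independent of $\lambda$ (and of $\beta$), which is what makes (2) and (3) possible.

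\emph{Assertion (2).} From $M\ge\tfrac18\|u_{\lambda,\beta}\|_{a,\lambda}^2$ and \eqref{eq0002} one gets $(\lambda-d_a)\int_{\bbr^3}a\,u_{\lambda,\beta}^2\le C(M)$, hence $\int_{\bbr^3}a\,u_{\lambda,\beta}^2\to0$ (likewise for $b,v_{\lambda,\beta}$). Consequently $\int_{\bbr^3\setminus\mathcal D_a}u_{\lambda,\beta}^2\to0$ (there $a\ge a_\infty$); and since $\mathcal D_a$ has finite measure, $\{u_{\lambda,\beta}^2\}$ is bounded in $L^3$, hence uniformly integrable on $\mathcal D_a$, while $\{a<\eta\}\cap(\mathcal D_a\setminus\Omega_a')\downarrow\emptyset$ as $\eta\downarrow0$, so also $\int_{\mathcal D_a\setminus\Omega_a'}u_{\lambda,\beta}^2\to0$. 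On each unselected well $\Omega_{a,j_a}'$, $j_a\notin J_a$, the nonlinearity is penalised and the $\beta$-term is $\le0$, so testing the first equation of $(\mathcal P_{\lambda,\beta}^*)$ with $u_{\lambda,\beta}\zeta^2$, $\zeta\in C_c^\infty(\Omega_{a,j_a}')$, $\zeta\equiv1$ on $\Omega_{a,j_a}''$, and absorbing the $\mu_1\delta_\beta^2\|u_{\lambda,\beta}\zeta\|_2^2$-term via Lemma~\ref{lem0001} and \eqref{eq0039} gives $\|u_{\lambda,\beta}\zeta\|_{a,\lambda}^2\le2\|\nabla\zeta\|_\infty^2\int_{\Omega_{a,j_a}'\setminus\Omega_{a,j_a}''}u_{\lambda,\beta}^2\to0$ (the last set being bounded with $a$ bounded below on it), whence $\int_{\Omega_{a,j_a}'}u_{\lambda,\beta}^2\to0$; collecting these, $\int_{\bbr^3\setminus\Omega_a^{J_a}}u_{\lambda,\beta}^2\to0$, and the same test on the annuli $\Omega_{a,i_a}'\setminus\Omega_{a,i_a}''$ ($i_a\in J_a$) gives $\int_{\Omega_a^{J_a}\setminus(\Omega_a^{J_a})''}u_{\lambda,\beta}^2\to0$. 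Finally, testing the first equation with $u_{\lambda,\beta}\Psi$ for a fixed $\Psi\in C^\infty(\bbr^3,[0,1])$ equal to $1$ on $\bbr^3\setminus\Omega_a^{J_a}$ and to $0$ on $(\Omega_a^{J_a})''$ — so that $\lambda a+a_0\ge0$ on the transition region $\Omega_a^{J_a}\setminus(\Omega_a^{J_a})''$ for $\lambda$ large (whence the left side dominates $\int_{\bbr^3\setminus\Omega_a^{J_a}}(|\nabla u_{\lambda,\beta}|^2+(\lambda a+a_0)u_{\lambda,\beta}^2)$), the $\beta$-term is $\le0$, and $f_a(x,u_{\lambda,\beta})\le\delta_\beta^2u_{\lambda,\beta}$ on $\bbr^3\setminus\Omega_a^{J_a}$ — yields, using the two $L^2$-limits just obtained (and $\|(u_{\lambda,\beta},v_{\lambda,\beta})\|\le M_1$ with H\"older--Sobolev) to kill $\int u_{\lambda,\beta}\nabla u_{\lambda,\beta}\cdot\nabla\Psi$ and $\mu_1\int_{\Omega_a^{J_a}\setminus(\Omega_a^{J_a})''}u_{\lambda,\beta}^4$, the conclusion $\int_{\bbr^3\setminus\Omega_a^{J_a}}(|\nabla u_{\lambda,\beta}|^2+(\lambda a+a_0)u_{\lambda,\beta}^2)\to0$. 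This is the verbatim analogue of \eqref{eq0019}--\eqref{eq0080}; the $v$-component is identical.

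\emph{Assertion (3).} Off $\Omega_a^{J_a}$ one has $\mu_1f_a(x,u_{\lambda,\beta})\le\mu_1\delta_\beta^2u_{\lambda,\beta}$ and $2\beta\int_0^{v_{\lambda,\beta}}h(x,u_{\lambda,\beta},s)\,ds\le0$, so $u_{\lambda,\beta}\ge0$ is a subsolution of $-\Delta w+(\lambda a+a_0-\mu_1\delta_\beta^2)w=0$ on $\bbr^3\setminus\Omega_a^{J_a}$, and by $(D_4)$ the coefficient $\mu_1\delta_\beta^2-\lambda a-a_0$ is bounded above there by a constant independent of $\lambda$. For points at distance $\ge\rho$ from $\Omega_a^{J_a}$, a standard sub-solution (De Giorgi--Nash--Moser) estimate on balls lying in $\bbr^3\setminus\Omega_a^{J_a}$ gives $u_{\lambda,\beta}(x)\le C(\rho)\|u_{\lambda,\beta}\|_{L^2(\bbr^3\setminus\Omega_a^{J_a})}\to0$ by (2). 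Near $\partial\Omega_a^{J_a}$ one first upgrades, by one more cut-off test on a thin tubular neighbourhood $U$ of $\partial\Omega_a^{J_a}$ (again $a$ is bounded below on $U$, so $\lambda a+a_0\ge0$ there for $\lambda$ large), the $L^2$-decay of $u_{\lambda,\beta}$ on $U$ to $H^1$- hence $L^6$-decay on $U$; then on $U$ one has $-\Delta u_{\lambda,\beta}\le(\mu_1u_{\lambda,\beta}^2+C)u_{\lambda,\beta}$ with $\|\mu_1u_{\lambda,\beta}^2+C\|_{L^{3/2}(U)}$ as small as we wish for $\lambda$ large and $U$ thin, so a Brezis--Kato/Moser iteration gives $\|u_{\lambda,\beta}\|_{L^\infty(U)}\le C\|u_{\lambda,\beta}\|_{L^6(U)}\to0$. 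Hence $\|u_{\lambda,\beta}\|_{L^\infty(\bbr^3\setminus\Omega_a^{J_a})}\to0$ as $\lambda\to+\infty$, similarly for $v_{\lambda,\beta}$ on $\bbr^3\setminus\Omega_b^{J_b}$, and one takes $\Lambda_1^*(\beta,M)$ so large that these sup-norms are $\le\delta_\beta$ for $\lambda\ge\Lambda_1^*(\beta,M)$. I expect the main obstacle to be precisely this last step: the boundary-layer analysis near $\partial\Omega_a^{J_a}\cup\partial\Omega_b^{J_b}$, where the critical term $u_{\lambda,\beta}^3$ is not yet controlled; (3) is also the step that makes the whole penalisation scheme work, since it forces $J_{\lambda,\beta}^*$ to coincide with $J_{\lambda,\beta}$ on these solutions.
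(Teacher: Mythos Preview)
Your argument for (1) and (2) is essentially the paper's; the organisation differs slightly (the paper first proves decay on $\bbr^3\setminus\Omega_a'$ using a cut-off $\Psi$ vanishing on $\Omega_a''=\bigcup_{i_a}\Omega_{a,i_a}''$, and only then treats the unselected wells $\Omega_{a,i_a}'$, $i_a\notin J_a$, by a further cut-off supported on a slightly larger $\Omega_{a,i_a}'''$), but the ingredients and conclusions are the same. One small imprecision: the $L^2$-smallness on the \emph{selected} annuli $\Omega_{a,i_a}'\setminus\Omega_{a,i_a}''$, $i_a\in J_a$, does not come from ``the same test'' (there the nonlinearity is the genuine cubic) but directly from $\int a\,u_{\lambda,\beta}^2\to0$ and $\inf_{\Omega_{a,i_a}'\setminus\Omega_{a,i_a}''}a>0$, as you in fact used for the unselected wells.

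For (3) your two-zone scheme (interior subsolution estimate $+$ boundary-layer Brezis--Kato) can be made to work, but it is more laborious than necessary, and the ``main obstacle'' you flag simply does not arise in the paper's proof. The paper avoids any boundary analysis by enlarging the target set: having already shown $\int_{\bbr^3\setminus(\Omega_a^{J_a})''}u_{\lambda,\beta}^2\to0$, it fixes $r=\tfrac13\,\mathrm{dist}\big((\Omega_a^{J_a})'',\,\bbr^3\setminus\Omega_a^{J_a}\big)>0$ so that for every $x\in\bbr^3\setminus\Omega_a^{J_a}$ the ball $B_{2r}(x)$ lies entirely in $\bbr^3\setminus(\Omega_a^{J_a})''$. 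On such a ball the first equation of $(\mathcal P_{\lambda,\beta}^*)$ gives (after dropping the $\beta$-term and using $(D_4)$) $-\Delta u_{\lambda,\beta}\le C_\beta(u_{\lambda,\beta}^3+u_{\lambda,\beta})$, with the cubic present only on the thin strip $B_{2r}(x)\cap\Omega_a^{J_a}$. A single Moser iteration with test functions $\min\{|u_{\lambda,\beta}|^{\alpha-1},L\}\,u_{\lambda,\beta}\,\overline\rho^2$ then yields $\|u_{\lambda,\beta}\|_{L^\infty(B_r(x))}\le C_\beta'\big(\int_{\bbr^3\setminus(\Omega_a^{J_a})''}u_{\lambda,\beta}^5\big)^{c}$ uniformly in $x$, and the right-hand side tends to $0$ by the $L^2$-decay and the uniform $H^1$-bound. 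Thus $\|u_{\lambda,\beta}\|_{L^\infty(\bbr^3\setminus\Omega_a^{J_a})}\to0$ with no separate treatment of $\partial\Omega_a^{J_a}$. The trade-off: your approach localises the subsolution property sharply on $\bbr^3\setminus\Omega_a^{J_a}$ (where the penalised nonlinearity is linear), at the cost of a delicate endpoint argument; the paper accepts the cubic term on a small strip but gains a fixed-radius ball at every point, making the iteration uniform.
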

\begin{proof}$(1)$\quad Since $\lambda\geq\Lambda_1$ and $\beta<0$, by a similar argument as \eqref{eq0109}, we can conclude that
\begin{equation}     \label{eq0015}
M\geq\frac{1}{4}(1-\mu_1\delta_\beta^2C_{a,b}^{-1})\|u_{\lambda,\beta}\|_{a,\lambda}^2
+\frac{1}{4}(1-\mu_2\delta_\beta^2C_{a,b}^{-1})\|v_{\lambda,\beta}\|_{b,\lambda}^2.
\end{equation}
It follows from Lemma~\ref{lem0001} and \eqref{eq0039} that
$8MC_{a,b}^{-1}\geq\|u_{\lambda,\beta}\|_2^2+\|v_{\lambda,\beta}\|_2^2$.
Now, applying the condition $(D_4)$, we can see that
\begin{equation}    \label{eq0020}
8M+8MC_{a,b}^{-1}(C_{a,0}+d_a+C_{b,0}+d_b)\geq\|(u_{\lambda,\beta},v_{\lambda,\beta})\|^2.
\end{equation}
We complete this proof by taking $M_1=8M+8MC_{a,b}^{-1}(C_{a,0}+d_a+C_{b,0}+d_b)$.

\vskip0.1in

$(2)$\quad Since $\bbr^3\backslash\Omega_a^{J_a}=(\bbr^3\backslash\Omega_a')\cup(\underset{i_a=k_a+1}{\overset{n_a}{\cup }}\Omega_{a,i_a}')$ and $\bbr^3\backslash\Omega_b^{J_b}=(\bbr^3\backslash\Omega_b')\cup(\underset{j_b=k_b+1}{\overset{n_b}{\cup }}\Omega_{b,j_b}')$, for the sake of clarity, we divide this proof into the following two steps.

\vskip0.1in

{\bf Step 1. }\quad We prove that
\begin{equation}          \label{eq0118}
\lim_{\lambda\to+\infty}\int_{\bbr^3\backslash\Omega_a'}|\nabla u_{\lambda,\beta}|^2+(\lambda a(x)+a_0(x))u_{\lambda,\beta}^2 dx=0
\end{equation}
and
\begin{equation}          \label{eq0140}
\lim_{\lambda\to+\infty}\int_{\bbr^3\backslash\Omega_b'}|\nabla v_{\lambda,\beta}|^2+(\lambda b(x)+b_0(x))v_{\lambda,\beta}^2 dx=0.
\end{equation}

Indeed, let $\{\Omega_{a,i_a}''\}$ be a sequence of bounded domains with smooth boundaries in $\bbr^3$ and satisfy
\begin{enumerate}
\item[$(a)$] $\Omega_{a,i_a}\subset\Omega_{a,i_a}''\subset\Omega_{a,i_a}'$ for all $i_a=1,\cdots,n_a$.
\item[$(b)$] $dist(\Omega_{a,i_a}'', \bbr^3\backslash\Omega_{a,i_a}')>0$ and $dist(\bbr^3\backslash\Omega_{a,i_a}'',\Omega_{a,i_a})>0$ for all $i_a=1,\cdots,n_a$.
\end{enumerate}
Denote $\Omega_a''=\underset{i_a=1}{\overset{n_a}{\cup }}\Omega_{a,i_a}''$.   Then by a similar argument  as \eqref{eq0017}, we can show that
\begin{equation}        \label{eq0341}
\int_{\Omega_a''}|\nabla u_{\lambda,\beta}|^2+(\lambda a(x)+a_0(x))u_{\lambda,\beta}^2dx\geq\frac{\nu_a}{2}\int_{\Omega_a''}u_{\lambda,\beta}^2dx
\end{equation}
for $\lambda$ large enough.  Without loss of generality, we assume \eqref{eq0341} holds for $\lambda\geq\Lambda_1$.  Since Lemma~\ref{lem0001} and \eqref{eq0015} hold, we can obtain $8((C_{a,0}+d_a)C_{a,b}^{-1}+1)M\geq\lambda\int_{\bbr^3\backslash\Omega_a''}a(x)u_{\lambda,\beta}^2dx$ for $\lambda\geq\Lambda_1$ by the condition $(D_4)$ and \eqref{eq0341}.  Since the condition $(D_3')$ is contained in the condition $(D_3)$, by a similar argument as \eqref{eq0019}, we can see that
\begin{equation}        \label{eq0119}
\int_{\bbr^3\backslash\Omega_a''}u_{\lambda,\beta}^2dx\to0\text{ as }\lambda\to+\infty.
\end{equation}
Let $\Psi\in C^\infty(\bbr^3)$ be given by \eqref{eq0086}.
Then $u_{\lambda,\beta}\Psi\in E_a$.   Note $D[J_{\lambda,\beta}^*(u_{\lambda,\beta},v_{\lambda,\beta})]=0$ in $E^*$, we get that
\begin{eqnarray*}
&&\int_{\bbr^3}(|\nabla u_{\lambda,\beta}|^2+(\lambda a(x)+a_0(x))u_{\lambda,\beta}^2)\Psi dx+\int_{\bbr^3}(\nabla u_{\lambda,\beta}\nabla \Psi) u_{\lambda,\beta} dx\\
&=&\mu_1\int_{\bbr^3}f_a(x,u_{\lambda,\beta})u_{\lambda,\beta}\Psi dx+2\beta\int_{\bbr^3}(\int_0^{v_{\lambda,\beta}}h(x,u_{\lambda,\beta},s)ds)u_{\lambda,\beta}\Psi dx.
\end{eqnarray*}
Since $\beta<0$, by \eqref{eq0022} and the construction of $f_a(x,t)$ and $h(x,t,s)$, we have
\begin{eqnarray}
0&\leq&\int_{\bbr^3\backslash\Omega_a'}|\nabla u_{\lambda,\beta}|^2+(\lambda a(x)+a_0(x))u_{\lambda,\beta}^2 dx\notag\\
&\leq&\mu_1\int_{\Omega_a^{J_a}\cap(\bbr^3\backslash\Omega_a'')}u_{\lambda,\beta}^4dx
+\mu_1\delta_\beta^2\int_{(\bbr^3\backslash\Omega_a^{J_a})\cap(\bbr^3\backslash\Omega_a'')}u_{\lambda,\beta}^2dx
+\int_{\Omega_a'\backslash\Omega_a''}|\nabla u_{\lambda,\beta}||\nabla\Psi||u_{\lambda,\beta}|dx\notag\\
&\leq&\mu_1\delta_\beta^2\int_{\bbr^3\backslash\Omega_a''}u_{\lambda,\beta}^2dx
+(\mu_1S^{-\frac32}\|u_{\lambda,\beta}\|_a^3+\max_{\bbr^3}|\nabla\Psi|\|u_{\lambda,\beta}\|_a)\bigg(\int_{\bbr^3\backslash\Omega_a''}u_{\lambda,\beta}^2dx\bigg)^{\frac12}.\label{eq0046}
\end{eqnarray}
Thanks to \eqref{eq0020} and \eqref{eq0119}, we  know from \eqref{eq0046} that \eqref{eq0118} holds.
By a similar argument, we can also conclude that \eqref{eq0140} is true.

\vskip0.1in

{\bf Step 2. }\quad We prove that
\begin{equation}         \label{eq0049}
\lim_{\lambda\to+\infty}\int_{\Omega_{a,i_a}'}|\nabla u_{\lambda,\beta}|^2+(\lambda a(x)+a_0(x))u_{\lambda,\beta}^2 dx=0\quad\text{for all }i_a\in\{1,\cdots,n_a\}\backslash J_a
\end{equation}
and
\begin{equation}        \label{eq0050}
\lim_{\lambda\to+\infty}\int_{\Omega_{b,j_b}'}|\nabla v_{\lambda,\beta}|^2+(\lambda b(x)+b_0(x))v_{\lambda,\beta}^2 dx=0\quad\text{for all }j_b\in\{1,\cdots,n_b\}\backslash J_b.
\end{equation}

In fact, let $\{\Omega_{a,i_a}'''\}$ be a sequence of bounded domains with smooth boundaries in $\bbr^3$ and satisfy
\begin{enumerate}
\item[$(i)$] $\Omega_{a,i_a}'\subset\Omega_{a,i_a}'''$ and dist$(\Omega_{a,i_a}', \bbr^3\backslash\Omega_{a,i_a}''')>0$ for all $i_a\in\{1,\cdots,n_a\}$.
\item[$(ii)$]$\overline{\Omega_{a,i_a}'''}\cap \overline{\Omega_{a,j_a}'''}=\emptyset$ for all $i_a\not=j_a$.
\item[$(iii)$] $(\overline{\underset{i_a=1}{\overset{n_a}{\cup }}\Omega_{a,i_a}'''})\cap \overline{\Omega_b'}=\emptyset$.
\end{enumerate}
For every $i_a\in\{1,\cdots,n_a\}\backslash J_a$, we choose $\Psi_{i_a}\in C^\infty(\bbr^3, [0, 1])$ satisfying
\begin{equation*}
\Psi_{i_a}=\left\{\aligned&1,\quad&x\in\Omega_{a,i_a}',\\&0,\quad&x\in\bbr^3\backslash\Omega_{a,i_a}'''.\endaligned\right.
\end{equation*}
Then by a similar argument as \eqref{eq0046}, the choice of $\Omega_{a,i_a}'''$ and the construction of $f_{a}(x,t)$ and $h(x,t,s)$, we can obtain that
\begin{eqnarray*}
&&\int_{\Omega_{a,i_a}'}|\nabla u_{\lambda,\beta}|^2+(\lambda a(x)+a_0(x))u_{\lambda,\beta}^2 dx\\
& & \leq\mu_1\delta_\beta^2\int_{\Omega_{a,i_a}'''}u_{\lambda,\beta}^2dx
+\int_{\Omega_{a,i_a}'''\backslash\Omega_{a,i_a}'}|\nabla u_{\lambda,\beta}||\nabla\Psi_{i_a}||u_{\lambda,\beta}|dx.
\end{eqnarray*}
Thanks to the choice of $\Omega_{a,i_a}'''$ and \eqref{eq0020}, for $i_a\in\{1,\cdots,n_a\}\backslash J_a$, we have
\begin{eqnarray*}
&&\int_{\Omega_{a,i_a}'}|\nabla u_{\lambda,\beta}|^2+(\lambda a(x)+a_0(x))u_{\lambda,\beta}^2 dx\notag\\
&& \leq \mu_1\delta_\beta^2\int_{\Omega_{a,i_a}'}u_{\lambda,\beta}^2dx
+C\bigg(\int_{\bbr^3\backslash\Omega_a''}u_{\lambda,\beta}^2dx\bigg)^{\frac12}.
\end{eqnarray*}
It follows from \eqref{eq0087}, \eqref{eq0039} and \eqref{eq0119} that \eqref{eq0049} holds.  A similar argument implies that \eqref{eq0050} holds too.
Now, the conclusion follows immediately from \eqref{eq0118}-\eqref{eq0140} and \eqref{eq0049}-\eqref{eq0050}.


\vskip0.1in

$(3)$\quad By \eqref{eq0087} and \eqref{eq0049}, we have
\begin{equation*}
\int_{\Omega_{a,i_a}'}u_{\lambda,\beta}^2dx\to0\quad\text{as }\lambda\to+\infty\quad\text{for } i_a\in\{1,\cdots,n_a\}\backslash J_a,
\end{equation*}
which together with \eqref{eq0119}, implies
\begin{equation}\label{eq0023}
\int_{\bbr^3\backslash\underset{i_a\in J_a}\cup\Omega_{a,i_a}''}u_{\lambda,\beta}^2dx\to0\text{ as }\lambda\to+\infty.
\end{equation}
Let $r=\frac13dist(\Omega_a',\Omega_a'')$.  Then for every $x\in \bbr^3\backslash\Omega_a^{J_a}$, $B_{2r}(x)\subset \bbr^3\backslash\underset{i_a\in J_a}\cup\Omega_{a,i_a}''$.  We define $\phi_L=\min\{|u_{\lambda,\beta}|^{\alpha-1}, L\}u_{\lambda,\beta}\overline{\rho}^2$, where $\overline{\rho}\in C_0^\infty(B_{2r}(x), [0, 1])$ with $\overline{\rho}=1$ on $B_{\frac{5r}{3}}(x)$ and $|\nabla\overline{\rho}|<\frac{C}{2r-\frac{5}{3}r}$, $\alpha>0$ and $L>0$.  Then $\phi_L\in E_a$.  Since $D[J_{\lambda,\beta}^*(u_{\lambda,\beta},v_{\lambda,\beta})]=0$
in $E^*$ and the conditions $(D_1)$ and $(D_4)$ hold, by multiplying $(\mathcal{P}_{\lambda,\beta}^*)$ with $(\phi_L, 0)$ and letting $L\to+\infty$, we have
\begin{eqnarray*}
\frac12\int_{\bbr^3}\overline{\rho}^2|u_{\lambda,\beta}|^{\alpha-1}|\nabla u_{\lambda,\beta}|^2dx\leq C_\beta\bigg(\int_{\bbr^3}\overline{\rho}^2u_{\lambda,\beta}^{\alpha+3}dx+\int_{\bbr^3}\overline{\rho}^2u_{\lambda,\beta}^{\alpha+1}dx\bigg)
+4\int_{\bbr^3}|\nabla\overline{\rho}|^2u_{\lambda,\beta}^{\alpha+1}dx,
\end{eqnarray*}
where $C_\beta=\mu_1+\mu_1\delta_\beta^2+C_{a,0}+d_a$.
By the Sobolev embedding theorem, we can see that
\begin{eqnarray}
\bigg(\int_{B_{\frac{5r}{3}}(x)}u_{\lambda,\beta}^{3(\alpha+1)}dx\bigg)^{\frac13}\leq C_\beta(\alpha+1)^2\bigg(\int_{B_{2r}(x)}u_{\lambda,\beta}^{\alpha+3}dx
+(1+\frac{24}{r^2})\int_{B_{2r}(x)}u_{\lambda,\beta}^{\alpha+1}dx\bigg).\label{eq0601}
\end{eqnarray}
Let $\alpha_n=3\alpha_{n-1}$ with $\alpha_0=2$ and $r_n=(1+(\frac{2}{3})^{n-1})r$, $n\in\bbn$.  Then \eqref{eq0601} can be re-written  as
\begin{equation}
\bigg(\int_{B_{r_1}(x)}u_{\lambda,\beta}^{3(\alpha_0+1)}dx\bigg)^{\frac13}\leq C_\beta(\alpha_0+1)^2\bigg(\int_{B_{r_0}(x)}u_{\lambda,\beta}^{\alpha_0+3}dx
+(1+\frac{4}{|r_0-r_1|^2})\int_{B_{r_0}(x)}u_{\lambda,\beta}^{\alpha_0+1}dx\bigg).\label{eq0711}
\end{equation}
We replace $\alpha_0$, $r_0$ and $r_1$ in \eqref{eq0711} by $\alpha_n$, $r_n$ and $r_{n+1}$.  Then we can obtain
\begin{eqnarray}
&&\bigg(\int_{B_{r_{n+1}}(x)}u_{\lambda,\beta}^{3(\alpha_n+1)}dx\bigg)^{\frac{1}{3(\alpha_n+1)}}\notag\\
& & \leq[C_\beta(\alpha_n+1)^2]^{\frac{1}{(\alpha_n+1)}}\bigg(\int_{B_{r_n}(x)}u_{\lambda,\beta}^{\alpha_n+3}dx\bigg)^{\frac{1}{(\alpha_n+1)}}\notag\\
&&\quad +[C_\beta(\alpha_n+1)^2]^{\frac{1}{(\alpha_n+1)}}(1+\frac{4}{|r_n-r_{n+1}|^2})^{\frac{1}{(\alpha_n+1)}}
\bigg(\int_{B_{r_n}(x)}u_{\lambda,\beta}^{\alpha_n+1}dx\bigg)^{\frac{1}{(\alpha_n+1)}}.\label{eq0610}
\end{eqnarray}
Clearly, one of the following two cases must happen:
\begin{enumerate}
\item[$(1)$] $\int_{B_{r_n}(x)}u_{\lambda,\beta}^{\alpha_{n}+1}dx\leq\int_{B_{r_n}(x)}u_{\lambda,\beta}^{\alpha_{n}+3}dx$ up to a subsequence.
\item[$(2)$] $\int_{B_{r_n}(x)}u_{\lambda,\beta}^{\alpha_{n}+3}dx\leq\int_{B_{r_n}(x)}u_{\lambda,\beta}^{\alpha_{n}+1}dx$ up to a subsequence.
\end{enumerate}
If case~$(1)$ happen, then by \eqref{eq0610}, we can see that
\begin{eqnarray*}
&&\bigg(\int_{B_{r_{n+1}}(x)}u_{\lambda,\beta}^{3(\alpha_{n}+1)}\bigg)^{\frac{1}{3(\alpha_{n}+1)}}\\
&  &  \leq \bigg(4C_\beta(\alpha_{n}+1)^2(1+\frac{1}{|r_n-r_{n+1}|^2})\bigg)^{\frac{1}{(\alpha_{n}+1)}}
\bigg(\int_{B_{r_{n}}(x)}u_{\lambda,\beta}^{\alpha_{n}+3}\bigg)^{\frac{1}{\alpha_{n}+1}}.
\end{eqnarray*}
By iterating \eqref{eq0610} and using the choice of $r_n$ and $\alpha_n$, we have
\begin{eqnarray}
&&\lim_{n\to+\infty}\bigg(\int_{B_{r_{n+1}}(x)}u_{\lambda,\beta}^{3(\alpha_{n}+1)}\bigg)^{\frac{1}{3(\alpha_{n}+1)}}\notag\\
& & \leq \bigg(\prod_{n=1}^{\infty}\bigg(4C_\beta(\alpha_{n}+1)^2(1+\frac{1}{|r_n-r_{n+1}|^2})\bigg)^{\frac{1}{(\alpha_{n}+1)}}
\bigg(\int_{\bbr^3\backslash\underset{i_a\in J_a}\cup\Omega_{a,i_a}''}u_{\lambda,\beta}^5dx\bigg)^{\frac15}\bigg)
^{\underset{n=1}{\overset{+\infty}{\prod }}\frac{\alpha_n+3}{\alpha_n+1}}\notag\\
& & \leq C_\beta'\bigg(\int_{\bbr^3\backslash\underset{i_a\in J_a}\cup\Omega_{a,i_a}''}u_{\lambda,\beta}^5dx\bigg)^{\frac {C}5},\label{eq0602}
\end{eqnarray}
where $C_\beta'$ is a constant independent of $\lambda$ and $x$.
If case~$(2)$ happen, then by iterating \eqref{eq0610} and using the choice of $r_n$ and $\alpha_n$ once more, we have
\begin{eqnarray}
&&\lim_{n\to+\infty}\bigg(\int_{B_{r_{n+1}}(x)}u_{\lambda,\beta}^{3(\alpha_{n}+1)}\bigg)^{\frac{1}{3(\alpha_{n}+1)}}\notag\\
& & \leq\prod_{n=1}^{\infty}\bigg(4C_\beta(\alpha_{n}+1)^2(1+\frac{1}{|r_n-r_{n+1}|^2})|B_{r_n}(x)|^{\frac{2}{\alpha_{n}+3}}\bigg)^{\frac{1}{(\alpha_{n}+1)}}
\bigg(\int_{\bbr^3\backslash\underset{i_a\in J_a}\cup\Omega_{a,i_a}''}u_{\lambda,\beta}^5dx\bigg)^{\frac15}\notag\\
& &\leq C_\beta'\bigg(\int_{\bbr^3\backslash\underset{i_a\in J_a}\cup\Omega_{a,i_a}''}u_{\lambda,\beta}^5dx\bigg)^{\frac15}.\label{eq0603}
\end{eqnarray}
By the H\"older and the Sobolev inequalities and \eqref{eq0020} and \eqref{eq0023}, we can conclude that
\begin{equation*}
\bigg(\int_{\bbr^3\backslash\underset{i_a\in J_a}\cup\Omega_{a,i_a}''}u_{\lambda,\beta}^5dx\bigg)^{\frac15}\to0\quad\text{as }\lambda\to+\infty.
\end{equation*}
It follows from \eqref{eq0602} and \eqref{eq0603} that $\|u_{\lambda,\beta}\|_{L^\infty(B_r(x))}\to0$ as $\lambda\to+\infty$, which then implies $\|u_{\lambda,\beta}\|_{L^\infty(\bbr^3\backslash\Omega_{a}^{J_a})}\to0$ as $\lambda\to+\infty$.  By similar arguments, we also have $\|v_{\lambda,\beta}\|_{L^\infty(\bbr^3\backslash\Omega_{b}^{J_b})}\to0$ as $\lambda\to+\infty$.  Now, we can choose $\Lambda_1^*(\beta,M)\geq\Lambda_1$ such that $|u_{\lambda,\beta}|\leq\delta_\beta$ a.e. on $\bbr^3\backslash\Omega_a^{J_a}$ and $|v_{\lambda,\beta}|\leq\delta_\beta$ a.e. on $\bbr^3\backslash\Omega_b^{J_b}$ for $\lambda\geq\Lambda_1^*(\beta)$.  Note that by a similar argument as used in Theorem~\ref{thm0002}, we can see that $(u_{\lambda,\beta}, v_{\lambda,\beta})\in C(\bbr^3)\times C(\bbr^3)$.  Hence, we must have $|u_{\lambda,\beta}|\leq\delta_\beta$ on $\bbr^3\backslash\Omega_a^{J_a}$ and $|v_{\lambda,\beta}|\leq\delta_\beta$ on $\bbr^3\backslash\Omega_b^{J_b}$ for $\lambda\geq\Lambda_1^*(\beta,M)$.
\end{proof}

\subsection{Construction of critical points}
In this section, we will construct critical values of $J_{\lambda,\beta}^*(u,v)$ by a minimax argument.  The idea of such a  construction  traces back to  S\'er\'e \cite{S92} and also was applied  in \cite{BT13,DT03,GT121,ST09}.

\vskip0.1in

We first recall some well-known results, which are useful in this construction.  For all $i_a=1,\cdots,n_a$ and $j_b=1,\cdots,n_b$, we  define $\mathcal{E}_{\Omega_{a,i_a}'}$ on $H^1(\Omega_{a,i_a}')$ and $\mathcal{E}_{\Omega_{b,j_b}'}$ on $H^1(\Omega_{b,j_b}')$ as follows:
\begin{eqnarray*}
\mathcal{E}_{\Omega_{a,i_a}'}(u)&=&\frac12\int_{\Omega_{a,i_a}'}|\nabla u|^2+(\lambda a(x)+a_0(x))u^2dx-\frac{\mu_1}{4}\int_{\Omega_{a,i_a}'}u^4dx,\\
\mathcal{E}_{\Omega_{b,j_b}'}(v)&=&\frac12\int_{\Omega_{b,j_b}'}|\nabla v|^2+(\lambda b(x)+b_0(x))v^2dx-\frac{\mu_2}{4}\int_{\Omega_{b,j_b}'}v^4dx.
\end{eqnarray*}
By \eqref{eq0087} and \eqref{eq0210}, $\mathcal{E}_{\Omega_{a,i_a}'}(u)$ and $\mathcal{E}_{\Omega_{b,j_b}'}(v)$ have the least energy nonzero critical point for all $i_a=1,\cdots,n_a$ and $j_b=1,\cdots,n_b$ if $\lambda\geq\Lambda_1$.  We  denote the ground state level of $\mathcal{E}_{\Omega_{a,i_a}'}(u)$ and $\mathcal{E}_{\Omega_{b,j_b}'}(v)$ by $m_{a,i_a,\lambda}$ and $m_{b,j_b,\lambda}$, respectively.
Since $\{\Omega_{a,i_a}'\}$ and $\{\Omega_{b,j_b}'\}$ are two sequences of bounded domains, by the conditions $(D_1)$-$(D_2)$, $(D_3')$, $(D_4)$ and $(D_5')$ and \eqref{eq0087}-\eqref{eq0210}, it is easy to show that $m_{a,i_a,\lambda}$ and $m_{b,j_b,\lambda}$ are positive for $\lambda\geq\Lambda_1$.  It follows that
\begin{equation}\label{eq0027}
m_{a,i_a,\lambda}=\inf\{\mathcal{E}_{\Omega_{a,i_a}'}(u)\mid \int_{\Omega_{a,i_a}'}u^4dx
=\frac{4m_{a,i_a,\lambda}}{\mu_1}\}\quad\text{for all }i_a=1,\cdots,n_a
\end{equation}
and
\begin{equation}\label{eq0028}
m_{b,j_b,\lambda}=\inf\{\mathcal{E}_{\Omega_{b,j_b}'}(v)\mid\int_{\Omega_{b,j_b}'}v^4dx
=\frac{4m_{b,j_b,\lambda}}{\mu_2}\}\quad\text{for all }j_b=1,\cdots,n_b.
\end{equation}
On the other hand, let $W_{a,i_a}\in H_0^1(\Omega_{a,i_a})$ and $W_{b,j_b}\in H_0^1(\Omega_{b,j_b})$ be the least energy nonzero critical points of $I_{\Omega_{a,i_a}}(u)$ and $I_{\Omega_{b,j_b}}(v)$, respectively.  Then by the conditions $(D_3')$ and $(D_5')$, it is well-known that
\begin{equation}\label{eq0029}
I_{\Omega_{a,i_a}}(W_{a,i_a})=\max_{t\geq0}I_{\Omega_{a,i_a}}(tW_{a,i_a})\quad\text{and}\quad
I_{\Omega_{b,j_b}}(W_{b,j_b})=\max_{t\geq0}I_{\Omega_{b,j_b}}(tW_{b,j_b}).
\end{equation}
Let $\gamma_{0,a}:[0, 1]^{k_a}\to E_a$ and $\gamma_{0,b}:[0, 1]^{k_b}\to E_b$ be
\begin{equation}     \label{eq0097}
\gamma_{0,a}(t_1,\cdots,t_{k_a})=\sum_{i_a=1}^{k_a}t_{i_a}RW_{a,i_a}
\end{equation}
and
\begin{equation}     \label{eq0098}
\gamma_{0,b}(s_1,\cdots,s_{k_b})=\sum_{j_b=1}^{k_b}s_{j_b}RW_{b,j_b},
\end{equation}
where $R>2$ is a large constant satisfying
\begin{eqnarray}
&I_{\Omega_{a,i_a}}(RW_{a,i_a})\leq0,\quad&R^4\int_{\Omega_{a,i_a}}W_{a,i_a}^4dx\geq2\frac{4m_{a,i_a}}{\mu_1},\label{eq0088}\\
&I_{\Omega_{b,j_b}}(RW_{b,j_b})\leq0,\quad&R^4\int_{\Omega_{b,j_b}}W_{b,j_b}^4dx\geq2\frac{4m_{b,j_b}}{\mu_2}.\label{eq0030}
\end{eqnarray}
for all $i_a=1,\cdots,k_a$ and $j_b=1,\cdots,k_b$.  By the condition $(D_3')$, we can extend $W_{a,i_a}$ and $W_{b,j_b}$ to the whole space $\bbr^3$ by letting  $W_{a,i_a}=0$ on $\bbr^3\backslash\Omega_{a,i_a}$ and $W_{b,j_b}=0$ on $\bbr^3\backslash\Omega_{b,j_b}$ such that $W_{a,i_a}\in\h$ and $W_{b,j_b}\in\h$ for all $i_a=1,\cdots,n_a$ and $j_b=1,\cdots,n_b$.  Now, we can define a minimax value of $J_{\lambda,\beta}^*(u,v)$ for $\lambda\geq\Lambda_1$ and $\beta<0$ as follows:
\begin{equation*}
m_{J_a,J_b,\lambda,\beta}=\inf_{(\gamma_a,\gamma_b)\in\Gamma}\sup_{[0, 1]^{k_a}\times[0, 1]^{k_b}}J_{\lambda,\beta}^*(\gamma_a,\gamma_b),
\end{equation*}
where
\begin{eqnarray*}
\Gamma&=&\bigg\{(\gamma_a,\gamma_b)\mid(\gamma_a,\gamma_b)\in C([0, 1]^{k_a}\times[0, 1]^{k_b}, E_a\times E_b),\\
&&(\gamma_a,\gamma_b)=(\gamma_{0,a},\gamma_{0,b})\text{ on }\partial([0, 1]^{k_a}\times[0, 1]^{k_b})\bigg\}.
\end{eqnarray*}
$m_{J_a,J_b,\lambda,\beta}$ may be a critical value of $J_{\lambda,\beta}^*(u,v)$.  In order to show it, we need the following.

\vskip0.1in

\begin{lemma}
\label{lem0005}Assume $(\gamma_a,\gamma_b)\in\Gamma$ and
\begin{eqnarray*}
(\xi_1,\cdots,\xi_{k_a},\eta_1,\cdots,\eta_{k_b})&\in&[0, R^4\int_{\Omega_{a,1}}W_{a,1}^4dx]\times\cdots\times[0, R^4\int_{\Omega_{a,k_a}}W_{a,k_a}^4dx]\times\\
&&[0, R^4\int_{\Omega_{b,1}}W_{b,1}^4dx]\times\cdots\times[0, R^4\int_{\Omega_{b,k_b}}W_{b,k_b}^4dx].
\end{eqnarray*}
Then there exist $(t_1',\cdots,t_{k_a}')\in[0, 1]^{k_a}$ and $(s_1',\cdots,s_{k_b}')\in[0, 1]^{k_b}$ such that
\begin{equation*}
\int_{\Omega_{a,i_a}'}[\gamma_a(t_1',\cdots,t_{k_a}')]^4(x)dx=\xi_{i_a}\quad\text{for all }i_a=1,\cdots,k_a
\end{equation*}
and
\begin{equation*}
\int_{\Omega_{b,j_b}'}[\gamma_b(s_1',\cdots,s_{k_b}')]^4(x)dx=\eta_{j_b}\quad\text{for all }j_b=1,\cdots,k_b.
\end{equation*}
\end{lemma}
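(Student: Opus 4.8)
The plan is to reformulate the assertion as a surjectivity statement for a continuous map into $\bbr^{k_a+k_b}$ and to settle it by a Brouwer degree argument, in the spirit of the topological lemmas of S\'er\'e and Ding--Tanaka. Given $(\gamma_a,\gamma_b)\in\Gamma$, define $\Phi\colon[0,1]^{k_a}\times[0,1]^{k_b}\to\bbr^{k_a+k_b}$ by
\[
\Phi(t,s)=\Big(\int_{\Omega_{a,1}'}[\gamma_a(t,s)]^4dx,\ \ldots,\ \int_{\Omega_{a,k_a}'}[\gamma_a(t,s)]^4dx,\ \int_{\Omega_{b,1}'}[\gamma_b(t,s)]^4dx,\ \ldots,\ \int_{\Omega_{b,k_b}'}[\gamma_b(t,s)]^4dx\Big),
\]
where $(t,s)=(t_1,\ldots,t_{k_a},s_1,\ldots,s_{k_b})$. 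By \eqref{eq0006}--\eqref{eq0007} we have $E_a,E_b\hookrightarrow L^4(\bbr^3)$, so each component is continuous in $(t,s)$ and $\Phi$ is continuous. Writing $c_{a,i_a}=R^4\int_{\Omega_{a,i_a}}W_{a,i_a}^4dx$ and $c_{b,j_b}=R^4\int_{\Omega_{b,j_b}}W_{b,j_b}^4dx$, the lemma is precisely the statement that the box $\mathcal{B}=\prod_{i_a=1}^{k_a}[0,c_{a,i_a}]\times\prod_{j_b=1}^{k_b}[0,c_{b,j_b}]$ is contained in the image of $\Phi$.

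The key preliminary step is to identify $\Phi$ on the boundary $\partial([0,1]^{k_a}\times[0,1]^{k_b})$. There $(\gamma_a,\gamma_b)=(\gamma_{0,a},\gamma_{0,b})$, so by \eqref{eq0097} one has $\gamma_a(t,s)=\sum_{i_a=1}^{k_a}t_{i_a}RW_{a,i_a}$; since $\operatorname{supp}W_{a,i_a}\subset\overline{\Omega}_{a,i_a}\subset\Omega_{a,i_a}'$ and the sets $\overline{\Omega'_{a,i_a}}$ are pairwise disjoint by $(I)$, only the $i_a$-th summand contributes on $\Omega_{a,i_a}'$, so $\int_{\Omega_{a,i_a}'}[\gamma_a(t,s)]^4dx=t_{i_a}^4\,c_{a,i_a}$; the $b$-components are treated the same way via \eqref{eq0098} and $(II)$. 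Hence on the boundary $\Phi$ coincides with the Cartesian-product map $\Phi_0(t,s)=(t_1^4c_{a,1},\ldots,t_{k_a}^4c_{a,k_a},s_1^4c_{b,1},\ldots,s_{k_b}^4c_{b,k_b})$.

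Now fix $\zeta=(\xi_1,\ldots,\xi_{k_a},\eta_1,\ldots,\eta_{k_b})$ in the interior of $\mathcal{B}$. On any boundary face where some $t_{i_a}$ (resp.\ $s_{j_b}$) equals $0$ or $1$, the corresponding coordinate of $\Phi_0$ equals $0$ or $c_{a,i_a}$ (resp.\ $0$ or $c_{b,j_b}$), which differs from $\xi_{i_a}\in(0,c_{a,i_a})$ (resp.\ $\eta_{j_b}\in(0,c_{b,j_b})$); thus $\zeta\notin\Phi_0(\partial([0,1]^{k_a}\times[0,1]^{k_b}))$. Using the affine homotopy $H_\tau=(1-\tau)\Phi+\tau\Phi_0$, which agrees with $\Phi_0$ on the boundary because $\Phi=\Phi_0$ there, homotopy invariance of the Brouwer degree gives $\deg(\Phi,(0,1)^{k_a+k_b},\zeta)=\deg(\Phi_0,(0,1)^{k_a+k_b},\zeta)$. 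Since $\Phi_0$ is a product of the strictly increasing $C^1$ maps $\tau\mapsto\tau^4c$ (with $c>0$, hence degree $+1$ at every interior value), the product formula for the degree yields $\deg(\Phi_0,\cdot,\zeta)=1\neq0$, so $\zeta\in\Phi((0,1)^{k_a+k_b})$. For $\zeta$ on $\partial\mathcal{B}$ we approximate by interior points $\zeta_n\to\zeta$, pick $(t_n,s_n)$ with $\Phi(t_n,s_n)=\zeta_n$, extract a convergent subsequence $(t_n,s_n)\to(t',s')\in[0,1]^{k_a}\times[0,1]^{k_b}$ by compactness of the cube, and conclude $\Phi(t',s')=\zeta$ by continuity of $\Phi$. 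The resulting $(t_1',\ldots,t_{k_a}')$ and $(s_1',\ldots,s_{k_b}')$ are the points claimed in the lemma. The one genuinely delicate point is the boundary identification $\Phi=\Phi_0$ on $\partial$: it is exactly there that the constraint $(\gamma_a,\gamma_b)|_\partial=(\gamma_{0,a},\gamma_{0,b})$ and the separation conditions $(I)$--$(III)$ enter and force the degree to be nonzero; everything else is routine.
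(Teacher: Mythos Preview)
Your proof is correct and follows essentially the same approach as the paper: define the $L^4$-integral map into $\bbr^{k_a+k_b}$, identify it on the boundary with the product map $\Phi_0(t,s)=(t_1^4c_{a,1},\ldots,s_{k_b}^4c_{b,k_b})$ via the disjointness of the $\Omega_{a,i_a}'$ and $\Omega_{b,j_b}'$, and conclude by Brouwer degree. The paper's proof is terser---it simply asserts $\deg(\widetilde{\gamma},[0,1]^{k_a}\times[0,1]^{k_b},\zeta)=1$ without spelling out the homotopy to $\Phi_0$, the product formula, or the approximation argument for $\zeta\in\partial\mathcal{B}$---so your version is a fleshed-out rendering of the same idea.
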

\begin{proof}
For every $(\gamma_a,\gamma_b)\in\Gamma$, we define a map $\widetilde{\gamma}:[0, 1]^{k_a}\times[0, 1]^{k_b}\to\bbr^{k_a}\times\bbr^{k_b}$ as follows:
\begin{eqnarray*}
&&\widetilde{\gamma}(t_1,\cdots,t_{k_a},s_1,\cdots,s_{k_b})\\
&=&\bigg(\int_{\Omega_{a,1}'}[\gamma_a(t_1,\cdots,t_{k_a})]^4(x)dx,\cdots,
\int_{\Omega_{a,k_a}'}[\gamma_a(t_1,\cdots,t_{k_a})]^4(x)dx,\\
&&\int_{\Omega_{b,1}'}[\gamma_b(s_1,\cdots,s_{k_b})]^4(x)dx,\cdots,
\int_{\Omega_{b,k_b}'}[\gamma_b(s_1,\cdots,s_{k_b})]^4(x)dx\bigg).
\end{eqnarray*}
Note that for every $(\gamma_a,\gamma_b)\in\Gamma$, we have
\begin{equation*}
(\gamma_a(t_1,\cdots,t_{k_a}),\gamma_b(s_1,\cdots,s_{k_b}))=
(\gamma_{0,a}(t_1,\cdots,t_{k_a}),\gamma_{0,b}(s_1,\cdots,s_{k_b}))
\end{equation*}
if $(t_1,\cdots,t_{k_a},s_1,\cdots,s_{k_b})\in\partial([0, 1]^{k_a}\times[0, 1]^{k_b})$.  Then by the construction of $\{\Omega_{a,i_a}'\}$ and $\{\Omega_{b,j_b}'\}$, we can see that
\begin{equation*}
\int_{\Omega_{a,i_a}'}[\gamma_a(t_1,\cdots,t_{k_a})]^4(x)dx=t_{i_a}^4R^4\int_{\Omega_{a,i_a}}W_{a,i_a}^4dx
\end{equation*}
and
\begin{equation*}
\int_{\Omega_{b,j_b}'}[\gamma_b(s_1,\cdots,s_{k_b})]^4(x)dx=s_{j_b}^4R^4\int_{\Omega_{b,j_b}}W_{b,j_b}^4dx
\end{equation*}
for all $i_a=1,\cdots,k_a$, $j_b=1,\cdots,k_b$ and $(t_1,\cdots,t_{k_a},s_1,\cdots,s_{k_b})\in\partial([0, 1]^{k_a}\times[0, 1]^{k_b})$.
It follows that
\begin{equation*}
deg(\widetilde{\gamma},[0, 1]^{k_a}\times[0, 1]^{k_b}, (\xi_1,\cdots,\xi_{k_a},\eta_1,\cdots,\eta_{k_b}))=1,
\end{equation*}
which completes the proof.
\end{proof}

\vskip0.1in

With Lemma~\ref{lem0005} in hands, we can obtain the following energy estimate, which can be viewed  as a linking structure of $J_{\lambda,\beta}^*(u,v)$.

\vskip0.1in

\begin{lemma}
\label{lem0006} Assume $\lambda\geq\Lambda_1$ and $\beta<0$.  Then we have the following results.
\begin{enumerate}
\item[$(1)$] If $(t_1,\cdots,t_{k_a},s_1,\cdots,s_{k_b})\in \partial([0, 1]^{k_a}\times[0, 1]^{k_b})$, then
 \begin{eqnarray*}
 &&J_{\lambda,\beta}^*(\gamma_{0,a}(t_1,\cdots,t_{k_a}),\gamma_{0,b}(s_1,\cdots,s_{k_b}))\\
 & & \leq  \sum_{i_a=1}^{k_a}m_{a,i_a}+\sum_{j_b=1}^{k_b}m_{b,j_b}-\min\{m_{a,1},\cdots,m_{a,k_a},m_{b,1},\cdots,m_{b,k_b}\}.
 \end{eqnarray*}
\item[$(2)$] $\underset{i_a=1}{\overset{k_a}{\sum }}m_{a,i_a,\lambda}+\underset{j_b=1}{\overset{k_b}{\sum }}m_{b,j_b,\lambda}\leq m_{J_a,J_b,\lambda,\beta}\leq\underset{i_a=1}{\overset{k_a}{\sum }}m_{a,i_a}+\underset{j_b=1}{\overset{k_b}{\sum }}m_{b,j_b}$.
\end{enumerate}
\end{lemma}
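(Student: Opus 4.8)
The proof splits into the two displayed inequalities of part (2), with part (1) serving as a preliminary boundary estimate. For part (1), the key observation is that on $\partial([0,1]^{k_a}\times[0,1]^{k_b})$ at least one coordinate $t_{i_a}$ or $s_{j_b}$ equals $0$ or $1$. Since $\Omega_a'$ and $\Omega_b'$ are disjoint and each $W_{a,i_a}$ (resp. $W_{b,j_b}$) is supported in $\Omega_{a,i_a}$ (resp. $\Omega_{b,j_b}$), the functional $J_{\lambda,\beta}^*$ evaluated on $(\gamma_{0,a},\gamma_{0,b})$ decouples as a sum $\sum_{i_a} I_{\Omega_{a,i_a}}(t_{i_a}RW_{a,i_a}) + \sum_{j_b} I_{\Omega_{b,j_b}}(s_{j_b}RW_{b,j_b})$, because on $\Omega_{a,i_a}\subset\Omega_a^{J_a}$ the penalized nonlinearity $f_a(x,\cdot)$ coincides with $(\cdot)^3$, the cross term $h(x,\cdot,\cdot)$ vanishes (disjoint supports), and the potential is just $a_0(x)$ on $\Omega_{a,i_a}$ (recall $a\equiv0$ there). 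Then I use \eqref{eq0029}: each summand is $\le m_{a,i_a}$ (resp. $\le m_{b,j_b}$), and for the coordinate that is $0$ the summand is $0$, while for a coordinate equal to $1$ the summand $I_{\Omega_{a,i_a}}(RW_{a,i_a})\le 0$ by \eqref{eq0088}. Either way we lose at least $\min\{m_{a,1},\dots,m_{a,k_a},m_{b,1},\dots,m_{b,k_b}\}$ from the full sum, which is the claimed bound.

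For the upper bound in part (2): taking $(\gamma_a,\gamma_b)=(\gamma_{0,a},\gamma_{0,b})$ as a competitor in the definition of $m_{J_a,J_b,\lambda,\beta}$, I need $\sup_{[0,1]^{k_a}\times[0,1]^{k_b}} J_{\lambda,\beta}^*(\gamma_{0,a},\gamma_{0,b}) \le \sum m_{a,i_a}+\sum m_{b,j_b}$. Again by the decoupling explained above, on the interior the value equals $\sum_{i_a} I_{\Omega_{a,i_a}}(t_{i_a}RW_{a,i_a}) + \sum_{j_b} I_{\Omega_{b,j_b}}(s_{j_b}RW_{b,j_b})$, and by \eqref{eq0029} each term is maximized at value $\le m_{a,i_a}$ (resp. $\le m_{b,j_b}$) over $t_{i_a}\in[0,1]$. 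Summing gives the bound.

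For the lower bound $\sum_{i_a} m_{a,i_a,\lambda} + \sum_{j_b} m_{b,j_b,\lambda} \le m_{J_a,J_b,\lambda,\beta}$: this is where Lemma~\ref{lem0005} does the real work. Fix an arbitrary $(\gamma_a,\gamma_b)\in\Gamma$. By \eqref{eq0088}-\eqref{eq0030} the target values $\xi_{i_a}=\tfrac{4m_{a,i_a,\lambda}}{\mu_1}$ and $\eta_{j_b}=\tfrac{4m_{b,j_b,\lambda}}{\mu_2}$ lie in the admissible boxes (using $m_{a,i_a,\lambda}\le m_{a,i_a}$ from monotonicity in $\lambda$, together with the factor-$2$ slack built into \eqref{eq0088}-\eqref{eq0030}). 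So Lemma~\ref{lem0005} produces parameters $(t')$, $(s')$ with $\int_{\Omega_{a,i_a}'}[\gamma_a(t')]^4 = \tfrac{4m_{a,i_a,\lambda}}{\mu_1}$ for all $i_a$ and similarly for $\gamma_b$. At this point, to bound $J_{\lambda,\beta}^*(\gamma_a(t'),\gamma_b(s'))$ from below, I would use $\beta<0$ to discard the (nonpositive) cross term $-\beta\int H(x,u,v)\,dx$ together with the lower-bound part of Lemma~\ref{lem0002}, and use the fact that the penalized terms outside $\Omega_a^{J_a},\Omega_b^{J_b}$ contribute a controlled amount absorbable by Lemma~\ref{lem0001} (exactly as in the bracket $(1-\mu_i\delta_\beta^2 C_{a,b}^{-1})$ estimates in Lemma~\ref{lem0003}); this shows $J_{\lambda,\beta}^*(\gamma_a(t'),\gamma_b(s')) \ge \sum_{i_a}\mathcal{E}_{\Omega_{a,i_a}'}(\gamma_a(t')\restriction_{\Omega_{a,i_a}'}) + \sum_{j_b}\mathcal{E}_{\Omega_{b,j_b}'}(\gamma_b(s')\restriction_{\Omega_{b,j_b}'})$ up to the discarded nonpositive pieces. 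Finally, by the constrained-minimization characterizations \eqref{eq0027}-\eqref{eq0028}, each $\mathcal{E}_{\Omega_{a,i_a}'}(\gamma_a(t')\restriction)$ is $\ge m_{a,i_a,\lambda}$ since $\gamma_a(t')\restriction_{\Omega_{a,i_a}'}$ satisfies the $L^4$-constraint, and similarly for the $b$-terms. Summing and taking the infimum over $(\gamma_a,\gamma_b)\in\Gamma$ yields the lower bound.

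The main obstacle is the lower bound step: the subtlety is that $\gamma_a(t')$ restricted to $\Omega_{a,i_a}'$ need not vanish on $\partial\Omega_{a,i_a}'$, so one genuinely needs the Neumann-type functionals $\mathcal{E}_{\Omega_{a,i_a}'}$ (not the Dirichlet ones $I_{\Omega_{a,i_a}}$) and their characterizations \eqref{eq0027}-\eqref{eq0028}; and one must make sure the leftover penalized terms on $\bbr^3\setminus(\Omega_a^{J_a}\cup\Omega_b^{J_b})$ — which are genuinely present — are nonnegative or small enough not to spoil the inequality, which is why the specific form of $\delta_\beta$ in \eqref{eq0039} and Lemma~\ref{lem0002} are needed. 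Everything else (the decoupling on disjoint supports, the degree computation already packaged in Lemma~\ref{lem0005}, and the elementary one-variable maximization \eqref{eq0029}) is routine.
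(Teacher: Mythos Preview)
Your proposal is correct and follows essentially the same route as the paper: the decoupling of $J_{\lambda,\beta}^*$ on $(\gamma_{0,a},\gamma_{0,b})$ via disjoint supports for part~(1) and the upper bound, and for the lower bound the combination of Lemma~\ref{lem0005} (to hit the $L^4$-constraints), the characterizations \eqref{eq0027}--\eqref{eq0028}, and the absorption of the penalized terms on $\bbr^3\setminus\Omega_a^{J_a}$, $\bbr^3\setminus\Omega_b^{J_b}$ via \eqref{eq0039} and Lemma~\ref{lem0001}. Two small corrections: the cross term $-\beta\int H(x,u,v)\,dx$ is \emph{nonnegative} (since $\beta<0$ and $H\ge 0$ by construction), which is precisely why you may drop it for a lower bound; and Lemma~\ref{lem0002} is not what is used here---the relevant bounds $F_a(x,u)\le\frac{\delta_\beta^2}{2}(u^+)^2$ on $\bbr^3\setminus\Omega_a^{J_a}$ and $H\ge 0$ come directly from the construction of $f_a$ and $h$, exactly as in the paper's \eqref{eq0093}.
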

\begin{proof}$(1)$\quad Since $(t_1,\cdots,t_{k_a},s_1,\cdots,s_{k_b})\in \partial([0, 1]^{k_a}\times[0, 1]^{k_b})$, there exists $i_a'\in\{1,\cdots,k_a\}$ or $j_b'\in\{1,\cdots,k_b\}$ such that $t_{i_a'}\in\{0, 1\}$ or $s_{j_b'}\in\{0, 1\}$.  Without loss of generality, we assume $t_1=1$.  It follows from \eqref{eq0029}-\eqref{eq0030} and the condition $(D_3')$ that
\begin{eqnarray*}
&&J_{\lambda,\beta}^*(\gamma_{0,a}(t_1,\cdots,t_{k_a}),\gamma_{0,b}(s_1,\cdots,s_{k_b}))\\
& & = I_{a,1}(RW_{a,1})+\sum_{i_a=2}^{k_a}I_{a,i_a}(t_{i_a}RW_{a,i_a})+\sum_{j_b=1}^{k_b}I_{b,j_b}(s_{j_b}RW_{b,j_b})\\
& & \leq \sum_{i_a=2}^{k_a}m_{a,i_a}+\sum_{j_b=1}^{k_b}m_{b,j_b}\\
& & \leq \sum_{i_a=1}^{k_a}m_{a,i_a}+\sum_{j_b=1}^{k_b}m_{b,j_b}
-\min\{m_{a,1},\cdots,m_{a,k_a},m_{b,1},\cdots,m_{b,k_b}\}.
\end{eqnarray*}

\vskip0.1in

$(2)$\quad Since $(\gamma_{0,a},\gamma_{0,b})\in\Gamma$ and $R>2$, by the condition $(D_3')$, we must have
\begin{eqnarray*}
m_{J_a,J_b,\lambda,\beta}&\leq&\sup_{[0, 1]^{k_a}\times[0, 1]^{k_b}}J_{\lambda,\beta}^*(\sum_{i_a=1}^{k_a}t_{i_a}RW_{a,i_a},\sum_{j_b=1}^{k_b}s_{j_b}RW_{b,j_b})\\
&\leq&\sum_{i_a=1}^{k_a}I_{a,i_a}(W_{a,i_a})+\sum_{j_b=1}^{k_b}I_{b,j_b}(W_{b,j_b})\\
&=&\sum_{i_a=1}^{k_a}m_{a,i_a}+\sum_{j_b=1}^{k_b}m_{b,j_b}.
\end{eqnarray*}
On the other hand, since the condition $(D_3')$ holds, by the construction of $\Omega_{a,i_a}'$ and $\Omega_{b,j_b}'$, it is easy to show that $m_{a,i_a,\lambda}\leq m_{a,i_a}$ and $m_{b,j_b,\lambda}\leq m_{b,j_b}$ for all $i_a=1,\cdots,n_a$, $j_b=1,\cdots,n_b$ and $\lambda>0$.  This together with \eqref{eq0088}-\eqref{eq0030} and Lemma~\ref{lem0005}, implies that for every $(\gamma_a,\gamma_b)\in\Gamma$, there exist $(t_1',\cdots,t_{k_a}')\in[0, 1]^{k_a}$ and $(s_1',\cdots,s_{k_b}')\in[0, 1]^{k_b}$ such that
\begin{equation}        \label{eq0031}
\int_{\Omega_{a,i_a}'}[\gamma_a(t_1',\cdots,t_{k_a}')]^4(x)dx=\frac{4m_{a,i_a,\lambda}}{\mu_1}\quad\text{for all }i_a=1,\cdots,k_a
\end{equation}
and
\begin{equation}        \label{eq0032}
\int_{\Omega_{b,j_b}'}[\gamma_b(s_1',\cdots,s_{k_b}')]^4(x)dx=\frac{4m_{b,j_b,\lambda}}{\mu_2}\quad\text{for all }j_b=1,\cdots,k_b.
\end{equation}
Denote $\gamma_a(t_1',\cdots,t_{k_a}')$ and $\gamma_b(s_1',\cdots,s_{k_b}')$ by $u_*$ and $v_*$.  Then by \eqref{eq0022}-\eqref{eq0210}, we have
\begin{equation*}
\int_{\bbr^3\backslash\Omega_a^{J_a}}u_*^2dx\leq C_{a,b}^{-1}\int_{\bbr^3\backslash\Omega_a^{J_a}}|\nabla u_*|^2+(\lambda a(x)+a_0(x))u_*^2dx
\end{equation*}
and
\begin{equation*}
\int_{\bbr^3\backslash\Omega_b^{J_b}}v_*^2dx\leq C_{a, b}^{-1}\int_{\bbr^3\backslash\Omega_b^{J_b}}|\nabla v_*|^2+(\lambda b(x)+b_0(x))v_*^2dx
\end{equation*}
for $\lambda\geq\Lambda_1$.
Note that $\{\Omega_{a,i_a}'\}$ and $\{\Omega_{b,j_b}'\}$ are two sequences of bounded domains with smooth boundaries, so the restriction of $u_*$ on $\Omega_{a,i_a}'$ lies in $H^1(\Omega_{a,i_a}')$ for every $i_a=1,\cdots,n_a, $ while the restriction of $v_*$ on $\Omega_{b,j_b}'$ lies in $H^1(\Omega_{b,j_b}')$ for every $j_b=1,\cdots,n_b$.  Now, by $\beta<0$, \eqref{eq0039} and the construction of $f_a(x,t)$, $f_b(x,t)$ and $h(x,t,s)$, we have
\begin{eqnarray}
J_{\lambda,\beta}^*(u_*,v_*)&\geq&\frac12\int_{\bbr^3}|\nabla u_*|^2+(\lambda a(x)+a_0(x))u_*^2dx-\mu_1\int_{\bbr^3}F_a(x,u_*)dx\notag\\
&&+\frac12\int_{\bbr^3}|\nabla v_*|^2+(\lambda b(x)+b_0(x))v_*^2dx-\mu_2\int_{\bbr^3}F_b(x,v_*)dx\notag\\
&\geq&\frac{1}{2}(\int_{\bbr^3\backslash\Omega_a^{J_a}}|\nabla u_*|^2+(\lambda a(x)+a_0(x))u_*^2dx-\delta_\beta^2\int_{\bbr^3\backslash\Omega_a^{J_a}}u_*^2dx)+\sum_{i_a=1}^{k_a}\mathcal{E}_{\Omega_{a,i_a}',\lambda}(u_*)\notag\\
&&+\frac{1}{2}(\int_{\bbr^3\backslash\Omega_b^{J_b}}|\nabla v_*|^2+(\lambda b(x)+b_0(x))v_*^2dx-\delta_\beta^2\int_{\bbr^3\backslash\Omega_b^{J_b}}v_*^2dx)+\sum_{j_b=1}^{k_b}\mathcal{E}_{\Omega_{b,j_b}',\lambda}(v_*)\notag\\
&\geq&\sum_{i_a=1}^{k_a}\mathcal{E}_{\Omega_{a,i_a}',\lambda}(u_*)+\sum_{j_b=1}^{k_b}\mathcal{E}_{\Omega_{b,j_b}',\lambda}(v_*).\label{eq0093}
\end{eqnarray}
Thanks to \eqref{eq0027}-\eqref{eq0028} and \eqref{eq0031}-\eqref{eq0032}, \eqref{eq0093} implies $J_{\lambda,\beta}^*(u_*,v_*)\geq\underset{i_a=1}{\overset{k_a}{\sum }}m_{a,i_a,\lambda}+\underset{j_b=1}{\overset{k_b}{\sum }}m_{b,j_b,\lambda}$ for $\lambda\geq\Lambda_1$ and $\beta<0$.  Since $(\gamma_a,\gamma_b)\in\Gamma$ is arbitrary, we must have $\underset{i_a=1}{\overset{k_a}{\sum }}m_{a,i_a,\lambda}+\underset{j_b=1}{\overset{k_b}{\sum }}m_{b,j_b,\lambda}\leq m_{J_a,J_b,\lambda,\beta}$ for $\lambda\geq\Lambda_1$ and $\beta<0$, which completes the proof.
\end{proof}

\vskip0.1in

Let $m_{a,b}:=\underset{i_a=1}{\overset{n_a}{\sum }}m_{a,i_a}+\underset{j_b=1}{\overset{n_b}{\sum }}m_{b,j_b}$.  Then we can obtain the following proposition.

\vskip0.1in

\begin{proposition}
\label{prop0001}Suppose $\beta<0$.  Then there exists $\Lambda_2^*(\beta)\geq\Lambda_1^*(\beta, m_{a,b})$ such that $m_{J_a,J_b,\lambda,\beta}$ is a critical value of $J_{\lambda,\beta}^*(u,v)$ for $\lambda\geq\Lambda_2^*(\beta)$, that is, for all $\lambda\geq\Lambda_2^*(\beta)$, there exists $(u_{\lambda,\beta},v_{\lambda,\beta})\in E$ satisfying $D[J_{\lambda,\beta}^*(u_{\lambda,\beta},v_{\lambda,\beta})]=0$ in $E^*$ and $J_{\lambda,\beta}^*(u_{\lambda,\beta},v_{\lambda,\beta})=m_{J_a,J_b,\lambda,\beta}$, where $\Lambda_1^*(\beta, m_{a,b})$ is given by Lemma~\ref{lem0004}.  Furthermore, for every $\{\lambda_n\}\subset[\Lambda_2^*(\beta), +\infty)$ satisfying $\lambda_n\to+\infty$ as $n\to\infty$, there exists $(u_{0,\beta}^{J_a},v_{0,\beta}^{J_b})\in E$ such that
\begin{enumerate}
\item[$(1)$] $(u_{0,\beta}^{J_a},v_{0,\beta}^{J_b})\in H_0^1(\Omega_{a,0}^{J_a})\times H_0^1(\Omega_{b,0}^{J_b})$ with $u_{0,\beta}^{J_a}=0$ on $\bbr^3\backslash\Omega_{a,0}^{J_a}$ and $v_{0,\beta}^{J_b}=0$ on $\bbr^3\backslash\Omega_{b,0}^{J_b}$.
\item[$(2)$] $(u_{\lambda_n,\beta},v_{\lambda_n,\beta})\to(u_{0,\beta}^{J_a},v_{0,\beta}^{J_b})$ strongly in $\h\times\h$ as $n\to\infty$ up to a subsequence.
\item[$(3)$] The restriction of $u_{0,\beta}^{J_a}$ on $\Omega_{a,i_a}$ lies in $H_0^1(\Omega_{a,i_a})$ and is a critical point of $I_{\Omega_{a,i_a}}(u)$ for every $i_a\in J_a$, while the restriction of $v_{0,\beta}^{J_b}$ on $\Omega_{b,j_b}$ lies in $H_0^1(\Omega_{b,j_b})$ and is a critical point of $I_{\Omega_{b,j_b}}(v)$ for every $j_b\in J_b$.
\end{enumerate}
\end{proposition}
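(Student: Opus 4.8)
\medskip\par\noindent\textbf{Proof proposal.}\quad The plan is to establish the two halves of the statement in turn: first that $m_{J_a,J_b,\lambda,\beta}$ is a critical value of $J_{\lambda,\beta}^*$ for all large $\lambda$, and then the concentration of the corresponding critical points along a sequence $\lambda_n\to+\infty$. For the first half I would run the standard minimax scheme. By Lemma~\ref{lem0003}, $J_{\lambda,\beta}^*$ satisfies the $(PS)_c$ condition for every $c\in\bbr$, and by Lemma~\ref{lem0006}$(2)$ one has $\sum_{i_a=1}^{k_a}m_{a,i_a,\lambda}+\sum_{j_b=1}^{k_b}m_{b,j_b,\lambda}\leq m_{J_a,J_b,\lambda,\beta}\leq\sum_{i_a=1}^{k_a}m_{a,i_a}+\sum_{j_b=1}^{k_b}m_{b,j_b}$. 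The crucial observation is that the minimax level strictly dominates the boundary estimate of Lemma~\ref{lem0006}$(1)$ for $\lambda$ large. Indeed, repeating on the bounded domains $\Omega_{a,i_a}'$ and $\Omega_{b,j_b}'$ the argument that gives $m_{a,\lambda}\to m_a$ and $m_{b,\lambda}\to m_b$ in Section~2, one gets $m_{a,i_a,\lambda}\to m_{a,i_a}$ and $m_{b,j_b,\lambda}\to m_{b,j_b}$ as $\lambda\to+\infty$; since $\min\{m_{a,1},\cdots,m_{a,k_a},m_{b,1},\cdots,m_{b,k_b}\}>0$ this yields, for $\lambda$ beyond some threshold,
\[
m_{J_a,J_b,\lambda,\beta}\ \geq\ \sum_{i_a=1}^{k_a}m_{a,i_a,\lambda}+\sum_{j_b=1}^{k_b}m_{b,j_b,\lambda}\ >\ \sum_{i_a=1}^{k_a}m_{a,i_a}+\sum_{j_b=1}^{k_b}m_{b,j_b}-\min\{m_{a,1},\cdots,m_{a,k_a},m_{b,1},\cdots,m_{b,k_b}\}.
\]
Hence $J_{\lambda,\beta}^*<m_{J_a,J_b,\lambda,\beta}$ on $\partial([0,1]^{k_a}\times[0,1]^{k_b})$, and the quantitative deformation lemma applies: if $m_{J_a,J_b,\lambda,\beta}$ were a regular value, one could deform a near-optimal $(\gamma_a,\gamma_b)\in\Gamma$ downward while leaving it fixed near $\partial([0,1]^{k_a}\times[0,1]^{k_b})$ (where $J_{\lambda,\beta}^*$ already sits strictly below the level), producing a competitor in $\Gamma$ with strictly smaller supremum, a contradiction. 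Choosing $\Lambda_2^*(\beta)\geq\Lambda_1^*(\beta,m_{a,b})$ above this threshold gives the critical point $(u_{\lambda,\beta},v_{\lambda,\beta})$ with $J_{\lambda,\beta}^*(u_{\lambda,\beta},v_{\lambda,\beta})=m_{J_a,J_b,\lambda,\beta}$ for every $\lambda\geq\Lambda_2^*(\beta)$.

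For the concentration, fix $\beta<0$ and $\lambda_n\to+\infty$ and set $(u_n,v_n):=(u_{\lambda_n,\beta},v_{\lambda_n,\beta})$. Since $J_{\lambda_n,\beta}^*(u_n,v_n)=m_{J_a,J_b,\lambda_n,\beta}\leq m_{a,b}$, Lemma~\ref{lem0004} applies with $M=m_{a,b}$: $\{(u_n,v_n)\}$ is bounded in $E$ (so $(u_n,v_n)\rightharpoonup(u_{0,\beta}^{J_a},v_{0,\beta}^{J_b})$ weakly in $E$ along a subsequence), the integrals of $|\nabla u_n|^2+(\lambda_n a+a_0)u_n^2$ over $\bbr^3\backslash\Omega_a^{J_a}$ and of $|\nabla v_n|^2+(\lambda_n b+b_0)v_n^2$ over $\bbr^3\backslash\Omega_b^{J_b}$ tend to $0$, and $|u_n|\leq\delta_\beta$ outside $\Omega_a^{J_a}$, $|v_n|\leq\delta_\beta$ outside $\Omega_b^{J_b}$ for $n$ large. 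To get $(1)$, combining the identity $J_{\lambda_n,\beta}^*(u_n,v_n)-\tfrac14\langle D[J_{\lambda_n,\beta}^*(u_n,v_n)],(u_n,v_n)\rangle_{E^*,E}$ with Lemmas~\ref{lem0001} and \ref{lem0002} gives a uniform bound on $\|u_n\|_{a,\lambda_n}^2+\|v_n\|_{b,\lambda_n}^2$, hence $\lambda_n\int_{\bbr^3}a(x)u_n^2dx\to0$ and $\lambda_n\int_{\bbr^3}b(x)v_n^2dx\to0$; the Fatou lemma and $(D_3')$ then force $u_{0,\beta}^{J_a}=0$ outside $\overline{\Omega}_a$ and $v_{0,\beta}^{J_b}=0$ outside $\overline{\Omega}_b$, while for $i_a\notin J_a$ the inclusion $\Omega_{a,i_a}\subset\bbr^3\backslash\Omega_a^{J_a}$ (choice $(I)$ of $\Omega_a'$) together with the concentration estimate and \eqref{eq0087} gives $\int_{\Omega_{a,i_a}}u_n^2dx\to0$, so $u_{0,\beta}^{J_a}=0$ on $\Omega_{a,i_a}$ too; since $(u_{0,\beta}^{J_a},v_{0,\beta}^{J_b})\in\h\times\h$ and the $\Omega_{a,i_a}$, $\Omega_{b,j_b}$ have smooth boundaries, this yields $(u_{0,\beta}^{J_a},v_{0,\beta}^{J_b})\in H_0^1(\Omega_{a,0}^{J_a})\times H_0^1(\Omega_{b,0}^{J_b})$ with the stated vanishing. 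For $(2)$, using \eqref{eq0022} and \eqref{eq0087} with the concentration estimate and $L^2_{loc}$-compactness I would first upgrade to $u_n\to u_{0,\beta}^{J_a}$ strongly in $L^2(\bbr^3)$ and, by interpolation with the bounded $L^6$-norm, in $L^4(\bbr^3)$; then, passing $D[J_{\lambda_n,\beta}^*(u_n,v_n)]=0$ to the limit (the penalized nonlinearities coinciding with the original cubic and coupling terms on the relevant sets once $\lambda_n$ is large) and applying weak lower semicontinuity and the Fatou lemma exactly as in Step~2 of the proof of Theorem~\ref{thm0002}, one concludes $\|u_n\|_{a,\lambda_n}^2\to\int_{\bbr^3}|\nabla u_{0,\beta}^{J_a}|^2+a_0(x)(u_{0,\beta}^{J_a})^2dx$ and the analogue for $v_n$, i.e. strong convergence in $E$, hence in $\h\times\h$. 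Finally, for $(3)$: as $u_{0,\beta}^{J_a}$ is supported in $\Omega_{a,0}^{J_a}\subset\Omega_a^{J_a}$, where $a\equiv0$ and $f_a(x,\cdot)$ is the unmodified cubic, and $v_{0,\beta}^{J_b}$ is supported in the disjoint set $\Omega_{b,0}^{J_b}$ (so the coupling term vanishes in the limit on every $\Omega_{a,i_a}$ with $i_a\in J_a$), testing the limiting equation against $H_0^1(\Omega_{a,i_a})$ gives $I_{\Omega_{a,i_a}}'(u_{0,\beta}^{J_a})=0$, and symmetrically $I_{\Omega_{b,j_b}}'(v_{0,\beta}^{J_b})=0$ for $j_b\in J_b$.

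The step I expect to be the main obstacle is the strong convergence in $(2)$: the $\lambda_n$-dependent, unbounded potentials $\lambda_n a(x)$ make it necessary to rule out loss of compactness both at spatial infinity and on the region $\{a>0\}\cap\Omega_a^{J_a}$ (where $u_n$ still carries the full power nonlinearity), which forces a careful combination of the concentration bounds of Lemma~\ref{lem0004}, the coercivity of Lemma~\ref{lem0001}, the sign $\beta<0$, and the Fatou lemma — essentially a verbatim repetition of Step~2 of the proof of Theorem~\ref{thm0002}. A second point requiring care is the bookkeeping in $(1)$: since Lemma~\ref{lem0004} only controls $\bbr^3\backslash\Omega_a^{J_a}$, one genuinely needs the separate estimate $\lambda_n\int a\,u_n^2dx\to0$ to annihilate $u_{0,\beta}^{J_a}$ on $\Omega_a^{J_a}\setminus\Omega_{a,0}^{J_a}$ and to identify the correct support.
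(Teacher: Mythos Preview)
Your proposal is correct and follows essentially the same approach as the paper's proof: the paper likewise uses the convergence $m_{a,i_a,\lambda}\to m_{a,i_a}$, $m_{b,j_b,\lambda}\to m_{b,j_b}$ (via \cite[Lemma~3.1]{DT03}) together with Lemma~\ref{lem0006} to obtain the strict linking inequality, invokes the linking theorem \cite{AR73} with Lemma~\ref{lem0003} to produce the critical point, and then proves (1)--(3) exactly along the two-step scheme you outline, explicitly referring back to Steps~1--2 of the proof of Theorem~\ref{thm0002} for the support identification and the strong convergence. Your bookkeeping in (1) (separating the Fatou argument on $\{a>0\}$ from the use of \eqref{eq0087} on the components $\Omega_{a,i_a}'$ with $i_a\notin J_a$) matches the paper's Step~1.
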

\begin{proof}
Since the conditions $(D_1)$-$(D_2)$, $(D_3')$, $(D_4)$ and $(D_5')$ hold,
by a similar argument as \cite[Lemma~3.1]{DT03}, we can see that $\lim_{\lambda\to+\infty}m_{a,i_a,\lambda}=m_{a,i_a}$ and $\lim_{\lambda\to+\infty}m_{b,j_b,\lambda}=m_{b,j_b}$ for all $i_a=1,\cdots,n_a$ and $j_b=1,\cdots,n_b$.  Note that $\beta<0$, so by Lemma~\ref{lem0006}, there exists $\Lambda_2^*(\beta)\geq\Lambda_1^*(\beta, m_{a,b})$ such that $m_{J_a,J_b,\lambda,\beta}>J_{\lambda,\beta}^*(\gamma_{0,a}(t_1,\cdots,t_{k_a}),\gamma_{0,b}(s_1,\cdots,s_{k_b}))$ for $\lambda\geq\Lambda_2^*(\beta)$.  Thanks to the construction of $m_{J_a,J_b,\lambda,\beta}$ and Lemma~\ref{lem0003}, we can use the linking theorem (cf. \cite{AR73}) to show that $m_{J_a,J_b,\lambda,\beta}$ is a critical value of $J_{\lambda,\beta}^*(u,v)$ for $\lambda\geq\Lambda_2^*(\beta)$, that is, there exists $(u_{\lambda,\beta},v_{\lambda,\beta})\in E$ satisfying $D[J_{\lambda,\beta}^*(u_{\lambda,\beta},v_{\lambda,\beta})]=0$ in $E^*$ and $J_{\lambda,\beta}^*(u_{\lambda,\beta},v_{\lambda,\beta})=m_{J_a,J_b,\lambda,\beta}$ for all $\lambda\geq\Lambda_2^*(\beta)$.  In what follows, we will show that $(1)$-$(3)$ hold.  Suppose $\{\lambda_n\}\subset[\Lambda_2^*(\beta), +\infty)$ satisfying $\lambda_n\to+\infty$ as $n\to\infty$.  Then by Lemmas~\ref{lem0004} and \ref{lem0006}, $\{(u_{\lambda_n,\beta}, v_{\lambda_n,\beta})\}$ is bounded in $E$ with
\begin{equation}\label{eq0159}
\int_{\bbr^3\backslash\Omega_a^{J_a}}|\nabla u_{\lambda_n,\beta}|^2+(\lambda_n a(x)+a_0(x))u_{\lambda_n,\beta}^2dx\to0\quad\text{as }n\to\infty
\end{equation}
and
\begin{equation}\label{eq0160}
\int_{\bbr^3\backslash\Omega_b^{J_b}}|\nabla v_{\lambda_n,\beta}|^2+(\lambda_n b(x)+b_0(x))v_{\lambda_n,\beta}^2dx\to0\quad\text{as } n\to\infty.
\end{equation}
Without loss of generality, we assume $(u_{\lambda_n,\beta}, v_{\lambda_n,\beta})\rightharpoonup(u_{0,\beta}^{J_a},v_{0,\beta}^{J_b})$ weakly in $E$ as $n\to\infty$ for some $(u_{0,\beta}^{J_a},v_{0,\beta}^{J_b})\in E$.  For the sake of clarity, we divide the following proof into two steps.

\vskip0.1in

{\bf Step 1. }\quad We prove that $(u_{0,\beta}^{J_a},v_{0,\beta}^{J_b})\in H_0^1(\Omega_{a,0}^{J_a})\times H_0^1(\Omega_{b,0}^{J_b})$ with $u_{0,\beta}^{J_a}=0$ on $\bbr^3\backslash\Omega_{a,0}^{J_a}$ and $v_{0,\beta}^{J_b}=0$ on $\bbr^3\backslash\Omega_{b,0}^{J_b}$.

Indeed, since $\beta<0$, by Lemmas~\ref{lem0002} and \ref{lem0006} and a similar argument as used in Step 1 of the proof for Theorem~\ref{thm0002}, we can conclude that $u_{0,\beta}^{J_a}=0$ on $\bbr^3\backslash\Omega_{a}$ and $v_{0,\beta}^{J_b}=0$ on $\bbr^3\backslash\Omega_{b}$.  On the other hand, combining \eqref{eq0087} and \eqref{eq0159}, we can see that $\int_{\Omega_{a,i_a}'}u_{\lambda_n,\beta}^2dx\to0$ for $i_a\in\{1,\cdots,n_a\}\backslash J_a$ as $n\to\infty$, which together with the Fatou lemma, implies $u_{0,\beta}^{J_a}=0$ on $\Omega_{a,i_a}'$ for $i_a\in\{1,\cdots,n_a\}\backslash J_a$.  Since \eqref{eq0160} holds, by a similar argument, we also have $v_{0,\beta}^{J_b}=0$ on $\Omega_{b,j_b}'$ for $j_b\in\{1,\cdots,n_b\}\backslash J_b$.  Note that $\{\Omega_{a,i_a}\}$ and $\{\Omega_{b,j_b}\}$ are two sequences of disjoint bounded domains with smooth boundaries.  So $(u_{0,\beta}^{J_a},v_{0,\beta}^{J_b})\in H_0^1(\Omega_{a,0}^{J_a})\times H_0^1(\Omega_{b,0}^{J_b})$ with $u_{0,\beta}^{J_a}=0$ on $\bbr^3\backslash\Omega_{a,0}^{J_a}$ and $v_{0,\beta}^{J_b}=0$ on $\bbr^3\backslash\Omega_{b,0}^{J_b}$.

\vskip0.1in

{\bf Step 2. }\quad We prove that $(u_{\lambda_n,\beta},v_{\lambda_n,\beta})\to(u_{0,\beta}^{J_a},v_{0,\beta}^{J_b})$ strongly in $\h\times\h$ as $n\to\infty$ up to a subsequence, and the restriction of $u_{0,\beta}^{J_a}$ on $\Omega_{a,i_a}$ lies in $H_0^1(\Omega_{a,i_a})$ and is a critical point of $I_{\Omega_{a,i_a}}(u)$ for every $i_a\in J_a$, while the restriction of $v_{0,\beta}^{J_b}$ on $\Omega_{b,j_b}$ lies in $H_0^1(\Omega_{b,j_b})$ and is a critical point of $I_{\Omega_{b,j_b}}(v)$ for every $j_b\in J_b$.

Indeed, since $(u_{0,\beta}^{J_a},v_{0,\beta}^{J_b})\in H_0^1(\Omega_{a,0}^{J_a})\times H_0^1(\Omega_{b,0}^{J_b})$ with $u_{0,\beta}^{J_a}=0$ on $\bbr^3\backslash\Omega_{a,0}^{J_a}$ and $v_{0,\beta}^{J_b}=0$ on $\bbr^3\backslash\Omega_{b,0}^{J_b}$, by $D[J_{\lambda_n,\beta}^*(u_{\lambda_n,\beta}, v_{\lambda_n,\beta})]=0$ in $E^*$ and the condition $(D_3')$, we can see that the restriction of $u_{0,\beta}^{J_a}$ on $\Omega_{a,i_a}$, denoted by $u_{i_a,\beta}^{J_a}$, lies in $H_0^1(\Omega_{a,i_a})$ and $I_{\Omega_{a,i_a}}'(u_{i_a,\beta}^{J_a})=0$ in $H^{-1}(\Omega_{a,i_a})$ for all $i_a\in J_a$, while the restriction of $v_{0,\beta}^{J_b}$ on $\Omega_{b,j_b}$, denoted by $v_{j_b,\beta}^{J_b}$, lies in $H_0^1(\Omega_{b,j_b})$ and $I_{\Omega_{b,j_b}}'(v_{j_b,\beta}^{J_b})=0$ in $H^{-1}(\Omega_{b,j_b})$ for all $j_b\in J_b$.  Now, since $\beta<0$ and the condition $(D_3)$ contains the condition $(D_3')$, by the construction of $f_a(x,t)$, $f_b(x,t)$ and $h(x,t,s)$ and a similar argument as used in Step 2 of the proof for Theorem~\ref{thm0002}, we can conclude that $(u_{\lambda_n,\beta},v_{\lambda_n,\beta})\to (u_{0,\beta}^{J_a},b_{0,\beta}^{J_b})$ in $E$ as $n\to\infty$.  Since $E$ is embedded continuously into $\h\times\h$, $(u_{\lambda_n,\beta},v_{\lambda_n,\beta})\to (u_{0,\beta}^{J_a},v_{0,\beta}^{J_b})$ in $\h\times\h$ as $n\to\infty$.
\end{proof}

\vskip0.1in

In the following part, we will use a deformation argument to obtain the solution described by Theorem~\ref{thm0001}.  Let
$\ve_0=\frac14\min\{2\sqrt{m_{a,1}},\cdots,2\sqrt{m_{a,k_a}},2\sqrt{m_{b,1}},\cdots,2\sqrt{m_{b,k_b}}\}$.  For $0<\ve<\ve_0$, we define
\begin{eqnarray*}
\mathcal{D}_{a,\ve}&=&\bigg\{u\in E_a\mid\bigg(\int_{\bbr^3\backslash\Omega_{a,0}^{J_a}}|\nabla u|^2+u^2dx\bigg)^{\frac12}<\ve,\\
&&\bigg|\bigg(\int_{\Omega_{a,i_a}}|\nabla u|^2+a_0(x)u^2dx\bigg)^{\frac12}-2\sqrt{m_{a,i_a}}\bigg|<\ve,\forall i_a\in J_a\bigg\}
\end{eqnarray*}
and
\begin{eqnarray*}
\mathcal{D}_{b,\ve}&=&\bigg\{v\in E_b\mid\bigg(\int_{\bbr^3\backslash\Omega_{b,0}^{J_b}}|\nabla v|^2+v^2dx\bigg)^{\frac12}<\ve,\\
&&\bigg|\bigg(\int_{\Omega_{b,j_b}}|\nabla v|^2+b_0(x)v^2dx\bigg)^{\frac12}-2\sqrt{m_{b,j_b}}\bigg|<\ve, \forall j_b\in J_b\bigg\}.
\end{eqnarray*}
Let $\mathcal{D}_{\ve}=\mathcal{D}_{a,\ve}\cap\mathcal{D}_{b,\ve}$ and $J_{\lambda,\beta}^{m_{J_a,J_b}}=\bigg\{(u,v)\in E\mid J^*_{\lambda,\beta}(u,v)\leq m_{J_a,J_b}\bigg\}$,
where $m_{J_a,J_b}=\underset{i_a=1}{\overset{k_a}{\sum }}m_{a,i_a}+\underset{j_b=1}{\overset{k_b}{\sum }}m_{b,j_b}$.  Then we have the following.

\vskip0.1in

\begin{proposition}
\label{prop0004}  Assume $\beta<0$ and $0<\ve<\ve_0$.  Then there exists $\Lambda_3^*(\beta,\ve)\geq\Lambda_2^*(\beta)$ such that $J^*_{\lambda,\beta}(u,v)$ has a critical point in $\mathcal{D}_{\ve}\cap J_{\lambda,\beta}^{m_{J_a,J_b}}$ for $\lambda\geq\Lambda^*_3(\beta,\ve)$.
\end{proposition}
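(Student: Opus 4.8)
The plan is to argue by contradiction using a deformation (pseudo-gradient flow) argument adapted to the minimax value $m_{J_a,J_b,\lambda,\beta}$, following the scheme of S\'er\'e and Ding--Tanaka. By Proposition~\ref{prop0001} we already know that for $\lambda\geq\Lambda_2^*(\beta)$ there is a critical point $(u_{\lambda,\beta},v_{\lambda,\beta})$ with $J^*_{\lambda,\beta}(u_{\lambda,\beta},v_{\lambda,\beta})=m_{J_a,J_b,\lambda,\beta}$, and by Lemma~\ref{lem0006}$(2)$ together with the convergences $m_{a,i_a,\lambda}\to m_{a,i_a}$, $m_{b,j_b,\lambda}\to m_{b,j_b}$ we have $m_{J_a,J_b,\lambda,\beta}\to m_{J_a,J_b}$ as $\lambda\to+\infty$. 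So the issue is not existence of \emph{a} critical point at level $\approx m_{J_a,J_b}$, but producing one that additionally lies in the small neighbourhood $\mathcal{D}_\ve$; for that I first want to record that if a sequence $(u_{\lambda_n,\beta},v_{\lambda_n,\beta})$ of critical points with energies bounded by $m_{a,b}$ satisfies $J^*_{\lambda_n,\beta}\to m_{J_a,J_b}$, then by Proposition~\ref{prop0001} (Steps 1--2) it converges strongly in $E$, after a subsequence, to a limit concentrated on $\Omega_{a,0}^{J_a}\times\Omega_{b,0}^{J_b}$ whose restrictions are critical points of the $I_{\Omega_{a,i_a}}$, $I_{\Omega_{b,j_b}}$; by \eqref{eq0204}-type non-degeneracy (each component is nonzero) these restrictions are in fact \emph{least energy} critical points, so the limit has exactly $\int_{\Omega_{a,i_a}}|\nabla u|^2+a_0u^2=4m_{a,i_a}$ and likewise for $v$. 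Hence for $\lambda$ large such critical points automatically enter $\mathcal{D}_\ve$. The remaining task is to supply a critical point at level $\le m_{J_a,J_b}$ (not just $\approx m_{J_a,J_b,\lambda,\beta}$, which could exceed $m_{J_a,J_b}$ for finite $\lambda$) lying in $\mathcal{D}_\ve$.

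The core step is the following deformation lemma in the spirit of \cite[Section~3]{DT03}: I claim there is $\Lambda_3^*(\beta,\ve)\geq\Lambda_2^*(\beta)$ such that, for $\lambda\geq\Lambda_3^*(\beta,\ve)$, if $J^*_{\lambda,\beta}$ has no critical point in $\mathcal{D}_\ve\cap J_{\lambda,\beta}^{m_{J_a,J_b}}$, then one can deform the class $\Gamma$. Concretely, suppose the contrary. Using Lemma~\ref{lem0003} ($(PS)_c$ holds for every $c$), there is no $(PS)$ sequence inside $\mathcal{D}_{2\ve}\setminus\mathcal{D}_{\ve}$ at levels in $[m_{J_a,J_b}-\eta,m_{J_a,J_b}]$ for some $\eta>0$, and likewise no critical point in $\mathcal{D}_{2\ve}$ at those levels; then a standard quantitative deformation argument gives a pseudo-gradient vector field, locally Lipschitz, which (i) vanishes outside a neighbourhood of $J_{\lambda,\beta}^{m_{J_a,J_b}}\cap(\mathcal{D}_{2\ve}\setminus\mathcal{D}_\ve)$, (ii) pushes the sublevel set $J_{\lambda,\beta}^{m_{J_a,J_b}}$ into $J_{\lambda,\beta}^{m_{J_a,J_b}-\eta}\cup\mathcal{D}_\ve$, and (iii) leaves the boundary data $(\gamma_{0,a},\gamma_{0,b})$ fixed because, by Lemma~\ref{lem0006}$(1)$, on $\partial([0,1]^{k_a}\times[0,1]^{k_b})$ the functional is already $\le m_{J_a,J_b}-\delta$ for some fixed $\delta>0$, hence the flow never acts there for small deformation time. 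Applying this flow to any $(\gamma_a,\gamma_b)\in\Gamma$ with $\sup J^*_{\lambda,\beta}\le m_{J_a,J_b,\lambda,\beta}+$ (small) produces a new admissible path; but along any admissible path Lemma~\ref{lem0005} forces some parameter value where $\int_{\Omega_{a,i_a}'}\gamma_a^4=4m_{a,i_a,\lambda}/\mu_1$ and $\int_{\Omega_{b,j_b}'}\gamma_b^4=4m_{b,j_b,\lambda}/\mu_2$ simultaneously for all $i_a\in J_a$, $j_b\in J_b$, and at such a point the energy estimate \eqref{eq0093} (from the proof of Lemma~\ref{lem0006}$(2)$) gives $J^*_{\lambda,\beta}\ge\sum m_{a,i_a,\lambda}+\sum m_{b,j_b,\lambda}=m_{J_a,J_b,\lambda,\beta}$; so that point is neither pushed below level $m_{J_a,J_b}-\eta$ — contradicting that the whole path was lowered — nor can it have been carried into $\mathcal{D}_\ve$ unless its limit (as $\lambda\to\infty$) lies there, which is exactly the concentration statement we want. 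Making this "it must land in $\mathcal{D}_\ve$" rigorous is where one compares the $L^\infty$-smallness off $\Omega^{J_a}_a$, $\Omega^{J_b}_b$ from Lemma~\ref{lem0004}$(2)$--$(3)$ with the definition of $\mathcal{D}_\ve$: Lemma~\ref{lem0004}$(2)$ already gives $\int_{\bbr^3\setminus\Omega^{J_a}_a}|\nabla u|^2+(\lambda a+a_0)u^2\to0$ hence, via \eqref{eq0022} and $\lambda\geq\Lambda_1$, $\int_{\bbr^3\setminus\Omega^{J_a}_{a,0}}|\nabla u|^2+u^2\to0$, so the first defining inequality of $\mathcal{D}_{a,\ve}$ holds for $\lambda$ large; the middle inequalities $|(\int_{\Omega_{a,i_a}}|\nabla u|^2+a_0u^2)^{1/2}-2\sqrt{m_{a,i_a}}|<\ve$ follow because the strong $E$-limit has this quantity exactly equal to $2\sqrt{m_{a,i_a}}$ (least-energy level), hence the path point, being $E$-close to it, satisfies the strict inequality. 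This forces a critical point of $J^*_{\lambda,\beta}$ inside $\mathcal{D}_\ve\cap J_{\lambda,\beta}^{m_{J_a,J_b}}$, the desired contradiction.

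Thus the argument reduces to: (a) the $(PS)$ condition — already Lemma~\ref{lem0003}; (b) a boundary estimate keeping $\gamma_{0,a},\gamma_{0,b}$ fixed under the flow — already Lemma~\ref{lem0006}$(1)$ gives the strict gap; (c) the "intermediate-value" obstruction on every admissible path — already Lemma~\ref{lem0005} plus \eqref{eq0093}; (d) upgrading the critical points obtained at level $\approx m_{J_a,J_b,\lambda,\beta}$ so that, for $\lambda$ large, they sit in $\mathcal{D}_\ve$ — via Lemma~\ref{lem0004}$(2)$--$(3)$, Proposition~\ref{prop0001}, and the least-energy identity. The only genuinely new ingredient is assembling a deformation that respects the thin tube $\mathcal{D}_{2\ve}\setminus\mathcal{D}_\ve$ while not destroying the energy $\le m_{J_a,J_b}$ constraint.

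The step I expect to be the main obstacle is (c)/(d) combined: one must ensure that the pseudo-gradient deformation cannot simultaneously avoid $\mathcal{D}_\ve$, stay below level $m_{J_a,J_b}$, and still respect the constraint extracted from Lemma~\ref{lem0005}. The delicate point is that the energy lower bound \eqref{eq0093} at a Lemma~\ref{lem0005} point only gives $\ge m_{J_a,J_b,\lambda,\beta}$, which for finite $\lambda$ may be strictly larger than $m_{J_a,J_b}$; so one cannot directly say "the deformed path still passes above $m_{J_a,J_b}$." The fix is to choose $\Lambda_3^*(\beta,\ve)$ so large that $m_{J_a,J_b}-m_{J_a,J_b,\lambda,\beta}<\eta$ and that the $L^\infty$-smallness of Lemma~\ref{lem0004}$(3)$ forces any path point achieving the Lemma~\ref{lem0005} normalization to already lie in $\mathcal{D}_{\ve/2}$ — so it is a point the flow must leave untouched (the flow is supported in $\mathcal{D}_{2\ve}\setminus\mathcal{D}_\ve$), giving $\sup J^*_{\lambda,\beta}(\text{deformed path})\ge m_{J_a,J_b,\lambda,\beta}\ge m_{J_a,J_b}-\eta$, contradicting that the path was lowered strictly below $m_{J_a,J_b}-\eta$ off $\mathcal{D}_\ve$. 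Closing this circle of quantifiers ($\eta$, $\ve$, $2\ve$, the flow time, $\Lambda_3^*$) carefully is the heart of the matter; everything else is the standard linking/deformation machinery of \cite{DT03,BT13,GT121}.
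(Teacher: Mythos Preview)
Your overall plan --- contradiction plus a pseudo-gradient deformation in the style of Ding--Tanaka --- is the same as the paper's, but the way you try to close the contradiction has a genuine gap.

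The step that fails is your ``fix'': you claim that for $\lambda$ large, Lemma~\ref{lem0004}$(3)$ forces \emph{any path point} achieving the Lemma~\ref{lem0005} normalization to already lie in $\mathcal{D}_{\ve/2}$. But Lemma~\ref{lem0004} concerns \emph{critical points} of $J^*_{\lambda,\beta}$ with energy $\le M$, not arbitrary points on an arbitrary $(\gamma_a,\gamma_b)\in\Gamma$. A deformed path can pass through anywhere in $E$, and there is no mechanism by which the $L^4$-constraints of Lemma~\ref{lem0005} force smallness outside $\Omega_{a,0}^{J_a},\Omega_{b,0}^{J_b}$ or the correct value of $\int_{\Omega_{a,i_a}}|\nabla u|^2+a_0u^2$. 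So your contradiction does not close: the Lemma~\ref{lem0005} point on the deformed path may well sit in $\mathcal{D}_\ve$ with energy $\ge\sum m_{a,i_a,\lambda}+\sum m_{b,j_b,\lambda}$ (which is all \eqref{eq0093} gives), and since your flow is supported in $\mathcal{D}_{2\ve}\setminus\mathcal{D}_\ve$ this is perfectly consistent with the deformation. A secondary issue is that your deformation assumes no critical point in $\mathcal{D}_{2\ve}$ at the relevant levels, whereas the contradiction hypothesis only excludes critical points in $\mathcal{D}_\ve$.

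What the paper does instead is the missing idea. The key is its Step~1: a \emph{uniform in $\lambda_n$} lower bound $\sigma_0>0$ on $\|D[J^*_{\lambda_n,\beta}]\|_{E^*}$ over the annulus $(\mathcal{D}_{3\ve}\setminus\mathcal{D}_\ve)\cap J_{\lambda_n,\beta}^{m_{J_a,J_b}}$, proved by a compactness/dichotomy argument (any almost-critical sequence in this set converges, and the limit is forced either into $\mathcal{D}_\ve$ or outside $\mathcal{D}_{3\ve}$ depending on whether all components are nonzero). The flow is then supported inside $\mathcal{D}_{2\ve}$ (not just the annulus), so it actively pushes down points of $\gamma_0$ that start in $\mathcal{D}_\ve$. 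Starting from the explicit path $(\gamma_{0,a},\gamma_{0,b})$, points outside $\mathcal{D}_\ve$ already satisfy a strict energy gap by the structure of $\gamma_0$ (this is \eqref{eq0301}); points inside $\mathcal{D}_\ve$ either remain in $\mathcal{D}_{3\ve/2}$ and are pushed down indefinitely, or they must cross from $\partial\mathcal{D}_\ve$ to $\partial\mathcal{D}_{3\ve/2}$, which takes a definite time and, via Step~1, costs a definite energy drop. Hence the deformed $\gamma_0$ has $\sup J^*_{\lambda_n,\beta}<m_{J_a,J_b}-\sigma_0^*$, contradicting Lemma~\ref{lem0006}$(2)$ and $m_{a,i_a,\lambda_n}\to m_{a,i_a}$, $m_{b,j_b,\lambda_n}\to m_{b,j_b}$. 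In short: the uniform annular gradient bound and flowing on the \emph{specific} path $\gamma_0$ replace your attempt to control an arbitrary path via Lemma~\ref{lem0005}.
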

\begin{proof}
Suppose the contrary, since Lemma~\ref{lem0003} holds, there exist $\{\lambda_n\}$ and $\{c_{n,\beta}\}$ with $\lambda_n\to+\infty$ as $n\to\infty$ and $c_{n,\beta}>0$ for all $n$ such that
\begin{equation}     \label{eq0302}
\|D[J^*_{\lambda_n,\beta}(u,v)]\|_{E^*}\geq c_{n,\beta}\quad\text{for all }(u,v)\in\mathcal{D}_{\ve}\cap J_{\lambda_n,\beta}^{m_{J_a,J_b}}.
\end{equation}
For the sake of clarity, we divide the following proof into several steps.

\vskip0.1in

{\bf Step 1. } We prove that there exists $N\in\bbn$ and a constant $\sigma_0>0$ such that $\|J^*_{\lambda_n,\beta}(u,v)\|_{E^*}\geq\sigma_0$ for every $(u,v)\in(\mathcal{D}_{3\ve}\backslash\mathcal{D}_{\ve})\cap J_{\lambda_n,\beta}^{m_{J_a,J_b}}$ and $n\geq N$.

Suppose the contrary, there exists a subsequence of $\{\lambda_n\}$, still denoted by $\{\lambda_n\}$, such that $\|J^*_{\lambda_n,\beta}(u_{\lambda_n,\beta},v_{\lambda_n,\beta})\|_{E^*}\to0$ as $n\to\infty$ for some $(u_{\lambda_n,\beta},v_{\lambda_n,\beta})\in(\mathcal{D}_{3\ve}\backslash\mathcal{D}_{\ve})\cap J_{\lambda_n,\beta}^{m_{J_a,J_b}}$.  By a similar argument as used in Proposition~\ref{prop0001}, we can see that
$(u_{\lambda_n,\beta},v_{\lambda_n,\beta})\to(u_{0,\beta}^{J_a},v_{0,\beta}^{J_b})$ strongly in $\h\times\h$ and $E$ as $n\to\infty$ for some $(u_{0,\beta}^{J_a},v_{0,\beta}^{J_b})\in H_0^1(\Omega_{a,0}^{J_a})\times H_0^1(\Omega_{b,0}^{J_b})$ with $u_{0,\beta}^{J_a}=0$ on $\bbr^3\backslash\Omega_{a,0}^{J_a}$ and $v_{0,\beta}^{J_b}=0$ on $\bbr^3\backslash\Omega_{b,0}^{J_b}$.  The restriction of $u_{0,\beta}^{J_a}$ on $\Omega_{a,i_a}$, denoted by $u_{i_a,\beta}^{J_a}$, lies in $H_0^1(\Omega_{a,i_a})$ and $I_{\Omega_{a,i_a}}'(u_{i_a,\beta}^{J_a})=0$ in $H^{-1}(\Omega_{a,i_a})$ for all $i_a\in J_a$, while the restriction of $v_{0,\beta}^{J_b}$ on $\Omega_{b,j_b}$, denoted by $v_{j_b,\beta}^{J_b}$, lies in $H_0^1(\Omega_{b,j_b})$ and $I_{\Omega_{b,j_b}}'(v_{j_b,\beta}^{J_b})=0$ in $H^{-1}(\Omega_{b,j_b})$ for all $j_b\in J_b$.  Clearly, one of the following two cases must occur:
\begin{enumerate}
\item[$(1^*)$] $\int_{\Omega_{a,i_a}}(u_{i_a,\beta}^{J_a})^4dx\geq C$ for all $i_a\in J_a$ and $\int_{\Omega_{b,j_b}}(v_{j_b,\beta}^{J_b})^4dx\geq C$ for all $j_b\in J_b$.
\item[$(2^*)$] There exists $i_a'\in J_a$ or $j_b'\in J_b$ such that $\int_{\Omega_{a,i_a'}}(u_{i_a',\beta}^{J_a})^4dx=0$ or $\int_{\Omega_{b,j_b'}}(v_{j_b',\beta}^{J_b})^4dx=0$.
\end{enumerate}
If case~$(1^*)$ happens, then we must have $I_{\Omega_{a,i_a}}(u_{i_a,\beta}^{J_a})\geq m_{a,i_a}$ and $I_{\Omega_{b,j_b}}(v_{j_b,\beta}^{J_b})\geq m_{b,j_b}$ for all $i_a\in J_a$ and $j_b\in J_b$.  Since $(u_{\lambda_n,\beta},v_{\lambda_n,\beta})\in J_{\lambda_n,\beta}^{m_{J_a,J_b}}$, by a similar argument as used in Step 3 of the proof for Theorem~\ref{thm0002}, we can show that $I_{\Omega_{a,i_a}}(u_{i_a,\beta}^{J_a})= m_{a,i_a}$ and $I_{\Omega_{b,j_b}}(v_{j_b,\beta}^{J_b})= m_{b,j_b}$ for all $i_a\in J_a$ and $j_b\in J_b$.  It follows from the condition $(D_4)$ that
\begin{equation*}
\int_{\Omega_{a,i_a}}|\nabla u_{\lambda_n,\beta}|^2+a_0(x)u_{\lambda_n,\beta}^2dx\to4m_{a,i_a}\quad\text{for all }i_a\in J_a
\end{equation*}
and
\begin{equation*}
\int_{\Omega_{b,j_b}}|\nabla v_{\lambda_n,\beta}|^2+b_0(x)v_{\lambda_n,\beta}^2dx\to4m_{b,j_b}\quad\text{for all }j_b\in J_b
\end{equation*}
as $n\to\infty$, which then implies $(u_{\lambda_n,\beta},v_{\lambda_n,\beta})\in\mathcal{D}_{\ve}$ for $n$ large enough.  It is impossible since $(u_{\lambda_n,\beta},v_{\lambda_n,\beta})\in(\mathcal{D}_{3\ve}\backslash\mathcal{D}_{\ve})\cap J_{\lambda_n,\beta}^{m_{J_a,J_b}}$ for all $n$.  Thus, we must have the case~$(2^*)$.  Without loss of generality, we assume $\int_{\Omega_{a,1}}(u_{1,\beta}^{J_a})^4dx=0$.  It follows from the condition $(D_4)$ that
\begin{equation*}
\bigg|\bigg(\int_{\Omega_{a,1}}|\nabla u_{\lambda_n,\beta}|^2+a_0(x)u_{\lambda_n,\beta}^2dx\bigg)^{\frac12}-2\sqrt{m_{a,1}}\bigg|\to2\sqrt{m_{a,1}}=4\ve_0\quad\text{as }n\to\infty,
\end{equation*}
which implies $(u_{\lambda_n,\beta},v_{\lambda_n,\beta})\in E\backslash\mathcal{D}_{3\ve}$ for $n$ large enough.  It also contradicts to the fact that $(u_{\lambda_n,\beta},v_{\lambda_n,\beta})\in(\mathcal{D}_{3\ve}\backslash\mathcal{D}_{\ve})\cap J_{\lambda_n,\beta}^{m_{J_a,J_b}}$ for all $n$.

\vskip0.1in

{\bf Step 2. } We construct a descending flow on $J_{\lambda_n,\beta}^{m_{J_a,J_b}}$ for every $n\geq N$.

Let $\eta:E\to[0, 1]$ be a local Lipschitz continuous function and satisfy
\begin{equation*}
\eta(u,v)=\left\{\aligned&1,\quad&(u,v)\in\mathcal{D}_{\frac32\ve},\\
&0,\quad&(u,v)\in E\backslash\mathcal{D}_{2\ve}.\endaligned\right.
\end{equation*}
Since $J^*_{\lambda_n,\beta}(u,v)$ is $C^1$ for every $n\geq N$, there exists a pseudo-gradient vector field of $J^*_{\lambda_n,\beta}(u,v)$, denoted by $\widetilde{D[J^*_{\lambda_n,\beta}(u,v)]}=(\widetilde{D[J^*_{\lambda_n,\beta}(u,v)]}_1, \widetilde{D[J^*_{\lambda_n,\beta}(u,v)]}_2)$, which satisfies
\begin{enumerate}
\item[$(a_*)$] $\langle D[J^*_{\lambda_n,\beta}(u,v)], \widetilde{D[J^*_{\lambda_n,\beta}(u,v)]}\rangle_{E^*,E^*}\geq\frac12\|D[J^*_{\lambda_n,\beta}(u,v)]\|_{E^*}^2$ for all $(u,v)\in E$;
\item[$(b_*)$] $\|\widetilde{D[J^*_{\lambda_n,\beta}(u,v)]}\|_{E^*}\leq2\|D[J^*_{\lambda_n,\beta}(u,v)]\|_{E^*}$ for all $(u,v)\in E$.
\end{enumerate}
Let $\overrightarrow{\mathcal{V}}_{n}:J_{\lambda_n,\beta}^{m_{J_a,J_b}}\to E^*$ be a continuous map and given by
\begin{equation*}
\overrightarrow{\mathcal{V}}_{n}(u,v)=(\mathcal{V}_{1,n}(u,v),\mathcal{V}_{2,n}(u,v))
=-\frac{\eta(u,v)}{\|\widetilde{D[J^*_{\lambda_n,\beta}(u,v)]}\|_{E^*}}(\widetilde{D[J^*_{\lambda_n,\beta}(u,v)]}_1, \widetilde{D[J^*_{\lambda_n,\beta}(u,v)]}_2)
\end{equation*}
for $(u,v)\in J_{\lambda_n,\beta}^{m_{J_a,J_b}}\backslash\mathcal{K}=\{(u,v)\in J_{\lambda_n,\beta}^{m_{J_a,J_b}}\mid D[J^*_{\lambda_n,\beta}(u,v)]\not=0\}$ and
\begin{equation*}
\overrightarrow{\mathcal{V}}_{n}(u,v)=(\mathcal{V}_{1,n}(u,v),\mathcal{V}_{2,n}(u,v))
=(0, 0)
\end{equation*}
for $(u,v)\in J_{\lambda_n,\beta}^{m_{J_a,J_b}}\cap\mathcal{K}=\{(u,v)\in J_{\lambda_n,\beta}^{m_{J_a,J_b}}\mid D[J^*_{\lambda_n,\beta}(u,v)]=0\}$.  Clearly, $\|\overrightarrow{\mathcal{V}}_{n}(u,v)\|_{E^*}\leq1$ for all $(u,v)\in J_{\lambda_n,\beta}^{m_{J_a,J_b}}$.  Furthermore, by \eqref{eq0302},  Step 1 and the definition of $\eta$, we can see that $\overrightarrow{\mathcal{V}}_{n}(u,v)$ is locally  Lipschitz.  Now, let us consider the flow $\overrightarrow{\rho}_{n}(\tau)=(\rho_{1,n}(\tau),\rho_{2,n}(\tau))$ given by the following two-component system of ODE
\begin{equation*}
\left\{\aligned&\frac{d\rho_{1,n}(\tau)}{d\tau}=\mathcal{V}_{1,n}(\rho_{1,n}(\tau),\rho_{2,n}(\tau)),\\
&\frac{d\rho_{2,n}(\tau)}{d\tau}=\mathcal{V}_{2,n}(\rho_{1,n}(\tau),\rho_{2,n}(\tau)),\\
&(\rho_{1,n}(0),\rho_{2,n}(0))=(u,v)\in J_{\lambda_n,\beta}^{m_{J_a,J_b}}.\endaligned\right.
\end{equation*}
By $(a_*)$, $(b_*)$ and a direct calculation, we can see that
\begin{eqnarray*}
&&\frac{dJ^*_{\lambda_n,\beta}(\rho_{1,n}(\tau),\rho_{2,n}(\tau))}{d\tau}\\
& &=\bigg\langle(\frac{\partial J^*_{\lambda_n,\beta}}{\partial u}(\rho_{1,n},\rho_{2,n}),\frac{\partial J^*_{\lambda_n,\beta}}{\partial v}(\rho_{1,n},\rho_{2,n})),(\mathcal{V}_{1,n}(\rho_{1,n},\rho_{2,n}),
\mathcal{V}_{2,n}(\rho_{1,n},\rho_{2,n}))\bigg\rangle_{E^*,E^*}\\
& & \leq -\frac14\eta(\rho_{1,n},\rho_{2,n})\|D[J^*_{\lambda_n,\beta}(\rho_{1,n},\rho_{2,n})]\|_{E^*}\\
& &\leq 0.
\end{eqnarray*}
It follows that $\overrightarrow{\rho}_{n}(\tau)=(\rho_{1,n}(\tau),\rho_{2,n}(\tau))$ is a descending flow on $J_{\lambda_n,\beta}^{m_{J_a,J_b}}$.  Furthermore, for every $\tau>0$, we have $\overrightarrow{\rho}_{n}(\tau)=\overrightarrow{\mathcal{\rho}}_{n}(0)$ if $\overrightarrow{\mathcal{\rho}}_{n}(0)=(u,v)\in E\backslash\mathcal{D}_{2\ve}$.

\vskip0.1in

{\bf Step 3. } For every $n\geq N$, we construct a map $\overrightarrow{\rho}^0_{n}(\tau)=(\rho^0_{1,n}(\tau), \rho^0_{2,n}(\tau))\in\Gamma$ for all $\tau>0$ such that
\begin{equation}          \label{eq0095}
\sup_{[0, 1]^{k_a}\times[0, 1]^{k_b}}J^*_{\lambda_n,\beta}(\rho^0_{1,n}(\tau_n), \rho^0_{2,n}(\tau_n))<m_{J_a,J_b}-\sigma_0^*\quad\text{for some }\tau_n>0,
\end{equation}
where $\sigma_0^*>0$ is a constant.

Indeed, let $n\geq N$ and $\gamma_{0,a}$ and $\gamma_{0,b}$ be given by \eqref{eq0097} and \eqref{eq0098}.  We consider
$\overrightarrow{\rho}^0_{n}(\tau)=(\rho^0_{1,n}(\tau), \rho^0_{2,n}(\tau))$, where $(\rho^0_{1,n}(0), \rho^0_{2,n}(0))=(\gamma_{0,a},\gamma_{0,b})$.  Since $W_{a,i_a}$ and $W_{b,j_b}$ are  the least energy nonzero critical  points of $I_{\Omega_{a,i_a}}(u)$ and $I_{\Omega_{b,j_b}}(v)$ for all $i_a\in J_a$ and $j_b\in J_b$, by the choice of $R$ and $\ve$, we can see that
\begin{equation*}
(\gamma_{0,a}(t_1,\cdots,t_{k_a}),\gamma_{0,b}(s_1,\cdots,s_{k_b}))\in E\backslash\mathcal{D}_{2\ve}
\end{equation*}
for every $(t_1,\cdots,t_{k_a},s_1,\cdots,s_{k_b})\in\partial([0, 1]^{k_a}\times[0,1]^{k_b})$.  It follows from the construction of $(\rho^0_{1,n}(\tau), \rho^0_{2,n}(\tau))$ that
\begin{equation*}
(\rho^0_{1,n}(\tau)(t_1,\cdots,t_{k_a}),\rho^0_{2,n}(\tau)(s_1,\cdots,s_{k_b}))
=(\gamma_{0,a}(t_1,\cdots,t_{k_a}),\gamma_{0,b}(s_1,\cdots,s_{k_b}))
\end{equation*}
for every $(t_1,\cdots,t_{k_a},s_1,\cdots,s_{k_b})\in\partial([0, 1]^{k_a}\times[0,1]^{k_b})$ and $\tau>0$.  Thus, $\overrightarrow{\rho}^0_{n}(\tau)\in\Gamma$ for all $\tau>0$.  It remains to show \eqref{eq0095} holds.  For every $(t_1,\cdots,t_{k_a},s_1,\cdots,s_{k_b})\in[0, 1]^{k_a}\times[0,1]^{k_b}$, one of the following two cases must occur:
\begin{enumerate}
\item[$(a^*)$] $(\gamma_{0,a}(t_1,\cdots,t_{k_a}),\gamma_{0,b}(s_1,\cdots,s_{k_b}))\in E\backslash\mathcal{D}_{\ve}$.
\item[$(b^*)$] $(\gamma_{0,a}(t_1,\cdots,t_{k_a}),\gamma_{0,b}(s_1,\cdots,s_{k_b}))\in \mathcal{D}_{\ve}$.
\end{enumerate}
If case~$(a^*)$ happen, then by Step 2, we must have
\begin{equation*}
J^*_{\lambda_n,\beta}(\rho^0_{1,n}(\tau)(t_1,\cdots,t_{k_a}),\rho^0_{2,n}(\tau)(s_1,\cdots,s_{k_b}))
\leq J^*_{\lambda_n,\beta}(\gamma_{0,a}(t_1,\cdots,t_{k_a}),\gamma_{0,b}(s_1,\cdots,s_{k_b}))
\end{equation*}
for all $\tau>0$.  Moreover, by \eqref{eq0029} and the choice of $\{W_{a,i_a}\}$ and $\{W_{b,j_b}\}$, we can see that
\begin{eqnarray*}
J^*_{\lambda_n,\beta}(\gamma_{0,a}(t_1,\cdots,t_{k_a}),\gamma_{0,b}(s_1,\cdots,s_{k_b}))
=\sum_{i_a=1}^{k_a}m_{a,i_a}+\sum_{j_b=1}^{k_b}m_{b,j_b}
\end{eqnarray*}
if and only if $t_{i_a}=s_{j_b}=\frac1R$ for all $i_a\in J_a$ and $j_b\in J_b$.  Since $(\gamma_{0,a}(t_1,\cdots,t_{k_a}),\gamma_{0,b}(s_1,\cdots,s_{k_b}))\in E\backslash\mathcal{D}_{\ve}$ in this case, there exists $i_a'\in J_a$ or $j_b'\in J_b$ such that $t_{i_a'}\not=\frac1R$ or $s_{j_b'}\not=\frac1R$.  It follows the construction of $\gamma_{0,a}$ and $\gamma_{0,b}$ and the condition $(D_3')$ that
\begin{equation}      \label{eq0301}
m_{a,b}^*=\sup_{(u,v)\in\mathbb{P}}J^*_{\lambda_n,\beta}(u,v)=\sup_{(u,v)\in\mathbb{P}}(I_{\Omega_a}(u)+I_{\Omega_b}(v))
<\sum_{i_a=1}^{k_a}m_{a,i_a}+\sum_{j_b=1}^{k_b}m_{b,j_b},
\end{equation}
where $\mathbb{P}=(\gamma_{0,a}([0, 1]^{k_a})\times\gamma_{0,b}([0,1]^{k_b}))\backslash\mathcal{D}_{\ve}$.
If case~$(b^*)$ happen, then two subcases may occur:
\begin{enumerate}
\item[$(b^*_1)$] $(\rho^0_{1,n}(\tau)(t_1,\cdots,t_{k_a}),\rho^0_{2,n}(\tau)(s_1,\cdots,s_{k_b}))\in\mathcal{D}_{\frac32\ve}$ for all $\tau>0$.
\item[$(b^*_2)$] There exists $\tau_{n}^*>0$ such that $(\rho^0_{1,n}(\tau_n^*)(t_1,\cdots,t_{k_a}),\rho^0_{2,n}(\tau_n^*)(s_1,\cdots,s_{k_b}))\in E\backslash\mathcal{D}_{\frac32\ve}$.
\end{enumerate}
In the subcase~$(b^*_1)$, by Step 2 and the Taylor  expansion, we can calculate that
\begin{eqnarray}
&&J^*_{\lambda_n,\beta}(\rho^0_{1,n}(\tau)(t_1,\cdots,t_{k_a}),\rho^0_{2,n}(\tau)(s_1,\cdots,s_{k_b}))\notag\\
& & \leq  J^*_{\lambda_n,\beta}(\gamma_{0,a}(t_1,\cdots,t_{k_a}),\gamma_{0,b}(s_1,\cdots,s_{k_b}))\notag\\
&&\quad -\int_0^\tau\frac14\eta(\rho_{1,n},\rho_{2,n})\|D[J^*_{\lambda_n,\beta}(\rho_{1,n},\rho_{2,n})]\|_{E^*}d\nu\notag\\
& & \leq \sum_{i_a=1}^{k_a}m_{a,i_a}+\sum_{j_b=1}^{k_b}m_{b,j_b}-\frac14\tau\min\{c_{n,\beta},\sigma_0\},
\end{eqnarray}
where $c_{n,\beta}$ is given by \eqref{eq0302} and $\sigma_0$ is given by Step 1.
It follows that
\begin{equation*}
J^*_{\lambda_n,\beta}(\rho^0_{1,n}(\tau)(t_1,\cdots,t_{k_a}),\rho^0_{2,n}(\tau)(s_1,\cdots,s_{k_b}))<\underset{i_a=1}{\overset{k_a}{\sum }}m_{a,i_a}+\underset{j_b=1}{\overset{k_b}{\sum }}m_{b,j_b}-\sigma_0
\end{equation*}
for $\tau\geq\tau_n^0=\frac{8\sigma_0}{\min\{c_{n,\beta},\sigma_0\}}$.  In the subcase~$(b^*_2)$, there must exist $0\leq\tau_{n,a}^*<\tau_{n,b}^*\leq\tau_{n}^*$ such that
\begin{equation*}
(\rho^0_{1,n}(\tau_{n,a}^*)(t_1,\cdots,t_{k_a}),\rho^0_{2,n}(\tau_{n,a}^*)(s_1,\cdots,s_{k_b}))\in\partial\mathcal{D}_{\ve}
\end{equation*}
and
\begin{equation*}
(\rho^0_{1,n}(\tau_{n,b}^*)(t_1,\cdots,t_{k_a}),\rho^0_{2,n}(\tau_{n,b}^*)(s_1,\cdots,s_{k_b}))\in\partial\mathcal{D}_{\frac32\ve}.
\end{equation*}
For the sake of convenience, we respectively denote $(\rho^0_{1,n}(\tau_{n,a}^*)(t_1,\cdots,t_{k_a}),\rho^0_{2,n}(\tau_{n,a}^*)(s_1,\cdots,s_{k_b}))$ and $(\rho^0_{1,n}(\tau_{n,b}^*)(t_1,\cdots,t_{k_a}),\rho^0_{2,n}(\tau_{n,b}^*)(s_1,\cdots,s_{k_b}))$ by $(u_{n,1},v_{n,1})$ and $(u_{n,2}, v_{n,2})$.
Since Lemma~\ref{lem0001} holds for $\lambda\geq\Lambda_1$, by a similar argument as used for (4.14) in \cite{DT03}, we can see that one of the following four cases must happen:
\begin{enumerate}
\item[$(a^{**})$] $\int_{\Omega_a}|\nabla(u_{n,1}-u_{n,2})|^2+a_0(x)(u_{n,1}-u_{n,2})^2dx\geq\frac{\ve^2}{4}$.
\item[$(b^{**})$] $\int_{\Omega_b}|\nabla(v_{n,1}-v_{n,2})|^2+b_0(x)(v_{n,1}-v_{n,2})^2dx\geq\frac{\ve^2}{4}$.
\item[$(c^{**})$] $\int_{\bbr^3\backslash\Omega_{a,0}^{J_a}}|\nabla(u_{n,1}-u_{n,2})|^2+(u_{n,1}-u_{n,2})^2dx\geq\frac{\ve^2}{4}$.
\item[$(d^{**})$] $\int_{\bbr^3\backslash\Omega_{b,0}^{J_b}}|\nabla(v_{n,1}-v_{n,2})|^2+(v_{n,1}-v_{n,2})^2dx\geq\frac{\ve^2}{4}$.
\end{enumerate}
By similar arguments as  \eqref{eq0004}-\eqref{eq0005}, we can see that in any case, there exists a constant $C(\ve)>0$ such that $\|(u_{n,1},v_{n,1})-(u_{n,2}, v_{n,2})\|\geq C(\ve)$.  On the other hand, by Step 2, we can see that $(u_{n,1},v_{n,1})\in J_{\lambda_n,\beta}^{m_{J_a,J_b}}$ and $(u_{n,2},v_{n,2})\in J_{\lambda_n,\beta}^{m_{J_a,J_b}}$.  It follows from $\|\overrightarrow{\mathcal{V}}_{n}(u,v)\|_{E^*}\leq1$ for all $(u,v)\in J_{\lambda_n,\beta}^{m_{J_a,J_b}}$ and the Taylor expansion that $\tau_{n,b}^*-\tau_{n,a}^*\geq C(\ve)$.  Now, by Step ~1, we have
\begin{eqnarray}
&&J_{\lambda_n,\beta}(\rho^0_{1,n}(\tau_n^*)(t_1,\cdots,t_{k_a}),\rho^0_{2,n}(\tau_n^*)(s_1,\cdots,s_{k_b}))\notag\\
& & \leq  J_{\lambda_n,\beta}(\gamma_{0,a}(t_1,\cdots,t_{k_a}),\gamma_{0,b}(s_1,\cdots,s_{k_b}))\notag\\
&&\quad -\int_{\tau_{n,a}^*}^{\tau_{n,b}^*}\frac14\eta(\rho_{1,n},\rho_{2,n})\|D[J_{\lambda_n,\beta}(\rho_{1,n},\rho_{2,n})]\|_{E^*}d\nu\notag\\
&  & \leq \sum_{i_a=1}^{k_a}m_{a,i_a}+\sum_{j_b=1}^{k_b}m_{b,j_b}-\frac14C(\ve)\sigma_0.\label{eq0303}
\end{eqnarray}
Let $\tau_n=\max\{\tau_n^0,\tau_n^*\}$ and $\sigma_0^*=\min\{\frac14C(\ve)\sigma_0,\sigma_0,\underset{i_a=1}{\overset{k_a}{\sum }}m_{a,i_a}+\underset{j_b=1}{\overset{k_b}{\sum }}m_{b,j_b}-m_{a,b}^*\}$.  Then \eqref{eq0095} follows from \eqref{eq0301}-\eqref{eq0303} and Step 2.

Since $\overrightarrow{\rho}^0_{n}(\tau)=(\rho^0_{1,n}(\tau), \rho^0_{2,n}(\tau))\in\Gamma$ for all $\tau>0$, by the definition of $m_{J_a,J_b,\lambda_n,\beta}$ and Step 3, we can see that $m_{J_a,J_b,\lambda_n,\beta}\leq m_{J_a,J_b}-\sigma_0^*$, which is impossible since Lemma~\ref{lem0006} holds and $m_{a,i_a,\lambda_n}\to m_{a,i_a}$ and $m_{b,j_b,\lambda_n}\to m_{b,j_b}$ as $n\to\infty$ for all $i_a\in J_a$ and $j_b\in J_b$.
\end{proof}

\vskip0.1in

We close this section by

\vskip0.1in

\medskip\par\noindent{\bf Proof of Theorem~\ref{thm0001}:}\quad Suppose $\beta<0$.  Then by Proposition~\ref{prop0004}, for every $\ve\in(0, \ve_0)$, there exists $\Lambda^*_3(\beta,\ve)>\Lambda_2^*(\beta)$ such that $J_{\lambda,\beta}^*(u,v)$ has a critical point $(u_{\lambda,\beta}^{J_a},v_{\lambda,\beta}^{J_b})\in \mathcal{D}_{\ve}\cap J_{\lambda,\beta}^{m_{J_a,J_b}}$ for all $\lambda\geq\Lambda_3^*(\beta,\ve)$.  Thanks to Lemma~\ref{lem0004} and the choice of $\Lambda^*_3(\beta,\ve)$, $(u_{\lambda,\beta}^{J_a},v_{\lambda,\beta}^{J_b})$ is also a critical point of $J_{\lambda,\beta}(u,v)$.  Since $u_{\lambda,\beta}^{J_a}$ and $v_{\lambda,\beta}^{J_b}$ are both nonnegative by the construction of $J_{\lambda,\beta}^*(u,v)$, we can use a similar argument as used in the proof of Theorem~\ref{thm0002} to show that $u_{\lambda,\beta}^{J_a}$ and $v_{\lambda,\beta}^{J_b}$ are both positive, which implies $(u_{\lambda,\beta}^{J_a},v_{\lambda,\beta}^{J_b})$ is a solution of $(\mathcal{P}_{\lambda,\beta})$.  Clearly, $(u_{\lambda,\beta}^{J_a},v_{\lambda,\beta}^{J_b})$ satisfies the concentration behaviors of $(1)$ and $(2)$, since $\Lambda^*_3(\beta,\ve)\to+\infty$ as $\ve\to0$ and $(u_{\lambda,\beta}^{J_a},v_{\lambda,\beta}^{J_b})\in \mathcal{D}_{\ve}$.  Furthermore, since $(u_{\lambda,\beta}^{J_a},v_{\lambda,\beta}^{J_b})\in J_{\lambda,\beta}^{m_{J_a,J_b}}$, by a similar argument as used in  Proposition~\ref{prop0001}, we can see that the properties $(3)$ and $(4)$ are also hold.  It remains to show that the concentration behavior $(5)$ is also true.  Indeed, by a similar argument as used in Proposition~\ref{prop0001}, the restriction of $u_{0,\beta}^{J_a}$ on $\Omega_{a,i_a}$, denoted by $u^{J_a}_{i_a,\beta}$, lies in $H_0^1(\Omega_{a,i_a})$ and is a critical point of $I_{\Omega_{a,i_a}}(u)$ for every $i_a\in J_a$, while the restriction of $v_{0,\beta}^{J_b}$ on $\Omega_{b,j_b}$, denoted by $v^{J_b}_{j_b,\beta}$, lies in $H_0^1(\Omega_{b,j_b})$ and is a critical point of $I_{\Omega_{b,j_b}}(v)$ for every $j_b\in J_b$.  Since $(u_{\lambda,\beta}^{J_a},v_{\lambda,\beta}^{J_b})\in \mathcal{D}_{\ve}$ for all $\lambda\geq\Lambda_3^*(\beta,\ve)$, it is easy to see that $u^{J_a}_{i_a,\beta}$ and $v^{J_b}_{j_b,\beta}$ are nonzero for all $i_a\in J_a$ and $j_b\in J_b$.  It follows from $(u_{\lambda,\beta}^{J_a},v_{\lambda,\beta}^{J_b})\in J_{\lambda,\beta}^{m_{J_a,J_b}}$ that $u^{J_a}_{i_a,\beta}$ and $v^{J_b}_{j_b,\beta}$ are  the least energy nonzero critical points of $I_{\Omega_{a,i_a}}(u)$ and $I_{\Omega_{b,j_b}}(v)$ for every $i_a\in J_a$ and $j_b\in J_b$, respectively.  The proof of this theorem can be finished by taking $\Lambda_{*}(\beta)=\Lambda_3^*(\beta,\frac{\ve_0}{2})$.
\qquad\raisebox{-0.5mm}{\rule{1.5mm}{4mm}}\vspace{6pt}

\section{The phenomenon of phase separations}
In this section, we study the phenomenon of phase separations to $(\mathcal{P}_{\lambda,\beta})$, that is, we study the concentration behavior of the solutions for $(\mathcal{P}_{\lambda,\beta})$ as $\beta\to-\infty$.

\vskip0.2in

\noindent\textbf{Proof of Theorem~\ref{thm0003}:}\quad Suppose $\lambda\geq\Lambda_*$ and $\{\beta_n\}\subset(-\infty, 0)$ satisfying $\beta_n\to-\infty$ as $n\to\infty$.  Let $(u_{\lambda,\beta_n}, v_{\lambda,\beta_n})$ be the ground state solution of $(\mathcal{P}_{\lambda,\beta_n})$ obtained by Theorem~\ref{thm0002}.  Then by Lemma~\ref{lem0008} and a similar argument of \eqref{eq0332}, we can see that $\{(u_{\lambda,\beta_n}, v_{\lambda,\beta_n})\}$ is bounded in $E$.  Without loss of generality, we assume $(u_{\lambda,\beta_n}, v_{\lambda,\beta_n})\rightharpoonup(u_{\lambda,0}, v_{\lambda,0})$ weakly in $E$ as $n\to\infty$ for some $(u_{\lambda,0}, v_{\lambda,0})\in E$.  Since $E$ is embedded continuously into $\h\times\h$, we have $(u_{\lambda,0}, v_{\lambda,0})\in \h\times\h$.  Clearly, $u_{\lambda,0}\geq0$ and $v_{\lambda,0}\geq0$ in $\bbr^3$.  In what follows, we verify that $(u_{\lambda,0}, v_{\lambda,0})$ satisfies $(1)$-$(4)$.  For the sake of clarity, we divide the following proof into several steps.

\vskip0.1in

{\bf Step 1. }\quad We prove that there exists $\Lambda_{**}\geq\Lambda_*$ such that $u_{\lambda,0}\not=0$ and $v_{\lambda,0}\not=0$ in $\bbr^3$ for $\lambda\geq\Lambda_{**}$ in the sense of almost everywhere.

Indeed, suppose $u_{\lambda,0}=0$ a.e. in $\bbr^3$.  Since $u_{\lambda,\beta_n}\rightharpoonup u_{\lambda,0}$ weakly in $E_a$ as $n\to\infty$ and $\{(u_{\lambda,\beta_n}, v_{\lambda,\beta_n})\}$ is bounded in $E$, by a similar argument as \eqref{eq0310}, we can see that
\begin{equation*}
\|u_{\lambda,\beta_n}\|_{a,\lambda}^2\leq C\lambda^{-\frac12}\|u_{\lambda,\beta_n}\|_{a,\lambda}^2+o_n(1).
\end{equation*}
It follows that there exists $\Lambda_{**}\geq\Lambda_*$ such that $\|u_{\lambda,\beta_n}\|_{a,\lambda}^2=o_n(1)$ for $\lambda\geq\Lambda_{**}$, which together Lemma~\ref{lem0001} and the boundedness of $\{(u_{\lambda,\beta_n}, v_{\lambda,\beta_n})\}$ in $E$, implies $\|u_{\lambda,\beta_n}\|_4=o_n(1)$.  Thanks to the fact that $(u_{\lambda,\beta_n}, v_{\lambda,\beta_n})$ is the ground state solution of $(\mathcal{P}_{\lambda,\beta_n})$ with $\beta_n<0$ for every $n$, we also have $\beta_n\int_{\bbr^3}u_{\lambda,\beta_n}^2v_{\lambda,\beta_n}^2dx\to0$ as $n\to\infty$.  Hence, $J_{\lambda,\beta_n}(u_{\lambda,\beta_n}, v_{\lambda,\beta_n})=I_{b,\lambda}(v_{\lambda,\beta_n})+o_n(1)$.  On the other hand, by Lemma~\ref{lem0001}, for every $n\in\bbn$, there exists $t_n>0$ such that $t_nu_{\lambda,\beta_n}\in\mathcal{N}_{a,\lambda}$ with $\lambda\geq\lambda_{**}$.  It follows from Lemma~\ref{lem0007} and $\beta_n<0$ that
\begin{eqnarray}
J_{\lambda,\beta_n}(u_{\lambda,\beta_n}, v_{\lambda,\beta_n})&\geq&J_{\lambda,\beta_n}(t_nu_{\lambda,\beta_n}, v_{\lambda,\beta_n})\notag\\
&\geq& I_{a,\lambda}(t_nu_{\lambda,\beta_n})+I_{b,\lambda}(v_{\lambda,\beta_n})\notag\\
&\geq& m_{a,\lambda}+J_{\lambda,\beta_n}(u_{\lambda,\beta_n}, v_{\lambda,\beta_n})+o_n(1)\label{eq0404}.
\end{eqnarray}
Since $m_{a,\lambda}>0$ for $\lambda\geq\lambda_{**}$, \eqref{eq0404} is impossible for $n$ large enough.  By a similar argument, we can also show that $v_{\lambda,0}\not=0$ in $\bbr^3$ for $\lambda\geq\Lambda_{**}$ in the sense of almost everywhere.

\vskip0.1in

{\bf Step 2. }\quad We prove that $\{u_{\lambda,\beta_n}\}, \{v_{\lambda,\beta_n}\}\subset C(\bbr^3)$ and $\|u_{\lambda,\beta_n}\|_{C(\bbr^3)}\leq C_0$ and $\|v_{\lambda,\beta_n}\|_{C(\bbr^3)}\leq C_0$ for some $C_0>0$.

Indeed, by a similar argument as used in the proof of Theorem~\ref{thm0002}, we have $\{u_{\lambda,\beta_n}\}, \{v_{\lambda,\beta_n}\}\subset C(\bbr^3)$. On the other hand, since $(u_{\lambda,\beta_n}, v_{\lambda,\beta_n})$ is the ground state solution of $(\mathcal{P}_{\lambda,\beta_n})$ obtained by Theorem~\ref{thm0002} and $\{(u_{\lambda,\beta_n}, v_{\lambda,\beta_n})\}$ is bounded in $E$, we can use a similar argument as used in $(3)$ of Lemma~\ref{lem0004} to show that $\{u_{\lambda,\beta_n}\}$ and $\{v_{\lambda,\beta_n}\}$ are bounded in $L^\infty(\bbr^3)$, that is, $\|u_{\lambda,\beta_n}\|_{L^\infty(\bbr^3)}\leq C_0$ and $\|v_{\lambda,\beta_n}\|_{L^\infty(\bbr^3)}\leq C_0$ for some $C_0>0$, which implies $\|u_{\lambda,\beta_n}\|_{C(\bbr^3)}\leq C_0$ and $\|v_{\lambda,\beta_n}\|_{C(\bbr^3)}\leq C_0$.

\vskip0.1in

{\bf Step 3. }\quad We prove that $u_{\lambda,0}, v_{\lambda,0}\in C(\bbr^3)$ and are all local Lipschitz in $\bbr^3$.

Indeed, since the conditions $(D_1)$-$(D_5)$ hold, by \cite[Theorem~1.7]{SZ14} and Step 2, we can see that $\{\nabla u_{\lambda,\beta_n}\}$ and $\{\nabla v_{\lambda,\beta_n}\}$ are bounded in $L^\infty(\bbr^3)$.  On the other hand, for every $n$, by a similar argument as used in $(3)$ of Lemma~\ref{lem0004}, we can show that $u_{\lambda,\beta_n}, v_{\lambda,\beta_n}\in L^\gamma(\bbr^3)$ for all $\gamma\geq2$.  Thanks to the Calderon-Zygmund inequality and conditions $(D_1)$-$(D_5)$, we have $u_{\lambda,\beta_n}, v_{\lambda,\beta_n}\in W_{loc}^{2,\gamma}(\bbr^3)$ for all $\gamma\geq2$.  Together with the Sobolev embedding theorem, it implies $u_{\lambda,\beta_n}, v_{\lambda,\beta_n}\in C^1(\bbr^3)$.  It follows that $\{u_{\lambda,\beta_n}\}$ and $\{v_{\lambda,\beta_n}\}$ are bounded in $C^1(\bbr^3)$.  Now, by applying the Ascoli-Arzel\'a theorem, we can conclude that $u_{\lambda,\beta_n}\to u_{\lambda,0}$ and $v_{\lambda,\beta_n}\to v_{\lambda,0}$ strongly in $C_{loc}(\bbr^3)$ as $n\to\infty$ with $u_{\lambda,0}, v_{\lambda,0}\in C(\bbr^3)$.  This together with the boundness of $\{u_{\lambda,\beta_n}\}$ and $\{v_{\lambda,\beta_n}\}$ in $C^1(\bbr^3)$ again, implies $u_{\lambda,0}$ and $v_{\lambda,0}$ are all local Lipschitz in $\bbr^3$.

\vskip0.1in

{\bf Step 4. }\quad We prove that $(u_{\lambda,\beta_n}, v_{\lambda,\beta_n})\to(u_{\lambda,0}, v_{\lambda,0})$ strongly in $\h\times\h$ as $n\to\infty$.  Furthermore, $u_{\lambda,0}\in H_0^1(\{u_{\lambda,0}>0\})$ and is a least energy solution of \eqref{eq0401}, while $v_{\lambda,0}\in H_0^1(\{v_{\lambda,0}>0\})$ and is a least energy solution of \eqref{eq0402}.

Indeed, since $u_{\lambda,0}\in C(\bbr^3)$ and is local Lipschitz in $\bbr^3$, we can conclude that $\partial\{u_{\lambda,0}>0\}$, the boundary of the set $\{u_{\lambda,0}>0\}$, is local Lipschitz.  It follows from $u_{\lambda,0}\in H^1(\bbr^3)$ and $u_{\lambda,0}=0$ in $\bbr^3\backslash\{u_{\lambda,0}>0\}$ that $u_{\lambda,0}\in H_0^1(\{u_{\lambda,0}>0\})$.  Similarly, we have
$v_{\lambda,0}\in H_0^1(\{v_{\lambda,0}>0\})$.  Let $I_{a,\lambda}^*(u)$ and $I_{b,\lambda}^*(v)$ respectively be the corresponding functional of \eqref{eq0401} and \eqref{eq0402}.  By a similar argument as used in Lemma~\ref{lem0001}, we can show that
\begin{equation}      \label{eq0712}
C\int_{\{u_{\lambda,0}>0\}}u^2dx\leq\int_{\{u_{\lambda,0}>0\}}|\nabla u|^2+(\lambda a(x)+a_0(x))u^2dx\quad\text{for all }u\in H_0^1(\{u_{\lambda,0}>0\})
\end{equation}
and
\begin{equation}      \label{eq0713}
C\int_{\{v_{\lambda,0}>0\}}v^2dx\leq\int_{\{v_{\lambda,0}>0\}}|\nabla v|^2+(\lambda b(x)+b_0(x))v^2dx\quad\text{for all }v\in H_0^1(\{v_{\lambda,0}>0\})
\end{equation}
if $\lambda\geq\Lambda_1$.
It follows that the Nehari manifolds of $I_{a,\lambda}^*(u)$ and $I_{b,\lambda}^*(v)$ are both well defined if $\lambda\geq\Lambda_1$.  Let
\begin{equation*}
m_{a,\lambda}^*=\inf_{\mathcal{N}_{a,\lambda}^*}I_{a,\lambda}^*(u)\quad\text{and}\quad m_{b,\lambda}^*=\inf_{\mathcal{N}_{b,\lambda}^*}I_{b,\lambda}^*(v),
\end{equation*}
where $\mathcal{N}_{a,\lambda}^*$ and $\mathcal{N}_{b,\lambda}^*$ are respectively the Nehari manifolds of $I_{a,\lambda}^*(u)$ and $I_{b,\lambda}^*(v)$.  Then $m_{a,\lambda}^*>0$ and $m_{b,\lambda}^*>0$ if $\lambda\geq\Lambda_1$.  For every $\ve>0$, there exist $u_\ve\in \mathcal{N}_{a,\lambda}^*$ and $v_\ve\in \mathcal{N}_{b,\lambda}^*$ such that
\begin{equation}     \label{eq0403}
I_{a,\lambda}^*(u_\ve)<m_{a,\lambda}^*+\ve\quad\text{and}\quad I_{b,\lambda}^*(v_\ve)<m_{b,\lambda}^*+\ve.
\end{equation}
Since $\{(u_{\lambda,\beta_n}, v_{\lambda,\beta_n})\}$ is bounded in $E$, by the fact that $(u_{\lambda,\beta_n}, v_{\lambda,\beta_n})$ is the ground state solution $(\mathcal{P}_{\lambda,\beta_n})$ and $\beta_n\to-\infty$, we have $\int_{\bbr^3}u_{\lambda,\beta_n}^2v_{\lambda,\beta_n}^2dx\to0$ as $n\to\infty$.  It follows from the Fatou lemma that $\int_{\bbr^3}u_{\lambda,0}^2v_{\lambda,0}^2dx=0$, which implies $\{u_{\lambda,0}>0\}\cap\{v_{\lambda,0}>0\}=\emptyset$.  Hence, by $u_\ve\in \mathcal{N}_{a,\lambda}^*$ and $v_\ve\in \mathcal{N}_{b,\lambda}^*$, we can see that $(u_\ve,v_\ve)\in\mathcal{N}_{\lambda,\beta_n}$ for all $n$.  Now, by
\eqref{eq0403}, we have
\begin{equation*}
2\ve+m_{a,\lambda}^*+m_{b,\lambda}^*\geq I_{a,\lambda}^*(u_\ve)+I_{b,\lambda}^*(v_\ve)=J_{\lambda,\beta_n}(u_\ve,v_\ve)\geq m_{\lambda,\beta_n}\quad\text{for all }n.
\end{equation*}
Since $\ve>0$ and $n\in\bbn$ is arbitrary, we can conclude that
\begin{equation}    \label{eq0405}
m_{a,\lambda}^*+m_{b,\lambda}^*\geq\limsup_{n\to\infty}m_{\lambda,\beta_n}.
\end{equation}
On the other hand, note that $(u_{\lambda,\beta_n}, v_{\lambda,\beta_n})$ is the ground state solution $(\mathcal{P}_{\lambda,\beta_n})$ and $(u_{\lambda,\beta_n}, v_{\lambda,\beta_n})\rightharpoonup(u_{\lambda,0}, v_{\lambda,0})$ weakly in $E$ as $n\to\infty$, by $\beta_n<0$, we can see that
\begin{equation*}
\int_{\{u_{\lambda,0}>0\}}|\nabla u_{\lambda,0}|^2+(\lambda a(x)+a_0(x))u_{\lambda,0}^2dx\leq\mu_1\int_{\{u_{\lambda,0}>0\}}u_{\lambda,0}^4dx
\end{equation*}
and
\begin{equation*}
\int_{\{v_{\lambda,0}>0\}}|\nabla v_{\lambda,0}|^2+(\lambda b(x)+b_0(x))v_{\lambda,0}^2dx\leq\mu_2\int_{\{v_{\lambda,0}>0\}}v_{\lambda,0}^4dx.
\end{equation*}
It follows from \eqref{eq0712} and \eqref{eq0713} that there exist $0<t_0\leq1$ and $0<s_0\leq1$ such that $t_0u_{\lambda,0}\in \mathcal{N}_{a,\lambda}^*$ and $s_0v_{\lambda,0}\in \mathcal{N}_{b,\lambda}^*$.  Now, since $(u_{\lambda,\beta_n}, v_{\lambda,\beta_n})\rightharpoonup(u_{\lambda,0}, v_{\lambda,0})$ weakly in $E$ as $n\to\infty$, by a similar argument as \eqref{eq7100}, we can see that
\begin{eqnarray}
\liminf_{n\to+\infty}m_{\lambda,\beta_n}&=&\liminf_{n\to+\infty}J_{\lambda,\beta_n}(u_{\lambda,\beta_n}, v_{\lambda,\beta_n})\notag\\
&=&\frac14\liminf_{n\to+\infty}(\|u_{\lambda,\beta_n}\|_{a,\lambda}^2+\|v_{\lambda,\beta_n}\|_{b,\lambda}^2)\notag\\
&\geq&\frac14(\|u_{\lambda,0}\|_{a,\lambda}^2+\|v_{\lambda,0}\|_{b,\lambda}^2)\notag\\
&\geq&\frac14(\|t_0u_{\lambda,0}\|_{a,\lambda}^2+\|s_0v_{\lambda,0}\|_{b,\lambda}^2)\notag\\
&=&I_{a,\lambda}^*(t_0u_{\lambda,0})+I_{b,\lambda}^*(s_0v_{\lambda,0})\notag\\
&\geq&m_{a,\lambda}^*+m_{b,\lambda}^*.\label{eq0406}
\end{eqnarray}
Hence, by combining \eqref{eq0405} and \eqref{eq0406}, we must have the following results:
\begin{enumerate}
\item[$(a^{***})$] $\lim_{n\to\infty}\|u_{\lambda,\beta_n}\|_{a,\lambda}^2=\|u_{\lambda,0}\|_{a,\lambda}^2$ and $\lim_{n\to\infty}\|v_{\lambda,\beta_n}\|_{b,\lambda}^2=\|v_{\lambda,0}\|_{b,\lambda}^2$.
\item[$(b^{***})$] $u_{\lambda,0}\in \mathcal{N}_{a,\lambda}^*$ with $I_{a,\lambda}^*(u_{\lambda,0})=m_{a,\lambda}^*$ and $v_{\lambda,0}\in \mathcal{N}_{b,\lambda}^*$ with $I_{b,\lambda}^*(v_{\lambda,0})=m_{b,\lambda}^*$.
\end{enumerate}
By $(a^{***})$ and Lemma~\ref{lem0001}, we know that $\|u_{\lambda,\beta_n}-u_{\lambda,0}\|_{a,\lambda}=\|v_{\lambda,\beta_n}-v_{\lambda,0}\|_{b,\lambda}=o_n(1)$.  Thanks to Lemma~\ref{lem0001} once more and the condition $(D_4)$, we  observe  that $\|u_{\lambda,\beta_n}-u_{\lambda,0}\|_a=\|v_{\lambda,\beta_n}-v_{\lambda,0}\|_b=o_n(1)$ for $\lambda\geq\Lambda_1$.
Since by \eqref{eq0712} and \eqref{eq0713}, $\mathcal{N}_{a,\lambda}^*$ and $\mathcal{N}_{b,\lambda}^*$ are a natural constraint in $H_0^1(\{u_{\lambda,0}>0\})$ and $H_0^1(\{v_{\lambda,0}>0\})$, respectively, which together with $(b^{***})$, implies $u_{\lambda,0}$ and $v_{\lambda,0}$ are a least energy solution of \eqref{eq0401} and \eqref{eq0402},  respectively.

\vskip0.1in

{\bf Step 5. }\quad We prove that $\{x\in\bbr^3\mid u_{\lambda,0}(x)>0\}$ and $\{x\in\bbr^3\mid v_{\lambda,0}(x)>0\}$ are connected domains and $\{x\in\bbr^3\mid u_{\lambda,0}(x)>0\}=\bbr^3\backslash\overline{\{x\in\bbr^3\mid v_{\lambda,0}(x)>0\}}$.

Indeed, since $u_{\lambda,0}$ and $v_{\lambda,0}$ are respectively a least energy solution of \eqref{eq0401} and \eqref{eq0402}, we must have $\{x\in\bbr^3\mid u_{\lambda,0}(x)>0\}$ and $\{x\in\bbr^3\mid v_{\lambda,0}(x)>0\}$ are connected domains.  It remains to show that $\{x\in\bbr^3\mid u_{\lambda,0}(x)>0\}=\bbr^3\backslash\overline{\{x\in\bbr^3\mid v_{\lambda,0}(x)>0\}}$.  To the contrary,  we suppose that there exists an open set $\Omega$ satisfying $\{x\in\bbr^3\mid u_{\lambda,0}(x)>0\}\subsetneqq\Omega\subsetneqq\bbr^3\backslash\overline{\{x\in\bbr^3\mid v_{\lambda,0}(x)>0\}}$.  Furthermore, it has a locally  lipschitz boundary.  Since $\Omega\subsetneqq\bbr^3\backslash\overline{\{x\in\bbr^3\mid v_{\lambda,0}(x)>0\}}$, by a similar argument as in Step 3, we can show that $u_{\lambda,0}$ is a least energy solution of the following equation:
\begin{equation*}
-\Delta u+(\lambda a(x)+a_0(x))u=\mu_1 u^3,\quad u\in H_0^1(\Omega).
\end{equation*}
Since $u_{\lambda,0}\geq0$ in $\Omega$, by a similar argument as in the proof of Theorem~\ref{thm0002}, we can conclude that $u_{\lambda,0}>0$ on $\Omega$, which contradicts to $\{x\in\bbr^3\mid u_{\lambda,0}(x)>0\}\subsetneqq\Omega$.

We complete the proof by showing that $\beta^2\int_{\bbr^3}u_{\lambda,\beta}^2v_{\lambda,\beta}^2\to0$ as $\beta\to-\infty$, where $(u_{\lambda,\beta},v_{\lambda,\beta})$ is the ground state solution of $(\mathcal{P}_{\lambda,\beta})$ obtained by Theorem~\ref{thm0002}.  Indeed,  if not,  then  there exists $\{\beta_n\}\subset(-\infty, 0)$ such that $\beta_n^2\int_{\bbr^3}u_{\lambda,\beta_n}^2v_{\lambda,\beta_n}^2\leq -C$.  By \eqref{eq0405} and \eqref{eq0406}, we must have $\beta_n^2\int_{\bbr^3}u_{\lambda,\beta_n}^2v_{\lambda,\beta_n}^2\to0$ as $\beta_n\to-\infty$ up to a subsequence, which is a contradiction.
\qquad\raisebox{-0.5mm}{%
\rule{1.5mm}{4mm}}\vspace{6pt}

\section{Acknowledgements}
This work was partly completed when the first author was visiting Tsinghua University, and he is grateful to the members in the Department of Mathematical Sciences at Tsinghua University for their invitation and hospitality.  The first author was supported by the Fundamental Research Funds for the Central Universities (2014QNA67), China.  The second author was supported by the National Science Council and the National Center for Theoretical Sciences (South), Taiwan. The third author was supported by NSFC(11025106, 11371212, 11271386) and the Both-Side Tsinghua Fund.

Y. Wu and W. Zou  thank Prof. A. Szulkin for his suggestions and discussions  when he was visiting Tsinghua University.  They are also grateful to Dr. Zhijie Chen for careful reading the
manuscript and some suggestions to improve it.


\begin{thebibliography}{999}
\bibitem{AR73}
A. Ambrosetti, P. Rabinowitz, Dual variational methods in critical point
theory and applications,  {\it J. Funct. Anal., } {\bf 14}(1973),  349-381.

\bibitem{AA99}
N. Akhmediev, A. Ankiewicz, Partially coherent solitons on a finite background,  {\it  Phys. Rev. Lett., }  {\bf82}(1999), 2661-2664.

\bibitem{BW95}
T. Bartsch, Z.-Q. Wang, Existence and multiplicity results for superlinear elliptic problems on $\bbr^n$,   {\it  Comm. Partial Differential Equations, }  {\bf20}(1995),  1725-1741.

\bibitem{BW00}
T. Bartsch, Z.-Q. Wang, Multiple positive solutions for a nonlinear Schr\"odinger equations,   {\it
Z. Angew. Math. Phys., } {\bf 51}(2000),  366-384.

\bibitem{BPW01}
T. Bartsch, A. Pankov, Z.-Q. Wang, Nonlinear Schr\"odinger equations with steep potential well,   {\it  Commun. Contemp.
Math., }  {\bf3}(2001),  549-569.

\bibitem{BZ03}
K. Brown, Y. Zhang, The Nehari manifold for a semilinear elliptic equation with a signchanging
weight function,   {\it  J. Differential Equations, }  {\bf193}(2003),  481-499.

\bibitem{BWW07}
T. Bartsch, Z.-Q. Wang, W. Wei, Bound states for a coupled Schr\"odinger system,   {\it  J. Fixed Point Theory Appl., } {\bf2}(2007),  353-367.

\bibitem{BDW10}
T. Bartsch, N. Dancer, Z.-Q. Wang, A Liouville theorem, a-priori bounds, and bifurcating branches of positive solutions for a nonlinear elliptic system,  {\it  Calc. Var. Partial Differential Equations, }  {\bf37}(2010),  345-361.

\bibitem{B13}
T. Bartsch, Bifurcation in a multicomponent system of nonlinear Schr\"odinger equations,  {\it  J. Fixed Point Theory Appl., }  {\bf13}(2013),  37-50.

\bibitem{BT13}
T. Bartsch, Z. Tang, Multibump solutions of nonlinear Schr\"odinger equations with steep potential well and indefinite potential,  {\it  Discrete Contin. Dyn. Syst., }  {\bf33}(2013),  7-26.



\bibitem{caf-1} L. A. Caffarelli, F.-H. Lin, Singularly perturbed elliptic systems and multi-valued harmonic functions
with free boundaries, {\it J. Amer. Math. Soc.,} {\bf 21} (2008), 847-862.

\bibitem{caf-2} L. A. Caffarelli, J. M. Roquejoffre, Uniform H\"{o}der estimates in a class of elliptic systems and applications
to singular limits in models for diffusion flames, {\it Arch. Ration. Mech. Anal.,} {\bf 183} (2007), 457-487.

\bibitem{CZ121}
Z. Chen, W. Zou, Positive least energy solutions and phase separation for coupled Schr\"odinger equations with critical exponent,   {\it  Arch. Rational Mech. Anal., }  {\bf205} (2012), 515-551.

\bibitem{CZ12}
Z. Chen, W. Zou, Ground states for a system of Schr\"odinger equations with critical exponent,   {\it  J. Funct. Anal., } {\bf262}(2012),  3091-3107.

\bibitem{CZ13}
Z. Chen, W. Zou, An optimal constant for the existence of least energy solutions of a coupled Schr\"odinger system,    {\it  Calc. Var. Partial Differential Equations, }  {\bf48}(2013),  695-711.

\bibitem{CZ131}
Z. Chen, W. Zou, Positive least energy solutions and phase separation for coupled Schr\"odinger equations with critical exponent: higher dimensional case, Arxiv:1209.2522v2 [math.AP];  {\it  Calc. Var. Partial Differential Equations, }  accepted.

\bibitem{CZ132}
Z. Chen, W. Zou, Standing waves for coupled Schr\"odinger equations with decaying potentials,
{\it Journal of Mathematical Physics,} {\bf  54,} 111505(2013), 1-21.



\bibitem{CZ14}
Z. Chen, W. Zou, Existence and symmetry of positive ground states for a doubly critical Schr\"odinger system, Arxiv:1211.1514v2 [math.AP]; {\it Trans. Amer. Math. Soc.,} accepted.





\bibitem{CLZ13}
Z. Chen, C. S. Lin, W. Zou, Multiple sign-changing and semi-nodal solutions for coupled Schr\"odinger equations,
 {\it  J. Differential Equations, }  {\bf255}(2013),  4289-4311.


\bibitem{CLZ14}
Z. Chen, C. S. Lin, W. Zou, Infinitely many sign-changing and semi-nodal solutions for a nonlinear Schr\"odinger system, Arxiv:1212.3773v1 [math.AP].




\bibitem{CLZcommpde} Z. Chen, C. S. Lin, W. Zou,   Sign-changing solutions and phase separation for an
elliptic system with critical exponent,    {\it Comm. Partial Differential Equations, } {\bf 39}(2014), 1827-1859.



\bibitem{con-1} M. Conti, S. Terracini, G. Verzini, Asymptotic estimates for the spatial segregation of competitive systems,
{\it Adv. Math.,} {\bf 195} (2005), 524-560.







\bibitem{DF96}
M. del Pino, P. Felmer, Local mountain passes for semilinear elliptic problems
in unbounded domains,    {\it  Calc. Var. Partial Differential Equations, }  {\bf4}(1996),  121-137.

\bibitem{DT03}
Y. Ding, K. Tanaka, Multiplicity of positive solutions of a nonlinear Schr\"odinger equation,  {\it  manuscripta math., } {\bf 112}(2003), 109-135.


\bibitem{EGBB97}
B. Esry, C. Greene, J. Burke, J. Bohn, Hartree-Fock theory for double condesates,
{\it   Phys. Rev. Lett., } {\bf78}(1997),  3594-3597.

\bibitem{FSX10}
M. Furtado, E. Silva, M. Xavier, Multiplicity and concentration of solutions for elliptic systems with vanishing potentials,   {\it  J. Differential Equations, }  {\bf249}(2010),  2377-2396.


\bibitem{GT12}
Y. Guo, Z. Tang, Ground state solutions for quasilinear Schr\"odinger systems,
{\it  J. Math. Anal. Appl., }  {\bf389}(2012), 322-339.

\bibitem{GT121}
Y. Guo, Z. Tang, Multibump bound states for quasilinear
Schr\"odinger systems with critical
frequency,   {\it  J. Fixed Point Theory Appl., } {\bf12}(2012),  135-174.


\bibitem{H09}
N. Hirano, Multiple existence of nonradial positive solutions
for a coupled nonlinear Schr\"odinger system,    {\it  NoDEA Nonlinear Differential Equations
Appl., }  {\bf16}(2009),  159-188.

\bibitem{I09}
N. Ikoma, Uniqueness of positive solutions for a nonlinear elliptic system,
 {\it  NoDEA Nonlinear Differential Equations
Appl., } {\bf16}(2009),  555-567.

\bibitem{IK11}
N. Ikoma, K. Tanaka, A local mountain pass type result for a system of nonlinear Schr\"odinger equations,  {\it  Calc. Var. Partial Differential Equations, }  {\bf40}(2011),  449-480.

\bibitem{JZ11}
Y. Jiang, H.-S. Zhou, Schr\"odinger-Poisson system with steep potential well,  {\it  J. Differential Equations, } {\bf251} (2011), 582-608.

\bibitem{LW05}
T.C. Lin, W. Wei, Spikes in two coupled nonlinear Schr\"odinger equations,   {\it  Ann. Inst. H. Poincar\'e Anal. Non Lin\'eaire, }  {\bf22}(2005),  403-439.


\bibitem{LW06}
T.C. Lin, W. Wei, Spikes in two-component systems of nonlinear Schr\"odinger equations with trapping potentials,
 {\it J. Differential Equations, }  {\bf229}(2006),  538-569.

\bibitem{LHL11}
X. Liu, Y. Huang, J. Liu, Sign-changing solutions for an asymptotically linear Schr\"odinger equations with deepening potential well,    {\it  Adv. Differential Equations, }  {\bf16}(2011), 1-30.

\bibitem{LW13}
T.C. Lin, T.F. Wu, Existence and multiplicity of positive solutions for two coupled nonlinear Schr\"odinger equations,  {\it  Discrete Contin. Dyn. Syst., }  {\bf 33}(2013),  2911-2938.

\bibitem{MS07}
L.A. Maia, E. Silva, On a class of coupled elliptic systems in $\bbr^n$,   {\it  NoDEA Nonlinear Differential Equations
Appl., }  {\bf14}(2007), 303-313.

\bibitem{NTTV10}
B. Noris, H. Tavares, S. Terracini, G. Verzini, Uniform H\"older bounds
for nonlinear Schr\"odinger systems with strong competition,  {\it  Comm. Pure
Appl. Math., } {\bf63}(2010),  267-302.

\bibitem{R14}
J. Royo-Letelier, Segregation and symmetry breaking of strongly coupled two-component Bose-Einstein condensates in a harmonic trap,   {\it  Calc. Var. Partial Differential Equations, }  {\bf 49}(2014),  103-124.

\bibitem{S92}
E. S\'er\'e, Existence of infinitely many homoclinic orbits in Hamiltonian systems,  {\it  Math. Z., }  {\bf 209}(1992), 27-42.


\bibitem{SZ05}
C. Stuart, H.-S. Zhou, Positive eigenfunctions of a Schr\"oinger operator,   {\it  J. London Math. Soc., }  {\bf72}(2005), 429-441.

\bibitem{SZ06}
C. Stuart, H.-S. Zhou, Global branch of solutions for nonlinear Schr\"odinger equations with deepening potential well,
 {\it  Proc. London Math. Soc., }  {\bf92}(2006),  655-681.

\bibitem{S07}
B. Sirakov, Least energy solitary waves for a system of nonlinear Schr\"odinger equations in $\bbr^n$,   {\it Commun. Math. Phys., }  {\bf271}(2007), 199-221.

\bibitem{ST09}
Y. Sato, K. Tanaka, Sign-changing multi-bump solutions nonlinear Schr\"odinger equations with steep potential wells,   {\it  Trans. Amer. Math. Soc., } {\bf 361}(2009),  6205-6253.

\bibitem{SW14}
J. Sun, T.F. Wu, Ground state solutions for an indefinite Kirchhoff type problem with steep potential well,   {\it  J. Differential Equations, }  {\bf 256}(2014),  1771-1792.

\bibitem{SZ14}
N. Soave, A. Zilio, Uniform bounds for strongly competing systems: the optimal Lipschitz case, arXiv:1407.6674v1 [math.AP].

\bibitem{TV09}
S. Terracini, G. Verzini, Multipulse phases in k-mixtures of Bose-Einstein condensates,
{\it Arch. Ration. Mech. Anal., } {\bf194}(2009), 717-741.


\bibitem{WW08}
J. Wei, T. Weth, Radial solutions and phase separation in a system of two
coupled Schr\"odinger equations,   {\it  Arch. Ration. Mech. Anal., }  {\bf190}(2008),
83-106.

\bibitem{WW081}
J. Wei, T. Weth, Asymptotic behaviour of solutions of planar elliptic
systems with strong competition,   {\it   Nonlinearity, }  {\bf21}(2008),  305-317.

\bibitem{WZ09}
Z. Wang, H.-S. Zhou, Positive solutions for nonlinear Schr\"odinger equations with deepening potential well,
 {\it J. Eur.
Math. Soc., }  {\bf11}(2009),  545-573.

\bibitem{ZLZ13}
L. Zhao, H. Liu, F. Zhao, Existence and concentration of solutions for the Schr\"odinger-Poisson equations with steep
well potential,    {\it  J. Differential Equations, }  {\bf255}(2013), 1-23.
\end{thebibliography}
\end{document}